\newcommand{\ee}{\varepsilon}
\newcommand{\defeq}{{\coloneqq}}
\newcommand{\Rd}{{\mathbb R^n}}
\newcommand{\Br}{{B_R}}
\newcommand{\diver}{\nabla \cdot}
\DeclareMathOperator{\sign}{sign}
\DeclareMathOperator{\supp}{supp}
\DeclareMathOperator*{\esssup}{ess\,sup}
\DeclareMathOperator*{\essinf}{ess\,inf}
\DeclareMathOperator*{\essosc}{ess\,osc}
\newcommand*\diff{\mathop{}\!\mathrm{d}}
\newtheorem{theorem}{Theorem}[section]
\newtheorem{proposition}[theorem]{Proposition}%
\newtheorem{corollary}[theorem]{Corollary}%
\newtheorem{lemma}[theorem]{Lemma}%
\theoremstyle{definition}
\newtheorem{definition}[theorem]{Definition}%
\newtheorem{remark}[theorem]{Remark}%
\numberwithin{equation}{section}
\renewcommand*{\@fnsymbol}[1]{\ensuremath{\ifcase#1\or \star \or \dagger\or \ddagger\or
		\mathsection\or \mathparagraph\or \|\or **\or \dagger\dagger
		\or \ddagger\ddagger \else\@ctrerr\fi}}
\begin{document}
\title{Infinite-time concentration in Aggregation--Diffusion equations  with a given potential}

\author{J.A. Carrillo%
\thanks{Mathematical Institute, University of Oxford, Oxford OX2 6GG, UK. Email: \href{mailto:carrillo@maths.ox.ac.uk}{carrillo@maths.ox.ac.uk}} %
\and D. Gómez-Castro%
\thanks{Mathematical Institute, University of Oxford, Oxford OX2 6GG, UK. Email: \href{mailto:gomezcastro@maths.ox.ac.uk}{gomezcastro@maths.ox.ac.uk}}
\and
J.L. Vázquez%
\thanks{Departamento de Matemáticas, Universidad Autónoma de Madrid. Email: \href{mailto:juanluis.vazquez@uam.es}{juanluis.vazquez@uam.es}}
}
\maketitle

\begin{abstract}
	\textbf{Abstract in English.}
Typically, aggregation-diffusion is modeled by parabolic equations that combine linear or nonlinear diffusion with a Fokker-Planck convection term. Under very general suitable assumptions,
we prove
that radial solutions of the evolution process converge asymptotically in time towards a stationary state representing the balance between the two effects.
Our parabolic system is the gradient flow of an energy functional, and in fact we show that the stationary states are minimizers of a relaxed energy.
Here, we study radial solutions of an aggregation-diffusion model that combines nonlinear fast diffusion with a convection term driven by the gradient of a potential, both in balls and the whole space.
We show that, depending on the exponent of fast diffusion and the potential, the steady state is given by the sum of an explicit integrable function, plus a Dirac delta at the origin containing the rest of the mass of the initial datum. 
This splitting phenomenon is an uncommon example of blow-up in infinite time.
\end{abstract}

\noindent {\bf 2020 Mathematics Subject Classification.}
  	35K55,  	%
   	35K65,  %
    35B40,  	%
    35D40,  %
  	35Q84	%

\noindent {\bf Keywords: } {Nonlinear parabolic equations, nonlinear diffusion, aggregation, Dirac delta formation,  blow-up in infinite time, viscosity solutions}.

\section{Introduction}

Enormous work has been devoted over the last years to the study of mathematical models for Aggregation-Diffusion that are formulated in terms of semilinear parabolic equations  combining linear or non-linear diffusion with a Fokker-Planck convection term coming either from a given potential or from an interaction potential, see \cite{AMTU,DP,MV,CJMTU,CMV03,Fornaro2012,CCY19} and the references therein, and the books \cite{AGS,Villani03}. In this paper  we consider the aggregation-diffusion equation
\begin{equation}
\label{eq:main}
	\tag{P}
		\frac{\partial \rho}{\partial t} = \Delta \rho^m +  \diver ( \rho \nabla V ), \qquad \text{in } (0,\infty) \times \Rd
\end{equation}
where the potential $V(x)$ is given
and $0<m<1$, the fast-diffusion range \cite{Vazquez2007}. We take as initial data a probability measure, i.\,e.,
\begin{equation}
	\label{eq:rho probability}
	\rho_0\ge 0 , \qquad \int_{\Rd } \rho_0 \diff x= 1.
\end{equation}
We will find conditions on the radial initial data $\rho_0$ and the radial potential $V$ so that
\begin{enumerate}[label=\roman*)]
\item  we provide a suitable notion of solution of the Cauchy problem defined globally in time passing through the mass (or distribution) function,

\item as $t\to\infty$, the solution undergoes one-point blow-up of the split form
\begin{equation*}
	\rho(t)\to \mu_\infty =  \rho_\infty + (1 - \| \rho_\infty \|_{L^1 (\Rd)}) \delta_0  ,
\end{equation*}
where $\rho_\infty(x)>0$ is an explicit stationary solution of \eqref{eq:main}.
The presence of the concentrated point measure is a striking fact that needs detailed understanding and is the main motivation of this work.
Here and after we identify an $L^1$ function with the absolutely continuous measure it generates.
\end{enumerate}
It is known that Dirac measures are invariant by the semigroup generated by the fast-diffusion equation $u_t = \Delta u^m$ for $0 < m < \frac{n-2}n$ (see \cite{Brezis1983}), but they are never produced from $L^1$ initial data. Here we show that the aggregation caused by the potential term might be strong enough to overcome the fast-diffusion term and produce a Dirac-delta concentration at $0$ as $t \to \infty$, in other words, infinite-time concentration. 

The case of \eqref{eq:main} with slow diffusion $m > 1$ was studied in \cite{CJMTU,Kim2010}, where the authors show that the steady state does not contain a Dirac delta (i.e. $\| \rho_\infty \|_{L^1} = 1$). The linear diffusion case was extensively studied in \cite{AMTU,MV,OV}. The fast diffusion range  $1>m > \frac{n-2}n$ with quadratic confinement potential is also well-known and its long-time asymptotics, even for Dirac initial data, is given by integrable stationary solutions, see for instance \cite{BBDGV} and its references. See also \cite{VazWin2011} for the evolution of point singularities in bounded domains.

We will take advantage of the formal interpretation of \eqref{eq:main} as the $2$-Wasserstein flow \cite{CJMTU,CMV03,AGS} associated to the free-energy
\begin{equation}\label{eqn.freeen}
	\mathcal F [\rho] = \tfrac{1}{m-1} \int_\Rd \rho(x)^{m} \diff x + \int_\Rd V(x) \rho(x) \diff x,	
\end{equation}
in order to obtain properties of this functional in terms of the Calculus of Variations. We also take advantage of this structure to obtain a priori estimates on the solution $\rho$ of \eqref{eq:main} due to the dissipation of the energy.

\paragraph {Main assumptions and discussion of the main results.} We introduce the specific context in which point-mass concentration arises.
We first examine the special stationary  solutions that play a role in the asymptotics:
\begin{equation}\label{Vh}
	\rho_{V+h} (x) = (  \tfrac{1-m}{m} (V (x) + h)  )^{- \frac{1}{1-m}} \quad  \text{for } \ x \in \Rd,
\end{equation}
for $h\ge 0$. It is easy to check that they are solutions of \eqref{eq:main}, and they are bounded if $h>0$. We now consider the class of suitable potentials. We first assume that $V$ has a minimum at $x=0$ and is smooth:
$V \in W^{2,\infty}_{loc} (\Rd)$, 	$V \ge 0$, $V(0) = 0$.
We are interested in radial aggregating potentials, in fact we
	$V$
	{is radially symmetric and non-decreasing}.
An essential assumption in the proof of formation of a point-mass concentration is the following
small-mass condition for the admissible steady states:
\begin{equation}
	a_V = \int_{\mathbb R^n} \rho_V(x) \diff x< 1.
\end{equation}
As a simplifying assumption we will assume that
\begin{equation}
	\label{eq:rhoV in L1ee loc}
	\int_{B_1} \rho_V^{1+\ee}(x)\diff x < +\infty, \qquad \text{for some } \ee > 0 .	
\end{equation}

The bounded case  in which $\rho_{V+h_1} \le \rho_0  \le  \rho_{V+h_2}$ with $h_1, h_2 > 0$ was studied in \cite{Cao2020} and leads to no concentration.
On the contrary, we will show that there exists a class of radial initial data $\rho_0(x)\ge \rho_V(x)$ such that the corresponding solution converges as $t\to\infty$ to the split measure
\begin{equation}\label{eqn.split}
	\mu_\infty = (1- a_V) \delta_0 + \rho_V(x),
\end{equation}
in the sense of mass (which will be made precise below). Moreover, under further assumptions on $V$, we show that $\mu_\infty$ is the global minimizer in the space of measures of the relaxation of $\mathcal F$.

An important motivation for our paper is the current interest in the following model of aggregation diffusion with interaction potential
\begin{equation}
\label{eq:ADE general}
		\frac{\partial \rho}{\partial t} = \Delta \rho^m +  \diver ( \rho \nabla W*\rho )
\end{equation}
that has led to the discovery of some highly interesting features that have consequences for the parabolic theory and the the Calculus of Variations. Recent results \cite{Carrillo2019} show that, under some conditions on $W$ the
energy minimizer of the corresponding energy functional is likewise split as
\begin{equation*}
	\mu_\infty = (1 - \|\rho_\infty\|_{L^1 (\Rd)}) \delta_0 + \rho_\infty.
\end{equation*}
The presence of the concentrated point measure is known for specific choices of $W$, see \cite{carrillo2020fast}. To the best of our knowledge, there exist no results in the literature showing that solutions of the parabolic problem actually converge to these minimisers with a Dirac delta.
In this paper, we treat as a first step the where $V$ is known. We expect similar results to hold when $V$ is replaced by $W * \rho$, but the techniques will be more difficult. For example, the non-local nature of \eqref{eq:ADE general} suggests that there might not be a comparison principle.
\normalcolor

 It was shown in \cite{Benilan1981} that for very fast diffusion, $m < \frac{n-2} n$, then the solutions of the Fast Diffusion Equation $u_t = \Delta u^m$ with $u_0 \in L^1 (\mathbb R^n) \cap L^\infty (\mathbb R^n)$ vanish in finite time, i.e. $u(t, x) = 0$ for $t \ge T^*$.
When $a_V < 1$, we construct explicit initial data that preserve the total mass, and this holds for any $m \in (0,1)$.

\subparagraph{The case of a ball of radius $R$} We have a more complete overall picture when we focus on the problem posed in a ball $\Br$, adding a no-flux condition on the boundary:
\begin{equation}
	\label{eq:main bounded domain}
	\tag{P$_R$}
	\begin{dcases}
		\frac{\partial \rho}{\partial t} = \Delta \rho^m +  \diver ( \rho \nabla V_R ) & \text{in }(0,\infty) \times \Br, \\
	(\nabla \rho^m + \rho \nabla V_R) \cdot x = 0, &  \text{on }  (0,\infty) \times \partial \Br, \\
	\rho (0,x) = \rho_0 (x).
	\end{dcases}
\end{equation}
As a convenient assumption,  we require that $V_R$ does not produce flux across the boundary
\begin{equation}
	\label{eq:V no aggregation on boundary}
	\nabla V_R (x) \cdot x = 0 , \qquad \text{on } \partial \Br.
\end{equation}
We discuss this assumption on \Cref{sec:condition VR no flux}. This problem is the $2$-Wasserstein flow of the free energy
\begin{equation}
	\mathcal F_R [\rho] = \tfrac{1}{m-1} \int_\Br \rho(x)^{m} \diff x + \int_\Br V_R(x) \rho(x) \diff x	.
\end{equation}
For \eqref{eq:main bounded domain}, we show that $\mathcal F_R$ is bounded below and sequences of non-negative functions of fixed $\| \rho \|_{L^1 (\Br)} = \mathfrak m$ converge weakly in the sense of measures to
\begin{equation*}
	\mu_{\infty,\mathfrak m,R} =
	\begin{dcases}
		\rho_{V_R + h} & \text{ if there exists } h \ge 0 \text{ such that }\| \rho_{V_R + h} \|_{L^1 (\Br)}  = \mathfrak m , \\
		\rho_{V_R} + (\mathfrak{m} - \| \rho_{V_R + h} \|_{L^1 (\Br)} ) \delta_0  & \text{ if } \| \rho_{V_R} \|_{L^1 (\Br)} < \mathfrak m.
	\end{dcases}
\end{equation*}
This means that, if the mass $a_{0,R}$ cannot be reached in the class $\rho_{V_R+h}$, the remaining mass is complete with a Dirac delta at $0$. Notice that the mass of $\rho_{V_R+h}$ is decreasing with $h$, so the largest mass is that of $\rho_{V_R}$.

We construct an $L^1$-contraction semigroup of solutions $S_R$ of \eqref{eq:main bounded domain} such that, if $ \rho_{V_R} \le \rho_0 \in L^1(\Br)$ and radially symmetric, then
\begin{equation*}
	\mathcal F_R[S_R (t) \rho_0] \searrow \widetilde {\mathcal F}_R[\mu_{\infty,\mathfrak m,R}] = \mathcal F_R[\rho_{V_R}],
\end{equation*}
where $\mathfrak m = \| \rho_0 \|_{L^1 (\Br)}$ and $\widetilde {\mathcal F}_R$ is the relaxation of $\mathcal F_R$ to the space of measures presented below
(see \cite{Demengel1986})
.
The semigroup $S_R$ is constructed as the limit of the semigroup of the regularised problems written below as \eqref{eq:main regularised bounded}. Then, we recover our results by passing to the limit in $\Phi$ and $R$.

\paragraph {The mass function.}
One of the main tools in this paper will be the study of the so-called mass variable, which can be applied under the assumption of radial solutions. It works as follows. First, we introduce the spatial volume variable $v = |x|^n |B_1|$ and consider the mass function
\begin{equation}
	M_u(t,v) = \int_{ \widetilde B_v } \rho(t,x) \diff x, \qquad \widetilde B_v = \left( \tfrac{v}{|B_1|} \right)^{\frac 1 n} B_1
\end{equation}
Notice that $|\widetilde B_v| = v$.
For convenience we define
$	R_v = R^n |B_1|.
$
We will prove that $M$ satisfies the following nonlinear diffusion-convection equation in the viscosity sense
\begin{align}
\tag{M}
\label{eq:mass}
	\frac{\partial M}{\partial t} &= (n \omega_n^{\frac 1 n } v ^{\frac{n-1} n})^2   \left\{ \frac{\partial }{\partial v} \left[  \left(  \frac{\partial M }{\partial v}  \right)^m \right] + \frac{\partial M }{\partial v} \frac{\partial V}{\partial v}  \right\},
\end{align}
where $\omega_n = |B_1|$.
The diffusion term of this equation is of $p$-Laplacian type, where $p = m+1$. The weight will not be problematic when $ v > 0$, as we show in \Cref{sec:classical regularity} using the parabolic theory in DiBenedetto's book \cite{DiBenedetto1993}.

Notice that the formation of a Dirac delta at $0$ is equivalent to the loss of the Dirichlet boundary condition $M(t,0) = 0$.
Few results of loss of the Dirichlet boundary condition are known in the literature of parabolic equation.
For equations of the type $u_t=u_{xx}+|u_x|^p$, it is known (see, e.g., \cite{Arrieta2004}) that $u_x$ may blow up on the boundary in finite or infinite time, depending on the choice of boundary conditions. The case of infinite time blow-up  was revisited in \cite{SoupletVazq}.
The question of boundary discontinuity in finite time, loss of boundary condition, for the so-called  viscous Hamilton-Jacobi equations is studied in \cite{BarlesDaLio2004,PS17,PS20,mizoguchi2021singularity} and does not bear a direct relation with our results. A general reference for boundary blow-up can be found in the book \cite{QuittSoupBook}.

\paragraph {Precise statement of results.} In order to approximate the problem in $\Rd$, our choice of $V_R$ will be of the form
\begin{equation*}
	V_R (x)  \begin{dcases}
		= V(x) & |x| \le R- \ee, \\
		\le V(x) & R - \ee < |x| \le R
	\end{dcases}
\end{equation*}
and with the condition $V_R \cdot x = 0$ on $\partial B_R$.
We also define
\begin{equation*}
	a_{V,R} = \int_{\Br} \rho_{V_R} \diff x \qquad \text{ and } \qquad 	a_{0,R} = \int_{\Br} \rho_0 \diff x.
\end{equation*}
We will denote $V = V_R$ until \Cref{sec:Rn}.
\begin{theorem}[Infinite-time concentration of solutions of \eqref{eq:main bounded domain}]
	\label{thm:bounded concentrating intro}
	Assume $V \in W^{2,\infty} (\Br)$ is radially symmetric, strictly increasing, $V\ge 0$, $V (0) = 0$, $V \cdot x = 0$ on $\partial \Br$ and the technical assumption \eqref{eq:rhoV in L1ee loc}.
	\normalcolor
	Assume also that $a_{0,R} > a_{V,R}$, $\rho_0$ radially symmetric, $\rho_0 \ge \rho_V$ and $\rho_0 \in L^\infty (\Br \setminus B_{r_1})$ for some $r_1 < R$. Then, the solution $\rho$ of \eqref{eq:main bounded domain} constructed in \Cref{thm:existence bounded L1} satisfies
	\begin{equation*}
		\liminf_{t \to \infty} \int_{B_r} \rho(t,x) \diff x \ge (a_{0,R} - a_{V,R}) + \int_{B_r} \rho_V (x) \diff x , \qquad \forall r \in [0,R].
	\end{equation*}
	(i.e., there is concentration in infinite time).
	Moreover, if
	\begin{equation}
	\label{eq:rho0 below limit}	
	\int_{B_r} \rho_0 (x) \diff x \le (a_{0,R} - a_{V,R}) + \int_{B_r} \rho_V (x) \diff x \qquad \forall  r \in [0,R],
	\end{equation}
	 then for $\mu_{\infty,R} = (a_{0,R} - a_{V,R}) \delta_0  + \rho_V$ we have that
	\begin{equation*}
		\lim_{t \to \infty} d_{1} ( \rho (t) , \mu_{\infty,R} ) = 0  ,
	\end{equation*}
	where $d_1$ denotes the $1$-Wasserstein distance.
\end{theorem}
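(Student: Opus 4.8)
The plan is to argue throughout in the mass variable. By the results of the previous sections $M(t,v)=\int_{\widetilde B_v}\rho(t,x)\diff x$ is a viscosity solution of \eqref{eq:mass} on $(0,\infty)\times[0,R_v]$ with $M(0,\cdot)=M_0$, with the conserved-mass condition $M(t,R_v)=a_{0,R}$, and with the datum $M(t,0)=0$ imposed only in the relaxed sense that permits its loss. Writing $M_V(v)=\int_{\widetilde B_v}\rho_V\diff x$ and $r_v=r^n\omega_n$, the two assertions read: (i) $\liminf_{t\to\infty}M(t,r_v)\ge (a_{0,R}-a_{V,R})+M_V(r_v)$ for every $r\in(0,R]$; and, under \eqref{eq:rho0 below limit}, (ii) $M(t,v)\to M_{\mu_{\infty,R}}(v):=(a_{0,R}-a_{V,R})+M_V(v)$ pointwise on $[0,R_v]$, which is exactly convergence of the radial mass functions to that of $\mu_{\infty,R}$ and hence gives $d_1(\rho(t),\mu_{\infty,R})\to0$.

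\textbf{Lower bound (i).} First I would note that $\nabla\rho_V^m+\rho_V\nabla V\equiv0$, so $\rho_V=\rho_{V_R}$ is a genuine (no-flux) stationary solution of \eqref{eq:main bounded domain}; by the comparison principle for the semigroup of \Cref{thm:existence bounded L1}, $\rho_0\ge\rho_V$ propagates to $\rho(t)\ge\rho_V$, i.e. $M(t,v)\ge M_V(v)$, for all $t$. Since $V\in W^{2,\infty}(\Br)$ one has $\rho_V\ge c_0>0$ on $\Br$, so the diffusion in \eqref{eq:mass} is non-degenerate; fixing $r\in(0,R)$, the a priori estimates of \Cref{sec:classical regularity} make $\{M(t,\cdot)\}_{t\ge1}$ bounded in $C^{1,\alpha}([r_v,R_v])$ uniformly in $t$. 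Given $t_k\to\infty$, a diagonal extraction in $r\downarrow0$ yields a subsequence along which the time-shifts $M(t_k+\cdot,\cdot)$ converge in $C^1_{loc}((0,R_v]\times[0,1])$ to some $M_\infty$. The energy--dissipation identity
\begin{equation*}
	\frac{d}{dt}\mathcal F_R[\rho(t)]=-\int_{\Br}\rho(t)\,\Big|\nabla\big(\tfrac{m}{m-1}\rho(t)^{m-1}+V\big)\Big|^2\diff x\le0,
\end{equation*}
together with $\mathcal F_R[\rho]\ge\mathcal F_R[\rho_V]$, forces the right-hand side to be integrable in $t$; hence in the limit $M_\infty$ is time-independent and $\partial_vM_\infty=\rho_{V+h}$ on every annulus for one constant $h$. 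As $h\mapsto\rho_{V+h}$ is strictly decreasing and $\partial_vM_\infty\ge\rho_V$ (the bound above passes to the limit), $h\le0$; as $\rho_{V+h}$ must be finite on all of $(0,R]$ while $\inf_{x\neq0}V(x)=0$, also $h\ge0$. So $h=0$, $\partial_vM_\infty=\rho_V$, and evaluating $M_\infty(R_v)-M_\infty(r_v)=a_{V,R}-M_V(r_v)$ with $M_\infty(R_v)=\lim_kM(t_k,R_v)=a_{0,R}$ gives $M_\infty(r_v)=(a_{0,R}-a_{V,R})+M_V(r_v)$. Since the limit is independent of the subsequence, $M(t,r_v)\to(a_{0,R}-a_{V,R})+M_V(r_v)$ for every $r\in(0,R]$, which yields (i).

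\textbf{Convergence (ii).} Assume \eqref{eq:rho0 below limit}, i.e. $M_0\le M_{\mu_{\infty,R}}$. The profile $M_{\mu_{\infty,R}}$ is a stationary solution of \eqref{eq:mass} on $v>0$, with $M_{\mu_{\infty,R}}(R_v)=a_{0,R}=M(t,R_v)$ and $M_{\mu_{\infty,R}}(0)=a_{0,R}-a_{V,R}>0\ge$ the possibly lost datum of $M$ at $v=0$. The comparison principle for viscosity solutions of \eqref{eq:mass}---the one built so as to be insensitive to the loss of the boundary condition at the origin---then gives $M(t,v)\le M_{\mu_{\infty,R}}(v)$ for all $t,v$. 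Combined with (i), $M(t,v)\to M_{\mu_{\infty,R}}(v)$ pointwise on $[0,R_v]$; as this is convergence of the radial mass functions with the same total mass, $\rho(t)\rightharpoonup\mu_{\infty,R}$ weakly-$*$, and on the compact set $\overline{B_R}$ weak-$*$ convergence at fixed total mass is equivalent to $d_1(\rho(t),\mu_{\infty,R})\to0$.

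\textbf{Main difficulty.} I expect the crux to be, in (i), the passage to the limit in the dissipation term and---above all---the identification $h=0$, i.e. excluding that a positive fraction of the excess mass $a_{0,R}-a_{V,R}$ remains trapped in a heavier profile $\rho_{V+h}$ with $h<0$ or in a thin layer at an intermediate radius (this is exactly where the pointwise lower bound $\rho(t)\ge\rho_V$ and the non-degeneracy $\rho_V\ge c_0>0$ are used); and, in (ii), having available a comparison principle for \eqref{eq:mass} that genuinely tolerates the loss of the Dirichlet condition at $v=0$, which is precisely what allows a Dirac mass to appear in the limit instead of being ruled out by the boundary condition. Both rest on the viscosity-solution and parabolic-regularity machinery of the earlier sections, but they are where the real work lies.
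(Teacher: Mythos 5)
Your route to the lower bound is genuinely different from the paper's. The paper never extracts subsequential limits of the given solution via an energy/LaSalle argument; instead it constructs explicit barrier data $\rho_D$ (\Cref{lem:rho_D}) of the form \eqref{eq:initial data with concentration}, proves that the corresponding mass is \emph{non-decreasing in time} (\Cref{thm:solutions with increasing mass}, via a linear parabolic equation for $U=\partial M/\partial t$ and a duality argument), arranges $M_{\rho_D}\le M_{\rho_0}$ and invokes mass comparison, and then identifies the monotone limit $M_\infty$ as a viscosity solution of the stationary ODE, upgraded to $C^2$ by concavity plus an elliptic bootstrap before integrating it explicitly (\Cref{thm:R finite concentration mass}). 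What the paper's route buys is monotone convergence (hence Dini-type uniformity) and a complete bypass of any compactness-in-time issue; what your route would buy, if completed, is independence from the special barrier construction. Your part (ii) — the stationary supersolution $\;(a_{0,R}-a_{V,R})+M_{\rho_V}\;$ as an upper barrier via \Cref{thm:comparison principle m}, then the equivalence of convergence of radial mass functions with $d_1$-convergence — is essentially the paper's argument.

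There are, however, two genuine gaps in part (i). First, the uniform-in-time $C^{1,\alpha}([r_v,R_v])$ compactness you claim is not available: the interior estimate \eqref{eq:local Calpha regularity} is only $C^\alpha$ and its constants degenerate at \emph{both} $v=0$ and $v=R_v$, and a genuine $C^{1,\alpha}$ bound would require a time-uniform $L^\infty$ bound on $\rho$ on the annulus (the diffusion coefficient $m(\partial_vM)^{m-1}$ degenerates as $\partial_vM\to\infty$ since $m<1$), which the paper's estimates — growing like $e^{t\|\Delta V\|_{L^\infty}}$ — do not give. Consequently the identification $\partial_vM_\infty=\rho_{V+h}$ through the vanishing of the dissipation requires strong convergence of $\rho(t_k+\cdot)$ and lower semicontinuity of the dissipation functional, which you explicitly defer; this is fillable but it is where the work is, and the paper deliberately avoids it. Second, and more seriously, you assert $M_\infty(R_v)=a_{0,R}$ without justification. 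Your only lower barrier near $v=R_v$ is $M_{\rho_V}$, which leaves a deficit of $a_{0,R}-a_{V,R}$ there; nothing in your argument excludes that a fraction $\beta>0$ of the excess mass accumulates at the outer sphere $|x|=R$ as $t\to\infty$, in which case $M_\infty(r_v)=(a_{0,R}-\beta-a_{V,R})+M_{\rho_V}(r_v)$ and the claimed $\liminf$ fails by $\beta$. This is precisely where the paper uses the hypothesis $\rho_0\in L^\infty(\Br\setminus B_{r_1})$ — which your proof never invokes, a warning sign — to choose $b_1$ close to $V(R)$ so that $M_{\rho_D}\le M_{\rho_0}$ with $M_{\rho_D}(v)\to a_{0,R}$ as $v\to R_v$, pinning down the limit profile from below near the boundary. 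You must supply an analogous boundary control to close the argument.
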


\begin{remark}
	\label{rem:bounded concentration non-radial}
	If we take a non-radial datum $\rho_0 \ge \rho_{0,r}$ with $\rho_{0,r}$ radially symmetric satisfying the hypothesis of \Cref{thm:bounded concentrating intro}, then the corresponding solution $\rho(t,x) $ of \eqref{eq:main bounded domain} constructed in \Cref{thm:existence bounded L1} concentrates in infinite time as well, due to the comparison principle.
\end{remark}

Through approximation as $R \to \infty$, we will also show that
\begin{corollary}[At least infinite-time concentration of solutions of \eqref{eq:main}]
\label{cor:Rd concentration}
	Under the hypothesis of \Cref{thm:bounded concentrating intro} and
	suitable hypothesis on the initial data (specified in \Cref{sec:Rd concentration}), we can show the existence of viscosity solutions of \eqref{eq:mass} in $(0,\infty) \times (0,\infty)$ (obtained as a limit of the problems in $\Br$), such that
	\begin{equation*}
		\lim_{t \to \infty} M(t,v) = (1-a_V) + M_{\rho_V} (v)
	\end{equation*}
	for all $v > 0$ and, furthermore, locally uniformly $(0,\infty)$. We also have that
	\begin{equation*}
			\lim_{t \to \infty} d_{1} ( \rho(t) , (1-a_V)\delta_0 + \rho_V  ) = 0.
	\end{equation*}
\end{corollary}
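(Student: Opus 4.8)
The proof builds the whole-space mass function $M$ as a limit of the bounded-domain mass functions supplied by \Cref{thm:bounded concentrating intro}, extracts from them an upper bound $M\le(1-a_V)+M_{\rho_V}$ by a comparison argument that survives the limit $R\to\infty$, and then establishes the matching lower bound by an energy-dissipation argument carried out directly on the limit problem. \textbf{Step 1 (Approximating family).} Fix $V_R$ as in the construction above (so $V_R\equiv V$ on $B_{R-\ee}$, $V_R\le V$, and $\nabla V_R\cdot x=0$ on $\partial\Br$), and choose radial initial data $\rho_{0,R}$ with $\rho_{0,R}\ge\rho_{V_R}$, $\rho_{0,R}\in L^\infty(\Br\setminus B_{r_1})$, $\int_\Br\rho_{0,R}\diff x=a_{0,R}\nearrow1$ and $\rho_{0,R}\to\rho_0$ in $L^1(\Rd)$, arranged so that the mass functions $v\mapsto M_{\rho_{0,R}}(v)$ are nondecreasing in $R$ and satisfy \eqref{eq:rho0 below limit} for the datum $\rho_{0,R}$ in $\Br$ (this is the content of the ``suitable hypothesis on the initial data''; it is possible because the right-hand side of \eqref{eq:rho0 below limit} increases with $R$ towards $(1-a_V)+M_{\rho_V}(v)$, so the constraint relaxes). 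By \Cref{thm:bounded concentrating intro}, for each $R$ the solution $\rho_R$ of \eqref{eq:main bounded domain} concentrates, $d_1(\rho_R(t),\mu_{\infty,R})\to0$, and its mass function $M_R$ obeys $M_R(t,v)\to m_{\infty,R}(v)\defeq(a_{0,R}-a_{V,R})+M_{\rho_V}(v)$ as $t\to\infty$ on the $v$-range covered by $B_{R-\ee}$.

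\textbf{Step 2 (Compactness and the limit equation).} One has $0\le M_R\le1$ with $M_R$ nondecreasing in $v$, and the interior parabolic theory for the $p$-Laplacian-type equation \eqref{eq:mass} (the argument of \Cref{sec:classical regularity}, based on \cite{DiBenedetto1993}) gives local Hölder estimates for $M_R$ on compact subsets of $(0,\infty)\times(0,\infty)$ that are uniform in $R$, since $V_R\equiv V$ on any fixed $v$-interval once $R$ is large. Arzelà--Ascoli and a diagonal argument extract a subsequence $M_{R_k}\to M$ locally uniformly on $(0,\infty)\times(0,\infty)$, and stability of viscosity solutions under locally uniform convergence (of the solutions and of the coefficient $V_{R_k}\to V$) shows $M$ is a viscosity solution of \eqref{eq:mass} there with $M(0,v)=M_{\rho_0}(v)$ — this is the asserted existence. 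Two bounds pass to the limit. First, $\rho_{V_R}$ is a stationary solution of \eqref{eq:main bounded domain} (its flux $\nabla\rho_{V_R}^m+\rho_{V_R}\nabla V_R$ vanishes identically, so it meets the no-flux condition) and $\rho_{0,R}\ge\rho_{V_R}\ge\rho_V$ on $\Br$, hence by comparison $\rho_R(t)\ge\rho_V$ and $M(t,v)\ge M_{\rho_V}(v)$. Second, the function $(a_{0,R}-a_{V,R})+M_{\rho_{V_R}}(v)$ is a stationary solution of \eqref{eq:mass} — because the right-hand side of \eqref{eq:mass} depends on $M$ only through $\partial_v M$, so adding a constant to a stationary solution gives one — it dominates $M_R$ on the parabolic boundary of $(0,\infty)\times(0,R^n\omega_n)$ by \eqref{eq:rho0 below limit} (it equals $a_{0,R}$ at the outer edge and is $\ge0=M_R(t,0)$ at $v=0$), so the comparison principle for \eqref{eq:mass} yields $M_R(t,v)\le(a_{0,R}-a_{V,R})+M_{\rho_{V_R}}(v)$, and letting $R\to\infty$ (with $a_{0,R}\to1$, $a_{V,R}\to a_V$, $M_{\rho_{V_R}}\to M_{\rho_V}$ pointwise) gives $M(t,v)\le m^\infty(v)\defeq(1-a_V)+M_{\rho_V}(v)$. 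Finally, a uniform tightness estimate for $\{\rho_R(t)\}$ — the lower bound $\rho_R\ge\rho_V$ pins mass inside $\Br$ from below while $\mathcal F_R[\rho_R(t)]\le\mathcal F_R[\rho_{0,R}]$ controls $\int V_R\rho_R$, so no mass escapes to infinity — shows $M(t,v)\to1$ as $v\to\infty$ for every $t$.

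\textbf{Step 3 (Infinite-time limit and $d_1$-convergence).} It remains to prove $\lim_{t\to\infty}M(t,v)=m^\infty(v)$, the inequality $\le$ being Step~2. For $\ge$, fix $v_0>0$ and $t_j\to\infty$ realizing $\liminf_{t\to\infty}M(t,v_0)$; by the compactness of Step~2 a subsequence of the time-shifted orbits $M(t_j+\cdot,\cdot)$ converges locally uniformly to a viscosity solution $\overline M$ of \eqref{eq:mass}, and the dissipation of the relaxed free energy along the flow — inherited in the limit $R\to\infty$ from $\mathcal F_R[\rho_R(t)]\searrow\mathcal F_R[\rho_{V_R}]$ — forces $\overline M$ to be stationary (if the chosen data make $t\mapsto M(t,v)$ nondecreasing, which can be arranged alongside \eqref{eq:rho0 below limit}, this reduces to monotone convergence). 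Every stationary solution of \eqref{eq:mass} has $\partial_v\overline M=\rho_{V+h}$ on $\{\partial_v\overline M>0\}$ for some $h\ge0$; combining this with $M_{\rho_V}(v)\le\overline M(v)\le m^\infty(v)$, with $\overline M(0)\le m^\infty(0)=1-a_V$, and with mass conservation $\overline M(\infty)=1$ forces $h=0$ and $\overline M(0)=1-a_V$, i.e.\ $\overline M=m^\infty$. Hence $\liminf_{t\to\infty}M(t,v_0)=m^\infty(v_0)$, so the limit exists and equals $m^\infty(v_0)$; since each $M(t,\cdot)$ is nondecreasing and $m^\infty$ is continuous, the convergence is locally uniform on $(0,\infty)$ (Pólya). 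Finally $m^\infty$ is the mass function of $(1-a_V)\delta_0+\rho_V$, so pointwise convergence of mass functions together with the matching total mass $1$ gives narrow convergence $\rho(t)\rightharpoonup(1-a_V)\delta_0+\rho_V$, which upgrades to $d_1(\rho(t),(1-a_V)\delta_0+\rho_V)\to0$ by the uniform tightness of Step~2.

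\textbf{Main obstacle.} The delicate point is the interchange of the limits $t\to\infty$ and $R\to\infty$: \Cref{thm:bounded concentrating intro} controls $\lim_{t\to\infty}M_R$ only for each fixed $R$, while the whole-space $M$ exists only as $\lim_R M_R$. The comparison construction in Step~2 makes the upper bound immune to this, but the matching lower bound must be obtained directly on the limit problem, which is why Step~3 rests on the dissipation of the relaxed energy and on the classification of the stationary solutions of \eqref{eq:mass}; checking that the relaxed energy is bounded below, lower semicontinuous and dissipated along the limit flow (so that $\omega$-limit points are steady states) is the technical heart of the corollary.
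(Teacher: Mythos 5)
Your construction of $M$ (limit of the ball problems via the uniform interior Hölder estimate \eqref{eq:local Calpha regularity}, Ascoli--Arzelà and viscosity stability) and your upper barrier $(a_{0,R}-a_{V,R})+M_{\rho_{V_R}}$ match the paper. But your main route to the infinite-time limit has a genuine gap: you rest Step~3 (and the tightness claim at the end of Step~2) on dissipation of the free energy, asserting that $\mathcal F_R[\rho_R(t)]\le\mathcal F_R[\rho_{0,R}]$ controls $\int V_R\rho_R$ uniformly and that the relaxed energy, ``bounded below, lower semicontinuous and dissipated along the limit flow'', forces $\omega$-limits to be steady states. Under the hypotheses of the corollary the whole-space energy $\mathcal F$ is in general \emph{not} bounded below: boundedness below requires extra conditions on $V$ such as \eqref{eq:sufficient condition existence of minimisers}, and for power-like potentials the corollary covers the range $\frac{n-\lambda_\infty}{n}<m<\frac{n}{n+\lambda_\infty}$ in which \Cref{rem:minimisation for powers at infinity} shows $\mathcal F$ is unbounded below. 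In that regime an upper bound on $\mathcal F_R[\rho_R(t)]$ controls neither $\int V_R\rho_R$ nor $\int\rho_R^m$, so your tightness estimate, the ``mass conservation $\overline M(\infty)=1$'' for the $\omega$-limit, and the LaSalle-type argument all break down; this is exactly why the paper does not argue through the energy in $\Rd$ (cf.\ the remark after \Cref{prop:Rd existence mass}, where even tightness of $\rho_R$ requires additional hypotheses on $V$).

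The paper's mechanism is the one you mention only parenthetically: the ``suitable hypotheses'' \eqref{eq:Rd hypothesis concentration} are precisely a sandwich $M_{\rho_F}\le M_{\rho_0}\le (1-a_V)+M_{\rho_V}$ with $\|\rho_F\|_{L^1(\Rd)}=1$, chosen so that (through the approximations of \Cref{thm:solutions with increasing mass} and \Cref{thm:R finite concentration mass}) $M$ is non-decreasing in $t$ and satisfies $M_{\rho_F}\le M(t,\cdot)\le(1-a_V)+M_{\rho_V}$, together with the increment bound $M(t,v_2)-M(t,v_1)\ge\int_{\widetilde B_{v_2}\setminus\widetilde B_{v_1}}\rho_V$. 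Monotonicity gives the existence of $M_\infty$ and, via the full-mass lower barrier $M_{\rho_F}$, rules out loss of mass at infinity without any energy input; Dini upgrades to local uniform convergence, viscosity stability plus the regularity bootstrap identify $M_\infty$ as in the ball case (the increment bound excluding $h>0$), and the $d_1$ convergence follows as in \Cref{thm:bounded concentrating intro}. So to repair your argument you should promote your parenthetical monotonicity remark to the main line and verify that the data of Section~\ref{sec:Rd concentration} indeed produce time-monotone masses; as written, the ``technical heart'' you identify cannot be carried out in the generality of the corollary.
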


Through our construction of $M$, we cannot guarantee in general that $M(t,0) = 0$ for $t$ finite. Producing a priori estimates, we can ensure this in some cases.

\begin{theorem}[Infinite-time concentration for $V$ quadratic at $0$]
	\label{prop:Rd no concentration in finite time}
	Let $\rho_0 \in L^1_+ (\mathbb R^n)$ non-increasing and assume
	\begin{equation}
		\label{eq:hypothesis superquadratic}
		\frac{\partial V}{\partial r} (s)   \le C_{V} r , \qquad \text{  in  } B_{R_V} \text{ for some } C_V  > 0.
	\end{equation}
	Then, the viscosity mass solution constructed in \Cref{prop:Rd existence mass} does not concentrate in finite time, i.e. $M(t,0) = 0$.
\end{theorem}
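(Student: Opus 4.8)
\emph{Sketch of the intended argument.} The plan is to establish the a priori estimate $M(t,v)\le\Phi(t,v)$ for a barrier $\Phi$ with $\Phi(t,0)=0$; since $M\ge 0$ and $M$ is non-decreasing in $v$, this forces $M(t,0)=\lim_{v\to 0^+}M(t,v)=0$, which is exactly the absence of concentration, and letting the final time $T\uparrow\infty$ finishes. Morally this is the statement that the $L^1$--$L^\infty$ smoothing of the fast-diffusion part survives a quadratic aggregation, and as we will see it uses $m>(n-2)/n$ — which is automatic here, since \eqref{eq:hypothesis superquadratic} gives $\rho_V\gtrsim|x|^{-2/(1-m)}$ near the origin and $\rho_V\in L^1_{loc}$ is part of the standing assumptions behind \Cref{prop:Rd existence mass}. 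First I would rewrite the convection term of \eqref{eq:mass} in the radial variable: using $\partial_r V(r)\le C_V r$ and $\partial_v M\ge 0$ one checks that on $\{0<v<v_V\}$, with $v_V\defeq\omega_n R_V^n$, the mass function is a viscosity subsolution of the model equation
\[
w_t=(n\omega_n^{1/n}v^{(n-1)/n})^2\,\partial_v\big[(\partial_v w)^m\big]+nC_V\,v\,\partial_v w .
\]
So it suffices to produce a supersolution $\Phi$ of this model equation on $(0,T)\times(0,v_V)$ that vanishes at $v=0$ and dominates $M$ on the remaining parabolic boundary.

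For $\Phi$ I take the self-similar form $\Phi(t,v)=G(vt^{-\beta})$ with $\beta=\tfrac{n}{2-n(1-m)}>0$, the exponent that makes the diffusion term scale like $\partial_t\Phi$. Inserting the ansatz, multiplying through by $t$ and using $t\le T$ to absorb the (now subcritical) first-order term, the supersolution inequality reduces to a first-order ODE for $p\defeq G'$, solved explicitly by
\[
p(\xi)=\big(p_0^{\,m-1}+b\,\xi^{2/n}\big)^{-\frac{1}{1-m}},\qquad G(\xi)=\int_0^\xi p(s)\diff s,
\]
with $b=b(T,n,m,C_V)>0$ and a free parameter $p_0>0$. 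Then $G(0)=0$, $G$ is increasing and concave, $G(\infty)<\infty$ precisely because $m>(n-2)/n$, and — the key point — $G(\xi_0)\to\infty$ as $p_0\to\infty$ for each fixed $\xi_0>0$, since in this range the limiting integrand $\xi^{-2/(n(1-m))}$ is non-integrable at the origin. Hence I can fix $p_0$ so large that $G(v_V T^{-\beta})\ge 1$. Since $\beta>0$ gives $v_V t^{-\beta}\ge v_V T^{-\beta}$ for $t\le T$, this yields $\Phi(t,v_V)=G(v_V t^{-\beta})\ge 1\ge M(t,v_V)$ (the total mass is at most $1$), $\Phi(0^+,v)=G(\infty)\ge 1\ge M_0(v)$ for every $v>0$, and $\Phi(t,0)=G(0)=0$.

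It then remains to apply the comparison principle of \Cref{sec:classical regularity} on $(0,T)\times(0,v_V)$ and send $v\to 0^+$. The subtle point is the edge $v=0$: a priori $M$ is only known to solve \eqref{eq:mass} in the open region, and $M(t,0)=0$ is precisely the Dirichlet condition that may be lost, so it cannot be assumed on the parabolic boundary in the comparison. This is why one compares not with $M$ directly but with the non-degenerate regularised solutions $M_\varepsilon$ from \Cref{prop:Rd existence mass}, which do satisfy $M_\varepsilon(t,0)=0$: for these $\Phi\ge M_\varepsilon$ holds on the whole parabolic boundary, so $M_\varepsilon\le\Phi$ on $(0,T)\times(0,v_V)$, and passing to the limit gives $M\le\Phi$ there. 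Letting $v\to 0$ yields $M(t,0)\le\lim_{v\to 0}G(vt^{-\beta})=0$ for all $t\in(0,T)$, and $T$ arbitrary completes the proof.

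The main obstacle is building $\Phi$ so that it starts above $M_0$ at $t=0$ no matter how slowly $M_0(v)$ vanishes as $v\to 0$ (for a non-increasing $\rho_0\in L^1$ one may have, e.g., $M_0(v)\sim 1/\log(1/v)$): a pure power barrier $c(t)v^a$ is useless because the inward drift forces any admissible $c$ to be non-decreasing, whereas a self-similar profile that equals $\ge 1$ near $v=0$ at $t=0^+$ but collapses to a corner at $(0,0)$ for $t>0$ does the job and, crucially, survives the passage to the limit in the approximations. A minor point is the admissibility of $\Phi$ as a supersolution up to $\{v=0\}$, where $G''$ blows up; this is harmless because the degenerate weight $v^{2(n-1)/n}$ makes the diffusion term of $\Phi$ vanish as $v\to 0$, and in any case one may run the comparison on $(0,T)\times(\delta,v_V)$ and let $\delta\to 0$.
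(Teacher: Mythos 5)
There is a fatal gap at the very first step: your claim that $m>\tfrac{n-2}{n}$ is ``automatic here'' is exactly backwards. From \eqref{eq:hypothesis superquadratic} one gets $V(r)\le \tfrac{C_V}{2}r^2$ near the origin, hence $\rho_V\gtrsim r^{-2/(1-m)}$ there, and the standing assumption \eqref{eq:rhoV in L1ee loc} (local integrability of $\rho_V^{1+\ee}$, inherited from \Cref{prop:Rd existence mass}) then forces $\tfrac{2}{1-m}<n$, i.e.\ $m<\tfrac{n-2}{n}$, not $m>\tfrac{n-2}{n}$. This is not a peripheral issue: the whole point of the paper's concentration regime is very fast diffusion $m<\tfrac{n-2}{n}$, where Dirac masses are not diffused (\cite{Brezis1983}) and the $L^1$--$L^\infty$ smoothing you invoke is known to fail. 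Concretely, your self-similar exponent $\beta=\tfrac{n}{2-n(1-m)}$ is negative (or undefined) for $m<\tfrac{n-2}{n}$, the profile $p(\xi)=(p_0^{m-1}+b\xi^{2/n})^{-1/(1-m)}$ no longer has $G(\infty)<\infty$, and the property $G(\xi_0)\to\infty$ as $p_0\to\infty$ relies on $\tfrac{2}{n(1-m)}\ge 1$, i.e.\ again on $m\ge\tfrac{n-2}{n}$. So the barrier you propose does not exist in the only parameter range where the theorem applies, and no purely diffusive barrier can work there.

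The paper's proof goes in the opposite direction: it discards the diffusion rather than exploiting it. One truncates and renormalises $\rho_0$ to $B_{R_V}$, replaces $V$ by $\widetilde V(r)=\tfrac{C_V}{2}r^2$, and solves the \emph{pure aggregation} mass equation \eqref{eq:aggregation mass} by characteristics (\Cref{prop:solutions of pure aggregation}). Because $\rho_0$ is non-increasing and $\widetilde V$ is quadratic, \Cref{prop:monotone solutions of aggregation} shows the transported density stays non-increasing, so $\overline M$ is convex in $v$; hence the fast-diffusion term $m(\partial_v\overline M)^{m-1}\partial_v^2\overline M$ is nonnegative and $\overline M$ is a viscosity supersolution of the full equation \eqref{eq:mass} (using $\partial_r V\le C_V r$ on the support, which characteristics keep inside $B_{R_V}$). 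The comparison principle \Cref{thm:comparison principle m} gives $M\le\overline M$, and $\overline M(t,0)=0$ for all finite $t$ because the quadratic drift satisfies the Osgood condition \eqref{eq:hypothesis for concentration in infinite time}, so characteristics take infinite time to reach the origin. Your comparison-principle scaffolding (comparing with the regularised $M_\varepsilon$ and handling the edge $v=0$) is sensible, but it would need to be wrapped around a barrier of this transport type, not a diffusive self-similar one.
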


\paragraph{The picture for power-like $V$.} Let us discuss the case where $V$ is of the form
	\begin{equation*}
		V(x) \sim  \begin{dcases}
			|x|^{\lambda_0} & |x| \ll 1, \\
			|x|^{\lambda_\infty} & |x| \gg 1. \\
		\end{dcases}
	\end{equation*}
	The condition $V \in W^{2,\infty}_{loc} (\Rd)$ means $\lambda_0 \ge 2$.
	In this setting, we satisfy \eqref{eq:hypothesis superquadratic} so concentration does not happen in finite time.
	The condition $\rho_V \in L^1(\Rd)$ (i.e. $a_V < \infty$) holds if and only if
	\begin{equation}\label{Vgrowth}
		\frac{n-\lambda_\infty}{n} < m < \frac{n-\lambda_0}{n}.
	\end{equation}
	In fact, under this condition, $\rho_V \in L^{1+\ee} (\Rd)$.
	In addition to the behaviour at $0$ and $\infty$, the \ restriction $a_V < 1$ is a condition on the intermediate profile of $V$.
	This is sufficient to construct initial data $\rho_0$ (of the shape $\rho_D$ present below) so that solutions converge to $\mu_\infty$ as $t \to \infty$, in the sense of mass.
	Due to \eqref{eq:hypothesis superquadratic}, the concentration is precisely at infinite time.
	But we do not know that $\mu_\infty$ is a global minimiser of $\mathcal F$.
	In \Cref{rem:minimisation for powers at infinity} we prove that the energy functional is bounded below whenever $m > \frac{n}{n+\lambda_\infty}$. Notice that
$
	\frac{n-\lambda_\infty}{n} < \frac{n}{n+\lambda_\infty}.
$
Therefore,
if \begin{equation*}
		\frac{n}{n+\lambda_\infty} < m < \frac{n-\lambda_0}{n}, \qquad \text{ and } \qquad a_V < 1,
\end{equation*}
then $\mu_\infty$ is the global minimiser in $\mathcal P (\Rd)$ of the relaxation of $\mathcal F$
and it is an attractor for some initial data.

\paragraph{Structure of the paper.}
In \Cref{sec:2} we write the theory in $\Br$ for a regularised problem where the fast-diffusion is replaced by a smooth elliptic non-linearity $\Phi$.
In \Cref{sec:agg-diff equation} we construct solutions of \eqref{eq:main bounded domain}, by passing to the limit as $\Phi(s) \to s^m$ the solutions of \Cref{sec:2}. In \Cref{sec:mass} we show that mass functions $M$ of the solutions of \Cref{sec:2,sec:agg-diff equation} are solutions in a suitable sense of Problem \eqref{eq:mass}, and we prove regularity and a priori estimates. In \Cref{sec:concentration} we construct initial data $\rho_0$ so that the mass $M$ is non-decreasing in time as well a space. We show that these solutions $M$ concentrate in the limit, a main goal of the paper. We recall that this means the formation of a jump at $v=0$ for $t=\infty$. \Cref{sec:FR} is dedicated to the minimisation of $\mathcal F_R$ for functions defined in $\Br$. We prove that the minimisers are precisely of the form $\mu_{\infty, \mathfrak m , R}$ described above.
In \Cref{sec:Rn}, we pass to the limit as $R \to \infty$ in terms of the mass. We show that the mass functions for suitable initial data still concentrate.
	We discuss minimisation of the function $\mathcal F$. We show the class of potentials $V$ that make $\mathcal F$ bounded below is more restrictive than for $\mathcal F_R$, and provide suitable assumptions so that $\mu_\infty$ is a minimiser.
We list some comments and open problems in \Cref{sec:final comments}. We conclude the paper with two appendixes. The first, \Cref{sec:classical regularity} recalls results from \cite{DiBenedetto1993} and compacts them into a form we use for $M$. \Cref{sec:appendix b} is devoted to mixing partial space and time regularities into Hölder regularity in space and time.

\paragraph{A comment on the notation.}
Throughout the document, we deal with the spatial variable $x$, the radial variable
$r = |x|$, and the volume variable $v = |B_1| |x|^d$. Since it will not lead to confusion and simplifies the notation, radial functions of $x$ will sometimes be evaluated or differentiated in $r$ or $v$. This must be understood as the correct substitution.
\normalcolor

\section{The regularised equation in $\Br$}
\label{sec:2}
Following the theory of non-linear diffusion, we consider in general
\begin{equation}
\label{eq:main regularised bounded}
\tag{P$_{\Phi,R}$}
	\begin{dcases}
		\frac{\partial u}{\partial t} = \Delta \Phi (u) +  \diver ( u E ) & \text{in }(0,\infty) \times \Br \\
		(\nabla \Phi(u) + u E) \cdot x = 0, &  \text{on }  (0,\infty) \times \partial \Br, \\
		u (0,x) = u_0 (x).
	\end{dcases}
\end{equation}
We assume that $\Phi \in C^1$ and elliptic we think of the problem as
\begin{equation*}
	\frac{\partial u}{\partial t} = \Delta \Phi (u) +  \nabla u \cdot E + u \diver E.
\end{equation*}
Furthermore, we assume
\begin{equation}
\label{eq:E point outwards}
	E(x) \cdot x = 0 , \qquad \text{on } \partial \Br
\end{equation}

\begin{remark}
Our results work in a general bounded domain $\Omega$, where the assumption on $E$ is that $E \cdot n(x) = 0$ on $\partial \Omega$. However, we write them in a ball of radius $R$ since our main objective is to study the long-time asymptotics of radially symmetric solutions.
\end{remark}

The diffusion corresponds to the flux
$	a(u, \nabla u) = \Phi'(u) \nabla u.
$
When $\Phi, E$ are smooth and we assume $\Phi$ is uniformly elliptic, in the sense that there exist constants such that
\begin{equation}
\label{eq:Phi elliptic}
	0 < c_1 \le \Phi'(u) \le c_2 < \infty,
\end{equation}
existence, uniqueness, and maximum principle hold from the classical theory.
The literature is extensive:
	 in $\Rd$ this issue was solved at the beginning of the twentieth century (see \cite{ladyzhenskaia1968parabolic}),
	 in a bounded domain with Dirichlet boundary condition the result can be found in \cite{Gilbarg+Trudinger2001}, and
	 the case of Neumann boundary conditions was studied by the end of the the twentieth century (for example \cite{Amann1990}), where the assumptions on the lower order term were later generalised (see, e.g. \cite{Yin2005}).
Following \cite{Amann1990}, we have that, if $u_0 \in C^2 (\overline \Br)$ then the solution $u$ of \eqref{eq:main regularised bounded} is such that
\begin{equation}
	\label{eq:bounded regularised regularity of $u$}
	u \in C^1\Big((0,T); C(\overline \Br) \Big) \cap C \Big((0,\infty) ; C^2 (\overline \Br)\Big) \cap C\Big([0,\infty)\times \overline \Br\Big).
\end{equation}

Let us obtain further properties of the solution of \eqref{eq:main regularised bounded}.

\begin{theorem}[$L^p$ estimates]
	\label{thm:Lp estimates}
	Assume $E(x) \cdot n(x) \ge 0$. For classical solutions we have that
	\begin{align}
	\label{eq:regularisation Lp estimates}
	\| u(t)_\pm \|_{L^p } &\le e^{ 
		\frac{p-1}{p} \normalcolor
		\| \diver E \|_{L^\infty} t }\| (u_0)_\pm \|_{L^p}.
	\end{align}
\end{theorem}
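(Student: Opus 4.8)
The plan is to establish the $L^p$ bound by a standard energy (Moser-type) computation: multiply the equation by $p\,u_+^{p-1}$ (and separately by $p\,u_-^{p-1}$), integrate over $\Br$, and use the boundary condition to control the boundary terms. I will write $\phi_\pm(u) = u_\pm^{p-1}$, note that $\frac{d}{dt}\int_\Br u_\pm^p = p\int_\Br u_\pm^{p-1}\partial_t u_\pm$ for classical solutions, and that on the set $\{u>0\}$ one has $\partial_t u_\pm = \partial_t u$, while $u_+^{p-1}=0$ on $\{u\le 0\}$, so only the region $\{u>0\}$ contributes (and symmetrically for $u_-$). Substituting the PDE gives
\begin{equation*}
	\frac{d}{dt}\int_\Br u_+^p \diff x = p\int_\Br u_+^{p-1}\bigl(\Delta\Phi(u) + \diver(uE)\bigr)\diff x.
\end{equation*}

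Next I would integrate by parts in both terms. For the diffusion term, $p\int u_+^{p-1}\Delta\Phi(u) = p\int_{\partial\Br} u_+^{p-1}\nabla\Phi(u)\cdot x - p\int \nabla(u_+^{p-1})\cdot\nabla\Phi(u)$; since $\Phi$ is elliptic (hence $\Phi'\ge 0$) and $\nabla(u_+^{p-1})\cdot\nabla\Phi(u) = (p-1)u_+^{p-2}\Phi'(u)|\nabla u_+|^2 \ge 0$ on $\{u>0\}$, the bulk term has a good sign and can be dropped. Similarly, $p\int u_+^{p-1}\diver(uE) = p\int_{\partial\Br} u_+^{p-1} u\,E\cdot x - p\int \nabla(u_+^{p-1})\cdot uE = (\text{bdry}) - p(p-1)\int u_+^{p-1}\nabla u_+\cdot E = (\text{bdry}) - (p-1)\int \nabla(u_+^p)\cdot E$, and one more integration by parts turns the last bulk integral into $(p-1)\int u_+^p\,\diver E - (p-1)\int_{\partial\Br} u_+^p\,E\cdot x$. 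The terms on $\partial\Br$ combine to $\int_{\partial\Br}\bigl(p\,u_+^{p-1}\nabla\Phi(u)\cdot x + p\,u_+^{p-1}u\,E\cdot x - (p-1)u_+^p\,E\cdot x\bigr)\,dx$; using the no-flux boundary condition $(\nabla\Phi(u)+uE)\cdot x = 0$ on $\partial\Br$ the first two cancel, leaving $-(p-1)\int_{\partial\Br} u_+^p\,E\cdot x$, which is $\le 0$ because $u_+^p\ge 0$ and $E(x)\cdot x = E(x)\cdot n(x)\,|x| \ge 0$ there by hypothesis. Collecting everything,
\begin{equation*}
	\frac{d}{dt}\int_\Br u_+^p \diff x \le (p-1)\,\|\diver E\|_{L^\infty}\int_\Br u_+^p \diff x,
\end{equation*}
and Grönwall's inequality gives $\|u(t)_+\|_{L^p}^p \le e^{(p-1)\|\diver E\|_{L^\infty}t}\,\|(u_0)_+\|_{L^p}^p$, i.e. the claimed estimate after taking $p$-th roots. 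The argument for $u_-$ is identical, with the same signs: replacing $u$ by $-u$ turns $\diver(uE)$ into $-\diver(u_-E)$-type terms but the quadratic diffusion term is still nonnegative and the boundary term is still $-(p-1)\int_{\partial\Br}u_-^p E\cdot x \le 0$.

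The only genuine subtlety — and the step I would treat with some care — is the regularity needed to justify differentiating $\int u_\pm^p$ and integrating by parts with the (only Lipschitz in $u$, not smooth) functions $u_\pm^{p-1}$ when $p<2$; here I would invoke the classical regularity \eqref{eq:bounded regularised regularity of $u$} (so $u\in C^{2,1}$ up to the boundary for $t>0$, $u$ continuous down to $t=0$) and a routine approximation of $s\mapsto (s_\pm)^{p}$ by smooth convex functions, passing to the limit in the identity and using Fatou/dominated convergence; alternatively one runs the computation for $p\ge 2$ first, where $u_\pm^{p-1}\in C^1$ directly, and obtains the general case by interpolation together with the $p=1$ and $p=\infty$ (maximum principle) bounds. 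I expect no other obstacle: the geometry of the weight and the cancellation of the boundary flux term are exactly engineered by assumptions \eqref{eq:E point outwards} and $E\cdot n\ge 0$.
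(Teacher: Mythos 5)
Your proof is correct and follows essentially the same route as the paper: test the equation with the derivative of the convex function $s\mapsto s_\pm^p$, integrate by parts, move the derivative in the convection term onto $u_\pm^p$ so that only $\diver E$ and a boundary flux of sign $-(p-1)\int_{\partial\Br}u_\pm^p\,E\cdot n\le 0$ survive, and conclude by Grönwall. The paper merely packages the same computation via a general convex $j$ and the auxiliary $F$ with $F'(s)=j''(s)s$, $F(s)=(p-1)s_\pm^p$, and it likewise glosses over the smoothing of $s\mapsto s_\pm^p$ that you rightly flag for $p<2$.
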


\begin{proof}
Let $j$ be convex. We compute
\begin{align*}
	\frac{\diff }{\diff t} \int_\Br j(u) &= \int_\Br j'(u) \diver ( \nabla  \Phi(u) + u E  ) =-  \int_\Br j''(u) \nabla u \cdot ( \nabla  \Phi(u) + u E  ) \\
	&=-  \int_\Br j''(u) \Phi'(u) |\nabla u|^2 - \int_\Br  j''(u) u \nabla u \cdot E  \le - \int_\Br   \nabla F(u) \cdot E
\end{align*}
where $F'(u) = j''(u) u \ge 0$ and we can pick $F(0) = 0$. Hence $F \ge 0$.
If $j$ has a minimum at $0$, then $j' \ge 0$ and $F \ge 0$. Since $E \cdot x \ge 0$,
\begin{align*}
	\int_\Br \nabla F(u) \cdot E &= \int_\Br \diver (F(u) E) - \int_{\Br} F(u) \diver E = \int_{\partial \Br} F(u) E \cdot \frac{x}{|x|} - \int_{\Br} F(u) \diver E \\
	&\ge - \int_{\Br} F(u) \diver E.
\end{align*}
Finally, we recover
\begin{align*}
	\int_\Br j(u (t))  &\le \int_\Br j(u_0)   + \int_0^t \int_\Br   F(u) \diver E .
\end{align*}
When $j (s) = s_\pm^p$, $ j''(s) = p (p - 1) s_\pm^{p - 2}$ we have $ F(s) = (p-1) s_\pm^{p }$.
Applying \eqref{eq:V no aggregation on boundary} we show that\\
\begin{align*}
	\int_\Br u(t)_\pm^p  &\le \int_\Br (u_0)_\pm^p  + (p-1) \| \diver E \|_{L^\infty} \int_0^t \int_\Br  u_\pm^p.
\end{align*}
By Gronwall's inequality we have that
\begin{align*}
	\int_\Br u(t)_\pm^p  &\le e^{(p-1)\| \diver E \|_{L^\infty} t }\int_\Br (u_0)_\pm^p .
\end{align*}
Taking the power $1/p$ we have \eqref{eq:regularisation Lp estimates} for $p < \infty$
and letting $p \to \infty$ we also obtain the $L^\infty$ estimate.
\end{proof}

\begin{theorem}[Estimates on $\nabla \Phi(u)$]
Let
$\Psi(s) = \int_0^s \Phi(\sigma) \diff \sigma$. Then, we have that
\begin{equation}
	\label{eq:smooth estimate nabla Phi u}
	 \frac 1 2
	 \int_{0}^{T} \int_\Br |\nabla \Phi(u)|^2  \le  \int_{\Br} \Psi(u_0) + \frac {\| E \|_{L^\infty}^2} 2 \int_0^T \int_{B_R} u(t,x)^2 \diff x \diff t.
\end{equation}
\end{theorem}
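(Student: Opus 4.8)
The estimate is a standard energy identity obtained by testing the equation with $\Phi(u)$, so the plan is to multiply \eqref{eq:main regularised bounded} by $\Phi(u)$, integrate over $\Br$, and integrate by parts, keeping careful track of the boundary term and then using Young's inequality on the convective contribution.

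First I would compute
\[
\frac{\diff}{\diff t} \int_\Br \Psi(u) = \int_\Br \Psi'(u)\, u_t = \int_\Br \Phi(u)\, u_t = \int_\Br \Phi(u)\,\diver(\nabla \Phi(u) + uE).
\]
Integrating by parts, the boundary contribution is $\int_{\partial \Br} \Phi(u)\,(\nabla \Phi(u) + uE)\cdot \tfrac{x}{|x|}$, which vanishes by the no-flux boundary condition in \eqref{eq:main regularised bounded}; this is the step that makes the clean identity possible, and it is where the assumption $E(x)\cdot x = 0$ on $\partial\Br$ is used (together with the Neumann condition itself). This leaves
\[
\frac{\diff}{\diff t}\int_\Br \Psi(u) = -\int_\Br |\nabla \Phi(u)|^2 - \int_\Br u\,\nabla\Phi(u)\cdot E.
\]

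Next I would bound the last term by Young's inequality: $\bigl|\int_\Br u\,\nabla\Phi(u)\cdot E\bigr| \le \tfrac12 \int_\Br |\nabla\Phi(u)|^2 + \tfrac12 \int_\Br u^2 |E|^2 \le \tfrac12 \int_\Br |\nabla\Phi(u)|^2 + \tfrac{\|E\|_{L^\infty}^2}{2}\int_\Br u^2$. Absorbing the first term on the right into the dissipation term gives
\[
\frac{\diff}{\diff t}\int_\Br \Psi(u) + \frac12 \int_\Br |\nabla\Phi(u)|^2 \le \frac{\|E\|_{L^\infty}^2}{2}\int_\Br u^2.
\]
Integrating in time from $0$ to $T$ and discarding the nonnegative term $\int_\Br \Psi(u(T))$ (note $\Psi \ge 0$ since $\Phi$ is increasing through the origin — or at worst this term is controlled, but for nonnegative $u$ and $\Phi(0)=0$ it is nonnegative) yields exactly \eqref{eq:smooth estimate nabla Phi u}.

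The only genuinely delicate point is justifying the integration by parts and the use of $\Phi(u)$ as a test function at the regularity level \eqref{eq:bounded regularised regularity of $u$}; since $u\in C((0,\infty);C^2(\overline{\Br}))$ and $\Phi\in C^1$, all the manipulations are classical for $t>0$, and continuity up to $t=0$ handles the initial term, so there is no real obstacle here. The argument is otherwise entirely routine.
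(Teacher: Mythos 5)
Your proposal is correct and follows essentially the same route as the paper: test with $\Phi(u)$, integrate by parts using the no-flux condition, absorb the convective term via Young's inequality, and discard $\int_{\Br}\Psi(u(T))\ge 0$ (which holds since $\Phi'\ge 0$ and $\Phi(0)=0$ give $\Psi\ge 0$). No gaps.
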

\begin{proof}
Multiplying by $\Phi(u)$ and integrating
\begin{equation*}
	\int_\Br u_t \Phi(u) = - \int_\Br \nabla  \Phi(u) \cdot (\nabla \Phi(u) + u E) = - \int_\Br |\nabla \Phi(u)|^2 - \int_\Br u \nabla \Phi(u) E.
\end{equation*}
We have that
\begin{equation*}
	\frac{\diff }{\diff t} \int_\Br \Psi(u)  + \int_\Br |\nabla \Phi(u)|^2 \le \| u (t) \|_{L^2} \| \nabla  \Phi(u(t)) \|_{L^2} \| E \|_{L^\infty}.
\end{equation*}
Applying Young's inequality we obtain
\begin{equation*}
	\frac{\diff }{\diff t} \int_\Br \Psi(u)  + \frac 1 2 \int_\Br |\nabla \Phi(u)|^2 \le \frac 1 2 \| u (t) \|_{L^2}^2 \| E \|_{L^\infty}^2.
\end{equation*}
Notice since $\Phi' \ge 0$ we have that $\Psi \ge 0$.
Hence, we deduce the result.
\end{proof}

If $\Psi(u_0) \in L^1$ and $u_0 \in L^2$ then the right-hand side is finite due to \eqref{eq:regularisation Lp estimates}.
\begin{remark}
	When $\Phi(s) = s^m$ then $\Psi(s) = \frac{1}{m+1}s^{m+1}$.
\end{remark}
In order to get point-wise convergence, we follow the approach for the Fast Diffusion equation proposed in \cite[Lemma 5.9]{Vazquez2007}. Define \begin{equation*}
	Z(s) = \int_0^s \min \{ 1, \Phi'(s) \} \diff s, \qquad z (t,x) = Z(u(t,x)).
\end{equation*}
\begin{corollary}
	We have that
	\begin{equation}
	\label{eq:smooth estimate nabla z}
	 \int_{0}^{T} \int_\Br |\nabla z|^2  \le \int_{\Br} \Psi(u_0) + \frac{\| E \|_{L^\infty}^2} 2 \int_0^T \int_{B_R} u(t,x)^2 \diff x \diff t.
\end{equation}
\end{corollary}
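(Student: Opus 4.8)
The plan is to deduce the estimate from \eqref{eq:smooth estimate nabla Phi u} by a pointwise comparison of $\nabla z$ with $\nabla\Phi(u)$. Since $Z'(s)=\min\{1,\Phi'(s)\}$ and $Z$ is globally Lipschitz (indeed $0\le Z'\le 1$), while $u$ is a classical solution and hence $C^1$ in space for $t>0$, the chain rule gives $\nabla z=Z'(u)\,\nabla u=\min\{1,\Phi'(u)\}\,\nabla u$ almost everywhere. I would then observe that ellipticity of $\Phi$ forces $\Phi'\ge 0$, so that $\min\{1,\Phi'(u)\}\le\Phi'(u)$ and therefore
$$ |\nabla z| \;=\; \min\{1,\Phi'(u)\}\,|\nabla u| \;\le\; \Phi'(u)\,|\nabla u| \;=\; |\nabla\Phi(u)| $$
pointwise. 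Squaring, integrating over $(0,T)\times B_R$, and invoking \eqref{eq:smooth estimate nabla Phi u} then gives the claim. The right-hand side is finite as soon as $\Psi(u_0)\in L^1(B_R)$ and $u_0\in L^2(B_R)$, since $\int_0^T\!\int_{B_R} u^2$ is controlled by the $L^p$ bound \eqref{eq:regularisation Lp estimates}.

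There is essentially no real obstacle here: the statement is a one-line consequence of the preceding energy estimate. The only point that deserves a word of justification is the chain-rule identity $\nabla z=Z'(u)\nabla u$, which is standard given that $u$ is smooth and $Z$ is Lipschitz and which is unaffected by the Lebesgue-null level set $\{\Phi'(u)=1\}$ on which $Z'$ need not be differentiable. If one wishes to avoid quoting \eqref{eq:smooth estimate nabla Phi u} altogether, one may instead rerun the energy computation of the previous theorem testing the equation against $z$ in place of $\Phi(u)$: the time term becomes $\tfrac{\diff}{\diff t}\int_{B_R}G(u)$ with $G(s)=\int_0^s Z(\sigma)\diff\sigma\ge 0$, the no-flux condition \eqref{eq:E point outwards} kills the boundary integral, the diffusion term produces $\int_{B_R}\nabla z\cdot\nabla\Phi(u)=\int_{B_R}Z'(u)\Phi'(u)|\nabla u|^2\ge\int_{B_R}|\nabla z|^2$ (again using $\Phi'\ge Z'\ge 0$), and the drift term $\int_{B_R}u\,\nabla z\cdot E$ is bounded by $\|E\|_{L^\infty}\|u\|_{L^2}\|\nabla z\|_{L^2}$ and absorbed through Young's inequality. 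I would keep the short route in the paper.

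The point of isolating this estimate is that, since also $0\le Z'\le 1$ yields $|\nabla z|\le|\nabla u|$, the corollary furnishes a spatial $H^1$-type bound on $z=Z(u)$ whose right-hand side can be made uniform over the family of regularisations $\Phi$ (with $\|E\|_{L^\infty}$ and $\int\!\int u^2$ controlled independently of $\Phi$ by \eqref{eq:regularisation Lp estimates}); this is exactly the compactness input needed to pass to the limit $\Phi(s)\to s^m$ in \eqref{eq:main regularised bounded} and obtain a.e.-convergent subsequences. I would therefore state the estimate (and the constants in it) in a form that makes this uniformity explicit.
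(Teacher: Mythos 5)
Your argument is correct and coincides with the paper's own one-line proof: both rest on the pointwise bound $|\nabla z|=\min\{1,\Phi'(u)\}\,|\nabla u|\le \Phi'(u)|\nabla u|=|\nabla\Phi(u)|$ followed by an appeal to \eqref{eq:smooth estimate nabla Phi u}. The alternative energy computation and the remarks on uniformity in $\Phi$ are sound but not needed.
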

\begin{proof}
	Notice
$
	|\nabla z| \le |\Phi'(u)||\nabla u| = |\nabla \Phi(u)|.
$
\end{proof}

\begin{lemma}[Estimates on $u_t$ and $\nabla \Phi (u)$]
	\label{lem:bounded regularised estimates ut and nabla Phi u}
	Assume $E \cdot x = 0$ on $\partial B_R$,
	$u \in L^\infty(0,T; L^2 (B_R))$, $\Phi(u) \in L^2(0,T; H^1 (B_R))$, $\Phi (u_0) \in H^1 (B_R)$ then
	\begin{equation*}
		\Phi(u) \in L^\infty(0,T; H^1 (B_R)) \qquad \text{ and } \qquad u_t \in L^2 ( (0,T) \times \Br).
\end{equation*}
We also have, for $z(t,x) = Z(u(t,x))$ that
\begin{equation}
\label{eq:estimate Phi'uu_t}
\begin{aligned}
	\int_0^T \int_\Br |z_t|^2 &\le C \Bigg( \int_\Br |\nabla \Phi(u_0)|^2  + \frac{1} 2\int_0^T \int_\Br \Phi'(u) |\nabla u|^2|  E|^2 \\
	 & \qquad \qquad + \int_\Br u(0) \nabla \Phi(u_0) \cdot E + \int_{B_R} |u(T)|^2 |E|^2 \Bigg).
\end{aligned}
\end{equation}
\end{lemma}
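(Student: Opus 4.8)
The lemma is a standard higher-energy estimate for the regularised equation, and the natural approach is to differentiate the equation in time (formally) and test against $z_t = Z'(u)u_t = \min\{1,\Phi'(u)\}\, u_t$, or equivalently to test the time-differentiated equation against $\Phi(u)_t$. Concretely, I would multiply
\[
u_{tt} = \Delta \Phi(u)_t + \diver(u_t E)
\]
by $\Phi(u)_t$ and integrate over $B_R$. The left-hand side gives $\int_{B_R} u_{tt}\,\Phi(u)_t = \int_{B_R} \Phi'(u)\, u_t\, u_{tt}$, which after an integration by parts in $t$ is comparable to $\frac{\diff}{\diff t}\int_{B_R}\Phi'(u)|u_t|^2$ up to a lower-order term involving $\Phi''$; here one must be slightly careful since we only know $\Phi\in C^1$, so it is cleaner to work with $z=Z(u)$ and note $|z_t|^2 = \min\{1,\Phi'(u)\}^2 |u_t|^2 \le \Phi'(u)\,|u_t|^2$ and also $|z_t|^2\le |u_t|^2$, so controlling $\int\Phi'(u)|u_t|^2$ controls $\int|z_t|^2$. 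The diffusion term yields $-\int_{B_R}|\nabla \Phi(u)_t|^2$ after integration by parts (using the no-flux boundary condition, which is preserved under differentiation in $t$ because $E\cdot x=0$ on $\partial B_R$), which has a good sign. The convection term $\int_{B_R}\Phi(u)_t\,\diver(u_t E)$ is integrated by parts to give $-\int_{B_R}\nabla\Phi(u)_t\cdot u_t\, E$, which is absorbed by Young's inequality into the $\int|\nabla\Phi(u)_t|^2$ term at the cost of a term $\frac12\int |u_t|^2|E|^2$; using $\Phi'(u)\ge c_1$ (or just $u_t = $ RHS of the equation) this is in turn bounded by $\frac12\int \Phi'(u)|\nabla u|^2|E|^2$ plus diffusion-type contributions already under control.

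After integrating in $t$ from $0$ to $T$, the $\frac{\diff}{\diff t}$ of $\int\Phi'(u)|u_t|^2$ produces a boundary-in-time term at $t=0$. This is the delicate point: one needs the value of $u_t(0)$, which by the equation equals $\Delta\Phi(u_0)+\diver(u_0 E)$, and testing against $\Phi(u)_t$ at time $0$ (or rather, controlling the initial data term) produces exactly the combination $\int_{B_R}|\nabla\Phi(u_0)|^2 + \int_{B_R}u(0)\nabla\Phi(u_0)\cdot E$ appearing in \eqref{eq:estimate Phi'uu_t}; the term $\int_{B_R}|u(T)|^2|E|^2$ arises from redistributing the convective contribution so that only the endpoint $u(T)$ (rather than a full time integral of $|u_t|$) remains. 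Once $\int_0^T\int_{B_R}|z_t|^2$ is bounded, the claims $u_t\in L^2((0,T)\times B_R)$ and $\Phi(u)\in L^\infty(0,T;H^1(B_R))$ follow: for the former use $u_t = (Z'(u))^{-1}z_t$ where $Z'(u)=\min\{1,\Phi'(u)\}\ge \min\{1,c_1\}>0$ under uniform ellipticity; for the latter, the inequality obtained by testing with $\Phi(u)_t$ also controls $\sup_t\int_{B_R}|\nabla\Phi(u)|^2$ (the $\int|\nabla\Phi(u)_t|^2$ term, integrated, bounds the variation of $\int|\nabla\Phi(u)|^2$), with the initial value finite by the hypothesis $\Phi(u_0)\in H^1(B_R)$.

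**Main obstacle.** The real difficulty is rigour rather than strategy: the formal computation differentiates the equation in time and assumes $u_{tt}$, $\Phi(u)_t$ etc. have enough regularity to justify the integrations by parts, whereas a priori we only have \eqref{eq:bounded regularised regularity of $u$}. The clean way around this is to carry out the estimate on the smooth approximations guaranteed by \cite{Amann1990} with $u_0\in C^2(\overline{B_R})$ and smooth $\Phi$, where everything is legitimate, obtain \eqref{eq:estimate Phi'uu_t} with a constant independent of the approximation, and then pass to the limit using lower semicontinuity of the $L^2$ norms and weak compactness — exactly the regularisation philosophy already in force in \Cref{sec:2}. A secondary technical point is keeping track of the boundary terms: each integration by parts in space produces a $\partial B_R$ contribution, and one must check that the differentiated no-flux condition $(\nabla\Phi(u)_t + u_t E)\cdot x = 0$ holds (it does, differentiating the boundary condition in $t$, since $E\cdot x=0$ on $\partial B_R$ is time-independent), so that all such terms vanish. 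Finally, I would double-check the bookkeeping that produces exactly the four terms on the right-hand side of \eqref{eq:estimate Phi'uu_t} with the stated factor $\frac12$ on the $\Phi'(u)|\nabla u|^2|E|^2$ term — this is the kind of constant that is easy to misplace and should be verified by carrying the Young's inequality split with an explicit parameter and optimising.
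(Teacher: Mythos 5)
Your overall strategy does not match the paper's and, more importantly, it does not close under the stated hypotheses. The paper does \emph{not} differentiate the equation in time: it tests the equation itself against $w_t=\Phi(u)_t$, which gives the identity
\begin{equation*}
	\int_\Br \Phi'(u)|u_t|^2 \;=\; \int_\Br w_t\,\Delta w + \int_\Br w_t\,\diver(uE),
\end{equation*}
so that the quantity you ultimately want, $\int_0^T\int_\Br\Phi'(u)|u_t|^2\ge\int_0^T\int_\Br|z_t|^2$, appears directly on the left, the diffusion term becomes $-\tfrac12\frac{\diff}{\diff t}\int_\Br|\nabla w|^2$ (controlled at $t=0$ precisely by the hypothesis $\Phi(u_0)\in H^1$), and the convective term is integrated by parts in time to produce the endpoint terms $\int_\Br u(0)\nabla\Phi(u_0)\cdot E$ and $-\int_\Br u(T)\nabla w(T)\cdot E$ (the latter absorbed by Young into $\tfrac12\int_\Br|\nabla w(T)|^2+\int_\Br|u(T)|^2|E|^2$), plus a bulk term $\int_0^T\int_\Br u_t\nabla w\cdot E=\int_0^T\int_\Br\Phi'(u)^{1/2}u_t\cdot\Phi'(u)^{1/2}\nabla u\cdot E$ which Young's inequality splits into $\tfrac12\int_0^T\int_\Br\Phi'(u)|u_t|^2$ (absorbed on the left) and $\tfrac12\int_0^T\int_\Br\Phi'(u)|\nabla u|^2|E|^2$. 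This accounts for every term in \eqref{eq:estimate Phi'uu_t}.

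Your plan instead tests the \emph{time-differentiated} equation against $\Phi(u)_t$, which is an estimate of one order higher, and it breaks at exactly the point you flag as delicate. The boundary-in-time term at $t=0$ is $\int_\Br\Phi'(u_0)|u_t(0)|^2=\int_\Br\Phi'(u_0)|\Delta\Phi(u_0)+\diver(u_0E)|^2$, which involves \emph{second} derivatives of $\Phi(u_0)$ and cannot be controlled by $\int_\Br|\nabla\Phi(u_0)|^2+\int_\Br u(0)\nabla\Phi(u_0)\cdot E$; it would require $\Phi(u_0)\in H^2(\Br)$, which is not assumed. The bookkeeping of the convective term also fails: absorbing $-\int\nabla\Phi(u)_t\cdot u_tE$ into $\int|\nabla\Phi(u)_t|^2$ leaves $\tfrac12\int_0^T\int_\Br|u_t|^2|E|^2$, a full space-time integral of $|u_t|^2$ that is not bounded by $\tfrac12\int_0^T\int_\Br\Phi'(u)|\nabla u|^2|E|^2$ (these are different quantities) and is not among the terms on the right of \eqref{eq:estimate Phi'uu_t}; re-substituting the equation for $u_t$ would only reintroduce second-order terms. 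Drop the time differentiation and run the first-order computation above on the smooth approximations (that part of your rigour discussion, and the final passage from $\int\Phi'(u)|u_t|^2$ to $\int|z_t|^2$ via $Z'=\min\{1,\Phi'\}$ and to $u_t\in L^2$ via ellipticity, is fine and matches the paper).
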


\begin{proof}
Again we we will use the notation $w = \Phi(u)$. When $u$ is smooth, we can take $w_t$ as a test function and integrate in $\Br$. Notice that $w_t = \Phi'(u) u_t$, so
\begin{equation*}
	\int_\Br \Phi'(u) |u_t|^2 = \int_\Br w_t \Delta w + \int_\Br w_t \diver (u E)
\end{equation*}
Since $\nabla w = 0$ on $\partial \Br$, then also $\nabla w_t = 0$. We can integrate by parts to recover
\begin{equation*}
	\int_\Br \Phi'(u) |u_t|^2 = - \frac{\diff }{\diff t}\int_\Br |\nabla w|^2 + \int_{\partial B_R} w_t u E \cdot \frac{x}{|x|} - \int_\Br u \nabla w_t \cdot E .
\end{equation*}
Using assumption \eqref{eq:E point outwards} the second term on the right-hand side vanishes.
Integrating in $[0,T]$ we have
\begin{equation*}
	\int_0^T \int_\Br \Phi'(u) |u_t|^2 + \int_\Br |\nabla w (T)|^2  = \int_\Br |\nabla w(0)|^2 - \int_0^T \int_\Br u \nabla w_t \cdot E
\end{equation*}
Integrating by parts in time the last integral
\begin{align*}
	\int_0^T \int_\Br \Phi'(u) |u_t|^2 + \int_\Br |\nabla w (T)|^2  &= \int_\Br |\nabla w(0)|^2  + \int_0^T \int_\Br u_t \nabla w \cdot E \\
	&\qquad + \int_\Br u(0) \nabla w(0) \cdot E - \int_\Br u(T) \nabla w(T) \cdot E.
\end{align*}
Notice that $u_t \nabla w = \Phi'(u)^{\frac 1 2} u_t \Phi'(u)^{\frac 1 2} \nabla u$. Applying Young's inequality, we deduce
\begin{equation}
\label{eq:estimate Phi'uu_t}
\begin{aligned}
	\frac 1 2 \int_0^T \int_\Br \Phi'(u) |u_t|^2 + \frac 1 2 \int_\Br |\nabla w (T)|^2  &\le \int_\Br |\nabla w(0)|^2  + \frac{1} 2\int_0^T \int_\Br \Phi'(u) |\nabla u|^2|  E|^2  \\
	&\qquad + \int_\Br u(0) \nabla w(0) \cdot E + \int_{B_R} |u(T)|^2 |E|^2.
\end{aligned}
\end{equation}
From the estimates above, we know that $c_1 |\nabla u|  \le \Phi'(u) |\nabla u| = |\nabla \Phi (u)| \in L^2$. Similarly, the result follows.
Finally, we use that
\begin{equation*}
	|z_t|^2 \le |Z'(u)|^2 | u_t| ^2 \le |\Phi'(u)| |u  _t|^2 .
\end{equation*}
using that  $Z' =  \min \{ 1, \Phi' \}$.
\end{proof}

\subsection{Free energy and its dissipation when $E = \nabla V$}
When $E = \nabla V$ we have, again, a variational interpretation of the equation that leads to additional a priori estimates.
We can rewrite equation \eqref{eq:main regularised bounded} as
\begin{equation}
\label{eq:bounded regularised Wassertein}
	\frac{\partial u}{\partial t} = \diver \left(  \Phi'(u) \nabla u + u \nabla V \right) = \diver \left(  u \left\{ \frac{ \Phi'(u) }{u } \nabla u + \nabla V \right\} \right) =  \diver \left(  u \nabla  \left\{ \Theta (u) + V \right\} \right)
\end{equation}
where
\begin{equation}
\label{eq:Theta}
	\Theta(s) = \int_1^s \frac{\Phi'(\sigma)}{\sigma} \diff \sigma.
\end{equation}

\begin{remark}
	Since $c_1 \le \Phi'(\rho) \le c_2$ then $ \Theta(\rho) \sim \alpha \ln \rho$ so $\Theta^{-1} (\rho) \sim e^{\alpha^{-1} \rho}$. In particular $\Theta (0) = +\infty$. This is why we have to integrate from $1$ in this setting.
	However, when $\Phi (s) = s^m$ then $\Theta (s) = \frac{m}{m-1} ( s^{ {m-1}} - 1)$. So $\Theta^{-1} (s) = (  1 - \frac{1-m}{m} s  )^{ \frac 1 {m - 1} }$. For $\Phi$ elliptic then $\Theta^{-1} : \mathbb R \to [0,+\infty)$. However, for the FDE passing to the limit we are restricted to $s \le \frac m {1-m}$.
\end{remark}
Formulation \eqref{eq:bounded regularised Wassertein} shows that this equation is the 2-Wasserstein gradient flow of the free energy
\begin{equation*}
	\mathcal F_\Phi [u] = \int_\Br \left( \int_1^{u(x)}\Theta(s)\diff s + V(x) u(x) \right) \diff x.
\end{equation*}
Along the solutions of \eqref{eq:main regularised bounded} it is easy to check that
\begin{equation}
\label{eq:bounded regularised decay of free energy}
	\frac{\diff}{\diff t} \mathcal F_\Phi [u (t)] = - \int_\Br u \left| \nabla (\Theta(u) + V ) \right|^2 \diff x \le 0.
\end{equation}
Also, by integrating in time we have that
\begin{equation}
\label{eq:bounded regularised estimate of free energy disipation}
	0 \le \int_0^T \int_\Br u \left| \nabla (\Theta(u) + V ) \right|^2 \diff x \diff t= \mathcal F_\Phi [u_0] - \mathcal F_\Phi[u(t)]
\end{equation}

Finally, let us take a look at the stationary states. For any $H \in \mathbb R$, the solution of $\Theta(u) + V = - H$
is a stationary state.
Since $\Theta: [0,+\infty) \to \mathbb R$ is non-decreasing, we have that $H = - \Theta (u(0))$. We finally define
\begin{equation*}
	u_{V+H} \, \defeq \,  \Theta^{-1} \Big( -( H + V )  \Big).
\end{equation*}

\begin{remark}
	When $\Phi$ is elliptic $u_{V+H} \le \Theta^{-1} (-H)$.
	In the case of the FDE we have
	\begin{equation*}
		u_{V+H} = \left(  1 + \tfrac{1-m}{m} (H + V)  \right)^{ \frac 1 {m - 1} } = \rho_{V+h}.
	\end{equation*}
	where $h = H + \frac{m}{1-m}$.
	When $h > 0$ we have $\rho _{V+h}$ is bounded, but $\rho_V$ is not bounded.
\end{remark}

\subsection{Comparison principle and $L^1$ contraction}
Let us present a class of solutions which have a comparison principle, and are therefore unique.
\begin{definition}
We define strong $L^1$ solutions of \eqref{eq:main regularised bounded} as distributional solutions such that
\begin{enumerate}
	\item  $u \in C( [0,T] ; L^1 (\Br) )$.
	\item 	$\Phi(u) \in L^1(0,T; W^{1,1} (\Br))$ , $\Delta \Phi(u) \in L^1 ((0,T) \times \Br)$.
	\item $u_t \in L^2 (0,T; L^1(\Br))$.
\end{enumerate}
\end{definition}

\begin{theorem}
	\label{thm:ball comparison Phi smooth}
	Assume $E \cdot n(x) = 0$. Let $u, \overline u$ be two strong $L^1$ solutions of \eqref{eq:main regularised bounded}. Then, we have that
	\begin{equation*}
		\int_\Br [u (t) - \overline u(t)]_+ \le \int_\Br [u (0) - \overline u (0)]_+
	\end{equation*}
	In particular
	$
		\| u(t) - \overline u (t) \|_{L^1 (\Br)} \le \| u(0) - \overline u(0)\|_{L^1 (\Br)}
	$
	and,
	for each $u_0 \in L^1 (\Br)$, there exists at most one strong $L^1$ solution.
\end{theorem}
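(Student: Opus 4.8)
**Proof plan for Theorem \ref{thm:ball comparison Phi smooth} ($L^1$-contraction / comparison for strong $L^1$ solutions of \eqref{eq:main regularised bounded}).**

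The plan is to run the classical Kato-type / doubling-of-variables argument adapted to the Fokker--Planck structure. Let $u,\overline u$ be two strong $L^1$ solutions and write $w = \Phi(u)$, $\overline w = \Phi(\overline u)$. Subtracting the two equations gives, in the sense of distributions,
\begin{equation*}
	\partial_t (u - \overline u) = \Delta(w - \overline w) + \diver\big( (u - \overline u) E \big).
\end{equation*}
First I would test this identity against $p_\delta(w - \overline w)$, where $p_\delta$ is a smooth, bounded, nondecreasing approximation of the Heaviside function $\sign_+ = \mathbf 1_{\{\cdot > 0\}}$ with $p_\delta(0) = 0$ and $p_\delta' \ge 0$. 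The key algebraic point is that $\Phi$ is strictly increasing, so $\sign_+(w - \overline w) = \sign_+(u - \overline u)$; hence the left-hand side produces $\tfrac{\diff}{\diff t}\int_{\Br}(u-\overline u)_+$ in the limit $\delta \to 0$ (here the regularity $u_t \in L^2(0,T;L^1(\Br))$ makes $t \mapsto \int_{\Br}(u(t)-\overline u(t))_+$ absolutely continuous and justifies the time derivative). The diffusion term contributes $-\int_{\Br} p_\delta'(w-\overline w)\,|\nabla(w - \overline w)|^2 \le 0$ after integration by parts (the no-flux boundary condition $(\nabla(w-\overline w))\cdot x = 0$ kills the boundary term), so it has the good sign and can be discarded.

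The only term requiring care is the convection term $\int_{\Br} \nabla\big(p_\delta(w - \overline w)\big)\cdot (u - \overline u) E$. I would integrate by parts to rewrite it as $-\int_{\Br} p_\delta(w - \overline w)\,\diver\big((u-\overline u)E\big) = -\int_{\Br} p_\delta(w-\overline w)\big(\nabla(u-\overline u)\cdot E + (u - \overline u)\diver E\big)$, with no boundary contribution because $E\cdot x = 0$ on $\partial\Br$. As $\delta \to 0$, $p_\delta(w - \overline w) \to \sign_+(w - \overline w) = \sign_+(u - \overline u)$ pointwise and boundedly. For the term with $\nabla(u - \overline u)$ one uses the standard chain-rule fact that $\sign_+(u - \overline u)\nabla(u - \overline u) = \nabla(u - \overline u)_+$, so $\int_{\Br}\sign_+(u-\overline u)\,\nabla(u - \overline u)\cdot E = \int_{\Br}\nabla (u-\overline u)_+\cdot E = -\int_{\Br}(u - \overline u)_+ \diver E$ (again boundary term vanishes). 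Combining, the convection term contributes exactly $-\int_{\Br}(u-\overline u)_+\diver E + \int_{\Br}(u - \overline u)_+ \diver E = 0$ in the limit. Therefore $\tfrac{\diff}{\diff t}\int_{\Br}(u(t) - \overline u(t))_+ \le 0$, and integrating in time yields the claimed inequality.

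From $\int_{\Br}[u(t) - \overline u(t)]_+ \le \int_{\Br}[u(0) - \overline u(0)]_+$ one gets the $L^1$-contraction by adding this to the symmetric inequality with the roles of $u$ and $\overline u$ exchanged (i.e. using $(\overline u - u)_+$), since $|a - b| = (a-b)_+ + (b-a)_+$. Uniqueness of the strong $L^1$ solution for given $u_0$ is then immediate by taking $u(0) = \overline u(0)$.

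The main obstacle is purely technical: justifying the integrations by parts and the passage to the limit $\delta \to 0$ at the low regularity available for strong $L^1$ solutions. In particular, the products $p_\delta(w - \overline w)\,\nabla(u - \overline u)\cdot E$ and $p_\delta'(w - \overline w)|\nabla(w-\overline w)|^2$ must be controlled: the first is handled because $\nabla w, \nabla\overline w \in L^1$ (so $\nabla(w - \overline w) \in L^1$) and $|\nabla u| \le c_1^{-1}|\nabla w|$ by the lower ellipticity bound $\Phi' \ge c_1$ in \eqref{eq:Phi elliptic}, giving $\nabla u \in L^1$; the Heaviside-chain-rule identities for $(u - \overline u)_+$ hold in $W^{1,1}$. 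One should also note that the \emph{distributional} formulation together with $\Delta\Phi(u) \in L^1$ lets one move the Laplacian onto the test function rigorously. A clean way to organize all of this is to first prove the estimate for smooth approximations (e.g. the classical solutions of \eqref{eq:main regularised bounded} with mollified data, whose existence and regularity were recalled in \eqref{eq:bounded regularised regularity of $u$}) where every manipulation is legal, and then pass to the limit using the $L^1$-continuity in time and stability of the construction; alternatively, one invokes the by-now standard framework for $m$-accretive operators in $L^1$ (Bénilan--Crandall), under which this contraction is exactly the statement that the operator $u \mapsto -\Delta\Phi(u) - \diver(uE)$ with the no-flux condition is accretive in $L^1(\Br)$.
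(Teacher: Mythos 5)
Your proof is correct and follows essentially the same route as the paper: test the difference of the equations against a smooth nondecreasing approximation of $\sign_+$ applied to $\Phi(u)-\Phi(\overline u)$, discard the diffusion term by its sign, and observe that in the limit the convection term becomes $\int_\Br \diver\big([u-\overline u]_+E\big)$, which vanishes by the no-flux condition $E\cdot x=0$ on $\partial\Br$. The additional technical remarks (chain rule in $W^{1,1}$, $\nabla u\in L^1$ from ellipticity, or the accretivity viewpoint) are consistent with, and slightly more explicit than, the paper's argument.
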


\begin{proof}
	We now have that $w = \Phi (u) - \Phi (\overline u)$. Let $j$ be convex and denote $p = j'$. We have, using the no flux condition
	\begin{align*}
		\int_0^T \int_\Br (u - \overline u)_t p(w) &= \int_0^T\int_\Br p(w) \diver \{ \nabla w + (u - \overline u)E  \} \\
		&=- \int_0^T\int_\Br p'(w) | \nabla w|^2
		+  \int_0^T\int_\Br  p(w) \diver \left\{ (u - \overline u)   E \right\} .
	\end{align*}
	Notice that $\nabla u = \frac{1}{\Phi'(u)} \nabla \Phi(u) \in L^1((0,T) \times \Br)$ due to \eqref{eq:Phi elliptic}.
	Expanding the divergence, 
	we have
	\begin{equation*}
		\int_0^T\int_\Br (u - \overline u)_t p(w) \le \int_0^T\int_\Br p(w) \left( \nabla (u - \overline u) E + (u - \overline u) \diver E \right) .
	\end{equation*}
	Then, as $p \to \sign_+$, we have $p(w) \to \sign_0^+ ( w ) = \sign_0^+ (u - \overline u)$ and
	\begin{equation*}
		\int_0^T\int_\Br ( [u - \overline u]_+ )_t  \le \int_0^T\int_\Br  \left( \nabla [u - \overline u]_+ E + [u - \overline u]_+ \diver E \right) =  \int_0^T\int_\Br  \diver (  [u - \overline u]_+ E ).
	\end{equation*}
	Using again that $E \cdot n(x) = 0$ on $\partial \Br$, we recover a $0$ on the right hand side. This completes the proof.
\end{proof}

\begin{remark}[Uniform continuity in time]
Because of the $L^1$ contraction, and the properties of the semigroup
$
	\| u(t + h) - u(t) \|_{L^1} \le \| u(h) - u(0) \|_{L^1}.
$
If $u(h) \to u(0)$ in $L^1$, we have uniform continuity in time $\omega(h) = \| u(h) - u(0) \|_{L^1}$.
\end{remark}

\begin{remark}[On the assumption $E \cdot x = 0$ on $\partial \Br$]
\label{sec:condition VR no flux}
Notice that to recover  the $L^p$ estimates in \Cref{thm:Lp estimates} (which depend on $\| \diver E \|_{L^\infty}$) we assume only that $E \cdot x \ge 0$ on $\partial \Br$. However, later (as in \Cref{lem:bounded regularised estimates ut and nabla Phi u} and \Cref{thm:ball comparison Phi smooth}) we require $E \cdot x = 0$ on $\partial \Br$. The estimates in these results do not include $\diver E$, and so it seems possible to extended the results to this setting by approximation.
\end{remark}

\section{The Aggregation-Fast Diffusion Equation}
\label{sec:agg-diff equation}
We start this section by providing a weaker notion of solution
\begin{definition}
	We say that $\rho \in L^1((0,T) \times \Br$ is a weak $L^1$ solution of \eqref{eq:main bounded domain} if $\rho^m \in L^1 ( 0,T; W^{1,1} (\Br) )$ and, for every $\varphi \in L^\infty (0,T; W^{2,\infty} (\Br) \cap W^{1,\infty}_0 (\Br)) \cap C^1 ([0,T]; L^1(\Br))$ we have that
	\begin{equation*}
		\int_\Br \rho(t) \varphi(t) - \int_0^t \int_\Br \rho(s) \varphi_t (s) \diff s = - \int_0^t \int_\Br \left( \nabla  \rho^m \cdot \varphi + \rho \nabla V \cdot \nabla \varphi \right) + \int_\Br \rho_0 \varphi(0).
	\end{equation*}
	for a.e. $t \in (0,T)$.
\end{definition}
If $\nabla V \cdot n(x) = 0$ we then have $\nabla \rho^m \cdot n = 0$ and we can write the notion of very weak $L^1$ solution by integrating once more in space the diffusion term
\begin{equation*}
		\int_\Br \rho(t) \varphi(t) - \int_0^t \int_\Br \rho(s) \varphi_t (s) \diff s =  \int_0^t \int_\Br \left(   \rho^m \Delta \varphi - \rho \nabla V \cdot \nabla \varphi \right) + \int_\Br \rho_0 \varphi(0).
	\end{equation*}

\begin{theorem}[$L^1$ contraction for $H^1$ solutions bounded below]
	Assume that
		$\rho, \overline \rho$ are weak $L^1$ solutions of \eqref{eq:main bounded domain} with initial data $\rho_0 $ and $\overline \rho_0$,
		$\rho, \overline \rho \in H^1 ((0,T) \times \Br)$,
		and
		$\rho, \overline \rho \ge c_0 > 0$.
	Then
	\begin{equation*}
		\int_\Br (\rho(t) - \overline \rho(t))_+ \le \int_\Br (\rho_0 - \overline \rho_0)_+.
	\end{equation*}
\end{theorem}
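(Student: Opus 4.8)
The plan is to mimic the argument of \Cref{thm:ball comparison Phi smooth}, but now working directly with the fast-diffusion nonlinearity $\Phi(s)=s^m$ instead of a uniformly elliptic $\Phi$. The key structural identity is the same: writing $w = \rho^m - \overline\rho^m$, we want to test the equation satisfied by $\rho - \overline\rho$ against $p(w)$ for a smooth convex approximation $p$ of $\sign_+$, and then let $p \to \sign_0^+$. The point of the hypotheses $\rho,\overline\rho \ge c_0 > 0$ and $\rho,\overline\rho \in H^1((0,T)\times\Br)$ is precisely to make this computation legitimate in the fast-diffusion setting: on the set $\{\rho,\overline\rho \ge c_0\}$ the map $s \mapsto s^m$ is bi-Lipschitz, so $w \in H^1$ with $\nabla w$ controlled by $\nabla(\rho - \overline\rho)$ and conversely, and $\rho - \overline\rho$ has the time regularity needed to justify the integration by parts in $t$.

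First I would record the weak formulation for the difference: subtracting the very weak (or weak) formulations for $\rho$ and $\overline\rho$, and using that both lie in $H^1$ and are bounded below so that $\nabla\rho^m, \nabla\overline\rho^m \in L^2$ and $(\rho-\overline\rho)_t \in L^2(0,T;L^1)$ (or $L^2$), one gets
\begin{equation*}
	\int_0^T\!\!\int_\Br (\rho - \overline\rho)_t\, p(w) = -\int_0^T\!\!\int_\Br p'(w)\,|\nabla w|^2 + \int_0^T\!\!\int_\Br p(w)\,\diver\!\big((\rho-\overline\rho)\nabla V\big).
\end{equation*}
The diffusion term has a good sign since $p'\ge 0$, so it is dropped. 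Next I expand the divergence in the convection term, $\diver((\rho-\overline\rho)\nabla V) = \nabla(\rho-\overline\rho)\cdot\nabla V + (\rho-\overline\rho)\Delta V$, using $V \in W^{2,\infty}$. Then I pass to the limit $p \to \sign_+$: since $w$ and $\rho-\overline\rho$ have the same sign (again because $s\mapsto s^m$ is increasing), $p(w) \to \sign_0^+(\rho-\overline\rho)$, and $p'(w)|\nabla w|^2 \ge 0$ is discarded in the limit, while on the right $p(w)\nabla(\rho-\overline\rho) \to \nabla[\rho-\overline\rho]_+$ and $p(w)(\rho-\overline\rho) \to [\rho-\overline\rho]_+$. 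This gives
\begin{equation*}
	\int_0^T\!\!\int_\Br \big([\rho-\overline\rho]_+\big)_t \le \int_0^T\!\!\int_\Br \big(\nabla[\rho-\overline\rho]_+\cdot\nabla V + [\rho-\overline\rho]_+\,\Delta V\big) = \int_0^T\!\!\int_\Br \diver\!\big([\rho-\overline\rho]_+\,\nabla V\big),
\end{equation*}
and the right-hand side vanishes after integrating by parts in space because $\nabla V\cdot n = 0$ on $\partial\Br$ (this is assumption \eqref{eq:V no aggregation on boundary}; note $[\rho-\overline\rho]_+$ need not vanish on $\partial\Br$, which is why the no-flux condition on $V$, not a Dirichlet condition, is what matters). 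Integrating the left-hand side in time then yields $\int_\Br[\rho(t)-\overline\rho(t)]_+ \le \int_\Br[\rho_0-\overline\rho_0]_+$.

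The main obstacle, and the place where care is required, is the limit $p \to \sign_+$ and the justification that the boundary/regularity hypotheses make every integration by parts valid — in particular that $\nabla w \in L^2$ so that $\int p'(w)|\nabla w|^2$ makes sense before taking the limit, that the chain rule $\nabla[\rho-\overline\rho]_+ = \sign_0^+(\rho-\overline\rho)\nabla(\rho-\overline\rho)$ holds (Stampacchia), and that $(\rho-\overline\rho)_t$ pairs with the bounded function $p(w)$. All of these follow from $\rho,\overline\rho\in H^1((0,T)\times\Br)$ together with the lower bound $\rho,\overline\rho\ge c_0>0$, which confines us to the region where $\Phi=s^m$ is non-degenerate; without the lower bound one would face the usual fast-diffusion difficulties near $\{\rho=0\}$, and that is exactly why this $L^1$-contraction is stated only for solutions bounded below and will later be upgraded to general $L^1$ data by approximation.
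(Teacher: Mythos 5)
Your proposal is correct and follows essentially the same route as the paper: both define $w=\rho^m-\overline\rho^m$, use the lower bound $\rho,\overline\rho\ge c_0$ together with the $H^1$ regularity to make $s\mapsto s^m$ non-degenerate so that $w\in H^1$ and $\varphi=p(w)$ is an admissible (approximate) test function, obtain the identity with the dissipative term $-\int p'(w)|\nabla w|^2$, and then let $p\to\sign_+$ and invoke the no-flux condition \eqref{eq:V no aggregation on boundary} exactly as in \Cref{thm:ball comparison Phi smooth}. Your explicit discussion of why the hypotheses are needed matches the paper's (terser) justification, so there is nothing to add.
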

\begin{proof}
	Since the solutions are in $H^1$ and are bounded below, then $\rho^m, \overline \rho^m \in H^1 ((0,T) \times B_R)$. 	Let $p$ be non-decreasing and smooth.
	By approximation by regularised choices, let us define $w = \rho^m - \overline \rho^m$ and $\varphi = p ( w )$. Thus we deduce
		\begin{align*}
		\int_0^t \int_\Br ( \rho(s) - \overline \rho(s))_t p(w) &= -\int_0^t \int_\Br \left( p'(w) |\nabla w|^2 + ( \rho - \overline \rho)  \nabla p(w) \cdot \nabla V \right) .
	\end{align*}
	Proceeding as in \Cref{thm:ball comparison Phi smooth} for $\Phi$ smooth and using \eqref{eq:V no aggregation on boundary} we have that
	\begin{equation*}
		\int_0^t \int_\Br ( [ \rho(s) - \overline \rho(s) ]_+)_t \diff s \le 0,
	\end{equation*}
	and this proves the result.
\end{proof}

We can now construct a semigroup of solutions.
We begin by constructing solutions for regular data, by passing to the limit in regularised problems with a sequence of smooth non-linearities $\Phi_k(s) \to \Phi(s) = s^m$.
We consider the sequence $\Phi_k$ of functions given by $\Phi_k(0) = 0$ and
\begin{equation}
\label{eq:Phik}
	\Phi_k' (s) \sim \begin{dcases}
		m k^{m-1} & s > k, \\
		m s^{m-1} & s \in [k^{-1}, k], \\
		m k^{1-m} & s < k^{-1}
	\end{dcases}
\end{equation}
up to a smoothing of the interphases.
We define
\begin{equation*}
		Z(s) = \int_0^s \min \{ 1 , \Phi' (\sigma) \} \diff \sigma = \int_0^s \min \{ 1 , m \sigma^{m-1} \} \diff \sigma = \begin{dcases}
			s & s < m^{-\frac 1 {1-m}} \\
			C_m + s^m & s \ge m^{-\frac 1 {1-m}}
		\end{dcases}
		.
	\end{equation*}

\begin{theorem}[Existence of solutions for regular initial data]
	\label{thm:existence bounded domain regular data}
	Assume $V \in W^{2,\infty} (\Br)$, $V \ge 0$, $V (0) = 0$, $V \cdot x = 0$ on $\partial \Br$ and the technical assumption \eqref{eq:rhoV in L1ee loc}. 	Let $\rho_0 $ be such that
	\begin{equation*}
		0 < \ee \le \rho_0 \le \ee^{-1}, \qquad \rho_0 \in H^1 (\Br).
	\end{equation*}
	Then, the sequence $u_k$ of solutions for \eqref{eq:main regularised bounded} where $\Phi = \Phi_k$ given by \eqref{eq:Phik} is such that
	\begin{alignat*}{2}
		u_k &\rightharpoonup \rho \qquad && \text{weakly in } H^1 ((0,T) \times \Br) \\
		u_k &\to \rho \qquad && \text{a.e. in }(0,T) \times \Br \\
		\Phi_k (u_k) &\rightharpoonup \rho^m \qquad &&\text{weakly in } L^2 (0,T, H^1 (\Br))
	\end{alignat*}
	and $\rho$ is a weak $L^1$ solution of the problem. Moreover, we have that $\rho \ge \omega(\ee) > 0$,
	\begin{equation*}
		\| \rho(t) \|_{L^1} = \|\rho_0 \|_{L^1}, \qquad 	\| \rho(t) \|_{L^q} \le e^{ t \| \Delta V \|_{L^\infty (B_R)} } \| \rho_0 \|_{L^q}.
	\end{equation*}
	In fact, $\rho$ is the unique weak $L^1$ solution which is $H^1$ and bounded below.
\end{theorem}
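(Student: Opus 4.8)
The plan is a compactness-and-passage-to-the-limit scheme, and the crux is that every a priori estimate must be made uniform in $k$ in spite of the degeneration of the nonlinearities $\Phi_k$. First I would fix $k$ and note that $\Phi_k$ is smooth and uniformly elliptic, with $0 < m k^{m-1} \le \Phi_k' \le m k^{1-m}$, so that \eqref{eq:main regularised bounded} with $\Phi = \Phi_k$ and $E = \nabla V$ has a unique solution $u_k$ with the regularity \eqref{eq:bounded regularised regularity of $u$} (for merely $H^1$ data this is obtained from the classical theory recalled in \Cref{sec:2}, combined with a preliminary $C^2$-approximation of $\rho_0$ and the $L^1$-contraction of \Cref{thm:ball comparison Phi smooth}). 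Since $\Delta V \in L^\infty(\Br)$ and $\nabla V \cdot x = 0$ on $\partial\Br$, \Cref{thm:Lp estimates} gives $\| u_k(t) \|_{L^q} \le e^{\frac{q-1}{q} \| \Delta V \|_{L^\infty} t} \| \rho_0 \|_{L^q}$ for every $q \in [1,\infty]$; in particular $u_k \le M \defeq \ee^{-1} e^{\| \Delta V \|_{L^\infty} T}$ on $(0,T) \times \Br$, uniformly in $k$. For the lower bound I would check that $\underline{u}(t) = \ee\, e^{-\| \Delta V \|_{L^\infty} t}$ is a subsolution --- it satisfies the no-flux condition with equality (because $\nabla\underline{u} = 0$ and $\nabla V \cdot x = 0$ on $\partial\Br$) and $\partial_t \underline{u} - \Delta \Phi_k(\underline{u}) - \diver(\underline{u}\,\nabla V) = -\underline{u}\,(\| \Delta V \|_{L^\infty} + \Delta V) \le 0$ --- so the comparison principle yields $u_k \ge \omega(\ee) \defeq \ee\, e^{-\| \Delta V \|_{L^\infty} T} > 0$, uniformly in $k$.

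The main step is the uniform $H^1$ bound. Here the key observation is that for $k \ge k_0(\ee,T,\| \Delta V \|_{L^\infty})$ the fixed interval $[\omega(\ee),M]$ is contained in $[k^{-1},k]$, so that on the range of $u_k$ (and of $\rho_0$) one has $\Phi_k'(s) = m s^{m-1}$, pinched between the positive $k$-independent constants $m M^{m-1}$ and $m\, \omega(\ee)^{m-1}$. Then the estimate \eqref{eq:smooth estimate nabla Phi u}, using the uniform bound on $\Psi_k(\rho_0)$ (finite since $\rho_0 \in L^\infty$) and the $L^\infty$ bound on $u_k$, gives a uniform bound on $\nabla \Phi_k(u_k)$ in $L^2((0,T) \times \Br)$, whence $\nabla u_k = \Phi_k'(u_k)^{-1} \nabla \Phi_k(u_k)$ is uniformly bounded in $L^2$ because $\Phi_k'(u_k)^{-1} \le (m M^{m-1})^{-1}$. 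For $\partial_t u_k$ I would apply \Cref{lem:bounded regularised estimates ut and nabla Phi u}: every term on the right-hand side of \eqref{eq:estimate Phi'uu_t} is bounded uniformly in $k$ --- the term $\int_\Br |\nabla\Phi_k(\rho_0)|^2 \le C(\ee,m) \| \nabla\rho_0 \|_{L^2}^2$ is finite thanks to $\rho_0 \in H^1$, the term $\int_0^T \int_\Br \Phi_k'(u_k) |\nabla u_k|^2 |\nabla V|^2 = \int_0^T \int_\Br \Phi_k'(u_k)^{-1} |\nabla\Phi_k(u_k)|^2 |\nabla V|^2$ is controlled by the previous bound, and the remaining two terms by the $L^\infty$ bound and $\| \nabla V \|_{L^\infty}$ --- so $\int_0^T \int_\Br |z_{k,t}|^2 \le C$, and since $Z_k'(u_k) = \min\{1, \Phi_k'(u_k)\} \ge \min\{1, m M^{m-1}\} > 0$ we deduce that $\partial_t u_k = z_{k,t} / Z_k'(u_k)$ is uniformly bounded in $L^2$. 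Altogether $u_k$ is bounded in $H^1((0,T) \times \Br)$ and $\Phi_k(u_k)$ in $L^2(0,T; H^1(\Br))$.

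With these bounds in hand the rest is routine. Along a subsequence $u_k \rightharpoonup \rho$ in $H^1((0,T) \times \Br)$; by the Rellich--Kondrachov compact embedding $u_k \to \rho$ in $L^2$ and a.e.; since $\Phi_k(s) \to s^m$ uniformly on $[\omega(\ee),M]$ and $u_k^m \to \rho^m$ a.e., we get $\Phi_k(u_k) \to \rho^m$ a.e., and being bounded it converges weakly in $L^2(0,T; H^1(\Br))$ with limit $\rho^m$. Passing to the limit in the weak formulation of \eqref{eq:main regularised bounded} --- the diffusion term via the weak $L^2$ convergence of $\nabla\Phi_k(u_k)$, the drift term via $u_k \to \rho$ in $L^2$ and $\nabla V \cdot \nabla\varphi \in L^\infty$, and the time terms via $u_k(t) \to \rho(t)$ in $L^1(\Br)$ for a.e.\ $t$ (a consequence of the $H^1$ bound, which gives equicontinuity in $t$ with values in $L^2$) --- shows that $\rho$ is a weak $L^1$ solution of \eqref{eq:main bounded domain}. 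The bounds $\rho \ge \omega(\ee)$ and $\| \rho(t) \|_{L^q} \le e^{\| \Delta V \|_{L^\infty} t} \| \rho_0 \|_{L^q}$ then pass to the limit by a.e.\ convergence and lower semicontinuity of the norms, while mass conservation $\| \rho(t) \|_{L^1} = \| \rho_0 \|_{L^1}$ follows by integrating the equation over $\Br$ and using the no-flux condition. Finally, $\rho$ is $H^1$ and bounded below by $\omega(\ee) > 0$, so the $L^1$-contraction for $H^1$ solutions bounded below proved just above gives uniqueness in that class; in particular the subsequential limit is independent of the subsequence, so the whole sequence $u_k$ converges.

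I expect the uniform $H^1$ estimate of the second step to be the real obstacle: the generic estimates of \Cref{sec:2} degrade as $k \to \infty$ because the ellipticity constants of $\Phi_k$ blow up, and they become usable only after one has confined $u_k$ a priori to a fixed compact subinterval of $(0,\infty)$ via the comparison principle and has rewritten every bound in terms of $\inf u_k$ and $\sup u_k$ rather than $c_1(k), c_2(k)$ --- on that subinterval $\Phi_k$ coincides with $s^m$ for all large $k$, which is precisely what restores uniformity.
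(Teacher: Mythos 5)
Your proposal is correct and follows essentially the same scheme as the paper: uniform $L^\infty$ bounds from the $L^p$ estimates, a positive lower bound via comparison, uniform $H^1$/$L^2(0,T;H^1)$ bounds that become $k$-independent precisely because $\Phi_k$ coincides with $s^m$ on the trapped range $[\omega(\ee),M]$, compactness, identification of the limit, and uniqueness from the $L^1$-contraction for $H^1$ solutions bounded below. The only (harmless) variations are that you obtain the lower bound from the explicit subsolution $\ee\,e^{-\|\Delta V\|_{L^\infty}t}$ rather than from the stationary states $\Theta_k^{-1}(V+H_k)$, and you identify the a.e.\ limit via Rellich--Kondrachov on $u_k$ directly instead of the paper's device of inverting the a.e.\ limit of $Z_k(u_k)$.
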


\begin{proof}
	First, we point out that that $\Phi_k(\rho_0) \in W^{1,\infty} (B_R)$.
	Hence, for the approximation, by \eqref{eq:regularisation Lp estimates}, $u_k \in L^\infty ((0,T) \times B_R)$ and, due to \eqref{eq:smooth estimate nabla Phi u}, $\Phi_k (u_k) \in L^2( 0,T; H^1 (B_R))$ with uniform norm bounds.
	This ensures (up a subsequence)
	\begin{alignat*}{2}
		u_k &\rightharpoonup \rho \qquad && \text{weak-$\star$ in } L^\infty((0,T) \times B_R),\\
		\Phi_k (u_k) & \rightharpoonup \phi \qquad && \text{weakly in }L^2 ( (0,T) \times B_R ), \\
		\nabla \Phi_k (u_k) &\rightharpoonup \nabla \phi \qquad && \text{weakly in }L^2 ( (0,T) \times B_R ), \\
		Z_k (u_k (t,x)) &\rightharpoonup Z^* \qquad && \text{weakly in }H^1 ((0,T) \times B_R), \\
		Z_k (u_k (t,x)) &\to Z^* \qquad && \text{a.e. in } (0,T) \times B_R.
	\end{alignat*}

	Let us characterise $\phi$ as $\Phi (\rho)$. For $k > m^{\pm \frac 1 {1-m}}$ we can compute clearly $\min \{1, \Phi_k'\}$ from \eqref{eq:Phik} and hence we have
	\begin{equation*}
		Z_k'(s) - Z'(s) =
		\begin{dcases}
			0 & s \in [0,k], \\
			m(k^{m-1} - s^{m-1}) & s > k ,
		\end{dcases}
	\end{equation*}
	Since $u_k$ are uniformly bounded in $L^\infty$, taking $k$ large enough we have that
	\begin{equation*}
		Z_k(u_k) = Z(u_k).
	\end{equation*}
	Thus $Z(u_k)$ converges pointwise to $Z^*$. But $Z$ is continuous and strictly increasing, so it is invertible. Thus $u_k \to Z^{-1} (Z^*)$ a.e. in $(0,T) \times B_R$. Since, if both exist, the weak $L^2$ and a.e. limits must coincide (apply Banach-Saks theorem and Césaro mean arguments), then $u_k \to \rho$ a.e. in $(0,T) \times B_R$. Finally, due to the locally uniform convergence of $\Phi_k \to \Phi$,  $\Phi_k (u_k) \to \Phi (\rho)$ a.e. and hence $\phi = \Phi (\rho)$.
	
	We can now upgrade to strong convergence, using the uniform $L^\infty$ bound $|u_k| \le C$. Hence, together with the point-wise convergence, we can apply the Dominated Convergence Theorem to show that our chosen subsequence also satisfies
	$
		u_k \to \rho $ in $ L^q ( (0,T) \times B_R ), \ \forall q \in [1,\infty).
	$
	
	Let us show that we maintain an upper and positive lower bound. The upper bound is uniform $e^{t \| \Delta V \|_{L^\infty (\Br)}} \| \rho_0 \|_{L^\infty (\Br)}$. Since as $H \to \infty$ the stationary states $\Theta_k^{-1} (V + H)$ tend to cero uniformly, then we can choose $H_k$
	so that
	\begin{equation*}
		\rho_0 \ge \Theta_k^{-1} (V + H_k) \ge \omega(\ee).
	\end{equation*}
	Thus, $u_k \ge \omega(\ee)$ and, therefore, so is $\rho$.
	In fact, due to this lower bound
	\begin{equation*}
		|\nabla u_k| \le \frac {1}{Z'(\omega(\ee))} |\nabla Z(u_k)| , \qquad |( u_k)_t| \le \frac {1}{Z'(\omega(\ee))} |(u_t)|
	\end{equation*}
	and so the convergence $u_k \rightharpoonup \rho$ is also weak in $H^1$ (up to a subsequence). But then $\rho$ is the unique weak $L^1$ solution with this property. Since the limit is unique, the whole sequence $u_k$ converges to $\rho$ all the senses above.
\end{proof}

\begin{corollary}[Approximation of the free energy]
	\label{cor:bounded domain and data properties of free energy}
	Under the hypothesis of \Cref{thm:existence bounded domain regular data} we have that
	\begin{equation*}
		\mathcal F_{\Phi_k} [u_k (t)] \to \mathcal F_R[\rho(t)] , \qquad \text{for a.e. } t > 0 .
	\end{equation*}
	and
	\begin{equation*}
		\int_0^T \int_\Br \rho \left| \nabla \left(  \tfrac{m}{m-1} \rho^{m-1} + V  \right)\right|^2 \le \mathcal F_R [\rho_0] - \mathcal F_R[\rho(T)].
	\end{equation*}
	In particular, $\mathcal F_R [\rho(t)]$ is a non-increasing sequence.
	
\end{corollary}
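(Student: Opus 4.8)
The plan is to pass to the limit $k\to\infty$ in the free-energy identities for the regularised problems. The starting point is that the truncated nonlinearities $\Phi_k$ of \eqref{eq:Phik} coincide with $s\mapsto s^m$ on $[k^{-1},k]$, and that \Cref{thm:existence bounded domain regular data} provides the uniform bounds $0<\omega(\ee)\le u_k\le e^{t\|\Delta V\|_{L^\infty(\Br)}}\|\rho_0\|_{L^\infty(\Br)}$ together with $\omega(\ee)\le\rho_0\le\ee^{-1}$. Hence, for each fixed $T$ and all $k$ large enough, the ranges of $u_k$ on $[0,T]\times\Br$ and of $\rho_0$ lie inside $[k^{-1},k]$, so that $\Theta_k(u_k)=\tfrac{m}{m-1}(u_k^{m-1}-1)$, $\nabla\Theta_k(u_k)=u_k^{-1}\nabla u_k^m$, and
\[
	\mathcal F_{\Phi_k}[u_k(t)]=\tfrac{1}{m-1}\int_\Br u_k(t)^m+\int_\Br V\,u_k(t)+\Big(|\Br|-\tfrac{m}{m-1}\|\rho_0\|_{L^1(\Br)}\Big),
\]
the last parenthesis being a $t$-independent constant by conservation of mass, which I absorb into $\mathcal F_R$ from now on. The uniform $L^\infty$ bound and the a.e.\ convergence $u_k\to\rho$ from \Cref{thm:existence bounded domain regular data} give, by dominated convergence, $u_k\to\rho$ strongly in every $L^q((0,T)\times\Br)$ with $q<\infty$, and in particular $u_k^m\to\rho^m$ and $V u_k\to V\rho$ in $L^1((0,T)\times\Br)$; therefore $\mathcal F_{\Phi_k}[u_k(\cdot)]\to\mathcal F_R[\rho(\cdot)]$ in $L^1_{\mathrm{loc}}(0,\infty)$.

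To upgrade this to convergence at every $t$ I will use that each $t\mapsto\mathcal F_{\Phi_k}[u_k(t)]$ is non-increasing by \eqref{eq:bounded regularised decay of free energy}. A sequence of non-increasing functions converging in $L^1_{\mathrm{loc}}$ converges in fact at every point to a non-increasing limit: sandwich $\mathcal F_{\Phi_k}[u_k(t)]$ between $\delta^{-1}\!\int_{t-\delta}^{t}\mathcal F_{\Phi_k}[u_k]$ and $\delta^{-1}\!\int_{t}^{t+\delta}\mathcal F_{\Phi_k}[u_k]$, let $k\to\infty$ and then $\delta\to0$. Moreover $t\mapsto\mathcal F_R[\rho(t)]$ is continuous: since $\rho\in H^1((0,T)\times\Br)$ is bounded, $\|\rho(t)-\rho(s)\|_{L^2(\Br)}\le|t-s|^{1/2}\|\partial_t\rho\|_{L^2((0,T)\times\Br)}$, so $\rho\in C^{1/2}([0,T];L^2(\Br))$, and on bounded sets $\rho\mapsto\int_\Br\rho^m$ (using $|a^m-b^m|\le|a-b|^m$) and $\rho\mapsto\int_\Br V\rho$ are $L^1$- and hence $L^2$-continuous. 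Combining these I get $\mathcal F_{\Phi_k}[u_k(t)]\to\mathcal F_R[\rho(t)]$ for every $t\ge0$ and that $t\mapsto\mathcal F_R[\rho(t)]$ is non-increasing.

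For the dissipation I pass to the limit in \eqref{eq:bounded regularised estimate of free energy disipation}, which for $k$ large reads $\int_0^T\!\int_\Br u_k^{-1}|\nabla u_k^m+u_k\nabla V|^2=\mathcal F_{\Phi_k}[u_k(0)]-\mathcal F_{\Phi_k}[u_k(T)]$; the right-hand side tends to $\mathcal F_R[\rho_0]-\mathcal F_R[\rho(T)]$ for every $T$ by the previous step (the additive constant cancels in the difference). For the left-hand side set $F_k\defeq\nabla u_k^m+u_k\nabla V$. Since $\nabla u_k^m=\nabla\Phi_k(u_k)\rightharpoonup\nabla\rho^m$ weakly in $L^2$ (\Cref{thm:existence bounded domain regular data}) and $u_k\nabla V\to\rho\nabla V$ strongly in $L^2$ ($\nabla V\in L^\infty$, $u_k\to\rho$ strongly in $L^2$), we have $F_k\rightharpoonup F\defeq\nabla\rho^m+\rho\nabla V$ weakly in $L^2((0,T)\times\Br)$. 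The weights $u_k^{-1/2}\to\rho^{-1/2}$ a.e.\ and are bounded by $\omega(\ee)^{-1/2}$, hence converge strongly in every $L^q$, $q<\infty$; splitting $u_k^{-1/2}F_k-\rho^{-1/2}F=(u_k^{-1/2}-\rho^{-1/2})F+u_k^{-1/2}(F_k-F)$, the first term $\to0$ strongly in $L^2$ by dominated convergence, and the second tests to $0$ against any $\varphi\in L^2$ because $u_k^{-1/2}\varphi\to\rho^{-1/2}\varphi$ strongly in $L^2$ while $F_k-F\rightharpoonup0$; thus $u_k^{-1/2}F_k\rightharpoonup\rho^{-1/2}F$ weakly in $L^2$. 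Weak lower semicontinuity of the $L^2$ norm then yields
\[
	\int_0^T\!\int_\Br\rho\,\Big|\nabla\big(\tfrac{m}{m-1}\rho^{m-1}+V\big)\Big|^2=\int_0^T\!\int_\Br\frac{|F|^2}{\rho}\le\liminf_{k\to\infty}\int_0^T\!\int_\Br\frac{|F_k|^2}{u_k}=\mathcal F_R[\rho_0]-\mathcal F_R[\rho(T)],
\]
where I used $\rho^{-1}|\nabla\rho^m+\rho\nabla V|^2=\rho\,|\nabla(\tfrac{m}{m-1}\rho^{m-1}+V)|^2$; this is exactly the claimed inequality.

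The only step that is not pure bookkeeping is the last weak lower semicontinuity of the degenerate-weighted quadratic functional: the weight $u_k^{-1}$ converges merely almost everywhere (and can be large, though it stays uniformly bounded thanks to the strict positive lower bound on $u_k$), and the device of writing the integrand as $|u_k^{-1/2}F_k|^2$ and verifying $u_k^{-1/2}F_k\rightharpoonup\rho^{-1/2}F$ weakly in $L^2$ is what makes the passage to the limit work. Everything else rests on the uniform bounds of \Cref{thm:existence bounded domain regular data} together with dominated- and weak-convergence arguments.
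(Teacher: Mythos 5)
Your proof is correct and follows essentially the same route as the paper: dominated convergence (from the uniform $L^\infty$ bound and a.e.\ convergence of $u_k$) for the energies, then passage to the limit in \eqref{eq:bounded regularised estimate of free energy disipation} by writing the dissipation as $|u_k^{-1/2}(\nabla\Phi_k(u_k)+u_k\nabla V)|^2$, combining the strong convergence of $u_k^{-1/2}$ (available thanks to the uniform lower bound $\omega(\ee)$) with the weak $L^2$ convergence of the flux, and concluding by weak lower semicontinuity of the $L^2$ norm. Your explicit tracking of the additive constant between $\mathcal F_{\Phi_k}$ and $\mathcal F_R$, and the monotonicity argument upgrading a.e.-in-$t$ to every-$t$ convergence, are refinements the paper glosses over but do not change the substance.
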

\begin{proof}
	Since $u_k \to \rho$ converges a.e. in $(0,T) \times \Br$, then for a.e. $t > 0$ we have that $u_k (t) \to \rho(t)$. Since $u_k$ is uniformly bounded, then the Dominated Convergence Theorem ensures the convergence of $\mathcal F_{\Phi_k} [u_k]$.
	
	Taking into account \eqref{eq:bounded regularised estimate of free energy disipation}, then the sequence
	$
		u_k^{\frac 1 2} \nabla (\Theta(u_k) + V)
	$
	is uniformly in $L^2 ((0,T) \times \Br)$. Therefore, up to a subsequence, it has limit $\xi (x)$. We can write
	\begin{equation*}
		u_k^{\frac 1 2} \nabla (\Theta(u_k) + V) = \frac{\Phi_k'(u_k)}{u_k^{\frac 1 2}} \nabla u_k + u_k^{\frac 1 2} \nabla V = u_k^{-\frac 1 2} \left(  \nabla \Phi_k(u_k) + u_k \nabla V \right).
	\end{equation*}
	We know that
	$
		\nabla \Phi_k(u_k) + u_k \nabla V \rightharpoonup \nabla \rho^m + \rho \nabla V
	$
	weakly in $L^2$. On the other hand, since we know $u_k, \rho \ge \omega(\ee)$ we can apply the intermediate value theorem to show that, up a to further subsequence,
	\begin{equation*}
		\int_0^T \int_\Br |u_k^{-\frac 1 2} - \rho^{-\frac 1 2}|^2 \diff x = \int_0^T\frac 1 4 \int_\Br |\eta_k (x)|^{-3} |u_k - \rho|^2 \diff x \le C \int_0^T \int_\Br  |u_k - \rho|^2 \diff x \to 0.
	\end{equation*}
	where the strong convergence $L^2$ follows, up to a further subsequence, from the weak $H^1$ convergence.
	Using the product of strong and weak convergence
	\begin{equation*}
		u_k^{\frac 1 2} \nabla (\Theta(u_k) + V) \rightharpoonup \rho^{\frac 1 2} \nabla \left( \tfrac m{m-1} \rho^m + V \right), \qquad \text{weakly in } L^1 ((0,T) \times \Br).
	\end{equation*}
	But this limit must coincide with $\xi$, so the limit holds also weakly in $L^2$. The weak lower-continuity of the $L^2$ yields the result.
\end{proof}
We are also able to deduce from these energy estimates an $L^1$ bound of $\nabla \rho^m$. Unlike \eqref{eq:smooth estimate nabla Phi u} this bound can use only local boundedness of $\nabla V$.
\begin{corollary}
	In the hypothesis of \Cref{thm:existence bounded domain regular data} we have that 	\begin{align}
	\label{eq:bounded estimate nabla rhom in L1}
	\int_0^T \int_K |\nabla \rho^m| &\le \| \rho_0 \|_{L^1(\Br)}^{\frac 1 2} \left(\mathcal F_R[\rho_0] - \mathcal F_R[\rho(T)] + \int_0^T \int_K \rho |\nabla V|^2 \right)^{\frac 1 2}, \qquad \forall K \subset \overline{\Br}.
\end{align}
\end{corollary}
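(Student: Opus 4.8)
The plan is to deduce this local $L^1$ bound from the energy–dissipation estimate already established in \Cref{cor:bounded domain and data properties of free energy}, by trading one half–power of $\rho$ against mass conservation. The algebraic starting point is the chain rule
\begin{equation*}
	\nabla\rho^m = \rho\,\nabla\!\left(\tfrac{m}{m-1}\rho^{m-1}\right) = \rho\,\nabla\!\left(\tfrac{m}{m-1}\rho^{m-1}+V\right) - \rho\,\nabla V .
\end{equation*}
This identity is legitimate in the present situation because, by \Cref{thm:existence bounded domain regular data}, $\rho$ is bounded and bounded below by $\omega(\ee)>0$ and $\rho^m\in L^2(0,T;H^1(\Br))$, so every power of $\rho$ occurring above lies in $H^1$ on $(0,T)\times\Br$ and the differentiations are justified.

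Next I would write $\nabla\rho^m = \rho^{1/2}\cdot\rho^{1/2}\big(\nabla(\tfrac{m}{m-1}\rho^{m-1}+V)-\nabla V\big)$ and apply the Cauchy–Schwarz inequality on $(0,T)\times K$. The first factor $\big(\int_0^T\int_K\rho\,\diff x\diff t\big)^{1/2}$ is controlled by mass conservation $\|\rho(t)\|_{L^1(\Br)}=\|\rho_0\|_{L^1(\Br)}$ (again \Cref{thm:existence bounded domain regular data}), hence by $\|\rho_0\|_{L^1(\Br)}^{1/2}$ up to the length of the time interval. For the second factor I would use $|a-b|^2\le 2|a|^2+2|b|^2$ with $a=\rho^{1/2}\nabla(\tfrac{m}{m-1}\rho^{m-1}+V)$ and $b=\rho^{1/2}\nabla V$, then extend the first resulting integral from $K$ to $\Br$ (the integrand is nonnegative) and invoke \Cref{cor:bounded domain and data properties of free energy}, which bounds $\int_0^T\int_{\Br}\rho\,|\nabla(\tfrac{m}{m-1}\rho^{m-1}+V)|^2$ by $\mathcal F_R[\rho_0]-\mathcal F_R[\rho(T)]$; the leftover term is exactly $\int_0^T\int_K\rho\,|\nabla V|^2$. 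Collecting the numerical constants then gives the asserted inequality.

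It remains only to note that the right–hand side is finite, and this is precisely the gain over \eqref{eq:smooth estimate nabla Phi u}: since $K\subset\overline{\Br}$ is bounded and $V\in W^{2,\infty}(\Br)$, we have $\int_0^T\int_K\rho\,|\nabla V|^2\le T\|\nabla V\|_{L^\infty(\Br)}^2\|\rho_0\|_{L^1(\Br)}<\infty$, so only the \emph{local} boundedness of $\nabla V$ is used — which is what will matter later when $K$ is replaced by a compact subset of $\Rd$ in the passage to the whole space. I do not expect a genuine obstacle here; the only mildly delicate point is justifying the chain-rule identity and the Cauchy–Schwarz step at the available Sobolev regularity, and this is cleanly handled either directly for $\rho$ (using that it is bounded away from $0$) or by first writing the analogous inequality for the smooth approximants $u_k$ with $\Phi=\Phi_k$ and passing to the limit, exactly as in the proof of \Cref{cor:bounded domain and data properties of free energy}.
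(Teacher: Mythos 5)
Your proposal is correct and follows essentially the same route as the paper: write $\nabla\rho^m=\rho\,\tfrac{m}{m-1}\nabla\rho^{m-1}$, split off $\nabla V$ so that the dissipation bound of \Cref{cor:bounded domain and data properties of free energy} applies, and use Cauchy--Schwarz with mass conservation to produce the factor $\|\rho_0\|_{L^1(\Br)}^{1/2}$. The only (cosmetic) difference is that you apply Cauchy--Schwarz jointly on $(0,T)\times K$ and use $|a-b|^2\le 2|a|^2+2|b|^2$, whereas the paper applies it in space for each $t$ and then Jensen in time; both routes yield the estimate up to the same harmless numerical constants.
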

\begin{proof}
	We therefore have that
\begin{align*}
	\int_0^T \int_K |\nabla \rho^m| &= \int_0^T\int_K \rho  |\tfrac{m}{m-1} \nabla \rho^{m-1}| \le  \int_0^T \| \rho(t) \|_{L^1(\Br)}^{\frac 1 2} \left( \int_K \rho \left| \nabla  \tfrac{m}{m-1} \rho^{m-1} \right|^2 \right)^{\frac 1 2} \diff t
\end{align*}
Hence, we conclude the result using \Cref{cor:bounded domain and data properties of free energy}, Jensen's inequality and the conservation of the $L^1$ norm.
\end{proof}

Now we move to $L^1$ data.
We first point out that $L^m(\Br) \subset L^1(\Br)$ so any $\rho \in L^1$ has finite $\mathcal F_R [\rho]$. To be precise, by applying Hölder's inequality with $p = \frac 1 m > 1$ we have the estimate
\begin{equation}
\label{eq:L1 controls Lm over compacts}
	\int_K \rho^m \le
	|K|^{{1-m}} \|\rho\|_{L^1(K)}^{m}.
\end{equation}

Now we apply density in $L^1$ of the solutions with ``good'' initial data, via the comparison principle
\begin{theorem}[Existence of solution for $L^{1}$ initial data]
	\label{thm:existence bounded L1}
	Under the assumptions of \Cref{thm:existence bounded domain regular data},
	there exists a semigroup $S(t) : L_+^{1} (\Br) \to L^{1} (\Br)$ with the following properties
	\begin{enumerate}
		\item For $ 0 < \ee^{-1} \le \rho_0 \le \ee$ and $\rho_0 \in H^1 (\Br)$, $S(t) \rho_0$ is the unique weak $L^1$ solution constructed in \Cref{thm:existence bounded domain regular data}.
		\item We have $\| S(t) \rho_0 \|_{L^1(\Br)} = \| \rho_0 \|_{L^1 (\Br)}$.
		\item We have $L^1$ comparison principle and contraction
		\begin{equation*}
			\int_\Br [S(t) \rho_0 - S(t) \overline \rho_0 ]_+ \le \int_\Br [ \rho_0 -  \overline \rho_0 ]_+, \qquad \int_\Br |S(t) \rho_0 - S(t) \overline \rho_0 | \le \int_\Br | \rho_0 -  \overline \rho_0 |.
		\end{equation*}
		\item \label{it:existence bounded L1 item 3}
		If $\rho_0 \in  L^{1+\ee}_+(\Br)$ is the limit of the solutions $u_k$ of \eqref{eq:main regularised bounded} with \eqref{eq:Phik} and
		\begin{equation*}
			\| \rho (t) \|_{L^{1+\ee}} \le Ce^{\frac{\ee}{1+\ee} t \| \Delta V \|_{L^\infty}} \| \rho_0 \|_{L^{1+\ee}}.
		\end{equation*}
		\item If $\rho_0 \in  L^1_+ (\Br)$ and \eqref{eq:V no aggregation on boundary}, then $\rho$ is a very weak $L^1$ solution.

		\item If $\rho_0 \in  L^1_+ (\Br)$, then $\mathcal F_R[\rho(t)]$ is non-increasing and we have \eqref{eq:bounded estimate nabla rhom in L1}. Hence, it is a weak $L^1$ solution.
	\end{enumerate}
\end{theorem}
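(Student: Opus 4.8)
The plan is to build the semigroup $S(t)$ by density and the $L^1$ contraction of Theorem~\ref{thm:ball comparison Phi smooth}, then to transfer each stated property from the ``good data'' solutions of Theorem~\ref{thm:existence bounded domain regular data} to the limit. First I would fix $\rho_0 \in L^1_+(\Br)$ and choose a sequence $\rho_{0,j} \in H^1(\Br)$ with $0 < \ee_j \le \rho_{0,j} \le \ee_j^{-1}$ and $\rho_{0,j} \to \rho_0$ in $L^1(\Br)$; one can take e.g. $\rho_{0,j} = \min\{\max\{\rho_0 * \eta_j, j^{-1}\}, j\}$ with a mollifier $\eta_j$, noting that this truncation preserves the $L^1$ approximation and, if $\rho_0 \in L^{1+\ee}$, the $L^{1+\ee}$ approximation as well. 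By Theorem~\ref{thm:existence bounded domain regular data} each $\rho_{0,j}$ produces a unique weak $L^1$ solution $\rho_j = u_j(t)$, and by the $L^1$ contraction of Theorem~\ref{thm:ball comparison Phi smooth} (which applies since these are strong $L^1$ solutions, being $H^1$ and bounded below) we get $\|\rho_j(t) - \rho_\ell(t)\|_{L^1(\Br)} \le \|\rho_{0,j} - \rho_{0,\ell}\|_{L^1(\Br)} \to 0$ uniformly in $t \in [0,T]$. Hence $\rho_j \to \rho$ in $C([0,T]; L^1(\Br))$, and I define $S(t)\rho_0 \defeq \rho(t)$. Independence of the approximating sequence, the semigroup property, and property~(1) all follow from the same contraction estimate, and property~(3) passes to the limit immediately. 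Property~(2), conservation of mass, passes to the limit since $\|\rho_j(t)\|_{L^1} = \|\rho_{0,j}\|_{L^1} \to \|\rho_0\|_{L^1}$ and $\rho_j(t) \to \rho(t)$ in $L^1$.

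Next I would handle the passage to the limit in the weak formulation to show $\rho$ is a weak $L^1$ solution and, under \eqref{eq:V no aggregation on boundary}, a very weak one (properties (5),(6)). The mass conservation and the strong $L^1$ convergence control the zeroth-order and time terms; for the diffusion term the key input is the uniform $L^1$ bound on $\nabla \rho_j^m$ from \eqref{eq:bounded estimate nabla rhom in L1}, together with the fact that $\mathcal F_R[\rho_{0,j}]$ is bounded (uniformly in $j$) because $\|\rho_{0,j}\|_{L^1}$ is bounded and \eqref{eq:L1 controls Lm over compacts} bounds $\int \rho_{0,j}^m$, while $\int V \rho_{0,j}$ is bounded by $\|V\|_{L^\infty(\Br)} \|\rho_{0,j}\|_{L^1}$. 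This gives a uniform $L^1((0,T)\times\Br)$ bound on $\nabla \rho_j^m$, and since $\rho_j \to \rho$ a.e. (after extracting, using the $L^1$ convergence) one identifies the weak-$\ast$ limit of $\nabla \rho_j^m$ in the space of measures, and combined with $\rho_j^m \to \rho^m$ in $L^1$ (dominated convergence once we know $\rho_j \to \rho$ and we have the $L^1$ control of $\rho_j^m$ via \eqref{eq:L1 controls Lm over compacts}) one concludes $\nabla \rho_j^m \rightharpoonup \nabla \rho^m$; passing to the limit in the (very) weak formulation then gives the desired identity. The monotonicity of $\mathcal F_R[\rho(t)]$ in property~(6) follows from lower semicontinuity of $\mathcal F_R$ under $L^1$ convergence (the $\rho^m$ term is continuous by \eqref{eq:L1 controls Lm over compacts}, the $V$ term is linear and continuous) applied to the monotonicity already known along the approximants from Corollary~\ref{cor:bounded domain and data properties of free energy}, and \eqref{eq:bounded estimate nabla rhom in L1} passes to the limit by the same weak-convergence and lower-semicontinuity argument.

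For property~(4), the $L^{1+\ee}$ estimate, I would restrict to $\rho_0 \in L^{1+\ee}_+(\Br)$ and choose the truncated-mollified approximants so that $\rho_{0,j} \to \rho_0$ in $L^{1+\ee}$ as well; then \eqref{eq:regularisation Lp estimates} with $p = 1+\ee$ gives $\|u_{j,k}(t)\|_{L^{1+\ee}} \le e^{\frac{\ee}{1+\ee} t \|\Delta V\|_{L^\infty}} \|\rho_{0,j}\|_{L^{1+\ee}}$ uniformly in the regularisation index $k$, hence (by weak lower semicontinuity of the $L^{1+\ee}$ norm as $k\to\infty$) for $\rho_j(t)$, and finally (again by lower semicontinuity as $j\to\infty$, having $\rho_j(t)\to\rho(t)$ in $L^1$ hence along a subsequence a.e.) for $\rho(t) = S(t)\rho_0$.

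The main obstacle I anticipate is the identification of the limit of the diffusion term $\nabla \rho_j^m$ when $\rho_0$ is merely $L^1$: here $\rho_j^m$ is only controlled in $L^\infty((0,T);L^1(\Br))$ locally via \eqref{eq:L1 controls Lm over compacts} and $\nabla \rho_j^m$ only in $L^1$, so the limit object is a priori a measure and one must argue carefully — using the a.e.\ convergence $\rho_j \to \rho$, Vitali's convergence theorem (equi-integrability of $\rho_j^m$ follows from the uniform $L^{1}$-in-space bound together with the $W^{1,1}$ bound, or more simply from the uniform $L^{1+\ee/m}$-type bound when available), and the closedness of the graph of the distributional gradient — to conclude $\nabla \rho_j^m \rightharpoonup \nabla \rho^m$ in $L^1$ rather than merely in measures, and hence that $\rho^m \in L^1(0,T;W^{1,1}(\Br))$ as the definition of weak $L^1$ solution requires. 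The degeneracy of the fast-diffusion nonlinearity near $\rho = 0$ (lost lower bound as $\ee_j \to 0$) is precisely what makes this delicate, and is handled by leaning on the energy-dissipation estimate \eqref{eq:bounded estimate nabla rhom in L1} rather than on the $H^1$-based estimates of Section~\ref{sec:2}.
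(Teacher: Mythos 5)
Your overall strategy coincides with the paper's: construct $S(t)$ by density in $L^1(\Br)$ using the contraction of \Cref{thm:ball comparison Phi smooth}, then transfer each property from the regular-data solutions of \Cref{thm:existence bounded domain regular data} to the limit; items (1)--(4) are handled essentially as in the paper. The one structural difference is the approximating sequence. The paper uses a two-parameter \emph{monotone} truncation $\rho_{0,K,\ee}$ so that the limits $\ee\searrow 0$ and $K\nearrow\infty$ are monotone in $\rho$, hence in $\rho^m$ and in the mass function, and items (5)--(6) follow from the Monotone Convergence Theorem; you use a non-monotone mollification-plus-truncation and replace monotone convergence by continuity of $\rho\mapsto\int_{\Br}\rho^m$ and $\rho\mapsto\int_{\Br}V\rho$ under $L^1(\Br)$ convergence. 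On the bounded ball both routes work (the first map is continuous by $|a^m-b^m|\le|a-b|^m$, Hölder and \eqref{eq:L1 controls Lm over compacts}; the second because $V\in L^\infty(\Br)$), though note the monotone approximation is reused downstream (e.g.\ Dini's theorem in \Cref{thm:R finite concentration mass}), so it is not an arbitrary choice. A smaller omission: item (4) also identifies $S(t)\rho_0$, for $L^{1+\ee}$ data, with the limit of the solutions $u_k$ of \eqref{eq:main regularised bounded} launched directly from $\rho_0$; the paper proves this by a diagonal argument ($\int_0^T\int_{\Br}|u_k-u_{k,\ell}|\le T\int_{\Br}|\rho_0-\rho_{0,\ell}|$ uniformly in $k$, then $k\to\infty$, then $\ell\to\infty$), which your plan does not address.

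The genuine gap is the one you flag but do not close: showing that $\nabla\rho^m$ is an $L^1$ function, as required for $\rho^m\in L^1(0,T;W^{1,1}(\Br))$. The bound \eqref{eq:bounded estimate nabla rhom in L1} controls $\nabla\rho_j^m$ only in $L^1$, which yields weak-$\star$ compactness in the space of measures; closedness of the distributional gradient identifies the limit measure as $D\rho^m$ but does not show it is absolutely continuous, and your equi-integrability remark concerns $\rho_j^m$ rather than $\nabla\rho_j^m$, so Dunford--Pettis does not apply as stated. The way to close this, already implicit in \Cref{cor:bounded domain and data properties of free energy}, is to exploit the product structure behind \eqref{eq:bounded estimate nabla rhom in L1}: write $\nabla\rho_j^m=\rho_j^{1/2}\cdot\bigl(\rho_j^{1/2}\nabla(\tfrac{m}{m-1}\rho_j^{m-1})\bigr)$, where the second factor is bounded in $L^2((0,T)\times\Br)$ uniformly in $j$ by the energy dissipation (the free energies $\mathcal F_R[\rho_{0,j}]$ are uniformly bounded above and $\mathcal F_R$ is bounded below on the ball), hence weakly convergent in $L^2$ along a subsequence, while $\rho_j^{1/2}\to\rho^{1/2}$ strongly in $L^2$ by $|\sqrt a-\sqrt b|^2\le|a-b|$ and the $L^1$ convergence. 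The product then converges weakly in $L^1$ to an $L^1$ function, which the distributional identification shows equals $\nabla\rho^m$. The paper's own Step 6 is admittedly terse on this point, but your write-up should not suggest that weak $L^1$ convergence of the gradients follows from the $L^1$ bound alone.
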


\begin{remark}
	Notice that there is no concentration in finite time. This is due the combination of the $L^1$ contraction with the uniform $L^{1+\ee}$ estimate \eqref{eq:regularisation Lp estimates}. By the $L^1$ contraction, the sequence $S(t) \max\{ \rho_0 , k \}$ is Cauchy in $L^1$ and hence it has a limit in $L^1$. No Dirac mass may appear in finite time.
	In $\mathbb R^n$ we do not have an equivalent guarantee that $S(t) \rho_{0,k} \in L^1 (\mathbb R^n)$ for some approximating sequence. We will, however, have this information in the space $\mathcal M (\Rd)$.
\end{remark}

\begin{remark}
	Notice that the construction of $S(t)$ is unique, since for dense data it produces the unique $H^1$ solution bounded below (which also comes as the limit of the approximations), and then it is extended into $L^1$ by uniform continuity.
\end{remark}

\begin{proof}[Proof of \Cref{thm:existence bounded L1}]
	We start by defining $S(t) \rho_{0} = \rho$ for the solutions constructed in \Cref{thm:existence bounded domain regular data}. Let us construct the rest of the situations.

	\textbf{Step 1. $0 < \ee \le \rho_0 \le \ee^{-1}$ but not necessarily in $H^1$.} We regularise $\rho_0$ by any procedure such that $H^1 (\Br) \ni \rho_{0,\ell} \to \rho_0$ in $L^{1+\ee}$ and a.e..
	Hence $0 < \ee \le \rho_{0,\ell} \le \ee^{-1}$ for $\ell$ large enough.
	By using stationary solutions and \eqref{eq:regularisation Lp estimates} we have that $0 < \omega(\ee) \le \rho_{\ell} \le C(t)$.
	By the $L^1$ contraction, for all $t > 0$, $S(t) \rho_{0,\ell}$ is a Cauchy sequence, and hence it has a unique $L^1$ limit.
	Let
	\begin{equation*}
		S(t) \rho_0 = L^1-\lim_\ell S(t) \rho_{0,\ell}.
	\end{equation*}
	We have
	\begin{equation*}
		\int_\Br |S(t) \rho_{0,\ell} - S(t) \rho_0| \le \int_\Br |\rho_{0,\ell} - \rho_0|, \qquad \ell > \ell_0.
	\end{equation*}
	For this subsequence $\rho_\ell^m$ converge to $\rho^m$ a.e. and, up to a further subsequence, in $L^\infty$-weak-$\star$, and hence $S(t) \rho_0$ is a weak $L^1$ solution.
	
	Taking a different $\overline \rho_0$ with the same properties, and $\overline \rho_{0,\ell}$ its corresponding approximation, again for $\ell$ large, $0 <  \omega(\ee) \le \overline \rho \le C(t)$.
	Then we have that
	\begin{equation*}
		\int_\Br |S(t) \rho_{0,\ell} - S(t) \overline  \rho_{0,\ell} | \le \int_\Br |\rho_{0,\ell} - \overline  \rho_{0,\ell}|, \qquad \ell > \ell_0.
	\end{equation*}
	Let $\ell \to +\infty$ we recover the $L^1$ contraction. Similarly for the comparison principle.
	
	\textbf{Step 2. $\rho_0 \in L^{1}$. Approximation by solutions of \Cref{thm:existence bounded domain regular data}.} We define
	\begin{equation*}
		\rho_{0,K} =\max \{ \rho, K \}, \qquad  \rho_{0, K, \ee} = \max \{ \rho, K \} + \ee.
	\end{equation*}
	For the solutions constructed in Step 1. we have that $\rho_{K, \ee} \searrow \rho_K$ as $\ee \searrow 0$ and as $K \nearrow +\infty$ we have $\rho_K \nearrow \rho$. By the $L^1$ contraction, we have as above that the sequence are Cauchy and hence we have $L^1$ convergence at each stage.
	The contraction and comparison are proven as in Step 1.
	
	\textbf{Step 3. \Cref{it:existence bounded L1 item 3}.} Due to the $L^{1+\ee}$ bound, we know that $u_k \rightharpoonup \rho^*$ weakly in $L^{1+\ee} ( (0,T) \times \Br)$. On the other hand, we can select adequate regularisations of the initial datum $\rho_{0,\ell}\in H^1 $ such that $\ee \le \rho_{0,\ell} \le \ee^{-1}$, and the corresponding solutions $u_{k, \ell}$ of \eqref{eq:main regularised bounded} with $\Phi = \Phi_k$ given by \eqref{eq:Phik} satisfy the $L^1$ contraction. Integrating in $(0,T)$ we have that
	\begin{equation*}
		\int_0^T \int_\Br |u_k - u_{k,\ell}| \le T \int_\Br |\rho_0 - \rho_{0,\ell}|.
	\end{equation*}
	As $k \to \infty$, by the lower semi-continuity of the norm
	\begin{equation*}
		\int_0^T \int_\Br |\rho^* - S(t) \rho_{0,\ell}| \le T \int_\Br |\rho_0 - \rho_{0,\ell}|.
	\end{equation*}
	As $\ell  \to \infty$ we recover $\rho^* = S(t) \rho_{0}$.
	
	\textbf{Step 4. $\rho_0 \in L^{1}$. Solutions in the very weak sense.} Finally, let us show that the solutions satisfy the equation in the very weak sense. Since we can integrate by parts, $\rho_{K,\ell}$ satisfies the very weak formulation, and we can pass to the limit to show that so does $\rho_K$.
	
	We have shown that $\rho_K \nearrow \rho$ in $L^1$. With the same philosophy, we prove that $\rho_K (t) \nearrow \rho(t)$ for every $t > 0$ so $\rho(t) \in L^1 (\Br)$ for a.e. and we can pass to the limit in the weak formulation. We only need to the deal with the diffusion term. We also have that $\rho_K^m \nearrow \rho^m$. Due to \eqref{eq:L1 controls Lm over compacts} and the Monotone Convergence Theorem, we deduce that $\rho^m \in L^1 ((0,T) \times \Br)$.
		
	\textbf{Step 5. Conservation of mass.} Since all the limits above hold in $L^1$, then preservation of the $L^1$ mass follows from the properties proved in \Cref{thm:existence bounded domain regular data}.
	
	\textbf{Step 6. Decay of the free energy.} Since all the limits above are taken monotonously and a.e., we can pass to the limit in
	\begin{equation*}
		\int_\Br \rho^m, \qquad \int_\Br V \rho
	\end{equation*}
	by the Monotone Convergence Theorem. Hence, the decay of the free energy proven in \Cref{cor:bounded domain and data properties of free energy} extends to $L^1$ solutions. We can also pass to the limit in \eqref{eq:bounded estimate nabla rhom in L1}.
\end{proof}

\section{An equation for the mass}
\label{sec:mass}
The aim of this section is to develop a well-posedness theory for the mass equation \eqref{eq:mass}.
We will show that the natural notion of solution in this setting is the notion of viscosity solution.
We will take advantage of the construction of the solution $\rho$ of \eqref{eq:main bounded domain} as the limit of the regularised problems \eqref{eq:main regularised bounded}.

\subsection{Mass equation for the regularised problem}

If $E$ is radially symmetric and $u$ is the solution solution of \eqref{eq:main regularised bounded}, its mass function $M$ satisfies
\begin{align*}
	\frac{\partial M}{\partial t} &=  \kappa (v)^2 \frac{\partial }{\partial v} \Phi \left(  \frac{\partial M }{\partial v}  \right) +  \kappa(v) \frac{\partial M }{\partial v}   E (v), \qquad \kappa(v) = n \omega_n^{\frac 1 n } v ^{\frac{n-1} n},
\end{align*}
by integrating the equation for $u = \frac{\partial M}{\partial v}$.
Notice that when $E = \nabla V$ then $E = \kappa(v) \frac{\partial V}{\partial v}$.
This change of variables guarantees that
\begin{align*}
	\int_{B_R} f(t,x) \diff x = |\partial B_1| \int_0^{R}
		 f(t,r)  r^{n-1} \diff r = \frac{|\partial B_1|}{|B_1|} n \int_0^{R_v} f(t,v) \diff v = \int_0^{R_v} f(t,v) \diff v,
	\end{align*}
	for radially symmetric functions.
	
\begin{theorem}[Comparison principle for masses]
	\label{thm:comparison classical solutions}
	Let $M_1$ and $M_2$ be two classical solutions of the mass problem such that $M_1(0,r) \le M_2 (0,r)$.
	Then
	$
		M_1 \le M_2.
	$
\end{theorem}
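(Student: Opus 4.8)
The plan is a standard parabolic comparison argument, using that the mass equation is uniformly parabolic for $v>0$ (because $0<c_1\le\Phi'\le c_2$) and that its convective term is of first order in $M$; the degeneracy of the weight $\kappa(v)^2$ at $v=0$ will be harmless since a Dirichlet value is prescribed there.

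First I would set $w\defeq M_1-M_2$ on the compact cylinder $\overline Q\defeq[0,T]\times[0,R_v]$ and record its values on the parabolic boundary $\partial_p Q$ of $Q\defeq(0,T]\times(0,R_v)$: by hypothesis $w(0,\cdot)\le0$; since $M_i$ is a classical solution of the mass problem one has $M_i(t,0)=0$, hence $w(t,0)=0$; and the no-flux condition \eqref{eq:E point outwards} at $v=R_v$ forces $M_i(t,R_v)=M_i(0,R_v)$, so $w(t,R_v)=M_1(0,R_v)-M_2(0,R_v)\le0$. Thus $w\le0$ on $\partial_p Q$.

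Next, fix $\ee>0$ and let $w_\ee\defeq w-\ee t$, which is continuous on $\overline Q$ and hence attains a maximum there. Suppose, for contradiction, $\max_{\overline Q}w_\ee>0$. Since $w_\ee\le w\le0$ on $\partial_p Q$, the maximum is attained at an interior point $(t_0,v_0)$ with $t_0\in(0,T]$ and $v_0\in(0,R_v)$. At such a point $\partial_v w_\ee=0$ and $\partial_{vv}w_\ee\le0$, while $\partial_t w_\ee\ge0$ (with equality if $t_0<T$, and $\ge0$ at $t_0=T$ because the maximum over $t$ is reached there). Subtracting the mass equations for $M_1$ and $M_2$ and evaluating at $(t_0,v_0)$: since $\partial_v w=0$ there we get $\partial_v M_1=\partial_v M_2=:p\ge0$, so the convective term $\kappa(v_0)E(v_0)\,\partial_v w$ vanishes and
\[
\partial_t w(t_0,v_0)=\kappa(v_0)^2\big(\Phi'(\partial_v M_1)\partial_{vv}M_1-\Phi'(\partial_v M_2)\partial_{vv}M_2\big)=\kappa(v_0)^2\,\Phi'(p)\,\partial_{vv}w(t_0,v_0)\le0,
\]
using $\Phi'(p)>0$ and $\partial_{vv}w=\partial_{vv}w_\ee\le0$ at the point. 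But $\partial_t w(t_0,v_0)=\partial_t w_\ee(t_0,v_0)+\ee\ge\ee>0$, a contradiction. Hence $\max_{\overline Q}w_\ee\le0$ for every $\ee>0$, and letting $\ee\to0^+$ yields $w\le0$, i.e. $M_1\le M_2$.

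I do not expect a serious obstacle here. The two points needing a little care are: (i) that the definition of classical solution of the mass problem supplies enough regularity — $M_i\in C^{2,1}\big((0,T]\times(0,R_v)\big)$ together with continuity up to the two lateral boundaries — to justify the pointwise first/second derivative tests at the interior maximum; and (ii) the correct reading of the boundary data $M_i(t,0)=0$ and $M_i(t,R_v)=M_i(0,R_v)$ from the definition of the mass function and the no-flux condition. The genuine content is the sign computation in the displayed line, which uses only the uniform ellipticity $\Phi'>0$ and the fact that the aggregation term is first order in $M$; the degenerate point $v=0$ contributes nothing, since the operator merely loses diffusivity there while a Dirichlet value is imposed, so $\{v=0\}$ is just part of the parabolic boundary. (If one wished to avoid even discussing $v=0$, one could work on $[\delta,R_v]$ with a strictly parabolic perturbation and let $\delta\to0$, but this seems unnecessary.)
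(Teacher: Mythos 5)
Your proof is correct and follows essentially the same route as the paper: both arguments evaluate the difference of the two mass equations at an interior positive maximum of a perturbed difference, observe that $\partial_v M_1=\partial_v M_2$ there so the convective terms cancel and the diffusion difference reduces to $\Phi'(p)\,\partial_{vv}w\le0$, and derive a sign contradiction. The only cosmetic difference is that the paper uses the weight $e^{-\lambda t}$ on an infinite time horizon (so the maximum is attained by decay at $t\to\infty$) whereas you use the $-\ee t$ perturbation on $[0,T]$ with a parabolic-boundary argument.
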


\begin{proof}
	For any $\lambda >0$, let us consider the continuous  function
	\begin{equation*}
		w (t,v) = e^{-\lambda t} (M_1 (t,v)- M_2(t,v)).
	\end{equation*}
	Notice that $w \to 0$ as either $t \to +\infty$ or $v \to  0, R_v$. Assume, towards a contradiction that $w$ reaches positive values. Hence, it reaches a positive global maximum at some point $t_0 > 0$ and $v_0 \in (0,\infty)$. At this maximum
	\begin{align*}
		0 &= \frac{\partial w}{\partial t} (t_0, v_0) = e^{-\lambda t} \frac{\partial}{\partial t} (M_1 - M_2) - \lambda e^{-\lambda t} (M_1 - M_2) \\
		0& =\frac{\partial w}{\partial v} (t_0, v_0)= e^{-\lambda t} \frac{\partial}{\partial v} (M_1 - M_2) \\
		0& \ge \frac{\partial^2 w}{\partial v^2}(t_0, v_0) = e^{-\lambda t} \frac{\partial^2}{\partial v^2} (M_1 - M_2) .
	\end{align*}
	At $(t_0, v_0)$, we simply write the contradictory result
	\begin{align*}
		0 &< \lambda e^{\lambda t} w(t_0, v_0) = \lambda (M_1 - M_2 ) =	\frac{\partial}{\partial t} (M_1 - M_2)\\
		&= (n \omega_n^{\frac 1 n} v_0^{\frac{n-1} n })^2 \left\{ \Phi'\left( \frac{\partial M_1}{\partial v}\right)  \frac{\partial^2 M_1}{\partial v^2} +  \frac{\partial M_1}{\partial v} E \right\} \\
		&\qquad \qquad -(n \omega_n^{\frac 1 n} v_0^{\frac{n-1} n })^2 \left\{ \Phi' \left( \frac{\partial M_2}{\partial v}\right)   \frac{\partial^2 M_2}{\partial v^2} +  \frac{\partial M_2}{\partial v} E \right\} \\
		&=(n \omega_n^{\frac 1 n} v_0^{\frac{n-1} n })^2 \left\{ \Phi'\left( \frac{\partial M_1}{\partial v}\right)\left(   \frac{\partial^2 M_1}{\partial v^2} - \frac{\partial^2 M_2}{\partial v^2} \right) \right\}
		\le 0. \qedhere
	\end{align*}
\end{proof}
	
Let us define the Hölder semi-norm for $\alpha \in (0,1)$
\begin{equation*}
	[f ]_{C^\alpha([a,b])} = \sup_{ \substack{ x,y \in [a,b] \\ x \ne y } } \frac{|f(x)-f(y)|}{|x-y|^\alpha}.
\end{equation*}
We have the following estimate
\begin{lemma}[Spatial regularity of the mass]
	\label{lem:M Calpha in space}
	If $u(t, \cdot) \in L^q (B_R)$ for some $q \in [1,\infty)$ then
	\begin{equation}
	\label{eq:bounded regularised M Calpha space}
		[M (t, \cdot) ]_{C^{\frac {q-1}q} ([0,R_v])} \le \| u \|_{L^q(\Br)}.
	\end{equation}
	If $q = \infty$ the same holds in $W^{1,\infty} (0,R_v)$.
\end{lemma}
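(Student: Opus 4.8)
The plan is to exploit the fact that, in the volume variable, the mass function is nothing but the antiderivative of the (radial profile of the) density. Concretely, from the definition $M_u(t,v)=\int_{\widetilde B_v}\rho(t,x)\,\diff x$, the identity $|\widetilde B_v|=v$, and the change-of-variables formula
$\int_{B_R}f(t,x)\,\diff x=\int_0^{R_v}f(t,v)\,\diff v$ recorded just above for radial functions, one gets for radial $u=\partial M/\partial v$ that
\begin{equation*}
	M(t,v)=\int_0^v u(t,w)\,\diff w,\qquad\text{hence}\qquad \frac{\partial M}{\partial v}(t,v)=u(t,v)\ \text{ for a.e. }v\in(0,R_v).
\end{equation*}

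Next I would estimate increments of $M$. Fix $0\le v_1<v_2\le R_v$; then $M(t,v_2)-M(t,v_1)=\int_{v_1}^{v_2}u(t,w)\,\diff w$, so $|M(t,v_2)-M(t,v_1)|\le\int_{v_1}^{v_2}|u(t,w)|\,\diff w$. For $q\in[1,\infty)$, Hölder's inequality with conjugate exponent $q'=q/(q-1)$ gives
\begin{equation*}
	\int_{v_1}^{v_2}|u(t,w)|\,\diff w\le (v_2-v_1)^{\frac{q-1}{q}}\left(\int_{v_1}^{v_2}|u(t,w)|^q\,\diff w\right)^{\frac1q}\le (v_2-v_1)^{\frac{q-1}{q}}\,\|u(t,\cdot)\|_{L^q(0,R_v)}.
\end{equation*}
Applying the same change of variables to $|u|^q$ yields $\|u(t,\cdot)\|_{L^q(0,R_v)}=\|u(t,\cdot)\|_{L^q(\Br)}$, so dividing by $(v_2-v_1)^{(q-1)/q}$ and taking the supremum over $v_1\ne v_2$ produces exactly \eqref{eq:bounded regularised M Calpha space}.

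For $q=\infty$ the same chain of inequalities, with $\int_{v_1}^{v_2}|u|\,\diff w\le (v_2-v_1)\,\|u(t,\cdot)\|_{L^\infty(\Br)}$, shows that $M(t,\cdot)$ is Lipschitz on $[0,R_v]$ with constant $\|u(t,\cdot)\|_{L^\infty(\Br)}$; together with the identity $\partial_v M=u$ a.e., this is precisely the asserted $W^{1,\infty}(0,R_v)$ bound. There is essentially no obstacle: the only two things requiring a little care are the passage between the $x$-integral and the $v$-integral, which is already available, and the identification of $\partial_v M$ with $u$ in the volume variable (so that the Hölder estimate on the increment can be phrased in terms of $\|u\|_{L^q(\Br)}$).
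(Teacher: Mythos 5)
Your proof is correct and is essentially the paper's argument: the paper writes the increment $M(t,v_1)-M(t,v_2)$ as an integral of $u$ over the annulus $\widetilde B_{v_1}\setminus\widetilde B_{v_2}$ (which has measure $v_1-v_2$) and applies H\"older's inequality directly in $\Rd$, which is exactly your computation transported through the measure-preserving change to the volume variable. The $q=\infty$ case is likewise handled the same way.
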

\begin{proof}
For $v_1 \ge v_2$ we have
\begin{equation*}
	|M(t,v_1) - M(t, v_2)| = \int_{\widetilde B_{v_1} \setminus \widetilde B_{{v_2}}} u(t,x) \diff x \le \| u \|_{L^q} |\widetilde B_{v_1} \setminus \widetilde B_{v_2}|^{\frac {q-1}q} = \| u \|_{L^q} ( v_1 - v_2 )^{\frac{q-1}{q}}.\qedhere
\end{equation*}
\end{proof}
	
\begin{lemma}[Temporal regularity of the mass]
	There exists a constant $C > 0$, independent of $u$ or $\Phi$, such that
	\begin{equation}
	\label{eq:bound Mt in L2}
		 \int_0^T \int_{0}^{R_v} | M_t |^2 \diff v \diff t \le  C \left(    \int_{\Br} \Psi(u_0) + \| E \|_{L^\infty}^2 \int_0^T \int_{B_R} u(t,x)^2 \diff x \diff t  \right) .
	\end{equation}
	In particular, if $u_0 \in L^2$ and $\Psi(u_0) \in L^1$ then $M \in C^{\frac 1 2} (0,T; L^1 (0,R_v))$.
\end{lemma}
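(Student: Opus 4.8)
The plan is to observe that, for each fixed $t$, the function $M_t(t,\cdot)$ is the total flux of $u$ across the concentric spheres, and then to transfer the energy bound \eqref{eq:smooth estimate nabla Phi u} from the space variable $x$ to the mass variable $v$, the only price being the weight $\kappa$, which is bounded on the bounded domain. Concretely, for a (sufficiently regular) classical solution $u$ of \eqref{eq:main regularised bounded} I would differentiate $M(t,v)=\int_{\widetilde B_v}u(t,x)\diff x$ in time, insert the equation and apply the divergence theorem on $\widetilde B_v$, obtaining
\[
M_t(t,v)=\int_{\partial\widetilde B_v}\bigl(\nabla\Phi(u)+uE\bigr)\cdot\tfrac{x}{|x|}\diff S .
\]
Since $u$, $\Phi(u)$ and $E$ are radial the integrand is constant on $\partial\widetilde B_v$, a sphere of surface measure $\kappa(v)$; equivalently one reads this off the mass equation \eqref{eq:mass} using $\kappa(v)\,\partial_v=\partial_r$, which gives $M_t(t,v)=\kappa(v)\bigl(\partial_r\Phi(u)+uE(v)\bigr)$. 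For radial functions $\nabla\Phi(u)+uE=\bigl(\partial_r\Phi(u)+uE(v)\bigr)\tfrac{x}{|x|}$, so $\bigl|\partial_r\Phi(u)+uE(v)\bigr|=|\nabla\Phi(u)+uE|$.

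Next I would change variables back to $x$. With $v=\omega_n r^n$, so that $\kappa(v)=n\omega_n r^{n-1}\le n\omega_n R^{n-1}$ for $r\in[0,R]$ and $\diff v=n\omega_n r^{n-1}\diff r$,
\[
\int_0^{R_v}|M_t(t,v)|^2\diff v=\int_0^R\bigl(n\omega_n r^{n-1}\bigr)^2\,|\nabla\Phi(u)+uE|^2\,n\omega_n r^{n-1}\diff r\le \bigl(n\omega_n R^{n-1}\bigr)^2\int_{B_R}|\nabla\Phi(u)+uE|^2\diff x .
\]
Then, using $|\nabla\Phi(u)+uE|^2\le 2|\nabla\Phi(u)|^2+2\|E\|_{L^\infty}^2u^2$, integrating in $t\in(0,T)$ and inserting \eqref{eq:smooth estimate nabla Phi u}, one gets $\int_0^T\!\int_{B_R}|\nabla\Phi(u)+uE|^2\le 4\int_{B_R}\Psi(u_0)+4\|E\|_{L^\infty}^2\int_0^T\!\int_{B_R}u^2$. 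Combining the two displays yields \eqref{eq:bound Mt in L2} with $C=4\bigl(n\omega_n R^{n-1}\bigr)^2$, a constant depending only on $n$ and $R$; in particular it is independent of $u$ and of $\Phi$, which is what will matter for the later passage $\Phi_k\to s^m$.

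For the Hölder-in-time statement, note that when $u_0\in L^2$ and $\Psi(u_0)\in L^1$ the right-hand side of \eqref{eq:bound Mt in L2} is finite for every finite $T$, the term $\int_0^T\!\int_{B_R}u^2$ being controlled by \eqref{eq:regularisation Lp estimates} with $p=2$; hence $M_t\in L^2((0,T)\times(0,R_v))$. Then for $0\le t_1<t_2\le T$, applying Cauchy--Schwarz first in $s$ and then in $v$,
\[
\|M(t_2,\cdot)-M(t_1,\cdot)\|_{L^1(0,R_v)}\le\int_0^{R_v}\!\!\int_{t_1}^{t_2}|M_t(s,v)|\diff s\diff v\le R_v^{1/2}\,\|M_t\|_{L^2((0,T)\times(0,R_v))}\,(t_2-t_1)^{1/2},
\]
which is exactly the asserted $M\in C^{1/2}(0,T;L^1(0,R_v))$ bound.

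The main obstacle is really the first step: turning $M_t$ into a boundary flux and thereby reducing $\int|M_t|^2\diff v$ to $\int_{B_R}|\nabla\Phi(u)+uE|^2\diff x$. For the smooth elliptic $\Phi$ of this section this is just the divergence theorem together with the classical regularity of $u$, hence routine; the structurally important points are that $\kappa$ is bounded on the \emph{bounded} domain $B_R$ — this is precisely where working in a ball rather than in $\mathbb R^n$ is being used — and that the resulting constant is $\Phi$-independent.
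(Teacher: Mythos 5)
Your proof is correct and follows essentially the same route as the paper: both write $M_t(t,v)$ as the flux of $\nabla\Phi(u)+uE$ through $\partial\widetilde B_v$, reduce $\int_0^{R_v}|M_t|^2\diff v$ to $\|\nabla\Phi(u)+uE\|_{L^2(B_R)}^2$ (you by exact evaluation using radial symmetry and bounding the weight $\kappa(v)\le n\omega_n R^{n-1}$, the paper by Jensen's inequality on the sphere), invoke \eqref{eq:smooth estimate nabla Phi u}, and obtain the $C^{1/2}(0,T;L^1)$ bound by Cauchy--Schwarz in time. The only differences are cosmetic bookkeeping of constants, which is immaterial since the statement only asserts a $C$ independent of $u$ and $\Phi$.
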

\begin{proof}

	Let us prove first an estimate for $\| M_t (t, \cdot) \|_{L^2 (0,R_v)}$.
	Since $M = \frac{\partial u}{\partial v}$ then $\frac{\partial M}{\partial t} = \frac{\partial u}{\partial v\partial t}$. Applying Jensen's inequality
	\begin{align*}
		\int_{0}^{R_v} \left| \frac{\partial M}{\partial t} (t,v) \right|^2 \diff v &= \int_0^{R_v} \left( \int_{\widetilde B_{v}} \frac{\partial u}{\partial t} \diff x \right)^2 \diff v  \\
		&= \int_0^{R_v} \left( \int_{\widetilde B_{v}} \diver ( \nabla \Phi(u) + u E ) \diff x \right)^2 \diff v  \\
		&= \int_0^{R_v} \left( \int_{\partial \widetilde B_{v}} (\nabla \Phi(u) + u E ) \cdot \frac x {|x|} \diff S_x \right)^2 \diff v  \\
		&\le \int_0^{R_v}  \int_{\partial \widetilde B_{v}} \left| \nabla \Phi(u) + u E \right|^2 \diff S_x \diff v  .
	\end{align*}	
	Making the change of variables $v = |B_1| r^n$ we have $|\widetilde B_v| = v = |B_1| r^n =  |B_r|$ and
	\begin{equation*}
		\int_{0}^{R_v} \left| \frac{\partial M}{\partial t} (t,v) \right|^2 \diff v \le \int_0^{R}  \int_{\partial B_r} \left| \nabla \Phi(u) + u E \right|^2 \diff S_x |B_1| n r^{n-1} \diff r  = \| \nabla \Phi(u) + u E \|_{L^2 (\Br)} ^2.
	\end{equation*}	
	Due to \eqref{eq:smooth estimate nabla Phi u} we recover \eqref{eq:bound Mt in L2}. Finally
	\begin{align*}
		\| M(t_1) - M(t_2) \|_{L^1 (0,R_v)} & =\int_0^{R_v} |M(t_2, v) - M(t_1, v)| \diff v= \int_0^{R_v} \left |\int_{t_1}^{t_2} \frac{\partial M} { \partial t } (s,v) \diff s \right| \diff v \\
		&\le  \int_0^{R_v} \int_{t_1}^{t_2} \left | \frac{\partial M} { \partial t } (s,v)\right| \diff s  \diff v \le |t_2-t_1|^{\frac 1 2} \left \| \frac{\partial M} { \partial t } \right\|_{L^2 ((0,T) \times (0,R_v))}. \qedhere
	\end{align*}
	
\end{proof}

\subsection{Aggregation-Fast Diffusion}

We recall the definition of viscosity solution for the $p$-Laplace problem, which deals with the singular ($p \in (1,2)$) and degenerate ($p > 2$) cases.
	We recall the definition found in many texts (see, e.g.,
	\cite{Juutinen2003,Medina2019}
	and the references therein).
\begin{definition}
	For $p > 1$ a function $u$ is a viscosity supersolution of $-\Delta_{p} u = f (x, u, \nabla u)$ if, $u \not \equiv \infty$, and for every $\varphi \in C^2 (\Omega)$ such that $u \ge \varphi$, $u (x_0) = \varphi (x_0)$ and $\nabla \varphi(x) \ne 0$ for all $x \ne x_0$ it holds that
	\begin{equation*}
		\lim_{r \to 0} \sup_{x \in B_r (x_0) \setminus \{x_0\}} (-\Delta_p \varphi(x)) \ge f(x_0, u(x_0), \nabla \varphi (x_0)).	
	\end{equation*}
\end{definition}
Similarly, for our problem we define
\begin{definition}
	For $m \in (0,1)$ a function $u$ is a viscosity supersolution of \eqref{eq:mass} if, for every $t_0 > 0, v_0 \in (0,R_v)$ and for every $\varphi \in C^2 ((t_0 - \ee, t_0 + \ee) \times (v_0 - \ee, v_0 + \ee))$ such that $M \ge \phi$, $M (v_0) = \varphi (v_0)$ and $\frac{ \partial \varphi}{\partial v} (v) \ne 0$ for all $v \ne v_0$ it holds that
	\begin{align}
		\frac{\partial \varphi}{\partial t} (t_0, v_0 ) - (n \omega^{\frac 1 n } v_0 ^{\frac{n-1} n})^2   \left[ \lim_{r \to 0} \sup_{0 < |v-v_0| < r} \frac{\partial }{\partial v} \left[  \left(  \frac{\partial \varphi }{\partial v}  \right)^m \right] + \frac{\partial \varphi }{\partial v}(t_0,v_0) \frac{\partial V}{\partial v} (v_0)  \right] \ge 0.
	\end{align}	
	The corresponding definition of subsolution is made by inverting the inequalities. A viscosity solution is a function that is a viscosity sub and supersolution.
\end{definition}

\begin{remark}
	Since we have a one dimensional problem, we can write the viscosity formulation equivalently by multiplying by $(  \frac{\partial \varphi }{\partial v} )^{1-m}$ everywhere, to write the problem in degenerate rather than singular form.
\end{remark}

\begin{remark}
	Our functions $M$ will be increasing in $v$. This allows to a simplification of the condition in some cases. For example, if also have a lower bound on $\rho$, in the sense that
	\begin{equation*}
		M(t, v_2) - M(t, v_1) \ge c (v_2 - v_1), \qquad \forall  v_0 + \ee \ge v_2 \ge v_1 \ge v_0 - \ee \text{ where } c > 0
	\end{equation*}
	then we know that it suffices to take viscosity test functions $\varphi$ such $\frac{\partial \varphi}{\partial v} \ge \frac c 2$. In particular, we can simplify the definition of sub and super-solution by removing the limit and the supremum.
\end{remark}
\begin{remark}
	We can define the upper jet as
	\begin{align*}
		\mathcal J^{2,+} M(t_0,v_0) = \Big\{ & (D\varphi(t_0,v_0), D^2\varphi(t_0,v_0)) \\
		&\qquad : \varphi \in C^2 ((t_0 - \ee,t_0 + \ee)\times(v_0 - \ee, v_0 + \ee)), \\
		&\qquad \qquad M(t,v) - \varphi(t,v) \le 0 = M(t_0,v_0) - \varphi(t_0,v_0) \Big\}.
	\end{align*}
	The elements of the upper jet are usually denoted by $(p,X)$. The lower jet $\mathcal J^{2,-}$ is constructed by changing the inequality above. The definition of viscosity subsolution (resp. super-) can be written in terms of the upper jet (resp. lower).
\end{remark}

\begin{theorem}[Existence from the semigroup theory for $\rho$]
	\label{thm:comparison principle for mass}
	Let $\rho_0 \in L^1 (\Br)$.
	Then
	\begin{equation*}
		M(t,v) = \int_{\widetilde B_v} S(t)[\rho_0] (x) \diff x
	\end{equation*}
	is a viscosity solution of \eqref{eq:mass} with $M(t,0) = 0$ and $M(t,R_v) = \| \rho_0 \|_{L^1 (\Br)}$.
	Furthermore, for any
	$v_1,v_2,T > 0$,
	$M \in C([0,T] \times [v_1,v_2])$ with a modulus of continuity that depends only on $n, m, v_1, v_2$, $T, \| \frac{\partial V}{\partial v} \|_{L^\infty(v_1,v_2)}$ and the modulus of continuity of $M_{\rho_0}$ in $[v_1,v_2]$ .
	Moreover, we have the following interior regularity estimate:
	for any $T_1 > 0$ and $0 < v_1 < v_2 < R_v$
	there exists $\gamma > 0$ and $\alpha \in (0,1)$ depending only on $n,m,\| \frac{\partial V}{\partial v} \|_{L^\infty(v_1,v_2)},v_1,v_2, T_1$, such that
	\begin{equation}
	\label{eq:local Calpha regularity}
			|M(t_1,v_1) - M(t_2,v_2) | \le \gamma  \left( \frac{|v_1-v_2| + \| \rho_0 \|_{L^1 (\Br)}^{\frac{m-1}{m+1}} |t_1 - t_2|^{\frac 1 {m+1}} }{ \min\{v_1, R_v-v_2 \} + \| \rho_0 \|_{L^1 (\Br)}^{\frac{m-1}{m+1}} T_1^{\frac 1 {m+1}}}   \right)^\alpha,
	\end{equation}	
	for all $(t_i, v_i) \in [T_1, +\infty) \times [v_1,v_2]$.
\end{theorem}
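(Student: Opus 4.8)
The plan is to construct $M$ as a locally uniform limit of the mass functions of the regularised problems \eqref{eq:main regularised bounded} and to carry the a priori estimates through that limit. Recall from \Cref{thm:existence bounded domain regular data} and \Cref{thm:existence bounded L1} that $\rho = S(t)\rho_0$ arises as an ($L^1$, possibly iterated) limit of classical solutions $u_k$ of \eqref{eq:main regularised bounded} with $\Phi = \Phi_k$ as in \eqref{eq:Phik}, starting from smooth data with $0 < c \le u_{0,k} \le C$ and $u_{0,k} \to \rho_0$ in $L^1(\Br)$. For each such $u_k$, integrating the equation over $\widetilde B_v$ shows that $M_k(t,v) = \int_{\widetilde B_v} u_k(t,x)\,\diff x$ is a classical solution of the regularised mass equation, i.e.\ of \eqref{eq:mass} with $\Phi_k$ in place of $(\cdot)^m$; by the parabolic theory collected in \Cref{sec:classical regularity} (the weight $(n\omega_n^{1/n} v^{(n-1)/n})^2$ is smooth and positive away from $v = 0$) it is smooth for $t, v > 0$ and continuous up to $t = 0$ and up to $v \in \{0, R_v\}$, with $0 \le M_k \le \|u_{0,k}\|_{L^1}$, non-decreasing in $v$, $M_k(t,0) = 0$ and $M_k(t,R_v) = \|u_{0,k}\|_{L^1(\Br)}$. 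The whole task is then to estimate $M_k$ uniformly in $k$ (and in all further approximation parameters entering $S(t)$) and to pass to the limit.

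\textbf{Uniform estimates.} Fix $0 < T_1 < T$ and $0 < v_1 < v_2 < R_v$. On $[T_1,T] \times [v_1,v_2]$ the regularised mass equation is a lower-order perturbation of a weighted singular evolution $p$-Laplacian with $p = m+1 \in (1,2)$, in which the weight is pinched between two positive constants and $\Phi_k'$ is, on the relevant range, pinched between constant multiples of $m(\cdot)^{m-1}$-type bounds, with all constants independent of $k$. Hence DiBenedetto's intrinsic-scaling theory, in the form set up in \Cref{sec:classical regularity}, applies uniformly in $k$ and yields an estimate exactly of the shape \eqref{eq:local Calpha regularity} for $M_k$, with $\gamma, \alpha$ depending only on $n, m, \|\partial_v V\|_{L^\infty(v_1,v_2)}, v_1, v_2, T_1$ and on $\sup M_k \le \|u_{0,k}\|_{L^1}$, which we keep uniformly bounded. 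To propagate the modulus of continuity of the datum, we invoke the classical comparison principle \Cref{thm:comparison classical solutions}: comparing $M_k$ with the spatially clipped translates $v \mapsto M_k(0, v \pm h)$ and with constant solutions bounds $\operatorname{osc}_{[v_1,v_2]} M_k(t,\cdot)$ by the oscillation of $M_k(0,\cdot)$ on a slightly larger interval, uniformly in $t \in [0,T]$; choosing the data so that $M_k(0,\cdot) \to M_{\rho_0}$ with a modulus controlled by that of $M_{\rho_0}$ on $[v_1,v_2]$ then gives a uniform-in-$t$ spatial modulus on $[v_1,v_2]$ depending only on the modulus of $M_{\rho_0}$ there. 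Finally, near $t = 0$ one has $|M_k(t,v) - M_k(0,v)| \le \|u_k(t) - u_{0,k}\|_{L^1(\Br)}$, which by the $L^1$-contraction is dominated by the (uniform) modulus of continuity of the semigroup at $t = 0$; combining this temporal modulus with the spatial one as in \Cref{sec:appendix b} produces a joint modulus of continuity of $M_k$ on $[0,T] \times [v_1,v_2]$ depending only on $n, m, v_1, v_2, T$, $\|\partial_v V\|_{L^\infty(v_1,v_2)}$ and the modulus of $M_{\rho_0}$ there (the endpoint $v = R_v$, if included, is handled the same way using the no-flux condition \eqref{eq:V no aggregation on boundary}).

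\textbf{Limit and viscosity property.} By equicontinuity and uniform boundedness, together with a diagonal argument, a subsequence of $M_k$ converges locally uniformly on $(0,\infty) \times (0,R_v)$ and uniformly on each $[0,T] \times [v_1,v_2]$ to a function $M$; since $u_k \to \rho$ in $L^1((0,T) \times \Br)$, the limit is $M(t,v) = \int_{\widetilde B_v} S(t)\rho_0\,\diff x$ (the remaining approximation limits are treated identically). This $M$ inherits the joint modulus of continuity and the interior estimate \eqref{eq:local Calpha regularity}; $M(t,0) = 0$ because $\rho(t) \in L^1(\Br)$ has no atom at the origin (no concentration in finite time, \Cref{thm:existence bounded L1}), and $M(t,R_v) = \|\rho_0\|_{L^1(\Br)}$ by conservation of mass. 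That $M$ solves \eqref{eq:mass} in the viscosity sense follows from the standard stability of viscosity solutions under locally uniform convergence of the solutions and of the nonlinearity $\Phi_k(s) \to s^m$ in $C^1_{\mathrm{loc}}(0,\infty)$: if $\varphi \in C^2$ touches $M$ strictly from above at $(t_0,v_0)$ with $\partial_v\varphi \ne 0$ off $v_0$, a standard perturbation makes $\varphi$ touch $M_k$ from above at nearby points $(t_k,v_k) \to (t_0,v_0)$, where $\partial_t\varphi \le (n\omega_n^{1/n} v_k^{(n-1)/n})^2 [\partial_v(\Phi_k(\partial_v\varphi)) + (\partial_v\varphi)\,\partial_v V]$; passing to the limit (and using the $\limsup$ in the definition at a contact point where $\partial_v\varphi(t_0,v_0) = 0$) yields the supersolution inequality, and the subsolution inequality is symmetric.

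\textbf{Main obstacle.} The crux is the $k$-uniformity of the interior Hölder estimate: DiBenedetto's singular $p$-Laplacian regularity is proved for the bare equation, so one must check that the regularised, weighted and lower-order-perturbed equations meet the intrinsic-scaling structure conditions with constants independent of $k$ — this is exactly what \Cref{sec:classical regularity} is designed to do, using the two-sided bounds on $\Phi_k'$ from \eqref{eq:Phik} and the pinching of the weight away from $v = 0$. A secondary technical point is making the transfer of the spatial modulus of continuity (and of its behaviour as $t \to 0$) uniform across all the approximation parameters used to build $S(t)$ on $L^1$ data; this relies on the comparison principle and the $L^1$-contraction of the semigroup.
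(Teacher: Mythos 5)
Your overall architecture (regularise, obtain uniform estimates, pass to the limit by Ascoli--Arzel\`a, use stability of viscosity solutions, identify the limit through the $L^1$ convergence of $u_k$) is the same as the paper's. The genuine gap is in where the uniform estimates come from. You propose to apply DiBenedetto's intrinsic-scaling theory to the regularised mass equations \emph{uniformly in $k$}, claiming that the structure conditions hold with $k$-independent constants ``on the relevant range''. They do not: with $p=m+1$, condition \eqref{eq:A1} requires $\Phi_k(s)s\ge C_0 s^{m+1}$, and for $s<k^{-1}$ one has $\Phi_k(s)\sim mk^{1-m}s$, so $\Phi_k(s)s/s^{m+1}\sim m(ks)^{1-m}\to 0$; the regularised equations are uniformly parabolic for each fixed $k$ but are \emph{not} of singular $p$-Laplacian type with uniform constants, and \Cref{sec:classical regularity} is set up only for the limit nonlinearity $\Phi(s)=|s|^{m-1}s$. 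The paper avoids this entirely: for regular data it obtains interior equicontinuity of $M_{u_k}$ from the $L^q$ bounds on $u_k$ (spatial H\"older, \Cref{lem:M Calpha in space}, \eqref{eq:bounded regularised M Calpha space}) and the $L^2$ bound on $\partial_t M$ (\eqref{eq:bound Mt in L2}), interpolated into a joint space--time modulus via \Cref{thm:relation between space and time regularities}; DiBenedetto's \Cref{thm:regularity dibenedetto} is applied only to the \emph{limit} $M$, which is a bounded local weak solution of the limit equation, and this is how \eqref{eq:local Calpha regularity} is obtained. You identified this as the ``main obstacle'' but your proposed resolution (that Appendix A already handles the regularised equations uniformly) is not correct.

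Two further steps would fail as written. First, the comparison with spatial translates $v\mapsto M_k(0,v\pm h)$ to propagate the modulus of the datum is not available: the mass equation has the $v$-dependent weight $(n\omega_n^{1/n}v^{(n-1)/n})^2$ and the drift $\partial_v V$, so translates of solutions are not solutions and \Cref{thm:comparison classical solutions} does not apply to them. Second, your temporal modulus near $t=0$, $|M_k(t,v)-M_k(0,v)|\le\|u_k(t)-u_{0,k}\|_{L^1(\Br)}$, is a global quantity that is not controlled by the modulus of continuity of $M_{\rho_0}$ on $[v_1,v_2]$ alone, so it cannot deliver the dependencies asserted in the statement. The paper instead invokes DiBenedetto's initial-time continuity estimate in the localised form of \Cref{cor:uniform regularity interior in x up to 0 in t}, which bounds the oscillation of $M$ near $t=0$ by the \emph{local} oscillation of $M_{\rho_0}$, and handles the extension from regular to $L^1$ data by the monotone approximation $\rho_{0,K,\ee}=\max\{\rho_0,K\}+\ee$ together with Dini's theorem rather than by a uniform modulus across all approximation parameters.
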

\begin{proof}
	\textbf{Step 1. $\ee \le \rho_0 \le \ee^{-1}$ and $\rho_0 \in H^1 (\Br)$.}
	Let us show that
	$$
		M_{u_k} \to M_\rho \qquad  \text{uniformly in }  [0,T] \times B_R.
	$$
	$M_\rho$ is a viscosity solution of \eqref{eq:mass} and $M_\rho$ is a weak local solution in the sense of \Cref{sec:classical regularity}.
	
	By our construction of $\rho$ by regularised problems in \Cref{thm:existence bounded domain regular data}, the strong $L^q$ convergence of $u_k$ to $\rho$ ensures that
	\begin{equation*}
		\int_0^T \sup_{v \in [0,R_v]}  |M_{u_k}(t,v) - M_\rho(t, v)|  \diff t \le \int_0^T \int_{\Rd} |u_k (t,x) - \rho(t,x)|\diff x \diff t \to 0.
	\end{equation*}
	So we know $M_{u_k} \to M_\rho$ in $L^1 (0,T; L^\infty (0,R_v))$, and hence (up a to a subsequence) a.e.
	
	Through estimates \eqref{eq:bounded regularised M Calpha space}, \eqref{eq:bound Mt in L2}
	, and \Cref{thm:relation between space and time regularities} we have
	\begin{equation}
	\label{eq:bound Mk Calpha}
		|M_{u_k}(t_1,v_1) - M_{u_k}(t_2,v_2)| \le C (\ee) (|v_1-v_2|^\alpha + |t-s|^\gamma), \qquad t,s \in [0,T], v_1,v_2 \in [\ee, R_v - \ee].
	\end{equation}
	To check that $M_\rho$ is a viscosity solution, we select $v_0 \in (0,R_v)$. Taking a suitable interval $(\ee , R_v - \ee ) \ni v_0$, by the Ascoli-Arzelá theorem, a further subsequence is uniformly convergent. Since we have characterised the a.e. limit we have
	\begin{equation*}
		\| M_{u_k} - M_\rho \|_{L^\infty ([0,T] \times [\ee, R_v - \ee]}) \to 0.
	\end{equation*}
	Due to the uniform convergence, we can pass to the limit in the sense of viscosity solutions and $M_\rho$ is a viscosity solution at $x_0$.
	
	The argument is classical and goes as follows (see \cite{crandall+ishii+lions1992users-guide-viscosity}). Take a viscosity test function $\varphi$ touching $M_\rho$ from above at $x_0$. Then, due to the uniform convergence $M_{u_k}$ to $M_\rho$ in a neighbourhood of $x_0$, there exists points $x_k$ where $\varphi$ touches $M_{u_k}$ from above. We apply the definition of viscosity solution for $M_{u_k}$ at $x_k$, and pass to the limit.

	Due to the pointwise convergence, $M_\rho$ also satisfies \eqref{eq:bound Mk Calpha}.

	\textbf{Step 2. $\rho_0 \in L^{1}$.}
	We pick the approximating sequence
	\begin{equation*}
		\rho_{0,K,\ee} = \max \{ \rho_0 ,K \} + \ee.
	\end{equation*}
	As we did in \Cref{thm:existence bounded L1} the $L^1$ limit of the corresponding solutions is $S(t) \rho_0$. Furthermore, the limits $\ee \searrow 0$ and $K \nearrow +\infty$ are taking monotonically in $\rho$, so also monotically in $M$. This guarantees monotone convergence in $M$. With the universal upper bound $1$ we have $L^1$ convergence.
	
	Since the $C^\alpha$ bound is uniform away from $0$, we know that $M$ maintains it and is continuous.
	Due to Dini's theorem the convergence is uniform over $[0,T] \times [\ee, R_v - \ee]$, and $M_{\rho}$ is a viscosity solution of the problem.
	
	The value $M(t,0) = 0$ is given by $S(t) \rho_0 \in L^1(\Br)$ and the value at $M(t,R_v) = a_{0,R}$ by
	the fact that $\| S(t) \rho_0 \|_{L^1(\Br)} = \| \rho_0 \|_{L^1 (\Br)} = a_{0,R}$.
	The uniform continuity is a direct application of \Cref{cor:uniform regularity interior in x up to 0 in t}. We point out that, since $\rho_0 \in L^1 (\Br)$, then $M_{\rho_0}$ is point-wise continuous, and therefore uniformly continuous over compact sets. Estimate \eqref{eq:local Calpha regularity} follows from \Cref{thm:regularity dibenedetto}.
\end{proof}

Let us now state a comparison principle, under simplifying hypothesis.

\begin{theorem}[Comparison principle of viscosity solutions if $\rho$ is bounded below]
	\label{thm:comparison principle m}
	Let $\underline M$ and $\overline M$ be uniformly continuous sub and supersolution.
	Assume, furthermore, that there exists $C_0 > 0$ such that
	\begin{equation*}
		\underline M(t,v_2) - \underline M(t, v_1) \ge C_0 (v_2 - v_1), \qquad \forall v_2 \ge v_1.
	\end{equation*}
	Then, the solutions are ordered, i.e. $\underline M \le \overline M$.
\end{theorem}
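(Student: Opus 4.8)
The plan is to run the classical doubling-of-variables argument of Crandall–Ishii–Lions, exploiting the non-degeneracy hypothesis $\underline M(t,v_2)-\underline M(t,v_1)\ge C_0(v_2-v_1)$ to sidestep the singular nature of the diffusion operator. First I would assume, for contradiction, that $\sigma \defeq \sup_{[0,T_0]\times[v_1,v_2]}(\underline M-\overline M)>0$ for some finite horizon $T_0$, and reduce to a bounded parabolic cylinder. To handle the behaviour as $t\to\infty$ one uses the standard trick of replacing $\underline M$ by $\underline M - \eta/(T_0-t)$ (or by $\underline M - \eta t$ and then letting $\eta\to0$), so that the supremum is attained at an interior time. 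The boundary in $v$ is controlled because the concentration question has been encoded as $M(t,0)=0$; more precisely both $\underline M$ and $\overline M$ share the same lateral and initial data in the situations where the theorem is applied, so the positive supremum cannot be attained on the parabolic boundary and must occur at an interior point.

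Next I would double the space variable, studying
\begin{equation*}
	\Psi_\varepsilon(t,v,w) = \underline M(t,v) - \overline M(t,w) - \frac{|v-w|^2}{2\varepsilon} - \text{(penalisation in $t$ if needed)},
\end{equation*}
and let $(t_\varepsilon,v_\varepsilon,w_\varepsilon)$ be a maximiser. The usual lemma gives $|v_\varepsilon-w_\varepsilon|^2/\varepsilon\to0$ and $\Psi_\varepsilon\to\sigma$, and the maximum is interior for $\varepsilon$ small. Crucially, the non-degeneracy assumption forces the common gradient value $p_\varepsilon = (v_\varepsilon-w_\varepsilon)/\varepsilon$ to satisfy $p_\varepsilon \ge C_0/2 > 0$ (this is exactly the content of the third Remark before the theorem): touching $\underline M$ from below at $v_\varepsilon$ by the smooth function $w\mapsto \overline M(t,w)+|v_\varepsilon-w|^2/(2\varepsilon)+\cdots$, whose $v$-derivative is $p_\varepsilon$, and using that $\underline M$ grows at least linearly, pins $p_\varepsilon$ away from zero. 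Hence the operator $q\mapsto \partial_v(q^m) = m q^{m-1}\partial_v q$ is uniformly elliptic near the relevant gradient values, and the singular viscosity machinery reduces to the standard uniformly parabolic case. Applying the Crandall–Ishii parabolic lemma produces matrices $X,Y$ with $X\le Y$, plugging into the sub/supersolution inequalities, and the remaining terms — the first-order term $p_\varepsilon\,\partial_v V$ evaluated at $v_\varepsilon$ vs. $w_\varepsilon$, and the weight $(n\omega_n^{1/n}v^{(n-1)/n})^2$ evaluated at $v_\varepsilon$ vs. $w_\varepsilon$ — are handled by the local Lipschitz continuity of $\partial_v V$ on $[v_1,v_2]$ and smoothness of the weight away from $v=0$, giving an $o(1)$ error as $\varepsilon\to0$, together with the strictly negative contribution from the penalisation term, which yields the contradiction $\lambda\sigma \le 0$ (or $\eta$-contradiction).

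The main obstacle is the lateral boundary at $v=0$: the weight $\kappa(v)^2=(n\omega_n^{1/n}v^{(n-1)/n})^2$ degenerates there, and more importantly the whole point of the paper is that the Dirichlet condition $M(t,0)=0$ may be lost in the limit. In this theorem the issue is finessed by hypothesis — one only compares solutions that already agree at $v=0$ (and at $v=R_v$), or one works on an interior subinterval $[v_1,v_2]$ with matching data there — so that the maximiser $v_\varepsilon$ stays uniformly inside $(v_1,v_2)$ for $\varepsilon$ small. A secondary technical point is verifying that the penalisation in $t$ can be taken to vanish in the limit while keeping the maximum at a strictly positive time; this is routine but must be done carefully given the infinite time horizon. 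The singular-to-degenerate reduction in the first Remark after the viscosity definition, combined with the non-degeneracy assumption, is what makes the rest of the argument go through verbatim as in a uniformly parabolic comparison proof.
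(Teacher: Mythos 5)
Your plan is correct and follows essentially the same route as the paper: a doubling-of-variables argument in which the hypothesis $\underline M(t,v_2)-\underline M(t,v_1)\ge C_0(v_2-v_1)$ pins the gradient at the doubled maximum away from zero (the paper gets $\frac{2(v_\varepsilon-\xi_\varepsilon)}{\varepsilon^2}\ge C_0$ by testing the maximality against nearby $v>v_\varepsilon$), so the Crandall--Ishii jet lemma applies as in the uniformly parabolic case and the first-order/weight discrepancies are absorbed using the Lipschitz continuity of $r^{n-1}\partial V/\partial r$ together with the refined collapse rate. The only cosmetic differences are that the paper doubles the time variable as well and uses the linear penalisation $\lambda(s+t)$, and it establishes interiority of the maximum (in $t$ and in $v$, away from $0$ and $R_v$) via the uniform continuity and the implicit ordering on the parabolic boundary, exactly the point you flag.
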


\begin{proof}
	Assume, towards a contradiction that
	\begin{equation*}
		\sup_{t > 0, v \in [0,R_v]} (\underline M(t,v) - \overline M(t,v)) = \sigma > 0.
	\end{equation*}
	Since both functions are continuous, there exists $(t_1,v_1)$ such that $\underline M(t_1,v_1) - \overline  M(t_1, v_1) > \frac {3\sigma}4$. Clearly, $t_1 , v_1 > 0$. Let us take $\lambda$ positive such that
	\begin{equation*}
		 \lambda < \frac{\sigma}{16(t_1 + 1)}.
	\end{equation*}
	With this choice, we have that
	\begin{equation*}
		2 \lambda t_1 < \frac{\sigma}{4}.
	\end{equation*}
	For this $\varepsilon$ and $\lambda$ fixed, let us construct the variable-doubling function defined as
	\begin{equation*}
		\Phi(t,s,v,\xi) = \underline M(t,v) - \overline M(s,\xi) - \frac{|v-\xi|^2 + |s-t|^2}{\varepsilon^2} -  \lambda (s + t).
	\end{equation*}
	This function is continuous and bounded above, so it achieves a maximum at some point. Let us name this maximum depending on $\ee$, but not on $\lambda$ by
	\begin{equation*}
		\Phi (t_\ee, s_\ee, v_\ee, \xi_\ee) \ge \Phi  (t_1,t_1,v_1,v_1) > \frac{3\sigma}{4}  - 2 \lambda t_1 > \frac{\sigma}{2}.
	\end{equation*}
	In particular, it holds that
	\begin{equation}
		\label{eq:estimate m - M at max of Phi ee}
		\underline M(t_\ee, v_\ee) - \overline M(s_\ee, \xi_\ee)  \ge \Phi(t_\ee,s_\ee, v_\ee, \xi_\ee) > \frac{\sigma}{2} .
	\end{equation}
	
	\noindent \textbf{Step 1. Variables collapse.} As $\Phi(t_\ee, s_\ee, v_\ee, \xi_\ee) \ge \Phi(0,0,0,0)$, we have
	\begin{equation*}
		\frac{|v_\ee-\xi_\ee|^2 + |s_\ee-t_\ee|^2}{\varepsilon^2}  + \lambda (s_\ee + t_\ee) \le \underline M(t_\ee, v_\ee) - \overline M(s_\ee, \xi_\ee) - \Phi(0,0,0,0) \le C.
	\end{equation*}
	Therefore, we obtain
	\begin{equation*}
		|v_\ee - \xi_\ee| + |t_\ee - s_\ee| \le C \varepsilon.
	\end{equation*}
	This implies that, as $\varepsilon \to 0$, the variable doubling collapses to a single point. \\
	
	We can improve the first estimate using that $\Phi(t_\ee, s_\ee, v_\ee, \xi_\ee) \ge \Phi(t_\ee, t_\ee, v_\ee, v_\ee)$. This gives us
	\begin{align*}
		\frac{|v_\ee-\xi_\ee|^2 + |s_\ee-t_\ee|^2}{\varepsilon^2} &\le \overline M(t_\ee, v_\ee) - \overline  M(s_\ee, \xi_\ee)  + \lambda(t_\ee - s_\ee) \\
		&\le \overline M(t_\ee, v_\ee) - \overline M(s_\ee, \xi_\ee) +  C \ee.
	\end{align*}
	Since $\overline M$ is uniformly continuous, we have that
	\begin{equation}
	\label{eq:comparison principle mass collapse rate}
		\lim_{\ee \to 0}\frac{|v_\ee-\xi_\ee|^2 + |s_\ee-t_\ee|^2}{\varepsilon^2} = 0.
	\end{equation}
	
	\noindent \textbf{Step 2. For $\ee > 0$ sufficiently small, the points are interior.} We show that there exists $\mu$ such that $t_\ee, s_\ee \ge \mu > 0$ for $\varepsilon>0$ small enough.  For this, since $\underline M$ and $\overline M$ are uniformly continuous we can estimate as
	\begin{align*}
		\frac{\sigma}{2} &<  \underline M(t_\ee, v_\ee) - \overline M(s_\ee, \xi_\ee) \\
		&=\underline M(t_\ee, v_\ee) - \underline M(0, v_\ee) + \underline M(0, v_\ee) - \overline M(0,v_\ee)  + \overline M(0,v_\ee) - \overline M(t_\ee, v_\ee) + \overline M(t_\ee, v_\ee) - \overline M(s_\ee, \xi_\ee) \\
		&\le \omega(t_\ee) + \omega( |v_\ee - \xi_\ee| + |t_\ee -s_\ee| ),
	\end{align*}
	where $\omega \ge 0$ is a modulus of continuity (the minimum of the moduli of continuity of $\underline M$ and $\overline M$),
	i.e.\ a continuous non-decreasing function such that $\lim_{r \to 0} \omega(r) = 0$. For $\varepsilon > 0$ such that
	\begin{equation*}
		\omega( |v_\ee - \xi_\ee| + |t_\ee -s_\ee| ) < \frac{\sigma}{4},
	\end{equation*}
	we have $\omega(t_\ee) > \frac{\sigma}{4}$. The reasoning is analogous for $s_\varepsilon$.
	For $v_\ee$ we can proceed much in the same manner
	\begin{align*}
		\frac{\sigma}{2} & < \underline M(t_\ee, v_\ee) - \overline M(s_\ee, \xi_\ee) \\
		&=\underline M(t_\ee, v_\ee) - \underline M(t_\ee, 0) + \underline M(t_\ee,0) - \overline M(t_\ee, v_\ee)   + \overline M(t_\ee, v_\ee) - \overline M(s_\ee,\xi_\ee) \\
		&\le \omega(v_\ee) + \omega( |v_\ee - \xi_\ee| + |t_\ee -s_\ee| ).
	\end{align*}
	And analogously for $\xi_\ee$. A similar argument holds for $R_v - v_\ee$ and $R_v - \xi_\ee$.
	
	\noindent \textbf{Step 3. Choosing viscosity test functions.}
	Unlike in the case of first order equations, there is no simple choice of $\varphi$ that works in the viscosity formula. We have to take a detailed look at the jet sets.
	Due to \cite[Theorem 3.2]{crandall+ishii+lions1992users-guide-viscosity} applied to $u_1 = \underline M$, $u_2 = - \overline M$  and
	\begin{equation*}
		\varphi_\ee (t,s,v,\xi)  = \frac{|v-\xi|^2 + |s-t|^2}{\varepsilon^2} +  \lambda (s + t)
	\end{equation*}
	for any $\delta > 0$,
 	there exists $\underline X$ and $\overline X$ in the corresponding jets such that
 	\begin{equation*}
		\left(\frac{\partial \varphi_\ee}{\partial (t,v)} (z_\ee) , \underline X \right) \in \mathcal J^{2,+} \underline M (t_\ee, v_\ee), \qquad  \left( - \frac{\partial \varphi_\ee}{\partial (s,\xi)} (z_\ee) , - \overline X \right) \in \mathcal J^{2,-} \overline M (s_\ee, \xi_\ee),
	\end{equation*}
	where $z_\ee = (t_\ee, s_\ee, v_\ee, \xi_\ee)$ and we have
	\begin{equation*}
		-( \delta^{-1} + \| A \| ) I \le
		\begin{pmatrix}
			\underline X \\
			& -\overline X
		\end{pmatrix}
		\le A + \delta A^2
	\end{equation*}
	where $A = D^2 \varphi_\ee (z_\ee)$. In particular, this implies that the term of second spatial derivatives satisfies $\underline X_{22} \le \overline X_{22}$ (see \cite{crandall+ishii+lions1992users-guide-viscosity}).
	Notice that
	\begin{equation*}
		\frac{\partial \varphi}{\partial t} (z_\ee) = \frac{2(t_\ee - s_\ee)}{\ee^2} + \lambda, \qquad - \frac{\partial \varphi}{\partial s} (z_\ee) =  \frac{2(t_\ee - s_\ee)}{\ee^2} - \lambda
	\end{equation*}
	and
	\begin{equation*}
		\frac{\partial \varphi}{\partial v} (z_\ee) = \frac{2(v_\ee - \xi_\ee)}{\ee^2} = - \frac{\partial \varphi}{\partial \xi} (z_\ee).
	\end{equation*}
	Since $\underline M (t_\ee, v) - \Phi(t_\ee, s_\ee, v, \xi_\ee)$ as a maximum at $ v = \xi_\ee$ we have that, for $v > v_\ee$
	\begin{equation*}
		\frac{|v - \xi_\ee|^2 - |v_\ee - \xi_\ee|^2  }{\ee^2} \ge \underline M (t_\ee, v) - \underline M(t_\ee, v_\ee) \ge C_0 (v - v_\ee).
	\end{equation*}
	Therefore, we conclude
	\begin{equation*}
		\frac{2(v_\ee - \xi_\ee)}{\ee^2} \ge C_0.
	\end{equation*}
	
	Plugging everything back into the notion of viscosity sub and super-solution
	\begin{gather*}
		\frac{2(t_\ee - s_\ee)}{\ee^2} + \lambda + H \left( v_\ee, \frac{2(v_\ee - \xi_\ee)}{\ee^2} , \underline X  \right)\le 0 \\
		\frac{2(t_\ee - s_\ee)}{\ee^2} - \lambda + H \left( \xi_\ee, \frac{2(v_\ee - \xi_\ee)}{\ee^2} , \overline X  \right)\ge 0
	\end{gather*}
	where
	\begin{equation*}
		H(v,p,X) = -(n \omega_n^{\frac 1n} v^{\frac{n-1}{n}})^2 \left\{ m p^{m-1} X_{22} + p \frac{\partial V}{\partial v}(v) \right\} .
	\end{equation*}
	
	\noindent \textbf{Step 4. A contradiction.}
	Substracting these two equations
	\begin{align*}
		0 < 2 \lambda &\le H \left( \xi_\ee, \frac{2(v_\ee - \xi_\ee)}{\ee^2} , \overline X  \right) - H \left( v_\ee, \frac{2(v_\ee - \xi_\ee)}{\ee^2} , \underline  X  \right) \\
			&= H \left( \xi_\ee, \frac{2(v_\ee - \xi_\ee)}{\ee^2} , \overline X  \right) - H \left( \xi_\ee, \frac{2(v_\ee - \xi_\ee)}{\ee^2} , \underline  X  \right) \\
			&\qquad + H \left( \xi_\ee, \frac{2(v_\ee - \xi_\ee)}{\ee^2} , \underline  X  \right) - H \left( v_\ee, \frac{2(v_\ee - \xi_\ee)}{\ee^2} , \underline  X  \right) \\
			&\le H \left( \xi_\ee, \frac{2(v_\ee - \xi_\ee)}{\ee^2} , \underline  X  \right) - H \left( v_\ee, \frac{2(v_\ee - \xi_\ee)}{\ee^2} , \underline  X  \right)\\
			&=(n \omega_n^{\frac 1n})^2 \frac{2(v_\ee - \xi_\ee)}{\ee^2} \left(v_\ee^{2\frac{n-1}{n}}\frac{\partial V}{\partial v}(v_\ee) - \xi_\ee^{2\frac{n-1}{n}}\frac{\partial V}{\partial v}(\xi_\ee)  \right) \to 0,
	\end{align*}
	since $v^{2\frac{n-1}{n}}\frac{\partial V}{\partial v}(v) = r^{n-1} \frac{\partial V}{\partial r}$ is Lipschitz continuous and \eqref{eq:comparison principle mass collapse rate}.
\end{proof}

\section{Existence of concentrating solutions}
\label{sec:concentration}
When we now take $F: (0,\infty) \to (0,\infty)$
\begin{equation}
	\label{eq:initial data with concentration}
	\rho_F (x) = \Big(  \tfrac{1-m}{m} F(  V (x) ) \Big )^{- \frac 1 {1-m}} , \qquad  F' \le 1, \qquad F(0) = 0 \qquad F(s) > 0 \text{ for all } s > 0,
\end{equation}
we have that $\rho_F \ge \rho_V$, so the corresponding solutions with $\rho(0,x) = \rho_F(x)$ satisfies
\[
	\rho (t,x) \ge \rho_V (x) , \qquad \forall t \ge 0 , x \in \Br.
\]
We will prove that with this initial data we have
$	U = \frac{\partial M}{\partial t} \ge 0
$
by showing it satisfies a PDE with a comparison principle and $U(0,\cdot) \ge 0$. First, we prove an auxiliary result for the regularised problem.

\begin{theorem}[Solutions of \eqref{eq:main regularised bounded} with increasing mass]
	\label{thm:solutions with increasing mass}
	Let $h \in \mathbb R$, $F$ be such that
	$F' \le 1$, $F \ge 0$, $F(0) = 0, $
	\begin{equation}
	\label{eq:increasing solutions Phi smooth}
		u_0 = \Theta^{-1} \Big ( h -  F (V(x)) \Big ),
	\end{equation}
	$u$ be the solution of \eqref{eq:main regularised bounded} and $M$ be its mass.
	Then, we have that
	\begin{equation}
		\label{eq:mass increasing in time}
		M(t + h, x) \ge M(t, x) , \qquad \forall h \ge 0
	\end{equation}
	and
	\begin{equation}
	\label{eq:lower bound on dv M}
	M(t, v_2) - M(t, v_1) \ge \int_{\widetilde B_{v_2} \setminus \widetilde B_{v_1}} \Theta^{-1} (h - V(x)) \diff x , \qquad \forall v_1 \le v_2.
	\end{equation}
\end{theorem}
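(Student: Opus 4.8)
The plan is to treat the two inequalities separately: \eqref{eq:lower bound on dv M} comes from a single application of the comparison principle, while \eqref{eq:mass increasing in time} follows from the comparison principle for masses once we recognise $M_{u_0}$ as a stationary subsolution of the mass equation.

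For \eqref{eq:lower bound on dv M}, I would first observe that $\underline u(x)\defeq\Theta^{-1}\!\big(h-V(x)\big)$ is a classical stationary solution of \eqref{eq:main regularised bounded}: since $\Theta(\underline u)+V\equiv h$ is constant, $\nabla(\Theta(\underline u)+V)\equiv0$, so by \eqref{eq:bounded regularised Wassertein} $\partial_t\underline u=\diver\!\big(\underline u\,\nabla(\Theta(\underline u)+V)\big)=0$ and the flux $\nabla\Phi(\underline u)+\underline u\,\nabla V=\underline u\,\nabla(\Theta(\underline u)+V)$ vanishes identically, so in particular the no-flux condition holds on $\partial\Br$. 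The hypotheses $F'\le1$ and $F(0)=0$ give $F(s)\le s$, hence $h-F(V)\ge h-V$ and therefore $u_0=\Theta^{-1}(h-F(V))\ge\underline u$ because $\Theta^{-1}$ is non-decreasing. Then \Cref{thm:ball comparison Phi smooth} (or the classical maximum principle for the uniformly elliptic regularised equation) gives $u(t,x)\ge\underline u(x)=\Theta^{-1}(h-V(x))$ for all $t\ge0$, and integrating this over $\widetilde B_{v_2}\setminus\widetilde B_{v_1}$ is exactly \eqref{eq:lower bound on dv M}.

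For \eqref{eq:mass increasing in time} — where the increment is an arbitrary non-negative number, not the fixed parameter $h$ entering $u_0$ — I would view $M_{u_0}(v)=\int_{\widetilde B_v}u_0$ as a function of $(t,v)$ that is constant in $t$, and check that it is a classical subsolution of the regularised mass equation $\partial_t M=\kappa(v)^2\big(\partial_v[\Phi(\partial_v M)]+\partial_v M\,\partial_v V\big)$. The key computation uses $\Phi'(s)=s\,\Theta'(s)$ together with $\Theta(u_0)=h-F(V)$, whence $\partial_v[\Phi(u_0)]=u_0\,\partial_v[\Theta(u_0)]=-u_0\,F'(V)\,\partial_v V$, so that the spatial operator at $M_{u_0}$ equals
\begin{equation*}
\kappa(v)^2\Big(\partial_v[\Phi(u_0)]+u_0\,\partial_v V\Big)=\kappa(v)^2\,u_0\,\big(1-F'(V)\big)\,\partial_v V\;\ge\;0,
\end{equation*}
since $u_0\ge0$, $F'\le1$, and $V$ is radially non-decreasing so $\partial_v V\ge0$; as $\partial_t M_{u_0}=0$ this makes $M_{u_0}$ a subsolution. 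Because $M(0,\cdot)=M_{u_0}$, $M_{u_0}(\cdot,0)=0=M(\cdot,0)$, and $M_{u_0}(\cdot,R_v)=\|u_0\|_{L^1(\Br)}=M(\cdot,R_v)$ by conservation of mass, the maximum-principle argument in the proof of \Cref{thm:comparison classical solutions} applies verbatim with the subsolution $M_{u_0}$ in place of $M_1$ and the solution $M$ in place of $M_2$, and yields $M(t,v)\ge M_{u_0}(v)=M(0,v)$ for all $t\ge0$. Finally, for any fixed $\tau\ge0$ the functions $M(\cdot+\tau,\cdot)$ and $M(\cdot,\cdot)$ are both classical solutions of the mass equation with $M(\tau,\cdot)\ge M(0,\cdot)$, so a second application of \Cref{thm:comparison classical solutions} gives $M(t+\tau,v)\ge M(t,v)$, which is \eqref{eq:mass increasing in time}. (Alternatively one can differentiate the mass equation in $t$ and run a maximum principle for $U=\partial_tM$, with $U(0,\cdot)\ge0$ coming from the same sign computation and $U$ vanishing at $v=0,R_v$; but the subsolution route avoids discussing the regularity of $\partial_t M$ up to $t=0$.)

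The main obstacle is making sure we are entitled to invoke \Cref{thm:comparison classical solutions}, i.e. that $M$ and $M_{u_0}$ are genuinely classical ($C^2$ in $v$ for $t>0$). For $M$ this holds because $u$ is a smooth solution of the uniformly elliptic regularised problem; for $M_{u_0}$ it holds as soon as $u_0=\Theta^{-1}(h-F(V))\in C^1$, which is the case when $V,F\in C^1$. If $V$ or $F$ is only Lipschitz, I would run the whole argument for smoothed data $\Theta^{-1}(h-F_k(V_k))$ chosen so that still $F_k'\le1$ — exactly what the sign computation above requires — and then pass to the limit using the $L^1$ contraction of \Cref{thm:existence bounded L1}, under which monotonicity of the mass in time is stable. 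Everything else reduces to tools already established in the previous sections.
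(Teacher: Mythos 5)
Your proof is correct, and the first half (the bound \eqref{eq:lower bound on dv M} via comparison of $u$ with the stationary solution $\Theta^{-1}(h-V)$) is exactly the paper's argument. For the monotonicity in time \eqref{eq:mass increasing in time}, however, you take a genuinely different route. The paper differentiates the mass equation in time, sets $U=\partial_t M$, checks $U(0,\cdot)\ge 0$ by the same sign computation $\partial_v(\Theta(u_0)+V)=(1-F'(V))\,\partial_v V\ge 0$ that you use, and then proves $U\ge 0$ by interpreting $U$ as a weak solution of a linear degenerate parabolic equation and running a duality argument (constructing nonnegative solutions of the adjoint problem with regularised, uniformly elliptic coefficients $A+\delta$). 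You instead observe that the same sign computation says precisely that the time-independent function $M_{u_0}$ is a classical subsolution of the (nonlinear, nondegenerate) regularised mass equation, apply the maximum-principle argument of \Cref{thm:comparison classical solutions} to get $M(t,\cdot)\ge M(0,\cdot)$, and then upgrade to $M(t+\tau,\cdot)\ge M(t,\cdot)$ by comparing the two classical solutions $M(\cdot+\tau,\cdot)$ and $M$; the boundary matching $M_{u_0}(0)=0=M(t,0)$ and $M_{u_0}(R_v)=\|u_0\|_{L^1}=M(t,R_v)$ that the argument needs is exactly as you state. This buys a cleaner proof: it avoids justifying the equation for $\partial_t M$ in a weak sense and the duality step, at the mild cost of needing $M_{u_0}\in C^2$ in $v$, i.e.\ $u_0\in C^1$; your proposed smoothing of $F$ (preserving $F'\le 1$) followed by passage to the limit via the $L^1$ contraction is consistent with how the paper itself uses the theorem, since its data $F_D$ are likewise obtained as limits of $C^1$ profiles $F_\ee$ satisfying the hypotheses. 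The only point worth making explicit is that \Cref{thm:comparison classical solutions} is stated for two solutions rather than a subsolution and a solution, but, as you note, its proof only uses the inequality $\partial_t M_1\le$ (spatial operator at $M_1$) at the interior maximum of $e^{-\lambda t}(M_1-M_2)$, so it applies verbatim.
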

\begin{proof}
Notice also that $F(s) \le s$ so
$
	u_0 (x) \ge \Theta^{-1} ( h - V (x) ),
$
and this is a stationary solution. Hence this inequality holds for $u(t)$ as well, due to \Cref{thm:ball comparison Phi smooth}. Thus \eqref{eq:lower bound on dv M} holds.
Since $u \in C^1 ((0,T) ; C(\overline B_R))$, we can consider
\begin{equation*}
	U(t,v) = \int_{\widetilde B_v} \frac{\partial u}{\partial t} (t,x ) \diff x = \frac{\partial M}{\partial t}.
\end{equation*}
Due to \eqref{eq:bounded regularised regularity of $u$},
$
U \in C((0,T)  \times [0,R_v]  ) .
$
Since we have
$
	M(t,0) = 0, M(t,R_v) = 1
$
the boundary conditions are
$
	U(t,0) = U(t, R_v) = 0.
$
Using the equation for the mass we have that
\begin{equation*}
	U(0,r) =  (n \omega_n v^{ \frac{n-1}n })^2  {u_0}  \frac{\partial }{\partial v} \left( \Theta(u_0) + V   \right) \ge 0,
\end{equation*}
since $\partial V / \partial v \ge 0$, by hypothesis.
Taking formally a time derivative in the equation of the mass, we obtain that
\begin{align*}
	\frac{\partial U}{\partial t} &= (n \omega_n v^{ \frac{n-1}n })^2 \left(  \frac{\partial}{\partial v} \left( \Phi'(u) \frac{\partial M}{\partial v \partial t} \right) + \frac{\partial M}{\partial t \partial v } \frac{\partial V}{\partial v}  \right )= (n \omega_n v^{ \frac{n-1}n })^2 \left(  \frac{\partial}{\partial v} \left( \Phi'(u) \frac{\partial U}{\partial v} \right) + \frac{\partial U}{ \partial v } \frac{\partial V}{\partial v}  \right ) \\
	&= A(v) \frac{\partial^2 U}{\partial v^2} + B(v) \frac{\partial U}{\partial v}
\end{align*}
where $A(v) = (n \omega_n v^{ \frac{n-1}n })^2 \Phi'(u) \ge 0$ and $B(v) = (n \omega_n v^{ \frac{n-1}n })^2 ( \frac{\partial}{\partial v} [\Phi'(u)] + \frac{\partial V}{\partial v}).$
This can be justified in the weak local sense. For $\varphi \in C_c^\infty ((0,T) \times (0,R_v))$ we can write
\begin{equation*}
	-\int_0^T\int_0^{R_v} M \frac{\partial \varphi}{\partial t} = - \int_0^T\int_0^{R_v}  \Phi \left( u \right) \frac{\partial }{\partial v} \left( (n \omega_n v^{ \frac{n-1}n })^2 \varphi \right) + \int_0^T\int_0^{R_v} (n \omega_n v^{ \frac{n-1}n })^2 \frac{\partial M}{ \partial v } \frac{\partial V}{\partial v} \varphi ,
\end{equation*}
we can simply take $\varphi = \frac{\partial \psi}{\partial t}$ and integrating by parts in time to recover
\begin{equation*}
	\int_0^T\int_0^{R_v} \frac{\partial M}{\partial t} \frac{\partial \psi}{\partial t} =  \int_0^T\int_0^{R_v}  \frac{\partial}{\partial t} \left( \Phi \left( u \right) \right) \frac{\partial }{\partial v} \left( (n \omega_n v^{ \frac{n-1}n })^2 \psi \right) - \int_0^T\int_0^{R_v} (n \omega_n v^{ \frac{n-1}n })^2 \frac{\partial M}{\partial t \partial v } \frac{\partial V}{\partial v} \psi .
\end{equation*}
Since $u$ is $C^1$ then $ \frac{\partial}{\partial t} \left( \Phi \left( u \right) \right) = \Phi'(u) \frac{\partial U}{\partial t} $ is a continuous function. Operating with the derivatives of $\psi$, we recover that
\begin{equation*}
	\int_0^T \int_0^{R_v} U \left\{ \frac{\partial \psi}{\partial t} +\frac{\partial}{\partial v} \left(A(t,v) \frac{\partial \psi}{\partial v} \right) + \frac{\partial}{\partial v} \left(B(v) \psi \right) \right\} = 0, \qquad \forall \psi \in C^\infty_c((0,T) \times (0,R_v)) .
\end{equation*}
We now show that $U$ is a solution in the weak sense, incorporating the boundary conditions.
Since $U$ is continuous and $U(t,0) = U(t,R_v) = 0$, for any $\psi$ suitably regular we can use an approximating sequence $\psi_k \in C^\infty_c((0,T) \times (0,R_v))$ to show that
\begin{equation*}
	\int_0^{R_v} U(T) \psi (T) \diff v + \int_0^T \int_0^{R_v} U \left\{ \frac{\partial \psi}{\partial t} +\frac{\partial}{\partial v} \left(A(t,v) \frac{\partial \psi}{\partial v} \right) + \frac{\partial}{\partial v} \left(B(v) \psi \right) \right\} = \int_0^{R_v} U(0) \psi (0) \diff v.
\end{equation*}
Fix $\Psi_0$ smooth and let $\Psi$ the solution of
\begin{equation}
\label{eq:bounded smooth dual U}
	\begin{dcases}
		\frac{\partial \Psi}{\partial t} = \frac{\partial}{\partial v} \left(A(T-t,v) \frac{\partial \Psi}{\partial v} \right) + \frac{\partial}{\partial v} \left(B(v) \Psi \right)	& \text{in }(0,T) \times (0,R_v) \\
		\Psi(t,0) = \Psi(0,R_v) = 0, \\
		\Psi(0,v) = \Psi_0.
	\end{dcases}
\end{equation}
If $\Psi$ is a classical interior solution, then taking as a test function $\psi(t,x) = \Psi (T-t,x)$ we have that
\begin{equation*}
	\int_0^{R_v} U(T) \Psi_0 \diff v = \int_0^{R_v} U(0) \Psi (T) \diff v.
\end{equation*}
Notice that $A(0) = 0$. Substituting $A$ by the uniformly elliptic diffusion $A(T-t,v) + \delta$, $\delta > 0$, and letting $\delta \searrow 0$, for any $\Psi_0 \ge 0$, we can construct a non-negative solution of \eqref{eq:bounded smooth dual U}. Therefore, since $U(0) \ge 0$ we have that
$
	U \ge 0
$
in $(0,T) \times (0,R_v)$, and the proof is complete.
\end{proof}

Before we continue, we point out that $M_{\rho_F}$, lies between $M_{\rho_V}$ and one of its upward translations 
\begin{remark}
	Notice that $F(0) = 0$ and $F' \le 1$ then $F(s) \le s$. Hence $\rho_F \ge \rho_V$. Integrating forward from $0$ we have
	\begin{equation*}
		M_{\rho_F} (v) = \int_{\widetilde B_v} \rho_F \diff x \ge  \int_{\widetilde B_v} \rho_V \diff x = M_{\rho_V} (v).
	\end{equation*}
	On the other hand, integrating backwards from $R_v$ we have
	\begin{equation}
	\label{eq:bounded rho0 with concentration comparison masses}
	\begin{aligned}
		M_{\rho_F} (v) &= M_{\rho_F} (R_v) - \int_{B_R \setminus \widetilde B_v} \rho_F \diff x \le  M_{\rho_F} (R_v) - \int_{B_R \setminus \widetilde B_v} \rho_V \diff x \\
		&= \Big(  M_{\rho_F} (R_v) - a_{V,R} \Big) + M_{\rho_V} (v).
	\end{aligned}
		\end{equation}
\end{remark}

Now we move to considering suitable initial data for \eqref{eq:main bounded domain}. We make the following construction
\begin{lemma}
	\label{lem:rho_D}
	Let $b_1 \in (0,V(R))$. Assume that $a_{V,R} < a_{V,0}$. There exists $\overline b_{2}(b_1) < V(R)$ such that, for all $b_{2} \in (b_1,  \overline b_{2}(b_1))$ there exists $D(b_1,b_2) < b_1$ such that the $\rho_D$ given by \begin{equation}
	\label{eq:bounded rho0 with concentration}
	F_D(s) = \begin{cases}
	s & \text{if } s \in [0,b_1 ], \\
	D & \text{if } s \in [b_1  , b_2 ] , \\
	D + s - b_2 & \text{if } s \ge b_2.
	\end{cases} \qquad \rho_D (x) =  \Big(  \tfrac{1-m}{m} F_D(  V (x) ) \Big )^{- \frac 1 {1-m}}
	\end{equation}
	satisfies
	$
	\int_{B_R} \rho_D = a_{0,R}.
	$
\end{lemma}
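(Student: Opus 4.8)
The plan is to reduce the statement to a one–variable intermediate value argument in the plateau height $D$, treating $b_1$ and $b_2$ as frozen parameters. First I would fix $b_1\in(0,V(R))$, pick for now any $b_2\in(b_1,V(R))$, and set $g(D)\defeq\int_{B_R}\rho_D$. Splitting this integral over the three regions on which $F_D$ is defined gives
\begin{equation*}
	g(D)=\int_{\{V\le b_1\}}\rho_V\diff x+\bigl|\{b_1<V\le b_2\}\bigr|\bigl(\tfrac{1-m}{m}D\bigr)^{-\frac{1}{1-m}}+\int_{\{V>b_2\}}\bigl(\tfrac{1-m}{m}(D+V-b_2)\bigr)^{-\frac{1}{1-m}}\diff x,
\end{equation*}
since $\rho_D=\rho_V$ on $\{V\le b_1\}$, $\rho_D$ is the constant $(\tfrac{1-m}{m}D)^{-1/(1-m)}$ on the annulus $\{b_1<V\le b_2\}$, and $\rho_D=\rho_{V+(D-b_2)}$ on $\{V>b_2\}$. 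The first term is a fixed finite constant (it is bounded by $a_{V,R}$, which is finite thanks to \eqref{eq:rhoV in L1ee loc}); the annulus has positive measure because $V$ is radial, continuous and strictly increasing; and on $(0,b_1]$ the map $D\mapsto g(D)$ is continuous (dominated convergence for the last term, whose integrand is $\le(\tfrac{m}{1-m})^{1/(1-m)}D_0^{-1/(1-m)}$ for $D\ge D_0>0$) and strictly decreasing, since $D\mapsto D^{-1/(1-m)}$ and $D\mapsto(D+V-b_2)^{-1/(1-m)}$ both are. The middle term forces $g(D)\to+\infty$ as $D\to0^+$.

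Next I would analyse the right endpoint value $\mathfrak m(b_1,b_2)\defeq g(b_1)$ and let the plateau collapse. Evaluating at $b_2=b_1$ (and recalling $g(b_1)$ uses plateau height $D=b_1$), the middle piece of $F_D$ is empty and $F_{b_1}$ is the identity, hence $\rho_{b_1}=\rho_V$ and $g(b_1)=a_{V,R}$. For $b_2>b_1$ I would show $\mathfrak m(b_1,b_2)\to a_{V,R}$ as $b_2\downarrow b_1$ by dominated convergence over the fixed finite–measure set $\{V>b_1\}\cap B_R$: restricting to $b_2\in(b_1,b_1+\delta]$ with $0<\delta<b_1$, the integrand of $g(b_1)$ there (the constant $(\tfrac{1-m}{m}b_1)^{-1/(1-m)}$ on $\{b_1<V\le b_2\}$ and $\rho_{V+(b_1-b_2)}$ on $\{V>b_2\}$) is pointwise bounded by $(\tfrac{1-m}{m}b_1)^{-1/(1-m)}$, while $|\{b_1<V\le b_2\}|\to0$ and $\rho_{V+(b_1-b_2)}\mathbf{1}_{\{V>b_2\}}\to\rho_V\mathbf{1}_{\{V>b_1\}}$ pointwise; adding the $D$– and $b_2$–independent term $\int_{\{V\le b_1\}}\rho_V$ gives $\mathfrak m(b_1,b_2)\to a_{V,R}$.

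Finally I would use the hypothesis $a_{V,R}<a_{0,R}$: the previous limit lets me choose $\overline b_2(b_1)\in(b_1,V(R))$ small enough that $\mathfrak m(b_1,b_2)<a_{0,R}$ for every $b_2\in(b_1,\overline b_2(b_1))$. For such $b_2$, $g$ is continuous and strictly decreasing on $(0,b_1]$ with $g(b_1)=\mathfrak m(b_1,b_2)<a_{0,R}<\lim_{D\to0^+}g(D)$, so the intermediate value theorem yields a unique $D=D(b_1,b_2)$ with $g(D)=a_{0,R}$, i.e.\ $\int_{B_R}\rho_D=a_{0,R}$; and $D<b_1$ is automatic since $g$ is decreasing and $a_{0,R}>g(b_1)$. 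I expect the only genuinely delicate point to be the dominated–convergence bookkeeping of the middle step, and in particular making sure that the possibly singular behaviour of $\rho_V$ near the origin never enters the limiting argument — which holds precisely because that contribution lives entirely in the fixed term $\int_{\{V\le b_1\}}\rho_V$ and is controlled once and for all by \eqref{eq:rhoV in L1ee loc}.
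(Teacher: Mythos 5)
Your proof is correct and follows essentially the same route as the paper's: continuity and monotonicity of $D\mapsto\int_{B_R}\rho_D$, the blow-up of this integral as $D\searrow 0$, the limit $\int_{B_R}\rho_{b_1}\to a_{V,R}$ as $b_2\searrow b_1$, and an intermediate value argument; you simply supply the dominated-convergence details that the paper leaves implicit. (You also correctly read the hypothesis $a_{V,R}<a_{V,0}$ as the intended $a_{V,R}<a_{0,R}$, which is what the paper's own proof uses.)
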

The function $F_D$ can be taken as the limit of functions $F_\ee \in C^1$ in the assumptions of \Cref{thm:solutions with increasing mass}. Take $F_\ee(0) = 0$ and
\begin{equation*}
F_\ee'(s) = \begin{cases}
1 & \text{if } s \in [0,b_1 ], \\
1 - \frac{1+\alpha}{\ee / 4} (s-b_1) & \text{if } s \in [b_1, b_1 + \tfrac \ee 4] \\
-\alpha & \text{if } s \in [b_1 + \tfrac \ee 4 ,b_1 + \tfrac \ee  2 ] \\
-\alpha + \frac{\alpha}{\ee / 2} (s - b_1 - \tfrac \ee  2)  & \text{if } s \in [b_1 + \tfrac \ee 2 ,b_1 + \ee ] \\
0 & \text{if } s \in [b_1 + \ee , b_2 - \ee ] , \\
\frac 1 \ee (s-b_2+\ee)  & \text{if } s \in [b_2 - \ee , b_2 ] , \\
1 & \text{if } s \ge b_2.
\end{cases}
\end{equation*}
where given $0 < b_1 < b_2 < V(R)$ and $\ee < \frac{b_2-b_1}{4}$ and $0 < D \le b_1$, we can always select
\begin{equation*}
\alpha(b_1,b_2,D,\ee) > 0 \text{ such that } \qquad 	F_\ee(b_1 + \ee) = \int_0^{b_1 + \ee}F_\ee'(s) \diff s = D.
\end{equation*}
Notice that $F_\ee(s) > 0$ for $s > 0$ and $F_\ee' \le 1$ and $F_\ee \in C^1$. This form is rather elaborate, so we pick the limit as $\ee \searrow 0$. Notice that $ \inf F_\ee' \to - \infty$ as $\ee \searrow 0$.

\begin{proof}[Proof of \Cref{lem:rho_D}]
We start by pointing out that $\int_{B_R} \rho_F$ is continuous in all parameters.
Taking $D = b_1$ we have that as $b_2 \searrow b_1$ we have that
$
	\int_{B_R} \rho_D \searrow \int_{B_R} \rho_V = a_{V,R} < a_{0,R}.
$
Hence, when $D = b_1$, there exists $\overline b_2(b_1)$ such that for $b_2 < \overline b_2$, and $D = b_1$,  $\int_{B_R} \rho_D  < a_{0,R}$. Fixed $b_1$ and $b_2$, as $D \searrow 0$ we have $\int_{B_R} \rho_D \nearrow \infty$. So there exists a choice of $D$ such that $\int_{B_R} \rho_F = a_{0,R}$.
\end{proof}

Notice that $\rho_D \in L^{1+\ee}(B_R)$ due to the assumption \eqref{eq:rhoV in L1ee loc}.
We sketch the profile in \Cref{fig:rhoD}.
\begin{figure}[h]
	\centering
	\includegraphics[width=.5\textwidth]{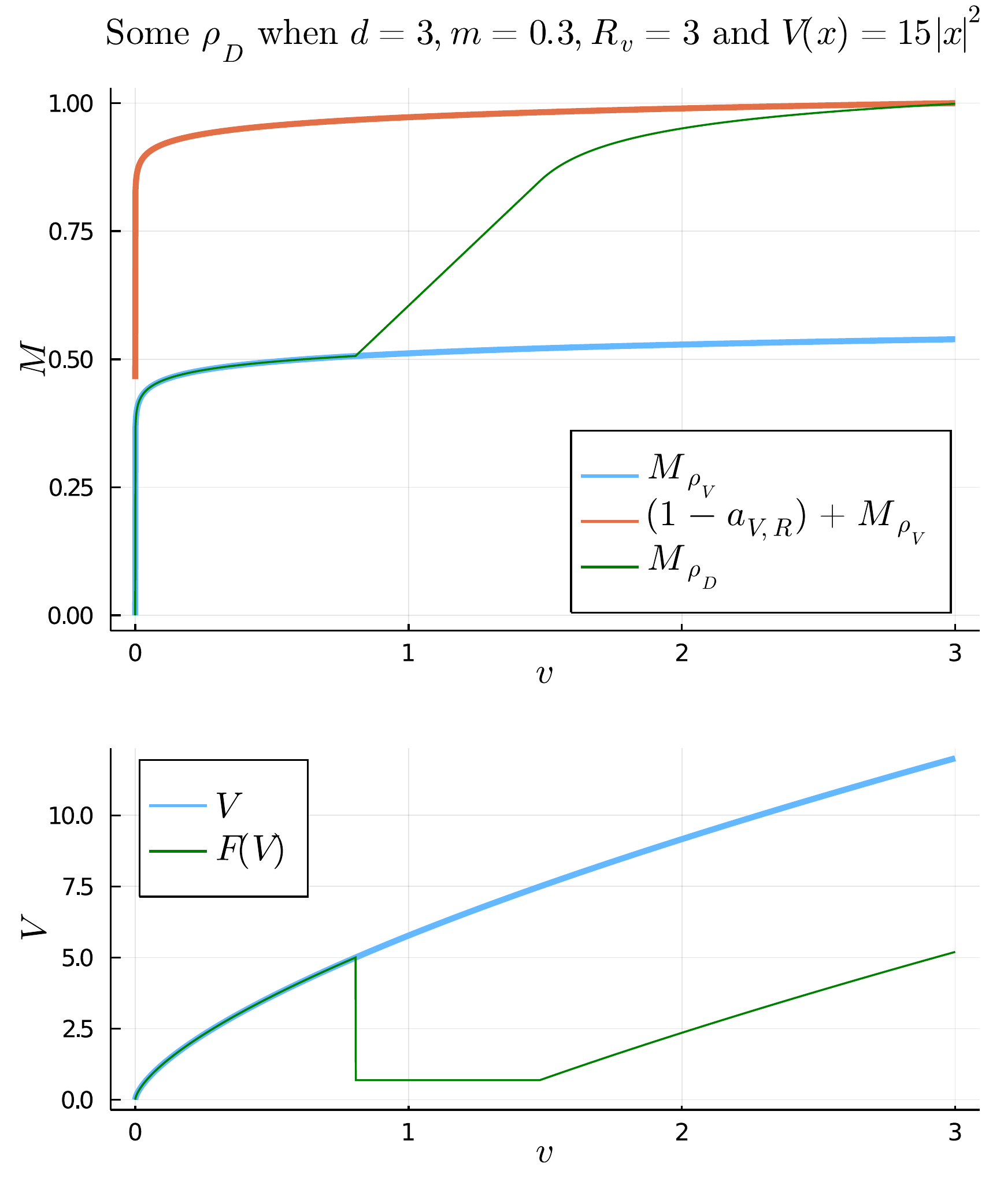}
	\caption{Example of $\rho_D$ for some parameters $b_1$ and $b_2$. In this example $a_{0,R} = 1$.}
	\label{fig:rhoD}	
\end{figure}

\begin{theorem}[Solutions of \eqref{eq:main bounded domain} with increasing mass]
\label{thm:R finite concentration mass}
	Under the hypothesis of \Cref{thm:existence bounded domain regular data},
	let $\rho_D$ be given by \eqref{eq:bounded rho0 with concentration}. Then, the mass $M$ of $\rho(t) = S(t) \rho_D$ constructed in \Cref{thm:existence bounded L1} is such that
	\begin{equation*}
			M(t, v) \nearrow (a_{0,R} - a_{V,R}) + M_{\rho_V} (v) \qquad \text{ uniformly in } [\ee,R_v].
	\end{equation*}
	In particular, $\rho(t, \cdot) \rightharpoonup (a_{0,R} - a_{V,R}) \delta_0 + \rho_V$ weak-$\star$ in the sense of measures.
\end{theorem}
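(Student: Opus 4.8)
The plan is to exploit the time-monotonicity of the mass furnished by \Cref{thm:solutions with increasing mass}, together with the a priori bound $\rho(t)\ge\rho_V$, the interior regularity of \Cref{thm:comparison principle for mass}, and a classification of the monotone stationary profiles of \eqref{eq:mass}.

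First I would record the monotonicity. The profile $F_D$ in \eqref{eq:bounded rho0 with concentration} is the pointwise limit of the $C^1$ functions $F_\ee$ described after \Cref{lem:rho_D}, for which $F_\ee'\le 1$, $F_\ee\ge 0$, $F_\ee(0)=0$, and $\rho_{F_\ee}\to\rho_D$ monotonically in $L^{1+\ee}(\Br)$. Applying \Cref{thm:solutions with increasing mass} to \eqref{eq:main regularised bounded} with $\Phi=\Phi_k$ and data $\Theta_k^{-1}(h_k-F_\ee(V))$ (with suitable $h_k$, as in \Cref{thm:existence bounded domain regular data}) gives $M_{u_{k,\ee}}(t+s,v)\ge M_{u_{k,\ee}}(t,v)$ for all $s\ge 0$; letting $k\to\infty$ and then $\ee\searrow 0$ in the construction of $S(t)$ from \Cref{thm:existence bounded domain regular data,thm:existence bounded L1} — using the $L^1$-contraction, and for the masses the uniform interior modulus of continuity of \Cref{thm:comparison principle for mass} — I get that $M(t,v):=M_{S(t)\rho_D}(v)$ is non-decreasing in $t$ for each fixed $v$. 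Since also $0\le M(t,v)\le M(t,R_v)=a_{0,R}$ and $M(t,0)=0$ at every finite time (no concentration in finite time, \Cref{thm:comparison principle for mass}), the pointwise limit $M_\infty(v):=\lim_{t\to\infty}M(t,v)$ exists on $[0,R_v]$, with $M_\infty(0)=0$ and $M_\infty(R_v)=a_{0,R}$.

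Next I would control $M_\infty$ from both sides and identify it. Since $F_D(s)\le s$ we have $\rho_D\ge\rho_V$; as $\rho_V$ is a stationary solution of \eqref{eq:main bounded domain}, the comparison principle of \Cref{thm:existence bounded L1} gives $\rho(t,x)\ge\rho_V(x)$ for all $t$, and with mass conservation
\begin{equation*}
	M_{\rho_V}(v)\ \le\ M(t,v)=a_{0,R}-\int_{\Br\setminus\widetilde B_v}\rho(t)\ \le\ (a_{0,R}-a_{V,R})+M_{\rho_V}(v),
\end{equation*}
so the same holds for $M_\infty$, and moreover $M_\infty-M_{\rho_V}$ is non-decreasing. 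By the interior estimate \eqref{eq:local Calpha regularity} the family $\{M(t+\cdot,\cdot)\}_{t\ge 1}$ is locally equicontinuous on $(0,\infty)\times(0,R_v)$, hence — being monotone in $t$ — converges locally uniformly to the time-independent $M_\infty$; stability of viscosity solutions then makes $M_\infty$ a viscosity solution of \eqref{eq:mass}, and being constant in time it solves $\partial_v[(\partial_v M_\infty)^m]+\partial_v M_\infty\,\partial_v V=0$ on $(0,R_v)$. Because $\partial_v M_\infty\ge\rho_V\ge c_0>0$ on $\Br$, this equation is non-degenerate on compacts of $(0,R_v)$, so by the regularity theory of \Cref{sec:classical regularity} (together with the one-dimensional bootstrap using that $\partial_v M_\infty$ is monotone) the function $u:=\partial_v M_\infty>0$ is $C^1_{loc}(0,R_v)$ and solves $\partial_v(\tfrac{m}{m-1}u^{m-1}+V)=0$ classically, whence $u=\rho_{V+H}$ for a constant $H$. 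As $V(0^+)=0$ forces $H\ge 0$, while $u\ge\rho_V$ forces $\rho_{V+H}\ge\rho_V$, i.e.\ $H\le 0$, we get $H=0$: thus $\partial_v M_\infty=\rho_V$, $M_\infty=M_{\rho_V}+C$, and $M_\infty(R_v)=a_{0,R}$ pins $C=a_{0,R}-a_{V,R}$.

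It then remains to upgrade the convergence. On $[\ee,R_v]$ the limit $M_\infty=M_{\rho_V}+(a_{0,R}-a_{V,R})$ is continuous, and $M(t,\cdot)\nearrow M_\infty$ pointwise with each $M(t,\cdot)$ continuous, so Dini's theorem gives the uniform convergence on $[\ee,R_v]$. Finally $M_\infty(v)=M_{\mu_\infty}(v)$ for every $v>0$ with $\mu_\infty=(a_{0,R}-a_{V,R})\delta_0+\rho_V$, and as the $\rho(t)\diff x$ all carry total mass $a_{0,R}$, convergence of the mass functions at every continuity point of $M_{\mu_\infty}$ yields $\rho(t,\cdot)\rightharpoonup\mu_\infty$ weak-$\star$ in the sense of measures. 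The delicate point is the third step: passing rigorously to the time-limit in the viscosity formulation and obtaining enough regularity of $M_\infty$ away from the origin to run the classification, and then exploiting the sign constraint $H\ge 0$ together with $\rho(t)\ge\rho_V$ to rule out every stationary profile other than $\rho_V$; the monotonicity of the first step is what turns this stationary information into convergence of the whole orbit. An alternative route to the identification is via the energy-dissipation bound of \Cref{cor:bounded domain and data properties of free energy} along a time sequence, combined with the comparison principle \Cref{thm:comparison principle m}.
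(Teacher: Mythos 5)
Your argument follows the paper's proof essentially step for step: time-monotonicity of $M$ obtained from \Cref{thm:solutions with increasing mass} through the approximations underlying $S(t)$, the two-sided sandwich $M_{\rho_V}\le M(t,\cdot)\le (a_{0,R}-a_{V,R})+M_{\rho_V}$, identification of the monotone limit $M_\infty$ as a stationary viscosity solution classified as $M_{\rho_{V+h}}$ plus a constant with $h=0$ forced by the lower bound $\partial_v M_\infty\ge\rho_V$, and finally Dini's theorem together with convergence of the distribution functions for the weak-$\star$ statement. The only (harmless) deviations are that you derive the upper bound from $\rho(t)\ge\rho_V$ and mass conservation rather than from the mass comparison principle, and that you sketch the interior $C^2$ regularity of $M_\infty$ more loosely than the paper's concavity-plus-bootstrap argument (viscosity supersolution of $-M''=0$ $\Rightarrow$ concave $\Rightarrow$ Lipschitz, then Caffarelli).
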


\begin{proof}[Proof of \Cref{thm:R finite concentration mass}]
	\textbf{Step 1. Properties by approximation.}
Since $\rho_D \in L^{1+\ee}$, looking at how we constructed $S(t) \rho_0$ in \Cref{thm:existence bounded domain regular data,thm:existence bounded L1}, it can approximated by  $S_k (t) \rho_0$ where $S_k$ is the semigroup of \eqref{eq:main regularised bounded} with $\Phi_k$ given by \eqref{eq:Phik}.
Notice that, the associated $\Theta_k$ given by \eqref{eq:Theta} is
\begin{equation*}
	\Theta_k (s) =
		\tfrac{m}{1-m} \left( 1 -  s^{ {m-1} } \right), \quad  \text{for }s \in [k^{-1}, k] .
\end{equation*}
Hence, we recover
\begin{equation*}
	\Theta_k^{-1} (s) =
		( 1 -  \tfrac{1-m}{m} s  )^{-\frac 1 {1-m}}, \qquad  \text{for } s \in [\Theta_k(k^{-1}), \Theta_k (k)].
\end{equation*}
Taking $h = \frac{m}{1-m}$ in \eqref{eq:increasing solutions Phi smooth} we have initial data $u_{0,k}$ such that
\begin{equation*}
	u_{0,k} = (\tfrac{1}{1-m} F(V))^{-\frac 1 {1-m}} , \qquad \text{whenever }F(V(x)) \in [\Theta_k (k^{-1}), \Theta_k (k)],
\end{equation*}
and $M_{u_k}$ non-decreasing in $t$.
This corresponds to an interval of the form $v \in [\ee_k, R_v - \delta_k]$. Let us denote $u_k = S_k (t) u_{0,k}$.
Due to the $L^1$ contraction we have that
\begin{equation*}
	\int_\Br |u_k (t) - S_k (t) \rho_D | \diff x \le \int_{\Br} |u_{0,k} - \rho_D| \diff x.
\end{equation*}
Hence, by \Cref{thm:existence bounded L1} we infer that
$
	S_k (t) u_{0,k} \to S(t) \rho_D $ in $ L^1 (\Br) $ for a.e. $t > 0.
$
This guarantees the a.e. convergence of the masses. Hence, the mass function $M$, which is already a viscosity solution of \eqref{eq:mass} and $C^\alpha$ regular, also inherits the point-wise estimate from $M_{u_k}$ in \eqref{eq:lower bound on dv M}. $M$ is also non-decreasing in $t$ and $v$.
Moreover,
due to \eqref{eq:bounded rho0 with concentration comparison masses} and \Cref{thm:comparison principle for mass} due to \Cref{eq:local Calpha regularity}, we conclude that
\begin{equation}
	\label{eq:R finite concentration mass aux 1}
	M_{\rho_D} (v) \le M(t,v) \le (a_{0,R} - a_{V,R}) + M_{\rho_V} (v).
\end{equation}

\noindent \textbf{Step 2. Uniform convergence of $M(t,\cdot)$ as $t \to +\infty$.}
Since $M$ is point-wise non-decreasing in $t$ and bounded above by $a_{0,R}$, we know there exists a function $M_\infty$ such that
\begin{equation}
	\label{eq:R finite concentration mass aux 2}
	M(t,x) \nearrow M_\infty (x) , \qquad t \nearrow \infty.
\end{equation}
By the estimate \eqref{eq:local Calpha regularity} we know that $M_\infty$ belongs to $C^\alpha_{loc} ((0,R_v))$ and hence continuous in interior points. On the other hand, \eqref{eq:R finite concentration mass aux 1}
 implies
\begin{equation*}
	M_{\rho_D} (v) \le M_\infty(v) \le (a_{0,R} - a_{V,R}) + M_{\rho_V} (v).
\end{equation*}
Hence, by the sandwich theorem, $M_\infty (R_v) = a_{0,R}$ and it is continuous at $R_v$ (due to the explicit formulas we can actually show rates). Since $M_\infty$ is non-decreasing and $M_\infty \ge 0$, due to \eqref{eq:R finite concentration mass aux 2}, there exists a limit
	\begin{equation*}
		\lim_{v \to 0} M_\infty(v) \le a_{0,R} - a_{V,R}.
	\end{equation*}
Defining $M_\infty (0) = \lim_{v \to 0} M_\infty(v)$, the function is obviously continuous in $[0,R_v]$.
Hence, applying Dini's theorem, we know that
\begin{equation*}
	\sup_{v \in [\ee, R_v]} | M (t, v) - M_\infty (v) | \to 0.
\end{equation*}
Due to \eqref{eq:lower bound on dv M} and our choice of $h$, we have that
\begin{equation}
	\label{eq:lower bound on dv M infty}
	M_\infty(v_2) - M_\infty(v_1) \ge ( v_2 - v_1 ) \inf_{\widetilde B_{v_2} \setminus \widetilde B_{v_1}} \rho_V , \qquad \forall v_1 \le v_2.
\end{equation}

\noindent \textbf{Step 3. Characterisation of $M_\infty$ as a viscosity solution.}
Let us check that $M_\infty$ is a viscosity solution of
\begin{equation}
\label{eq:problem M infty}
	\frac{\partial^2 M_\infty}{\partial v^2} + \frac 1 m \left( \frac{\partial M_\infty}{\partial v} \right)^{2-m} \frac{\partial V}{\partial v} = 0.
\end{equation}
Due to our lower bound \eqref{eq:lower bound on dv M infty}, $\frac{\partial M_\infty}{\partial v}$ is bounded below.
We define the sequence of masses $M_n:[0,1] \times [0,R_v] \to \mathbb R$ given by
$
	M_n (t,v) = M(t - n, v).
$
These are viscosity solutions for \eqref{eq:mass} due to \Cref{thm:comparison principle for mass}.
We also know that
\begin{equation*}
	\sup_{(t,v) \in [0,1] \times [\ee, R_v]} |M_n(t, v) - M_\infty(v) | \to 0.
\end{equation*}
By standard arguments of stability of viscosity solutions, $M_\infty$ is also a solution of \eqref{eq:mass}. Since it does not depend on $t$, we can select spatial viscosity test functions, and hence it is a solution of \eqref{eq:problem M infty}.
Since we have removed the time dependency, we dropped also the spatial weight $(n \omega_n v^{\frac{n-1}n})^2$.

\noindent \textbf{Step 4. $M_\infty$ is $C^2 ((0, R_v))$.}

\textbf{Step 4a. Lipschitz regularity}
Since $M_\infty$ is non-decreasing, at the point of contact of a viscosity test function touching from below, we deduce
\begin{equation*}
	- \frac{\partial^2 \varphi}{\partial v^2} (v_0) \ge \frac 1 m \left( \frac{\partial \varphi}{\partial v} (v_0) \right)^{2-m} \frac{\partial V}{\partial v} (v_0) \ge 0.
\end{equation*}
Hence, $M_\infty$ is a viscosity super-solution of $- \Delta M = 0$. Due to \cite{Ishii1995}, we have that $M$ is also a distributional super-solution of $-\Delta M = 0$. Distributional super-solutions are concave. Since $M_\infty$ is concave, it is $W^{1,\infty}([\ee,R_v - \ee])$ of all $\ee > 0$.

\textbf{Step 4b. Higher regularity by bootstrap.}  Now we can treat the right-hand side as a datum
	$$f = \frac 1 m \left( \frac{\partial M_\infty}{\partial v} (v_0) \right)^{2-m}\frac{\partial V}{\partial v} \in L^\infty (\ee, R_v-\ee).$$
	Applying the regularisation results in \cite{Caffarelli1995} we recover that $M_\infty \in C^{1,\alpha}(2 \ee, R_v - 2\ee)$. Since $V \in W^{2,\infty} = C^{0,1}$, then $f \in C^{0,\beta}(2 \ee, R_v - 2\ee)$ for some $\beta > 0$, so $M_\infty \in C^{2,\beta}(4 \ee, R_v - 4\ee)$.

\noindent \textbf{Step 5. Explicit formula of $M_\infty$.}
Since $M_\infty \in C^{2} ((0,R_v)) \cap C ([0,R_v])$, we can integrate \eqref{eq:problem M infty} to show that
\begin{equation*}
	M_\infty (v) = M_\infty(0) + M_{\rho_{V+h}}
\end{equation*}
for some $h \ge 0$.
Since $M_\infty(R_v) - M_\infty (0) = a_{0,R} - M_\infty (0) \le a_V$ then, for some $h \ge 0$ we have that $M_\infty(R_v) - M_\infty (0) = a_{V + h,R}$.
By the comparison principle, which holds due to \eqref{eq:lower bound on dv M infty}, we conclude the equality $M_\infty (v) - M_\infty (0) = M_{ \rho_{V + h}} (v)$ for $v \in [0,R_v]$.
Due to \eqref{eq:lower bound on dv M infty}, the singularity at $0$ is incompatible with $h > 0$. Thus $h = 0$.
\end{proof}

\begin{remark}
	Notice that the aggregation does not occur in finite time, since we assume \eqref{eq:rhoV in L1ee loc}.
\end{remark}

\begin{proof}[Proof of \Cref{thm:bounded concentrating intro}]
	To compute the $\liminf$, it suffices to pick a $\rho_D$
	such that $M_{\rho_0} \ge M_{\rho_D}$.
	This can be done by selecting $b_1$ sufficiently close to $V(R)$.
	If we assume \eqref{eq:rho0 below limit}, by the comparison of masses we have
	\begin{equation*}
		\int_{B_r} \rho (t,x) \diff x \le (a_{0,R} - a_{V,R}) + M_{\rho_V}(v), \qquad \forall r \in [0,R].
	\end{equation*}
	Then, the $\liminf$ and $\limsup$ coincide with this upper bound, i.e.
	\begin{equation*}
		\lim_{t \to +\infty} \int_{B_r} \rho (t,x) \diff x = (a_{0,R} - a_{V,R}) + M_{\rho_V}(v), \qquad \forall r \in [0,R].
	\end{equation*}	
	To check the convergence in Wasserstein distance, we must write the convergence of the masses in $L^1$ in radial coordinates. Let $\mu_{\infty,R} = (a_{0,R} - a_{V,R}) \delta_0 + \rho_V $, then we have that
	\begin{equation*}
		d_{1} (\rho(t) , \mu_{\infty,R}) = n \omega_n \int_{0}^R\left| \int_{B_r} \rho(t,x) \diff x - \mu_{\infty,R} (B_r) \right| r^{n-1} \diff r.
	\end{equation*}
	due to the fact that the optimal transport between radial densities is radial and the characterisation of $d_1$ in one dimension (see \cite{Villani03}).	
	Since we have shown in the proof above that $ \int_{B_r} \rho(t) \diff x \le \mu_\infty (B_r) $
	\begin{equation*}
		d_{1} (\rho(t) , \mu_\infty) = n \omega_n \int_{0}^R\left(\mu_{\infty,R} (B_r) - \int_{B_r} \rho(t,x) \diff x
		 \right) r^{n-1} \diff r.
	\end{equation*}
	Due to the monotone convergence $\int_{B_r} \rho(t,x) \nearrow \mu_{\infty,R} (B_r)$ for $r \in (0,R]$, the right-hand goes to $0$ as $t \to +\infty$.
\end{proof}

\section{Minimisation of $\mathcal F_R$}
\label{sec:FR}

It is very easy to see that the free energy $\mathcal F_R$ is bounded below, in particular
\begin{equation}
	{\mathcal F_R}[\rho] \ge - \tfrac 1 {1-m} |\Br|^{\frac 1 {1-m}} \|\rho\|_{L^1(\Br)}^{m},
\end{equation}
due to \eqref{eq:L1 controls Lm over compacts} and that $V \ge 0$.
Therefore, there exists a minimising sequence. The problem is that the functional setting does not offer sufficient compactness to guarantee its minimiser is in $L^1(\Br)$. However, we can define its extension to the set of measures as
\begin{equation*}
	\widetilde{\mathcal F_R}[\mu] = - \tfrac 1 {1-m} \int_{\Br} \mu_{ac}^m + \int_{\Br} V d\mu
\end{equation*}
This is the unique extension of $\mathcal F_R$   to $\mathcal M_+ (\Br)$ that is lower-semicontinuous in the weak-$\star$ topology (see
\cite{Demengel1986} and related results in \cite{Buttazzo1989}).

Since we work on a bounded domain, tightness of measures is not a limitation.
For convenience, let us define for $\rho \in L^1 (\Br)$,
\begin{equation*}
	\mathcal E_{m,R} [\rho] = \tfrac{1}{m-1}\int_\Br \rho(x)^m \diff x.
\end{equation*}
Let us denote the set of non-negative measures of fixed total mass $\mathfrak m$ in $\Br$ as
\begin{equation*}
	\mathcal P_{\mathfrak{m}} (\overline \Br) = \{  \mu \in \mathcal M_+ (\overline \Br) : \mu (\Br) = \mathfrak{m} \}.
\end{equation*}
We have the following result
\begin{theorem}[Characterisation of the unique minimiser of $\mathcal F_R$]
	\label{thm:bounded minimising FR}
	Let us fix $	\mathfrak{m} > 0$, $V \in W^{2,\infty} (\Br)$, $V (0) = 0$ and $V$ is radially increasing. Then, any
	sequence $\rho_j$ minimising $\mathcal F_R$ over $\mathcal P_{\mathfrak{m}} (\overline \Br) \cap L^1 (\Br)$
	converges weakly-$\star$ in the sense of measures to
	\begin{equation*}
	\mu_{\infty,\mathfrak{m}} = \begin{dcases}
 		\rho_{V+h} & \text{for } h \text{ such that }a_{V+h} = \mathfrak{m}, \\
 		(\mathfrak{m}-a_{V,R})\delta_0 +\rho_{V} & \text{if }a_{V,R} < 	\mathfrak{m}. \\
 	\end{dcases}	
	\end{equation*}
 	Furthermore,
 	\begin{equation}
 	\label{eq:bounded mu infinity minimising}
 		\widetilde {\mathcal F_R}[\mu_{\infty,\mathfrak{m}}] =
 		\inf_{\mu \in \mathcal P_{\mathfrak{m}} (\overline \Br) } \widetilde {\mathcal F}_R[\mu] =
 		\inf_{\rho \in \mathcal P_{\mathfrak{m}} (\overline \Br) \cap L^1 (\Br)} \mathcal F_R[\rho]  		.
 	\end{equation}
\end{theorem}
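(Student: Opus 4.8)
The plan is to pass to the relaxed functional $\widetilde{\mathcal F_R}$, whose weak-$\star$ lower semicontinuity on $\mathcal P_{\mathfrak m}(\overline\Br)$ is available (as recalled just after the statement, via \cite{Demengel1986}; the point is that $j(s)=-\tfrac1{1-m}s^m$ is convex with vanishing recession function, so the singular part carries no entropy, which is exactly why $\widetilde{\mathcal F_R}[\mu]$ sees only $\mu_{ac}$ in the first term). Granting this, the argument splits into three parts: (i) identify the unique minimiser of $\widetilde{\mathcal F_R}$ over $\mathcal P_{\mathfrak m}(\overline\Br)$; (ii) show $\inf_{\mathcal P_{\mathfrak m}\cap L^1}\mathcal F_R=\inf_{\mathcal P_{\mathfrak m}}\widetilde{\mathcal F_R}$ by building recovery sequences; (iii) deduce convergence of every minimising sequence by compactness and lower semicontinuity.

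For (i), write $\mu=\rho\,dx+\mu_s$ with $\rho\ge0$, $\mu_s\ge0$ singular and $\int_\Br\rho+\mu_s(\overline\Br)=\mathfrak m$. Since $V\ge0$ and the entropy ignores $\mu_s$, $\widetilde{\mathcal F_R}[\mu]=G[\rho]+\int_\Br V\,d\mu_s\ge G[\rho]$, where $G[\rho]=\int_\Br\big(-\tfrac1{1-m}\rho^m+V\rho\big)\,dx$; moreover $\int_\Br V\,d\mu_s=0$ iff $\mu_s$ is carried by $\{V=0\}$, which — whenever $\rho_V\in L^1(\Br)$ — is exactly $\{0\}$ (as $V$ is radially nondecreasing, $V(0)=0$, and $\rho_V=+\infty$ on $\{V=0\}$), so $\mu_s=c\,\delta_0$. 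It remains to minimise the strictly-convex-in-$\rho$ functional $G$ over $\{\rho\ge0:\int_\Br\rho\le\mathfrak m\}$. For a.e.\ $x$ the map $s\mapsto-\tfrac1{1-m}s^m+(V(x)+h)s$ is strictly convex on $(0,\infty)$ with unique minimiser $\rho_{V+h}(x)$ for each $h\ge0$ with $V(x)+h>0$. If there is $h\ge0$ with $a_{V+h,R}=\mathfrak m$ (such $h$ is unique, since $h\mapsto a_{V+h,R}$ is continuous and strictly decreasing with limits $a_{V,R}$ at $0^+$ and $0$ at $+\infty$), use $h$ as a KKT multiplier: for admissible $\rho$,
\[
G[\rho]\ \ge\ G[\rho]+h\Big(\int_\Br\rho-\mathfrak m\Big)\ \ge\ G[\rho_{V+h}]+h\big(a_{V+h,R}-\mathfrak m\big)\ =\ G[\rho_{V+h}],
\]
with equality forcing $\rho=\rho_{V+h}$ a.e.\ and, if $h>0$, $\int\rho=\mathfrak m$, hence $\mu=\rho_{V+h}$. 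If instead $a_{V,R}<\mathfrak m$ (so $\rho_V\in L^1$), then $\rho_V$ minimises $G$ pointwise over all $\rho\ge0$, $\int\rho_V=a_{V,R}<\mathfrak m$ so the constraint is inactive, and chasing equality together with $\int V\,d\mu_s=0$ yields $\mu=\rho_V+(\mathfrak m-a_{V,R})\delta_0$. In both cases the minimiser is $\mu_{\infty,\mathfrak m}$ and it is unique.

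For (ii), $\inf_{\mathcal P_{\mathfrak m}}\widetilde{\mathcal F_R}\le\inf_{\mathcal P_{\mathfrak m}\cap L^1}\mathcal F_R$ is immediate as $\widetilde{\mathcal F_R}$ extends $\mathcal F_R$; for the reverse I produce an $L^1$ recovery sequence of mass $\mathfrak m$ for $\mu_{\infty,\mathfrak m}$. If $\mu_{\infty,\mathfrak m}=\rho_{V+h}\in L^1(\Br)$ the constant sequence works. If $\mu_{\infty,\mathfrak m}=\rho_V+(\mathfrak m-a_{V,R})\delta_0$, put $\rho_j=\rho_V\mathbf 1_{\Br\setminus B_{1/j}}+c_j\mathbf 1_{B_{1/j}}$ with $c_j$ chosen so $\int_\Br\rho_j=\mathfrak m$; then $c_j|B_{1/j}|\to\mathfrak m-a_{V,R}$, the bump's entropy $-\tfrac1{1-m}(c_j|B_{1/j}|)^m|B_{1/j}|^{1-m}\to0$ since $m<1$, $\int V\rho_j\to\int V\rho_V=\int V\,d\mu_{\infty,\mathfrak m}$ (as $V(0)=0$), and $\int_{\Br\setminus B_{1/j}}\rho_V^m\to\int_\Br\rho_V^m<\infty$ (finite because $\rho_V\in L^1$, by \eqref{eq:L1 controls Lm over compacts}); hence $\mathcal F_R[\rho_j]\to\widetilde{\mathcal F_R}[\mu_{\infty,\mathfrak m}]$. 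This gives both equalities in \eqref{eq:bounded mu infinity minimising}.

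Finally, for (iii): if $\rho_j$ minimises $\mathcal F_R$ over $\mathcal P_{\mathfrak m}(\overline\Br)\cap L^1$, the measures $\rho_j\,dx$ remain in the weak-$\star$ sequentially compact set $\mathcal P_{\mathfrak m}(\overline\Br)$ (bounded mass on a compact set). Any weak-$\star$ limit point $\mu$ satisfies, by lower semicontinuity, $\widetilde{\mathcal F_R}[\mu]\le\liminf_j\mathcal F_R[\rho_j]=\inf_{\mathcal P_{\mathfrak m}}\widetilde{\mathcal F_R}$ by (ii), so $\mu$ is a minimiser, hence $\mu=\mu_{\infty,\mathfrak m}$ by (i); as every subsequence has a further subsequence converging to $\mu_{\infty,\mathfrak m}$, the whole sequence converges weak-$\star$ to $\mu_{\infty,\mathfrak m}$. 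I expect the main obstacle to be part (i): organising the case split and using $h$ — equivalently the value $a_{V+h,R}$ — as the multiplier for the one-sided constraint $\int_\Br\rho\le\mathfrak m$, while correctly handling whether or not $\rho_V$ is integrable. The second delicate point is the recovery-sequence step of (ii), where the Dirac mass must be resolved into absolutely continuous bumps whose entropy vanishes thanks to $|B_{1/j}|^{1-m}\to0$.
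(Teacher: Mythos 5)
Your argument is correct, but it follows a genuinely different route from the paper's. You work directly with the relaxed functional: you decompose $\mu=\rho\,dx+\mu_s$, observe that the entropy ignores $\mu_s$ and that $\int V\,d\mu_s\ge 0$ with equality only for $\mu_s$ concentrated at the origin, and then minimise $G[\rho]=\int_{\Br}(-\tfrac{1}{1-m}\rho^m+V\rho)$ over $\{\int\rho\le\mathfrak m\}$ by a KKT argument with multiplier $h$, exploiting the strict convexity of $s\mapsto-\tfrac{1}{1-m}s^m+(V(x)+h)s$ and the fact that its pointwise minimiser is exactly $\rho_{V+h}(x)$; equality in the chain then pins down the minimiser uniquely in both cases, and your explicit recovery sequence $\rho_V\mathbf 1_{\Br\setminus B_{1/j}}+c_j\mathbf 1_{B_{1/j}}$ (whose bump has vanishing entropy since $|B_{1/j}|^{1-m}\to0$) gives \eqref{eq:bounded mu infinity minimising}. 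The paper instead symmetrises the minimising sequence, extracts a weak-$\star$ limit by Prokhorov, uses Lieb's trick to show the singular part sits at the origin, derives the Euler--Lagrange condition by mass-preserving first variations to get $\rho_\infty=\rho_{V+h}\chi_{B_{R_\infty}}$, and finally optimises the two-parameter family $f(\tau,h)$ over the boundary of the admissible region. Your approach is shorter and gives uniqueness of the minimiser of $\widetilde{\mathcal F}_R$ over all of $\mathcal P_{\mathfrak m}(\overline\Br)$ transparently via strict convexity, at the price of leaning on the weak-$\star$ lower semicontinuity of $\widetilde{\mathcal F}_R$ as a black box (the same Demengel--Temam input the paper also invokes, so this is consistent). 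What the paper's longer route buys is a set of intermediate tools --- radially decreasing minimising sequences and the pointwise bounds from Lieb's trick --- that are reused verbatim in the whole-space setting of \Cref{lem:minimizer}, where tightness is not automatic and your compactness step (iii) would not transfer without them; on the ball, however, your argument is complete.
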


\begin{remark}[Lieb's trick]
\label{rem:Liebs trick}
	Given a radially decreasing $\rho \ge 0$, $\rho^q \in L^1 (\Br)$ for some $ q > 0$ (for any $R \le \infty$),
	using and old trick of Lieb's (see \cite{Lieb1977,Lieb1983}) we get, for $|x| \le R$,
	\begin{equation*}
		\int_\Br \rho^q \diff x = n \omega_n \int_0^R \rho(r)^q r^{n-1}  \diff r \ge n \omega_n \int_0^{|x|} \rho(r)^q r^{n-1}  \diff r  \ge n \omega_n \rho ({x})^q \int_0^{|x|} r^{n-1}  \diff r  .
	\end{equation*}
	Hence, we deduce the point-wise estimate
	\begin{equation}
		\label{eq:Lieb's trick estimate for rhom}
		\rho(x) \le \left(   \frac{\int_{\Br} \rho^q }{n \omega_n {|x|}^{n}}    \right)^{\frac 1 q}.
	\end{equation}
	It is easy to see that \eqref{eq:Lieb's trick estimate for rhom} is not sharp. However, it is useful to prove tightness for sets of probability measures.
	Similarly, if additionally	$V \rho \in L^1 (\Br)$,  and $V \ge 0$ we can estimate
	\begin{align*}
		\int_\Br V \rho \diff x = n \omega_n \int_0^{|x|} V(r) \rho(r) r^{n-1}  \diff r \ge n \omega_n \int_0^{|x|} V(r) \rho(r) r^{n-1}  \diff r  \ge n \omega_n \rho (x) \int_0^{|x|} V(r) r^{n-1}  \diff r,
	\end{align*}
	so we recover the point-wise estimate
	\begin{equation}
	\label{eq:Lieb's trick estimate for Vrho}
		\rho (x) \le \frac{\int_\Br V \rho }{\int_{B_{|x|}} V}.
	\end{equation}
\end{remark}

\begin{proof}[Proof of \Cref{thm:bounded minimising FR}]
	The second equality in \eqref{eq:bounded mu infinity minimising} is due to the weak-$\star$ density of $L^1_+ (\Br)$ in the space of non-negative measures, and the construction of $\widetilde{\mathcal F}_R$ (see \cite{Demengel1986}).
	Let us consider a minimising sequence. Let us show that we can replace it by a radially-decreasing minimising sequence.  Let $\rho_j \in L^1_+ (\Br) $ with $\| \rho_j \|_{L^1} = \mathfrak{m}$. By standard rearrangement results
	\begin{equation*}
		\mathcal E_{m,R} [\rho_j^\star] = \mathcal E_{m,R} [\rho_j].
	\end{equation*}
	Since $V \ge 0$ and radially symmetric and non-decreasing then
	\begin{equation*}
		 \int_{\Br} V(x) \rho_j^\star (x)  \diff x \le  \int_{\Br} V(x) \rho_j (x)  \diff x.
	\end{equation*}
	Hence, there exists minimising sequence $\rho_j \in L^1 (\Br)$ that we can assume   radially non-increasing.
	Since $\rho_j \in \mathcal P_{\mathfrak m} (\overline B_R)$, by Prokhorov's theorem, this minimising sequence must have a weak-$\star$ limit in the sense of measures, denoted by $\mu_{\infty, \mathfrak m}$.
	
	We use the following upper and lower bounds that follow from \eqref{eq:L1 controls Lm over compacts}
	\begin{equation*}
		\int_\Br V \rho_j
		=
		\mathcal F_R[\rho_j] + \tfrac{1}{1-m} \int_{\Br} \rho_j^m \le \mathcal F_R[\rho_j] + \tfrac{1}{1-m} |\Br|^{1-m} \| \rho_j \|_{L^1}.
	\end{equation*}
	Due to \eqref{eq:Lieb's trick estimate for Vrho} we have a uniform bound in $L^\infty (\Br \setminus B_\ee)$ for any $\ee > 0$. Thus, there exists $\rho_\infty \in L^1_+ (\Br) \cap L^\infty (\Br \setminus B_\ee)$ for any $\ee \ge 0$ such that
	\begin{equation*}
		\mu_{\infty, \mathfrak m} = \Big(\mathfrak{m} - \| \rho_\infty \|_{L^1 (\Br)} \Big) \delta_0 + \rho_\infty.
	\end{equation*}

	Let us now characterise this measure.
	For $\varphi \in C^\infty_c (\mathbb R^n)$ we take
	\begin{equation}
		\psi (x) = \left(  \varphi (x)
		\int_{B_R} \rho_\infty (y) \diff y
		 - \int_{\Br} \varphi (y) \rho_\infty (y) \diff y  \right) \rho_\infty (x) .
	\end{equation}
	For $\varphi$ fixed, there is $\ee_0>0$ such that for $\ee < \ee_0$, $\mu_{\infty,\mathfrak{m}} + \ee \psi \in \mathcal P_{\mathfrak{m}} (\mathbb R^n)$ and, hence,
	\begin{equation*}
	  {\mathcal F_R} [\rho_\infty]	= \widetilde {\mathcal F_R} [\mu_{\infty,\mathfrak{m}} ] \le \widetilde {\mathcal F_R} \left [ \mu_{\infty,\mathfrak{m}} + \ee \psi  \right].
	\end{equation*}
	Hence, we get the expression
	\begin{equation*}
	\mathcal E_{m,R}\left [ \rho_\infty + \ee \psi  \right]  - \mathcal E_{m,R} [ \rho_\infty ] + \ee  \int_{\Br} V(x) \psi (x) \diff x \ge 0.
	\end{equation*}
	We write
	\begin{align*}
		\frac{ \mathcal E_{m,R}\left [ \rho_\infty + \ee \psi  \right]  - \mathcal E_{m,R} [ \rho_\infty ] } \ee  &= \tfrac m {m-1} \int_0^1 \left(  \int_\Br |\rho(x) + t \ee \psi (x)|^{m-2} (\rho(x) + t \ee \psi(x)) \psi (x) \diff x \right)  \diff t	.
	\end{align*}
	Since we have the estimate
	\begin{equation*}
		\left|  \int_\Omega |\rho(x) + t \ee \psi (x)|^{m-2} (\rho(x) + t \ee \psi(x)) \psi (x) \diff x \right| \le (\| \rho_\infty \|_{L^m} + \ee_0 \| \psi \|_{L^m})^{m-1}  \| \psi \|_{L^m},
	\end{equation*}
	we recover by the Dominated Convergence Theorem
	\begin{equation*}
		\lim_{\ee \to 0} \frac{ \mathcal E_{m,R}\left [ \rho_\infty + \ee \psi  \right]  - \mathcal E_{m,R} [ \rho_\infty ] } \ee = \frac m {m-1} \int_\Br \rho_\infty ^{m-1} \psi .
	\end{equation*}
	Thus, as $\ee \to 0$ the following inequality holds
	\begin{equation*}
		 \int_{\Br} I[\rho_\infty] \psi \ge 0, \qquad \text{with } I[\rho] \defeq \frac m {m-1}  \rho ^{m-1} + V.
	\end{equation*}
	Applying the same reasoning for $-\psi$ (which corresponds to taking $-\varphi$ instead of $\varphi$), we deduce the reversed inequality, and hence the equality to $0$. This means that
	\begin{align*}
		0 &= \int_{\Br} I[\rho_\infty] (x) \varphi (x) \rho_\infty(x)
		\left(
		\int_{B_R} \rho_\infty (y) \diff y
		\right)
		 \diff x  - \int_{\Br}  \left( \int_{\Br} \varphi (y) \rho_\infty(y) \diff y \right)  I[\rho_\infty] (x)  \rho_\infty (x)  \diff x  \\
		&= \int_{\Br} \varphi (x) \rho_\infty(x)  \left(  I[\rho_\infty] (x)
		\left(
		\int_{B_R} \rho_\infty (y) \diff y
		\right)
		 -  \int_{\Br}  I[\rho_\infty] (y)  \rho_\infty (y)   \diff y \right)  \diff x
	\end{align*}
	Since $\mathfrak m \delta_0$ is not a minimiser (see \Cref{rem:energy formal analysis on Diracs}) then $\rho_\infty \not \equiv  0$.
	As $\varphi$ concentrates to a point, we recover for a.e. $x$ either
	\begin{align*}
		\rho_\infty (x) = 0 \qquad \text{or} \qquad 	I[\rho_\infty] (x)  =  \frac{ \int_{\Br}  I[\rho_\infty] (y)  \rho_\infty (y)   \diff y}{%
			\int_{B_R} \rho_\infty (y) \diff y
			 } =: \mathcal C[\rho_\infty].
	\end{align*}
	Notice that the right hand of the second term is a constant.
	 Since $\rho_\infty$ is radially decreasing then there exists $R_\infty > 0$ such that
	\begin{equation*}
		\rho_\infty(x) =
		\begin{dcases}
			\left (  \tfrac{1-m}{m} (V - \mathcal C[\rho_\infty]  ) \right )^{- \frac 1 {1-m} } & |x| \le R_\infty, \\
			0 & R_\infty < |x| < R
		\end{dcases} = \rho_{V+h} (x) \chi_{B_{R_\infty}} (x)
	\end{equation*}	
	where, by evaluating close to $0$ we deduce that $h = - \mathcal C[\rho_\infty] \ge 0$.
	Notice that $\rho_\infty$ is the minimiser of the two variable function
	\begin{align*}
		f(\tau,h) = \widetilde{\mathcal F_R} & \left[\rho_{V+h} \chi_{B_{\tau}} + \left( \mathfrak m - \int_{B_{\tau}} \rho_{V+h}\right) \delta_0 \right] = \mathcal F_R [\rho_{V+h} \chi_{B_{\tau}}] \\
		&= |\partial B_1| \int_0^{\tau} \left( -\tfrac {1}{1-m} \rho_{V+h}^{m} (r) + V (r) \rho_{V+h} (r) \right) r^{n-1} \diff r
	\end{align*}
	under the total mass constraint that
	\begin{equation*}
		|\partial B_1| \int_{0}^{\tau} \rho_{V+h} (r) r^{n-1} \diff r \le \mathfrak m.
	\end{equation*}
	It is not a difficult exercise to check that the minimum is achieved with $R_\infty = R$ and $h$ as small as possible.
	When $ \mathfrak m > a_{V,R}$ (which corresponds to $h = 0$) we have to add a Dirac Delta at the origin, with the difference of the masses $\mathfrak m  - a_{V,R}$.
	
	To check this, first we point out that
	\begin{align*}
		 -\tfrac {1}{1-m} \rho_{V+h}^{m}  + V  \rho_{V+h} & = \rho_{V+h}  \left( -\tfrac {1}{1-m} \rho_{V+h}^{m-1} + V   \right) = - \rho_{V+h} \left( \tfrac{1-m}{m} V + \tfrac 1 m h \right)< 0 \qquad \text{for all } r > 0.
	\end{align*}
	We deduce that $f \le 0$ and increasing the integration domain decreases $f$, i.e $\frac{\partial f}{\partial \tau} < 0$ for all $\tau, h > 0$. On the other hand, since $\frac{\partial \rho_{V+h}}{\partial h } < 0$ for $r, h > 0$ we have that
	\begin{equation*}
		\frac{\partial f}{\partial h} = |\partial B_1|\int_0^\tau \left( \frac{m}{m-1} \rho^{m-1} + V \right) \frac{\partial \rho_{V+h}}{\partial h} r^{n-1}\diff r = - h |\partial B_1|\int_0^\tau \frac{\partial \rho_{V+h}}{\partial h} r^{n-1}\diff r > 0 .
	\end{equation*}
	Hence, the derivative is not achieved at interior points. We look at the boundaries of the domain:
	\begin{enumerate}
		\item The segment $(\tau, h) \in \{0\} \times [0,+\infty)$, where $f = 0$. These are all maximisers.
		
		\item The segment $(\tau,h) \in \{R\} \times [h_0, +\infty)$, where $h_0$ is the minimum value so that $\int_{\Br} \rho_{V+h} \le \mathfrak m$. If $\mathfrak m > a_{V,R}$, then $h_0 = 0$. Using the derivative respect to $h$, the minimum in this segment is achieved at $(R,h_0)$.
		
		\item A segment $(\tau,h) \in [0,R_0] \times \{0\}$, where $R_0$ is such that $\int_{B_{R_0}} \rho_V = \mathfrak m$. If $\mathfrak m > a_{V,R}$, then $R_0 = R$. Using the derivative respect to $\tau$, the minimum in this segment is achieved at $(R_0,0)$.
		
	\end{enumerate}
	
	Hence, if $\mathfrak m \ge a_{V,R}$, the minimum is achieved at $\tau = R$ and $h = 0$. The remaining mass is completed with a Dirac. Lastly, if $\mathfrak m < a_{V,R}$ there is an extra part of the boundary, where the mass condition is achieved with equality
	
	\begin{enumerate}[resume]
		
		\item  The curve $(\tau, \overline h(\tau))$ such that
		\begin{equation*}
					|\partial B_1| \int_{0}^{\tau} \rho_{V+\overline h} (r) r^{n-1} \diff r = \mathfrak m.
		\end{equation*}
		Notice that this segment contains the minima of the other segments.
		Taking a derivative respect to $\tau$ we deduce that
		\begin{equation*}
			\frac{\diff \overline h}{\diff \tau} = - \frac{  \rho_{V+\overline h} (\tau) \tau^{n-1} } { \displaystyle  \int_{0}^{\tau} \frac{ \partial \rho_{V+\overline h} }{\partial h} (r) r^{n-1} \diff r} \ge 0.
		\end{equation*}
		Therefore, using Leibniz's rule again we recover
		\begin{align*}
			\frac{\diff }{\diff \tau} \left ( f(\tau, \overline h (\tau)) \right ) &= - |\partial B_1| \frac{\diff }{\diff \tau} \int_0^\tau  \rho_{V+\overline h} \left( \tfrac{1-m}{m} V + \tfrac 1 m \overline h \right) r^{n-1} \diff r \\
			&= - |\partial B_1| \rho_{V+\overline h} (\tau) \left( \tfrac{1-m}{m} V (\tau) + \tfrac 1 m \overline h \right) \tau^{n-1} \\
			&\qquad -  |\partial B_1| \frac{\diff \overline h }{\diff \tau } \int_{0}^{\tau} \left(  \frac{ \partial \rho_{V+\overline h} }{\partial h} \left( \tfrac{1-m}{m} V + \tfrac 1 m \overline h \right) + \rho_{V+ \overline h}  \right)  r^{n-1} \diff r \le 0.
		\end{align*}
		Finally, the minimum is achieved for the lasted $\tau$, so again the minimum is $(R,h_0)$. \qedhere
	\end{enumerate}
\end{proof}

\begin{remark}[$\mathfrak{m} \delta_0$ is not a minimiser]
\label{rem:energy formal analysis on Diracs}
Let $\rho \in L^1_+(\Br)$ smooth be fixed and let us consider the dilations
$	
	\rho_s (x) = s^n \rho(s x)
$
for $s \ge 1$. Notice that $\rho_s \to \delta_0$ as $s \to +\infty$ in the weak-$\star$ of $\mathcal M (\overline \Br)$. As $s \to \infty$ we can compute
\begin{equation*}
	\mathcal F_R[\rho_s] = \frac{s^{n(m-1)}}{m-1} \int_\Br \rho(x)^m \diff x + \int_\Br V(s^{-1}x) \rho(x) \diff x \longrightarrow 0+V(0)\int_\Br \rho(x) \diff x = 0.
\end{equation*}
It is not difficult see that $\mathcal F_R$ takes negative values, so this is not a minimiser.
\end{remark}

In \cite{Cao2020} the authors prove that in $\Rd$ if
$
	\rho_{V+h_1} \le \rho_0 \le \rho_{V+h_2}
$
then $\rho(t) \to \rho_{V+h}$ of the same initial mass.
This shows that $\mu_{\infty,\mathfrak{m}} = \rho_{V+h}$ is attractive in the cases without Dirac Delta concentration at the origin.

We have constructed initial data $\rho_0 > \rho_V$ such that $\rho(t) \to \mu_{\infty,\mathfrak{m}}$ in the sense of their mass functions. Furthermore, we show that
\begin{lemma}[Minimisation of $\mathcal F_R$ through solution of \eqref{eq:main bounded domain}] Assume
	$ \rho_V\le \rho_0 $, \eqref{eq:rho0 below limit},
	 $a_{V,R} < a_{0,R} = \|\rho_0 \|_{L^1 (\Br)}$ and let $\rho$ be constructed in \Cref{thm:bounded concentrating intro}. Then
$$
	\mathcal F_R[\rho(t)] \searrow 	\widetilde{\mathcal F_R}[\mu_{\infty, a_{0,R}}]=\mathcal F_R[\rho_V].
$$	
\end{lemma}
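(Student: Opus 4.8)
The idea is to identify the decreasing limit $L:=\lim_{t\to\infty}\mathcal F_R[\rho(t)]$ by squeezing it between two bounds: a lower bound coming from the lower semicontinuity of the relaxed functional along the weak-$\star$ convergence of \Cref{thm:bounded concentrating intro}, and an upper bound coming from the comparison $\rho(t)\ge\rho_V$ together with $V(0)=0$. First I would record the preliminaries. By \Cref{thm:existence bounded L1} (cf.\ \Cref{cor:bounded domain and data properties of free energy}), $t\mapsto\mathcal F_R[\rho(t)]$ is non-increasing and $\mathcal F_R$ is bounded below, so $L$ exists and is finite. Mass is conserved, $\|\rho(t)\|_{L^1(\Br)}=a_{0,R}$, so $\rho(t)\in\mathcal P_{a_{0,R}}(\overline\Br)\cap L^1(\Br)$ for all $t$; and under \eqref{eq:rho0 below limit}, \Cref{thm:bounded concentrating intro} gives $d_1(\rho(t),\mu_{\infty,R})\to0$, hence (finite measures of fixed mass on the compact $\overline\Br$) $\rho(t)\,dx\rightharpoonup\mu_{\infty, a_{0,R}}$ weakly-$\star$, where $\mu_{\infty, a_{0,R}}=(a_{0,R}-a_{V,R})\delta_0+\rho_V=\mu_{\infty,R}$. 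Finally, the last equality in the statement is a one-line computation: since the absolutely continuous part of $\mu_{\infty, a_{0,R}}$ is $\rho_V$ and $V(0)=0$, $\widetilde{\mathcal F_R}[\mu_{\infty, a_{0,R}}]=\tfrac1{m-1}\int_\Br\rho_V^m\,dx+\int_\Br V\,d\mu_{\infty, a_{0,R}}=\tfrac1{m-1}\int_\Br\rho_V^m\,dx+\int_\Br V\rho_V\,dx=\mathcal F_R[\rho_V]$.

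For the lower bound I would use that $\widetilde{\mathcal F_R}$ is the weak-$\star$ lower semicontinuous extension of $\mathcal F_R$ and that $\widetilde{\mathcal F_R}[\rho(t)\,dx]=\mathcal F_R[\rho(t)]$; passing to the $\liminf$ along $\rho(t)\,dx\rightharpoonup\mu_{\infty, a_{0,R}}$ yields $L\ge\widetilde{\mathcal F_R}[\mu_{\infty, a_{0,R}}]=\mathcal F_R[\rho_V]$. (Equivalently, one can invoke \Cref{thm:bounded minimising FR}, which gives $\mathcal F_R[\rho_V]=\inf_{\mathcal P_{a_{0,R}}(\overline\Br)\cap L^1(\Br)}\mathcal F_R$, and note $\rho(t)$ is admissible.) For the upper bound I would split $\mathcal F_R[\rho(t)]=\tfrac1{m-1}\int_\Br\rho(t)^m\,dx+\int_\Br V\rho(t)\,dx$ and treat the two terms separately. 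The comparison principle of \Cref{thm:existence bounded L1}, applied with the stationary solutions $\rho_{V+h}$ ($h>0$) and the hypothesis $\rho_0\ge\rho_V\ge\rho_{V+h}$, then letting $h\downarrow0$, gives $\rho(t)\ge\rho_V$ for all $t$ (as already used in the construction of \Cref{thm:bounded concentrating intro}); hence $\int_\Br\rho(t)^m\,dx\ge\int_\Br\rho_V^m\,dx$ and, since $\tfrac1{m-1}<0$, $\tfrac1{m-1}\int_\Br\rho(t)^m\,dx\le\tfrac1{m-1}\int_\Br\rho_V^m\,dx$. For the potential term, $V\in W^{2,\infty}(\Br)\subset C(\overline\Br)$, so weak-$\star$ convergence gives $\int_\Br V\rho(t)\,dx\to\int_\Br V\,d\mu_{\infty, a_{0,R}}=\int_\Br V\rho_V\,dx$ (the Dirac at the origin contributes $V(0)=0$). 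Adding, $\limsup_{t\to\infty}\mathcal F_R[\rho(t)]\le\tfrac1{m-1}\int_\Br\rho_V^m\,dx+\int_\Br V\rho_V\,dx=\mathcal F_R[\rho_V]$. Combining the two bounds, $L=\mathcal F_R[\rho_V]=\widetilde{\mathcal F_R}[\mu_{\infty, a_{0,R}}]$, and monotonicity in $t$ upgrades this to $\mathcal F_R[\rho(t)]\searrow\mathcal F_R[\rho_V]$.

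The only substantive point is the upper bound: a priori the non-increasing quantity $\mathcal F_R[\rho(t)]$ could stabilise strictly above $\mathcal F_R[\rho_V]$, i.e.\ energy could be "absorbed into the forming Dirac". This is ruled out by exactly the two structural features that also explain why $\widetilde{\mathcal F_R}$ (rather than $\mathcal F_R$) is the correct relaxed functional: the lower barrier $\rho(t)\ge\rho_V$ keeps the fast-diffusion entropy $\tfrac1{m-1}\int\rho(t)^m$ from decreasing below its value at $\rho_V$, and the escaping mass accumulates precisely at the origin, where $V$ vanishes, so it contributes nothing to the potential energy in the limit. A minor bookkeeping check is that the present $\rho_0$ genuinely satisfies the hypotheses of \Cref{thm:bounded concentrating intro} ($\rho_0\ge\rho_V$, \eqref{eq:rho0 below limit}, $a_{V,R}<a_{0,R}$, plus the radial symmetry and local boundedness assumptions), so that both the $d_1$-convergence and the comparison $\rho(t)\ge\rho_V$ are legitimately at our disposal.
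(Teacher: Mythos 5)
Your proof is correct, but it follows a genuinely different route from the paper's. The paper never separates upper and lower bounds: it first upgrades the mass convergence to $L^1$ convergence $\rho(t)\to\rho_V$ on $\Br\setminus B_\ee$ (using $\rho(t)\ge\rho_V$ so that the $L^1$ distance on the annulus is exactly a difference of masses), then estimates $|\mathcal F_R[\rho(t)]-\mathcal F_R[\rho_V]|$ directly by splitting each of the two terms over $B_\ee$ and $\Br\setminus B_\ee$, handling the entropy on the annulus via Fatou's lemma along a subsequence $t_k$ with a.e.\ convergence, and letting $\ee\to0$ at the end; monotonicity then converts the subsequential limit into a full limit. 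You instead squeeze: the lower bound comes from the weak-$\star$ lower semicontinuity of $\widetilde{\mathcal F_R}$ (or, equivalently, from \Cref{thm:bounded minimising FR} and admissibility of $\rho(t)$), and the upper bound from the pointwise barrier $\rho(t)\ge\rho_V$ — which makes the entropy comparison $\tfrac1{m-1}\int\rho(t)^m\le\tfrac1{m-1}\int\rho_V^m$ hold for every $t$, with no Fatou and no subsequence — together with testing the weak-$\star$ convergence against the continuous function $V$. Your argument is shorter and isolates cleanly why no energy is lost into the Dirac ($V(0)=0$ plus the lower barrier), at the price of leaning on the relaxation/minimisation machinery of \Cref{sec:FR}; the paper's argument is more self-contained and elementary, needing only the mass convergence and Fatou. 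Both are valid, and your bookkeeping (mass conservation, $d_1\Rightarrow$ weak-$\star$ on the compact $\overline\Br$, integrability of $\rho_V^m$ via \eqref{eq:L1 controls Lm over compacts}) checks out.
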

\begin{proof}
From the gradient flow structure we know $\mathcal F[\rho(t)]$ is non-increasing. First, we prove $\rho(t) \to \rho_V$ in $L^1(\Br \setminus B_\varepsilon)$ for some $\ee$ small.
	We know that $\rho(t) \ge \rho_V$ so 	
	\begin{align*}
		\int_{\Br \setminus B_\varepsilon} |\rho(t) - \rho_V | &=	\int_{\Br \setminus B_\varepsilon} (\rho(t) - \rho_V )  = M(t,R)-M(t,\ee) - (M_V (R) - M_V(\ee)) ) \\
		&= (a_{0,R} - a_{V,R}) + M_V(\ee) - M(t,\ee) \to 0
	\end{align*}
	as $t \to \infty$ due to \Cref{thm:bounded concentrating intro}.
Now we can explicitly compute
\begin{align*}
	|\mathcal F[\rho(t)] - \mathcal F[\rho_V]| &\le   \left| \frac{1}{1-m}\int_{B_\varepsilon} (\rho(t)^m - \rho_V^m) \right|
	 +  \left| \frac{1}{1-m}\int_{\Br \setminus B_\varepsilon} (\rho(t)^m - \rho_V^m)  \right| \\
	& \qquad +  \left| \int_{B_\varepsilon} V (\rho(t) - \rho_V) \right|
	+ \left| \int_{\Br \setminus B_\varepsilon} V (\rho(t) - \rho_V)  \right|\\
	& \le \frac{|B_\ee|^{ {1-m}}}{1-m}( \|\rho(t)\|_{L^1}^m + \|\rho_V\|_{L^1}^m ) + \left| \frac{1}{1-m}\int_{\Br \setminus B_\varepsilon} (\rho(t)^m - \rho_V^m)  \right|\\
	& \quad + \left( \sup_{x \in B_\ee} V(x) \right) ( \|\rho(t)\|_{L^1} + \|\rho_V\|_{L^1} ) + \left( \sup_{x \in B_R} V(x) \right) 		\int_{\Br \setminus B_\varepsilon} |\rho(t) - \rho_V | .
\end{align*}
Due to the $L^1$ convergence, we can extract a sequence $t_k \to \infty$ such that $\rho(t_k) \to \rho_V$ a.e. in $\Br\setminus B_\ee$. For this subsequence, due to Fatou's lemma and $\rho(t)\ge\rho_V$ we have
\begin{equation*}
	\int_{\Br\setminus B_\ee} \rho(t_k)^m \to 	\int_{\Br\setminus B_\ee} \rho_V^m.
\end{equation*}
Collecting the above estimates, we conclude that
\begin{equation*}
	\limsup_{k \to \infty} |\mathcal F[\rho(t_k)] - \mathcal F[\rho_V]|\le \tfrac{1}{1-m}|B_\ee|^{{1-m}}(\|\rho(t)\|^m_{L^1} + \|\rho_V\|_{L^1}^m) + \left(   \sup_{x \in B_\ee} V(x) \right)( \|\rho(t)\|_{L^1} + \|\rho_V\|_{L^1} )
\end{equation*}
for any $\ee > 0$. Letting $\ee \to 0$ we recover that $\limsup_k$ is actually a $\lim_k$, and it is equal to $0$. Since $\mathcal F[\rho(t)]$ is non-increasing, we recover the limit as $t \to \infty$.
\end{proof}

\section{The problem in $\mathbb R^n$}
\label{sec:Rn}

We start by showing the existence of a viscosity solution of the mass equation \eqref{eq:mass}, by letting $R \to +\infty$.
As $R \to \infty$ we can modify $V_R$ only on $(R-1) < |x| < R$ to have $\nabla V_R (x) \cdot x = 0$ for $|x|=R$.
Fix $\rho_0 \in L^1 (\Rd)$ radially symmetric. Let $M_R$ be the solution of the mass equation with this data.
Consider the extension
\begin{equation*}
	\widetilde M_R (t,v) = \begin{dcases}
		M_R (t,v) & v \le R_v, \\
		\| \rho_0 \|_{L^1 (\Br)} & v > R_v,
	\end{dcases}
\end{equation*}
where, as above, we denote $R_v = R^n |B_1|$.
Since $\| \widetilde M_R \|_{L^\infty ( (0,\infty) \times (0,\infty))}$ we have that, up to a subsequence
\begin{equation*}
	\widetilde M_{R_k} \rightharpoonup M \text{ weak-}\star \text{ in } L^\infty ((0,\infty)^2).
\end{equation*}
We can carry the estimate in $C^\alpha ( [T_1,T_2] \times [v_1,v_2] )$ given in \eqref{eq:local Calpha regularity}, which is uniform in $R$ since $\| M_R \| _{L^\infty} \le 1$, for any $0 < T_1 < T_2 < \infty$ and $0 < v_1 < v_2 < R_v$.

Now we show $M$ is a viscosity solution. Due to the
uniform continuity provided by \Cref{thm:comparison principle for mass}
 and the Ascoli-Arzelá theorem,
for any $K = [0,T] \times [v_1,v_2]$ with $v_1, v_2 , T > 0$, we have a further subsequence that converges in $C(K)$ to some function $\widetilde M$ the uniform continuity.
It is easy to characterise $\widetilde M = M$ almost everywhere. Due to the uniform convergence, we preserve the value of $M(0,v) = M_R(0,v)$ for $v \le R_v$.
Applying the same stability arguments for viscosity solutions as in \Cref{thm:comparison principle for mass}, $M$ is a viscosity solution of the mass equation \eqref{eq:mass}.

\begin{proposition}
\label{prop:Rd existence mass}
	Assume $V \in W^{2,\infty}_{loc} (\Rd)$ is radially symmetric, strictly increasing, $V\ge 0$, $V (0) = 0$ and the technical assumption \eqref{eq:rhoV in L1ee loc}.
	Let $\rho_0 \in L^1 (\Rd)$ be radially symmetric such that $\| \rho_0 \|_{L^1} = 1$.
	Then, there exists $M \in C_{loc} ([0,+\infty] \times (0,+\infty))$ a viscosity solution of \eqref{eq:mass} in $(0,\infty) \times (0,\infty)$ that satisfies the initial condition
		 $$M(0,v) = \int_{\widetilde B_v} \rho_0 (x) \diff x.$$
	We also have the $C^\alpha_{loc}$ interior regularity estimate \eqref{eq:local Calpha regularity} with $R_v = \infty$.
\end{proposition}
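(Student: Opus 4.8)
The plan is to obtain $M$ as a locally uniform limit of the bounded-domain mass solutions $M_R$ furnished by \Cref{thm:comparison principle for mass}, exactly along the lines sketched in the paragraph preceding the statement; the substance of the argument is packaging the uniform estimates into a compactness statement and then invoking the stability of viscosity solutions.

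\textbf{Step 1: the approximating family.} For each $R>1$ the modified potential $V_R$ agrees with $V$ outside the shell $\{R-1<|x|<R\}$ and satisfies $\nabla V_R\cdot x=0$ on $\partial\Br$, so \Cref{thm:comparison principle for mass}, applied to $\rho_0\big|_{\Br}\in L^1_+(\Br)$, yields a viscosity solution $M_R$ of the mass equation \eqref{eq:mass} with potential $V_R$, on $(0,\infty)\times(0,R_v)$, with $M_R(t,0)=0$, $M_R(t,R_v)=\|\rho_0\|_{L^1(\Br)}\le 1$, together with the interior estimate \eqref{eq:local Calpha regularity}. I extend $M_R$ to $\widetilde M_R$ on $(0,\infty)\times(0,\infty)$ by the constant value $\|\rho_0\|_{L^1(\Br)}$ for $v>R_v$; since $M_R$ is non-decreasing in $v$ and continuous up to $v=R_v$, the function $\widetilde M_R$ is continuous on $[0,\infty)\times(0,\infty)$ and satisfies $0\le\widetilde M_R\le 1$.

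\textbf{Step 2: compactness and the initial datum.} Choose an exhaustion $K_j=[0,T_j]\times[v_{1,j},v_{2,j}]$ of $[0,\infty)\times(0,\infty)$ with $v_{1,j}\downarrow 0$, $v_{2,j}\uparrow\infty$, $T_j\uparrow\infty$. For each $j$, as soon as $R$ is large enough that $R_v>v_{2,j}$ and $V_R=V$ on the ball of volume $v_{2,j}$, the regularity statement in \Cref{thm:comparison principle for mass} provides a modulus of continuity for $M_R$ on $K_j$ depending only on $n,m,v_{1,j},v_{2,j},T_j$, $\|\tfrac{\partial V}{\partial v}\|_{L^\infty(v_{1,j},v_{2,j})}$ and the modulus of continuity of $M_{\rho_0}$ on $[v_{1,j},v_{2,j}]$, hence not on $R$. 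Combining this with the uniform bound $0\le\widetilde M_R\le 1$, the Ascoli-Arzelá theorem on each $K_j$ and a diagonal extraction produce a sequence $R_k\to\infty$ and a limit $M\in C([0,\infty)\times(0,\infty))$ with $\widetilde M_{R_k}\to M$ uniformly on each $K_j$. Since $M_{R_k}(0,v)=\int_{\widetilde B_v}\rho_0\,\diff x=M_{\rho_0}(v)$ whenever $v$ is below the volume of $B_{R_k}$, the uniform convergence near $t=0$ gives $M(0,v)=M_{\rho_0}(v)$ for all $v>0$; and the estimate \eqref{eq:local Calpha regularity}, which is uniform in $R$ because $\|M_R\|_{L^\infty}\le 1$, passes to the pointwise limit and becomes the asserted $C^\alpha_{loc}$ bound with $R_v=\infty$.

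\textbf{Step 3: passing the viscosity property to the limit.} Fix $(t_0,v_0)\in(0,\infty)\times(0,\infty)$ and a test function $\varphi\in C^2$ touching $M$ from above at $(t_0,v_0)$ with $\tfrac{\partial\varphi}{\partial v}\neq 0$ off the contact point. The classical stability argument for viscosity solutions (see \cite{crandall+ishii+lions1992users-guide-viscosity}) produces, for $k$ large, points $(t_k,v_k)\to(t_0,v_0)$ at which a small vertical translate of $\varphi$ touches $M_{R_k}$ from above; applying the subsolution inequality for $M_{R_k}$ at $(t_k,v_k)$ and letting $k\to\infty$ yields the subsolution inequality for $M$, because for $k$ large $V_{R_k}=V$ in a fixed neighbourhood of the ball of volume $v_0$, so the coefficients $(n\omega_n^{1/n}v^{(n-1)/n})^2$ and $\tfrac{\partial V}{\partial v}(v)$ occurring in \eqref{eq:mass} are continuous there and identical for all the $M_{R_k}$. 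The supersolution inequality is symmetric, so $M$ is a viscosity solution of \eqref{eq:mass} on $(0,\infty)\times(0,\infty)$. The only delicate point is that $\rho_0$ is merely in $L^1$, so $M$ need not be strictly increasing in $v$ and one cannot drop the $\limsup$ over a punctured $v$-neighbourhood in the definition of viscosity solution; this is harmless, since the stability argument is carried out with the $\limsup$-form of the definition throughout, just as in \Cref{thm:comparison principle for mass}, where the singular $p$-Laplacian structure is already handled in this way.
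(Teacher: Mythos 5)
Your proposal is correct and follows essentially the same route as the paper: extend $M_R$ by a constant past $R_v$, use the uniform interior modulus of continuity from \Cref{thm:comparison principle for mass} together with Ascoli--Arzel\'a (the paper first extracts a weak-$\star$ $L^\infty$ limit and then upgrades, whereas you go directly to a diagonal uniform extraction, a purely cosmetic difference), and conclude by the standard stability of viscosity solutions, noting that $V_{R_k}=V$ on any fixed compact set for $k$ large. No gaps.
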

	
Notice that, at this point, we do not check that $M(t,0) = 0$, and hence concentration in finite time may, in principle, happen in $\Rd$. We also do not show, at this point, that $M(t,\infty) = 1$. There could, in principle, be loss of mass at infinity.
	
\begin{remark}[Conservation of total mass if $m \in (\frac{n-2}n, 1)$]
For this we use the following comparison. We consider $\underline u_k$ the solution of the pure-diffusion equation
\begin{equation*}
	\begin{dcases}
		 \underline u_t = \Delta \Phi_k (\underline u)  & t > 0 , x \in B_{R}, \\
		\partial_n \underline u = 0 & t> 0, x \in \partial B_{R} \\
		\underline u (0,x) = u_{0}(x).
	\end{dcases}
\end{equation*}
Then the associated mass satisfies the equation
\begin{equation*}
\begin{dcases}
	\frac{\partial \underline M}{\partial t} = (n \omega_n^{\frac 1 n} v^{\frac{n-1}{n}})^2 \frac{\partial  }{\partial v} \Phi_k \left(\frac{\partial \underline M}{\partial v} \right) & t > 0, v \in (0,R_v),\\
	\underline M (t,0) = 0 , & t> 0\\
	\underline M (t,R_v) = \|u_{0}\|_{L^1(\Br)}  & t > 0.
\end{dcases}
\end{equation*}
If $u_0 \ge 0$ is radially decreasing, then so is $\frac{\partial M_k}{\partial v} = \underline u$. Therefore, in the viscosity sense
\begin{equation*}
	\frac{\partial \underline M}{\partial t} \le  (n \omega_n^{\frac 1 n} v^{\frac{n-1}{n}})^2 \left\{ \frac{\partial  }{\partial v} \Phi_k \left(\frac{\partial \underline M}{\partial v} \right) +  \frac{\partial \underline M}{\partial v}\frac{\partial V}{\partial v} \right\}.
\end{equation*}
Let $u$ be the solution of \eqref{eq:main regularised bounded}.
Due to \Cref{thm:comparison principle for mass} we have that
\begin{equation*}
	\underline M (t,v) \le \int_{\widetilde B_v} u(t,x) \diff x.
\end{equation*}
Recalling the limit through $\Phi_k$ given by \eqref{eq:Phik} and the limit $R \to \infty$, the mass constructed in \Cref{prop:Rd existence mass} we have the estimate
\begin{equation*}
	\int_{\widetilde B_v} \underline u (t,x) \diff x \le M(t,v) \le 1.
\end{equation*}
where $\underline u$ is the solution of
$
		\underline u_t = \Delta \underline u^m
$ in $\Rd$.
When $m \in (\frac{n-2}n, 1)$ we know that $\int_\Rd \underline u(t,x) \diff x = \int_\Rd u_0 (x) \diff x$ and, hence $M(t,\infty) = 1$.
\end{remark}	

\subsection{At least infinite-time concentration of the mass}
\label{sec:Rd concentration}

Let assume $a_V < 1$ and that $\rho_0$ is such that
there exists $F$ with the following properties
\begin{equation}
	\label{eq:Rd hypothesis concentration}
	\| \rho_F \|_{L^1 (\Rd)} = 1 \qquad \text{ and } \qquad
	M_{\rho_F} \le M_{\rho_0} \le (1 - a_V) + M_{\rho_V}.
\end{equation}

\begin{remark}
For example, this covers the class of initial data that satisfy the following three assumptions:
\begin{itemize}
	\item $M_{\rho_V} \le M_{\rho_0} \le (1 - a_V) + M_{\rho_V}$
	\item $\int_{\widetilde B_v}\rho_0 (x)\diff x = (1 - a_V) + \int_{\widetilde B_v} \rho_V (x) \diff x$ for $v \ge v_0$
	\item $M_{\rho_0}$ is Lipschitz in $(v_0 - \ee, v_0 + \ee)$.
\end{itemize}
In this setting, we can take a suitable initial datum $\rho_D$ as in the case of balls, and we are reduced to a problem in $[0,v_0]$, since the upper and lower bound guarantee that $M(t,v) = (1 - a_V) + \int_{\widetilde B_v} \rho_V (x) \diff x$ for all $v \ge v_0$. This is a Dirichlet boundary condition for the mass.
\end{remark}

When $\rho _0 = \rho_F$ then the associated mass $M$ obtained in \Cref{prop:Rd existence mass} satisfies
\begin{enumerate}
	\item $M$ is a viscosity solution of the mass equation and locally $C^\alpha$
	\item $M (0,v) = \int_{\widetilde B_v} \rho_F (x) \diff x$
	\item  $M $ is non-decreasing in $t$ and $x$ (due to the properties of the approximations).
	\item We have the comparison
	\begin{equation*}
		M_{\rho_V} (v) \le M_{\rho_D} (v) \le M (t,v) \le (1 - a_V) + M_{\rho_V} (v).
	\end{equation*}
	In particular $M (t, \infty) = 1$ for all $t$ finite.
\end{enumerate}
Again, there exists a point-wise limit
\begin{equation*}
	M_\infty(v) = \lim_{t \to \infty} M(t,v).
\end{equation*}
As in \Cref{thm:R finite concentration mass}, $M_\infty$ preserves the $C^\alpha_{loc}$ estimates, using Dini's theorem we can prove uniform convergence in intervals $[\ee, \ee^{-1}]$. Thus $M_\infty$ is a viscosity solution of \eqref{eq:problem M infty}.
Due to the sandwich theorem and monotonicity
\begin{equation*}
	M_\infty(0^+) \le 1 -a_V, \qquad M_\infty(+\infty) = 1.
\end{equation*}
It is easy to characterise $M_\infty$ as we have done in the case of balls.

This proves \Cref{cor:Rd concentration} under hypothesis \eqref{eq:Rd hypothesis concentration}.

\begin{remark}[Convergence of $\rho_R$ as $R \to \infty$]
Since we do not have any $L^q$ bound for $\rho$ for $q > 1$, we do not have any suitable compactness. We can extend $\rho_R(t)$ by $0$ outside $B_R$ and we do know that
$
	\| \widetilde \rho_R(t) \|_{\mathcal M (\Rd)} \le 1.
$
If we assume that \eqref{eq:sufficient condition existence of minimisers} and that $V(x) \ge c |x|^\alpha$ for $c, \alpha > 0$. The properties can be inhereted to $\rho_R$ so
\begin{equation*}
	\int_{B_R} |x|^\alpha \rho_R  \le C (1 + \mathcal F [\rho_0]).
\end{equation*}
For $\rho_0$ in a suitable integrability class, we have tightness, and hence
a weakly convergent subsequence such that
\begin{equation*}
	\widetilde \rho_R \rightharpoonup \mu \qquad \text{ weak}-\star \text{ in } L^\infty( 0,\infty; \mathcal M(\Rd) )
\end{equation*}
We also know that $\rho_R^m$ is uniformly integrable. However, since we cannot assure $\rho_R^m \rightharpoonup (\mu_{ac})^m$, we cannot characterise $\mu$ as a solution of \eqref{eq:main}. This remark is still valid for radial initial data.
\end{remark}

\subsection{Minimisation of the free energy}
Following the arguments in \cite{Balague2013,Calvez2017,Carrillo2019}, we have an existence and characterisation result for the minimiser.
In
$\Rd$ the free-energy of the FDE $u_t = \Delta u^m$ with $0 < m <1$, is not bounded below, and $u(t) \to 0$ as $t \to \infty$. In fact, the mass of solutions escapes through $\infty$ in finite time if $m < \frac{n-2}n$. We need to ask further assumptions on $V$ so that the formal critical points $\rho_{V+h}$ are in fact minimisers.

We show below that it suffices that $V$ is not critical in the sense of constants, i.e.
	\begin{equation}
		\label{eq:sufficient condition existence of minimisers}
		\inf_{\rho \in \mathcal P_{ac} (\mathbb R^n)} \left(  \frac 1 {m-1} \int_\Rd \rho^m  + (1-\varepsilon) \int_{\mathbb R^n} V (x) \rho(x) \diff x \right)  > -\infty, \qquad \text{for some } \ee > 0.
	\end{equation}
We provide an example of $V$ where this property holds below.
As in $\Br$, we define an extension of $\mathcal F$ to the space of measure as
\begin{equation*}
	\widetilde{ \mathcal F} [ \mu  ] = \mathcal E_m [  \mu_{ac}  ] + \int_{\mathbb R^d} V(x) \diff  \mu  (x)
\end{equation*}
where $ \mu_{ac} $ is the absolutely continuous part of the measure $ \mu $. Notice that, since we choose $V(0) = 0$, we have that
$
	\widetilde{ \mathcal F} [ \mathfrak m \delta_0 + \rho] = \mathcal F [\rho].
$

\begin{proposition}
	\label{lem:minimizer}
	Assume $V \ge 0$ and $V(0) = 0$ and \eqref {eq:sufficient condition existence of minimisers}.
	Then, we have the following:
	\begin{enumerate}
		\item
		\label{it:minimizer 1}
		There exists a constant $C>0$ such that
		\begin{equation*}
			\int_{\mathbb R^n} \rho^m + \int_{\mathbb R^d} V \rho \le C ( 1+ \mathcal F[ \rho] ).
		\end{equation*}
	\end{enumerate}
	If, furthermore $V$ is radially symmetric and non-decreasing then
	\begin{enumerate}[resume]
		\item
		\label{it:minimizer 2}
		There exists $\mu_\infty \in \mathcal P (\mathbb R^n)$ such that
		\begin{equation}
		\label{eq:mu infinity minimises}
			\widetilde {\mathcal  F}[\mu_\infty] =
 		\inf_{\mu \in \mathcal P (\Rd) } \widetilde{ \mathcal F}[\mu]  =
 		\inf_{\rho \in \mathcal P(\mathbb R^n) \cap L^1 (\Rd)} \mathcal F [\rho] .
		\end{equation}

		\item
		\label{it:minimizer 3}
		We have that
		\begin{equation*}
			\mu_\infty = \begin{dcases}
 				\rho_{V+h} & \text{if } a_{V+h} = 1, \\
 				(1-a_V) \delta_0 + \rho_V &  \text{if } a_{V} < 1,
 				\end{dcases}
		\end{equation*}
	\end{enumerate}
\end{proposition}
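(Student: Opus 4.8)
This statement is the $\Rd$-version of \Cref{thm:bounded minimising FR}, so the plan is to follow that proof, the only genuinely new ingredient being compactness of minimising sequences on the whole space. I would organise the argument in four stages: (a) derive the a priori bound \ref{it:minimizer 1} by elementary algebra from \eqref{eq:sufficient condition existence of minimisers}; (b) rearrange a minimising sequence and prove it is tight; (c) use lower semicontinuity of $\widetilde{\mathcal F}$ and weak-$\star$ density of $L^1_+$ to obtain a minimiser $\mu_\infty$; (d) run the Euler--Lagrange analysis plus a two-parameter minimisation to identify $\mu_\infty$.

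\textbf{Stage (a).} Let $-C_0$ denote the (finite) infimum in \eqref{eq:sufficient condition existence of minimisers}. For $\rho$ with $\mathcal F[\rho]<\infty$ write $A=\int_\Rd\rho^m\ge 0$ and $B=\int_\Rd V\rho\ge 0$, so $\mathcal F[\rho]=-\tfrac{A}{1-m}+B$ while the hypothesis reads $-\tfrac{A}{1-m}+(1-\ee)B\ge -C_0$. Subtracting gives $\ee B\le \mathcal F[\rho]+C_0$, hence also $\mathcal F[\rho]\ge -C_0$; substituting $B=\mathcal F[\rho]+\tfrac{A}{1-m}$ into the hypothesis and rearranging gives $\tfrac{\ee}{1-m}A\le (1-\ee)\mathcal F[\rho]+C_0$. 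Therefore both $A$ and $B$ are $\le C(1+\mathcal F[\rho])$, which is exactly \ref{it:minimizer 1} (the case $\mathcal F[\rho]=+\infty$ being vacuous).

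\textbf{Stages (b)--(c).} Pick $\rho_j\in\mathcal P(\Rd)\cap L^1(\Rd)$ with $\mathcal F[\rho_j]\to\inf$. Replacing $\rho_j$ by its symmetric decreasing rearrangement leaves $\int\rho_j^m$ unchanged and does not increase $\int V\rho_j$ (as $V$ is radially non-decreasing, exactly as in \Cref{thm:bounded minimising FR}), so I may assume each $\rho_j$ radially non-increasing; by \ref{it:minimizer 1}, $\sup_j(\int\rho_j^m+\int V\rho_j)<\infty$. A short argument shows \eqref{eq:sufficient condition existence of minimisers} forces $V(r)\to\infty$ as $r\to\infty$: if $V\le L<\infty$, testing with $\rho=(\omega_n j^n)^{-1}\chi_{B_j}$ makes $\tfrac{1}{m-1}\int\rho^m+(1-\ee)\int V\rho\le -\tfrac{(\omega_n j^n)^{1-m}}{1-m}+(1-\ee)L\to-\infty$, a contradiction. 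Lieb's estimate \eqref{eq:Lieb's trick estimate for Vrho} now yields, first, a uniform bound for $\rho_j$ in $L^\infty(\Rd\setminus B_\delta)$ for every $\delta>0$, and second, tightness: $\int_{|x|>R}\rho_j\le V(R)^{-1}\int_{|x|>R}V\rho_j\le C/V(R)\to 0$ uniformly in $j$. By Prokhorov's theorem a subsequence converges weakly-$\star$ to $\mu_\infty\in\mathcal P(\Rd)$ with no loss of mass, and the local $L^\infty$ bound gives $\mu_\infty=(1-\|\rho_\infty\|_{L^1(\Rd)})\delta_0+\rho_\infty$ with $\rho_\infty\in L^1_+(\Rd)$ radially non-increasing and bounded on each $\Rd\setminus B_\delta$. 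Since $s\mapsto\tfrac{1}{m-1}s^m$ is convex on $[0,\infty)$ but sublinear, its recession function vanishes, so $\widetilde{\mathcal F}$ as written is the unique weak-$\star$ lower-semicontinuous extension of $\mathcal F$ to $\mathcal M_+(\Rd)$ (see \cite{Demengel1986}); combined with the weak-$\star$ density of $L^1_+(\Rd)$ this gives $\widetilde{\mathcal F}[\mu_\infty]\le\liminf_j\mathcal F[\rho_j]=\inf_{\mathcal P(\Rd)\cap L^1}\mathcal F=\inf_{\mathcal P(\Rd)}\widetilde{\mathcal F}$, proving both equalities in \eqref{eq:mu infinity minimises} and that $\mu_\infty$ attains the infimum, which is \ref{it:minimizer 2}.

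\textbf{Stage (d).} For $\varphi\in C_c^\infty(\Rd)$ set $\psi=\big(\varphi\int_\Rd\rho_\infty-\int_\Rd\varphi\rho_\infty\big)\rho_\infty$, so $\int\psi=0$ and $\mu_\infty+\ee\psi=(1-\|\rho_\infty\|_{L^1})\delta_0+(\rho_\infty+\ee\psi)\in\mathcal P(\Rd)$ for $|\ee|$ small. Differentiating $\widetilde{\mathcal F}$ along this path (the difference quotient for $\int(\rho_\infty+\ee\psi)^m$ converges by dominated convergence, using $\int\rho_\infty^m<\infty$ from \ref{it:minimizer 1}) and testing with $\pm\varphi$ gives $\rho_\infty(x)\big(I[\rho_\infty](x)-\mathcal C[\rho_\infty]\big)=0$ a.e., where $I[\rho]=\tfrac{m}{m-1}\rho^{m-1}+V$ and $\mathcal C[\rho_\infty]=(\int_\Rd I[\rho_\infty]\rho_\infty)/(\int_\Rd\rho_\infty)$ is a constant; moreover $\rho_\infty\not\equiv 0$ since $\delta_0$ is not a minimiser (the dilations $\rho_s(x)=s^n\rho(sx)\rightharpoonup\delta_0$ satisfy $\mathcal F[\rho_s]\to 0=\widetilde{\mathcal F}[\delta_0]$ while $\mathcal F$ takes negative values, e.g. along $\rho_{V+h}$, as in \Cref{rem:energy formal analysis on Diracs}). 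Since $\rho_\infty$ is radially non-increasing, this forces $\rho_\infty=\rho_{V+h}\chi_{B_{R_\infty}}$ for some $R_\infty\in(0,\infty]$ and $h=-\mathcal C[\rho_\infty]\ge 0$ (the sign from letting the test point approach $0$). Finally, exactly as in the last part of the proof of \Cref{thm:bounded minimising FR}, I minimise $f(\tau,h)=\widetilde{\mathcal F}\big[\rho_{V+h}\chi_{B_\tau}+(1-\int_{B_\tau}\rho_{V+h})\delta_0\big]$ over $\{\int_{B_\tau}\rho_{V+h}\le 1\}$: one checks $\partial_\tau f<0$ and $\partial_h f>0$ at interior points, so the minimum lies on the boundary, giving $R_\infty=\infty$ and $h$ as small as the mass constraint permits. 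Hence if some $h\ge 0$ satisfies $a_{V+h}=1$ then $\mu_\infty=\rho_{V+h}$ (no Dirac), while if $a_V<1$ no such $h$ exists, forcing $h=0$ and $\mu_\infty=(1-a_V)\delta_0+\rho_V$; this is \ref{it:minimizer 3}.

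\textbf{Main obstacle.} I expect the crux to be step (b): ruling out escape of mass to infinity for the minimising sequence. This is exactly where \eqref{eq:sufficient condition existence of minimisers} does double duty --- besides yielding the a priori bound of \ref{it:minimizer 1}, it forces $V$ to be confining, after which Lieb's trick \eqref{eq:Lieb's trick estimate for Vrho} promotes the energy bound to genuine tightness. Once compactness is secured, everything else (the relaxation identification, the Euler--Lagrange equation, and the two-parameter minimisation) is parallel to the bounded-domain case already handled in \Cref{thm:bounded minimising FR}.
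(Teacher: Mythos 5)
Your proposal is correct and, apart from one step, follows the paper's proof essentially verbatim: the same algebra for \ref{it:minimizer 1}, the same rearrangement/Prokhorov/relaxation scheme for \ref{it:minimizer 2}, and the same Euler--Lagrange plus two-parameter boundary minimisation (imported from \Cref{thm:bounded minimising FR}) for \ref{it:minimizer 3}. The one place you diverge is tightness. The paper applies Lieb's trick to the uniform bound on $\int_{\Rd}\rho_j^m$ to get the pointwise decay $\rho_j(x)\le C\min\{|x|^{-n},|x|^{-n/m}\}$ and then integrates over $\Rd\setminus B_R$ to obtain $\int_{\Rd\setminus B_R}\rho_j\le CR^{n(1-1/m)}\to0$; you instead first show that \eqref{eq:sufficient condition existence of minimisers} forces $V$ to be unbounded (hence $V(r)\to\infty$ by radial monotonicity) and then use Markov's inequality together with the uniform bound on $\int V\rho_j$ from \ref{it:minimizer 1}. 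Both arguments are valid; the paper's version has the minor advantage of not needing the monotonicity of $V$ at that stage and of giving an explicit quantitative rate, while yours makes transparent that the hypothesis \eqref{eq:sufficient condition existence of minimisers} is genuinely a confinement condition. Either way the local $L^\infty$ bound away from the origin needed for stage (d) comes from \eqref{eq:Lieb's trick estimate for Vrho}, exactly as in the paper.
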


\begin{proof}[Proof of \Cref{lem:minimizer}]
	Due to the lower bound, we have that
	\begin{equation*}
		\tfrac{1}{1-m} \int_{\mathbb R^n} \rho^m \le C + (1- \varepsilon) \int_{\mathbb R^n} V \rho .
	\end{equation*}
	On the other hand, we get
	\begin{equation*}
		\int_{\mathbb R^n} V \rho = \mathcal F[\rho] + \tfrac{1}{1-m} \int_{\mathbb R^n} \rho^m \le F[\rho] + C + (1- \varepsilon) \int_{\mathbb R^n} V \rho
	\end{equation*}
	Thus
	\begin{equation*}
		\ee \int_{\mathbb R^n} V \rho \le   \mathcal F[\rho] + C
	\end{equation*}
	Finally, we recover
	\begin{equation*}
			\tfrac{1}{1-m} \int_{\mathbb R^n} \rho^m \le C + \frac{(1- \varepsilon)}\ee (\mathcal F[\rho] + C) .
	\end{equation*}
	This completes the proof of \Cref{it:minimizer 1}.
	
	Clearly, we have that
	\[
		\mathcal F[\mu] \ge  \mathcal E_m [\rho] + (1-\varepsilon) \int_{\mathbb R^n} V (x) \rho(x) \diff x.
	\]
	Hence, the infimum of $\mathcal F$ is finite.
	As in the proof of \Cref{thm:bounded minimising FR}, we can consider a minimising sequence $\rho_j$. As in \Cref{thm:bounded minimising FR} we may assume that $\rho_j$ are radially symmetric and non-increasing.
	
		Let us prove \Cref{it:minimizer 2}.
		As in \Cref{thm:bounded minimising FR}, the second equality of \eqref{eq:mu infinity minimises} is due to the weak-$\star$ density of $L^1(\Rd)$ in the set of measures and the construction of $\widetilde{\mathcal F}$.
		For our minimising sequence we know hence that
	\begin{equation*}
		\int_{\mathbb R^n} \rho_j = 1 , \qquad \int_{\mathbb R^n} \rho_j^m \le C ( 1 + \mathcal F[\rho_j]) \le  C.
	\end{equation*}
	Using Lieb's trick in \Cref{rem:Liebs trick}, we obtain that
	$
		\rho_j \le C \min \{  |x|^{-n} , |x|^{-n/m}  \}.
	$	Integrating outside of any ball $B_R$, we can estimate
	\begin{equation*}
		\int_{\mathbb R^n \setminus B_R} \rho_j \le C \left(  \int_R^\infty r^{-\frac n m + n - 1} \diff r  \right)  \le  C R^{n \left(  1 - \frac 1 m \right)}.
	\end{equation*}
	Since $m < 1$, this is a tight sequence of measures. By Prokhorov's theorem, there exists a weakly-$\star$ convergent subsequence in the sense of measures. Let its limit be $\mu_\infty$.
	
	For the proof of \Cref{it:minimizer 3},  we proceed as in \Cref{thm:bounded minimising FR}.
	Notice that we still have the estimate
	\begin{equation*}
		\rho_j(x) \le \frac{\int_{\Rd} V \rho }{\int_{B_{|x|}} V}.
	\end{equation*}
	Since $V$ is strictly increasing, this is an $L^\infty (\Rd\setminus B_\kappa)$ of any $\kappa > 0$,
	and we can repeat the argument in $\Br$.
	\end{proof}

Let us illustrate the previous theorem by giving sufficient conditions on $V$ satisfying the main assumption of \Cref{lem:minimizer}. We extend the argument in \cite{Carrillo+Delgadino2018} to show a family of potentials $V$ for which \eqref{eq:sufficient condition existence of minimisers} holds.
\begin{theorem}
	\label{thm:Rd V example}
	Assume that, for some $\alpha \in (0,m)$ we have that
	\begin{equation}
	\label{eq:Rd V sufficient concentration}
		\chi_V = \sum_{j=1}^\infty 2^{jn} V(2^{j})^{-\frac \alpha {1-m}} < \infty.
	\end{equation}
	Then, \eqref{eq:sufficient condition existence of minimisers} holds for any $\ee \in (0,1)$.
\end{theorem}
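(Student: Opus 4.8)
The plan is to show that condition \eqref{eq:Rd V sufficient concentration} forces the functional
\[
	\mathcal G_\ee [\rho] = \tfrac{1}{m-1}\int_\Rd \rho^m + (1-\ee) \int_\Rd V\rho
\]
to be bounded below on $\mathcal P_{ac}(\Rd)$. First I would reduce to radially symmetric, radially non-increasing competitors by the rearrangement argument already used in \Cref{thm:bounded minimising FR}: the entropy term is invariant under Schwarz symmetrisation and the potential term cannot increase since $V\ge 0$ is radial and non-decreasing. So it suffices to bound $\mathcal G_\ee$ below over radially decreasing probability densities.

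The core of the argument is a dyadic decomposition in space. I would split $\Rd = \bigcup_{j\in\mathbb Z} A_j$ with $A_j$ the annulus $2^{j-1}\le |x| < 2^{j}$, and write $\mathfrak m_j = \int_{A_j}\rho\, dx$, so that $\sum_j \mathfrak m_j = 1$. On each annulus, Hölder's inequality (as in \eqref{eq:L1 controls Lm over compacts}) controls the entropy contribution by $|A_j|^{1-m}\mathfrak m_j^m$, and since $|A_j| \sim 2^{jn}$ we get a lower bound for the (negative) entropy term of the shape $-C\sum_j 2^{jn(1-m)}\mathfrak m_j^m$. For the confining term, for each $j$ with $V(2^{j-1})>0$ we have $\int_{A_j} V\rho \ge V(2^{j-1})\,\mathfrak m_j$. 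The problem is therefore reduced to the scalar estimate: on sequences $(\mathfrak m_j)$ with $\mathfrak m_j\ge 0$, $\sum_j \mathfrak m_j = 1$, show
\[
	(1-\ee)\sum_j V(2^{j-1})\,\mathfrak m_j - C\sum_j 2^{jn(1-m)}\mathfrak m_j^m \ge -C'.
\]
Here I would split each index $j$ into two cases. When $2^{jn(1-m)}\mathfrak m_j^{m-1}$ is small relative to $V(2^{j-1})$ — i.e. $\mathfrak m_j$ is not too small — the confining term dominates the entropy term on that annulus. When $\mathfrak m_j$ is very small, one instead uses the elementary inequality: for fixed $a,b>0$ the function $t\mapsto (1-\ee) b t - C a t^m$ on $t\ge 0$ is minimised at $t_* = (Cma/((1-\ee)b))^{1/(1-m)}$ with minimum value $-c_{m,\ee}\, a^{1/(1-m)} b^{-m/(1-m)}$. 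Summing these worst-case bounds over $j$ gives a lower bound controlled by
\[
	-c \sum_j \big(2^{jn(1-m)}\big)^{1/(1-m)} V(2^{j-1})^{-m/(1-m)} = -c\sum_j 2^{jn}\, V(2^{j-1})^{-\frac{m}{1-m}}.
\]
This is almost \eqref{eq:Rd V sufficient concentration}, with exponent $m/(1-m)$ rather than $\alpha/(1-m)$; the discrepancy is handled because near the origin $V$ vanishes (so those annuli with $j\to-\infty$ contribute a convergent geometric-type series on their own, using $V\in W^{2,\infty}_{loc}$, $V(0)=0$ hence $V(r)\lesssim r^2$, which makes the small-$j$ tail summable), while for large $j$ one interpolates: since $\alpha < m$, if the series with exponent $\alpha/(1-m)$ converges and $V$ is increasing then so does the one with the larger exponent $m/(1-m)$, after discarding finitely many terms. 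I would make this interpolation precise by noting $V(2^j)^{-m/(1-m)} = V(2^j)^{-\alpha/(1-m)}\cdot V(2^j)^{-(m-\alpha)/(1-m)}$ and that $V(2^j)\to\infty$ (which itself follows from \eqref{eq:Rd V sufficient concentration}, else the series diverges), so the extra factor is eventually bounded by $1$.

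The main obstacle I anticipate is the bookkeeping at the two ``ends'': near $0$ one must use the regularity $V\in W^{2,\infty}_{loc}$, $V\ge 0$, $V(0)=0$ to get $V(r) \ge$ something like $c\,r^2$ only if $V$ is genuinely quadratic — but $V$ could vanish to higher order, which would make $V(2^j)^{-m/(1-m)}$ blow up as $j\to -\infty$. However, in that regime $\mathfrak m_j^m \le \mathfrak m_j$ is not available; instead I would simply bound the entropy contribution from the inner ball $B_1$ crudely using that $\rho$ restricted to $B_1$ has mass $\le 1$ and $|B_1|$ finite, giving a single uniform constant for all annuli with $j\le 0$, so the delicate dyadic sum is only needed for $j\ge 1$, which is exactly the range appearing in \eqref{eq:Rd V sufficient concentration}. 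Closing this gap cleanly — separating the ``core'' $B_1$ (handled by one Hölder estimate) from the ``tail'' $\Rd\setminus B_1$ (handled by the dyadic sum above) — is the step requiring the most care, but it is routine once the splitting is set up.
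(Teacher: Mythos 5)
Your proposal is correct, and it reaches the conclusion by a genuinely different mechanism in the key step. Both arguments share the skeleton: treat $B_1$ crudely via Hölder (entropy on $B_1$ bounded by $|B_1|^{1-m}$ since the mass there is at most $1$, potential term nonnegative), decompose $\Rd\setminus B_1$ into dyadic annuli, and on each annulus bound the entropy by $C\,2^{jn(1-m)}\mathfrak m_j^m$ (Jensen) and the potential term from below by $V(2^{j-1})\mathfrak m_j$. Where you diverge is in how the annulus estimates are recombined. The paper writes $\mathfrak m_j^m=\mathfrak m_j^\alpha\mathfrak m_j^{m-\alpha}$, inserts the weight $V(2^{j-1})^{\alpha}$, and applies a triple H\"older inequality over the index $j$ with exponents $(1-m)^{-1},\alpha^{-1},(m-\alpha)^{-1}$, yielding $\int_{\Rd\setminus B_1}\rho^m\le \chi_V^{1-m}\,\|\rho\|_{L^1}^{m-\alpha}\bigl(\int V\rho\bigr)^{\alpha}$, and then Young's inequality absorbs $\bigl(\int V\rho\bigr)^{\alpha}$ into $\ee(1-m)\int V\rho$ plus a constant. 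You instead optimise termwise: minimising $t\mapsto (1-\ee)bt-Cat^m$ gives $-c\,a^{1/(1-m)}b^{-m/(1-m)}$ per annulus, hence a lower bound $-c\sum_j 2^{jn}V(2^{j-1})^{-m/(1-m)}$, whose convergence you correctly deduce from \eqref{eq:Rd V sufficient concentration} because convergence of $\chi_V$ forces $V(2^j)\to\infty$ and $\alpha<m$ makes the exponent comparison go the right way after finitely many terms. Your route is more elementary and in fact proves the result under the formally weaker hypothesis $\sum_j 2^{jn}V(2^j)^{-m/(1-m)}<\infty$ (the parameter $\alpha$ enters only to pass to that series), while the paper's H\"older--Young route gives the sharper quantitative statement that the entropy is dominated by an \emph{arbitrarily small} multiple of $\int V\rho$ plus a constant. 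Two minor remarks: the opening symmetrisation reduction is harmless but unnecessary (neither argument uses radial monotonicity of $\rho$); and for the finitely many annuli where $V(2^{j-1})$ may vanish or be small your unconstrained minimisation degenerates, so there you must fall back on $\mathfrak m_j\le 1$ to bound the entropy term by a constant, exactly as you indicate in your closing paragraph.
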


\begin{remark}
	If the function $r \mapsto V(r)^{-\frac \alpha {1-m}} r^n$ is non-increasing, then the integral criterion for series and the change of variable show that the condition becomes
	\begin{equation*}
		\int_1^\infty 2^{ny} V(2^y)^{-\frac \alpha {1-m}} \diff y =
		 \frac{1}{\ln 2}
		\int_2^\infty V(r)^{-\frac \alpha {1-m}} r^{n-1} \diff r \sim  \int_{|x| \ge 2} \rho_V^{\alpha} \diff x < \infty.
	\end{equation*}
	We are requesting that $\rho_V^{m-\delta} \in L^1$ for some $\delta \in (0,m)$. This is only slightly more restrictive than simply that $\rho_V$ gives a finite quantity in either term of $\mathcal F$.
\end{remark}

\begin{proof}[Proof of \Cref{thm:Rd V example}]
	We look first at the integral on $B_1$. Due to Hölder's inequality, we have that
	\begin{equation*}
		\frac{1}{m-1}\int_{B_1} \rho^m \ge \frac{|B_1|^{1-m}}{m-1} \left( \int_{B_1} \rho \right)^{m} .
	\end{equation*}
	On the other hand, since $V, \rho \ge 0$ we know that
	$
		\int_{B_1} V \rho \diff x \ge 0.
	$
	Hence, we only need to care about the integration on $\mathbb R^n \setminus B_1$.
	We define, for $j \ge 1$
	\begin{equation*}
		\rho_j = \int_{B_{2^j} \setminus B_{2^{j-1}}} \rho (x) \diff x.
	\end{equation*}
	First, we point out that
	\begin{equation*}
		\int_{\Rd \setminus B_1} V(x) \rho(x) \diff x \ge \sum_{j=1}^\infty V(2^{j-1}) \rho_j.
	\end{equation*}
	Due to Jensen's inequality
	\begin{align*}
		\int_{B_{2^j} \setminus B_{2^{j-1}}} \rho^m &\le |B_{2^j} \setminus B_{2^{j-1}}| \left( \frac{1}{|B_{2^j} \setminus B_{2^{j-1}}|} \int_{B_{2^j} \setminus B_{2^{j-1}}} \rho(x) \diff x \right)^m = |B_{2^j} \setminus B_{2^{j-1}}|^{1-m} \rho_j \\
		&= C_n 2^{jn(1-m)} \rho_j^m .
	\end{align*}
	Notice that $B_{2^j} \setminus B_{2^{j-1}} = 2^j (B_{1} \setminus B_{\frac 1 2})$. Hence
	\begin{equation*}
		\int_{\Rd \setminus B_1}  {\rho^m}  \le \sum_{j=1}^\infty \frac{C_n 2^{j n (1-m)}}{V(2^{j-1})^\alpha}V(2^{j-1})^\alpha \rho_j^\alpha \rho_j^{m-\alpha}.
	\end{equation*}
	Applying the triple Hölder inequality with exponents $p = (1-m)^{-1}, q = \alpha^{-1}, r = (m - \alpha)^{-1} $ we recover
	\begin{align*}
		\int_{\Rd \setminus B_1}  {\rho^m}  &\le \left(\sum_{j=1}^\infty \frac{C_n 2^{j n}}{V(2^{j-1})^\frac{\alpha}{1-m}}\right)^{1-m} \left( \sum_{j=1}^\infty V(2^{j-1}) \rho_j \right)^\alpha \left( \sum_{j=1}^\infty \rho_j \right)^{m-\alpha} \\
		&\le \chi_V^{1-m} \| \rho \|_{L^1}^{m-\alpha} \left( \int_{\mathbb R^n \setminus B_1} V (x) \rho(x) \diff x \right)^{\alpha}
	\end{align*}
	Lastly, using Young's inequality we have, for any $\ee > 0$
	\begin{equation*}
		\int_{\Rd \setminus B_1}  {\rho^m} \le \ee (1-m) \int_{\mathbb R^n \setminus B_1} V (x) \rho(x) \diff x + C({\ee, \alpha,m)}\chi_V^
		{
			\frac{1-m}{1-\alpha}
		} \| \rho \|_{L^1}^
		{
			\frac{m-\alpha}{1-\alpha}
		}
	\end{equation*}
	Therefore
	\begin{equation*}
		\frac{1}{m-1} \int_{\Rd} \rho^m + (1-\ee) \int_\Rd V \rho \ge \frac{|B_1|^{1-m}}{m-1} \left( \int_{B_1} \rho \right)^{m} - \frac{C({\ee, \alpha,m)}}{1-m}\chi_V^
		{
			\frac{1-m}{1-\alpha}
		} \| \rho \|_{L^1}^
		{
			\frac{m-\alpha}{1-\alpha}
		} .
	\end{equation*}
	This completes the proof.
\end{proof}

\begin{remark}[The power-type case $V(x) = C |x|^\lambda$ for $|x| \ge R_0$]
\label{rem:minimisation for powers at infinity}
	In this setting, \eqref{eq:Rd V sufficient concentration} becomes $m > \frac{n}{n+\lambda}$ (equivalently $\frac{n(1-m)}{m} < \lambda$), and in this case can
	take any $\alpha$ such that
	 $ \frac{n(1-m)}{\lambda} < \alpha
	<m$.
	This condition is sharp.
	Let us see that, otherwise, $\mathcal F$ is not bounded below.
	We recall the following computation, which can be found in \cite[Theorem 15]{Carrillo+Delgadino2018} following the reasoning in  \cite[Theorem 4.3]{Carrillo+Delgadino+Patacchini2019}.
	
	Assume $m < \frac{n}{n+\lambda}$. We can construct densities $\rho$ where the energy attains $-\infty$. Let
	\begin{equation*}
		\rho_\beta = \sum_{j=j_0}^\infty \frac{\rho_j}{|B_{2^{j+1}} \setminus B_{2^j}|} \chi_{B_{2^{j+1}} \setminus B_{2^j}}, \qquad \text{where } \rho_j = \frac{2^{-j\beta}}{\sum_{j=j_0}^\infty 2^{-j\beta}}.
	\end{equation*}
	where $\beta > 0$ is a constant we will choose later, and $j_0$ is such that $2^{j_0} > R_0$.
	We can explicitly compute
	\begin{equation*}
		\int_\Rd |x|^\lambda \rho_\beta(x) \diff x = \frac{2^{n+\lambda}-1}{n + \lambda} \frac{\sum_{j=j_0}^\infty 2^{-j(\beta-\lambda)}}{\sum_{j=j_0}^\infty 2^{-j\beta}}
	\end{equation*}
	This is a finite number whenever $\beta > \lambda$.
	On the other hand
	\begin{equation*}
		\int_\Rd \rho_\beta(x)^m \diff x= C(n, \lambda) \frac{\sum_{j=j_0}^\infty 2^{-j(m \beta - n(1-m))}}{\left(\sum_{j=j_0}^\infty 2^{-j\beta}\right)^m}
	\end{equation*}
	This number is infinite if $m \beta < n(1-m)$. Hence,
	\begin{equation*}
		-\tfrac{1}{1-m}\int_\Rd \rho_\beta(x)^m \diff x +   \int_\Rd C |x|^\lambda \rho_\beta (x) \diff x = -\infty , \qquad \forall  C \in \mathbb R \text{ and }\lambda < \beta < \frac{n(1-m)}{m}.
	\end{equation*}
	The case of the equality $m = \frac{n}{n+\lambda}$ is, as usual, more delicate due to the scaling. However, we still prove that
	\begin{equation*}
		\inf_{\rho \in \mathcal P \cap L^1} \left(- \int_\Rd \rho^{\frac{n}{n+\lambda}} + C \int_\Rd |x|^\lambda \rho \right)= -\infty, \qquad \forall C \in \mathbb R.
	\end{equation*}
	As in the proof of \cite[Proposition 4]{Carrillo2019}, we can take the following functions:
	\begin{equation*}
		\rho_k (x) = D_k |x|^{-(n+\lambda)} \chi_{B_k \setminus B_{R_0}}, \qquad \text{ where } D_k= \left(\int_{B_k \setminus B_{R_0}} |x|^{-(n+\lambda)} \diff x \right)^{-1}.
	\end{equation*}
	It is a direct computation that
	\begin{equation*}
		\tfrac 1 {D_k}\int_\Rd |x|^\lambda \rho_k = \int_{B_k \setminus B_{R_0}} |x|^{-n}\diff x = \tfrac 1 {D_k^{\frac{n}{n+\lambda}}}\int_\Rd \rho_k^{ \frac{n}{n+\lambda} }.
	\end{equation*}
	For any $\alpha_k > 0$, we have that the rescaling
	$
		\widetilde \rho_{k} (x) = \alpha_k^n \rho_k(\alpha_k x)
	$
	is such that
	\begin{equation*}
		a_k \defeq \frac{ \int_\Rd |x|^\lambda \widetilde\rho_k}
			{
				\left( \int_\Rd \widetilde\rho_k^{ \frac{n}{n+\lambda} } \right)^{\frac{n+\lambda}{n}}
			}
			=
			\frac{ \int_\Rd |x|^\lambda \rho_k}
			{
				\left( \int_\Rd \rho_k^{ \frac{n}{n+\lambda} } \right)^{\frac{n+\lambda}{n}}
			}
			= \left( \int_{B_k \setminus B_{R_0}} |x|^{-n}\diff x \right)^{1-\frac{n+\lambda}{n}} = \left(|\partial B_1| \log \frac{ k}{R_0}\right)^{1-\frac{n+\lambda}{n}}
			\to 0.
	\end{equation*}
	For any sequence $b_k$ which is yet to be determined, we can pick $\alpha_k$ so that
	$		
	\int_{\Rd} \widetilde\rho_k^{\frac{n}{n+\lambda}} = b_k$ by taking
	$$
	\alpha_k = \left(  \frac{ b_k }{ \int_\Rd \rho_k^{\frac{n}{n+\lambda}}} \right) ^{-\frac {n + \lambda} {\lambda n}}.
	$$ 	
	Then, passing to the notation $m = \frac{n}{n+\lambda}$, we recover that
	\begin{align*}
		-\int_\Rd \widetilde\rho_k^{m} + C \int_\Rd |x|^\lambda \widetilde\rho_k
		&= -b_k + C a_k b_k^{\frac 1 m} = - b_k^{\frac 1 m} \left(  b_k^{-\frac{1-m}{m}} - Ca_k \right) = - b_k^{\frac 1 m - \ee} ,
	\end{align*}
	if pick the sequence $b_k$ so that $b_k^{-\frac{1-m}{m}} - Ca_k =  b_k^{-\ee}$.
	Notice that the function $g_{a,b} (s) = s^a - s^b$ is strictly increasing near $0$ if $a < b$.
	Hence, for $k$ large enough and $\ee > \frac{1-m} m$, we can solve $Ca_k = b_k^{-\frac{1-m}{m}} - b_k^{-\ee}$, and we recover $b_k \to +\infty$ as $k \to \infty$. Hence, taking $\ee \in ( \frac{1-m} m, \frac 1 m)$, and $k \to \infty$, we prove the result.
\end{remark}

\begin{remark}
	With the sequence $\rho_k$ above, we can also prove that
	\begin{equation}
	\label{eq:Carlson critical}
		\inf_{\rho \in \mathcal P \cap L^1} \frac{ \int_\Rd |x|^\lambda \rho}{ \left( \int_\Rd \rho^{ \frac{n}{n+\lambda} } \right)^{\frac{n+\lambda}{n}} } = 0.
	\end{equation}
	This corresponds to the borderline case of the Carlson type inequalities
	 	\begin{equation*}
		\left( \int_\Rd \rho  \right)^{ 1 - \frac{n (1-m)}{\lambda m} } \left( \int_\Rd |x|^\lambda \rho \right)^{\frac{n (1-m)}{\lambda m}} \ge c_{n,\lambda,m} \left( \int_\Rd \rho^m \right)^{\frac 1 m} , \qquad \forall \tfrac{n}{n+\lambda} < m <1 \text{ and } \rho \ge 0.
	\end{equation*}
	which are known with the explicit constant (see, e.g., \cite[Lemma 5]{Carrillo2019}).
\end{remark}

\subsection{Infinite-time concentration if $V$ is quadratic at $0$}
\label{sec:pure aggregation}

Our aim in this section is to compare the solutions of \eqref{eq:main} with the solutions of the pure-aggregation problem
\begin{equation}
	\label{eq:aggregation}
	\frac{\partial  \rho }{\partial t} = \diver (\rho \nabla \widetilde V),
\end{equation}
where $\widetilde V$ is a different potential.
The  equation for the mass can be written in radial coordinates as
\begin{equation}
	\label{eq:aggregation mass}
	\frac{\partial  M}{\partial t} = \frac{\partial  M}{\partial r} \frac{\partial \widetilde V}{\partial r}.
\end{equation}
We will show that infinite-time aggregation happens for \eqref{eq:aggregation} if and only if
\begin{equation}
		\label{eq:hypothesis for concentration in infinite time}
		\int_{0^+} \left( \tfrac{\partial \widetilde V}{\partial r} (s) \right)^{-1} \diff s = +\infty.
	\end{equation}
Clearly, a sufficient condition that $\frac{\partial \widetilde V}{\partial r} \le C r$ near $0$. This is the so-called Osgood condition used to distinguish infinite from finite time blow-up in aggregation equations \cite{BCL}.

\begin{proposition}
\label{prop:solutions of pure aggregation}
	Assume $\widetilde V \in C^2 (\Rd)$, is radially symmetric, $\widetilde V(0) = 0$, $\frac{\partial \widetilde V}{\partial r} (r) > 0 $ for $r > 0$, \eqref{eq:hypothesis for concentration in infinite time} and let $  M_0$ be a continuous, non-decreasing and bounded function. Then
	\begin{enumerate}
		\item There exists a unique classical solution by characteristics $M(t,r)$ of \eqref{eq:aggregation mass} defined for all $t, r > 0$.
		
		\item We have $M(t,0) = 0$ for all $t > 0$, i.e. there is no concentration in finite time.
	\end{enumerate}
\end{proposition}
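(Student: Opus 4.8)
Equation \eqref{eq:aggregation mass} is a linear first-order transport equation, $M_t - \partial_r\widetilde V(r)\,M_r = 0$, so the natural route is the classical method of characteristics. I would first set up, for each $r_0>0$, the characteristic ODE
\begin{equation*}
	\dot X(t) = -\,\partial_r\widetilde V\big(X(t)\big), \qquad X(0)=r_0 .
\end{equation*}
Since $\widetilde V\in C^2$, the right-hand side is $C^1$, hence locally Lipschitz, and Picard--Lindelöf yields a unique maximal solution $X(\cdot\,;r_0)$; because $\partial_r\widetilde V>0$ on $(0,\infty)$ this solution is strictly decreasing and therefore remains in $(0,r_0)$ as long as it exists. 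Along such a curve $\tfrac{d}{dt}M(t,X(t;r_0)) = M_t - \partial_r\widetilde V\,M_r = 0$, so any classical solution is forced to be constant on characteristics, which gives uniqueness once existence is settled.

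The crux — and the only place the hypothesis \eqref{eq:hypothesis for concentration in infinite time} is used — is to show that the characteristics are global in forward time and never reach the degenerate point $r=0$ in finite time. Separating variables along $X(\cdot\,;r_0)$, the time needed to descend from $r_0$ to a level $\ell\in(0,r_0)$ equals $\int_{\ell}^{r_0}\big(\partial_r\widetilde V(s)\big)^{-1}\,ds$; letting $\ell\downarrow 0$ this diverges by \eqref{eq:hypothesis for concentration in infinite time}, so $X(t;r_0)>0$ for every $t>0$, and since $X(\cdot\,;r_0)$ is monotone and bounded it is defined for all $t\ge0$. (In addition $X(t;r_0)\searrow 0$ as $t\to\infty$: if the limit were $\ell>0$ then $\dot X\to -\partial_r\widetilde V(\ell)<0$, a contradiction; this is the infinite-time concentration of the pure-aggregation flow.) Consequently, for each fixed $t\ge0$ the forward flow $\Phi_t\colon r_0\mapsto X(t;r_0)$ is a strictly increasing $C^1$ diffeomorphism of $(0,\infty)$ onto an interval $(0,\beta(t))$ with $\beta(t)\in(0,+\infty]$; note $0<\Phi_t(r_0)<r_0$, so $\Phi_t(r_0)\to0$ as $r_0\to0^+$.

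I would then define $M(t,r)=M_0\big(\Phi_t^{-1}(r)\big)$ for $0<r<\beta(t)$ and, when $\beta(t)<\infty$, $M(t,r)=M_0(+\infty)\defeq\lim_{s\to\infty}M_0(s)$ for $r\ge\beta(t)$ (the limit exists since $M_0$ is bounded and non-decreasing). By construction $M$ is continuous, bounded, non-decreasing in $r$ and in $t$, constant along characteristics, and is the solution of \eqref{eq:aggregation mass} obtained by characteristics — classical on $\{t>0,\ 0<r<\beta(t)\}$ when $M_0\in C^1$. Uniqueness in this class is then immediate: any such solution is constant on characteristics, the characteristics emanating from $t=0$ foliate $\{0<r<\beta(t)\}$, and the value on the remaining region $\{r\ge\beta(t)\}$ is pinned by continuity — equivalently by conservation of total mass $M(t,+\infty)=M_0(+\infty)$, which rules out mass entering from infinity and holds automatically for the mass functions of the densities we compare with.

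Finally, part (2) follows directly from this description: as $r\downarrow0$ we have $\Phi_t^{-1}(r)\downarrow0$, hence $M(t,0^+)=M_0(0^+)=M_0(0)=0$; equivalently, since no characteristic issuing from $r_0>0$ crosses the axis $r=0$ in finite time, the Dirichlet datum at the origin is propagated for all $t>0$ and there is no concentration in finite time. The genuinely delicate point is the behaviour at the degenerate point $r=0$, where $\partial_r\widetilde V(0)=0$ makes the characteristic speed vanish; it is exactly the Osgood-type integrability condition \eqref{eq:hypothesis for concentration in infinite time} that controls how slowly it vanishes and excludes finite-time arrival. The secondary issue — characteristics arriving from $r=+\infty$ in finite time when $\partial_r\widetilde V$ grows super-quadratically — is harmless and is absorbed into the $M_0(+\infty)$ extension above.
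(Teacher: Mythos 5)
Your proof is correct and follows essentially the same route as the paper: solve the characteristic ODE $\dot X=-\partial_r\widetilde V(X)$, use the separated-variables identity $t=\int_{X(t)}^{r_0}(\partial_r\widetilde V)^{-1}\diff s$ together with the Osgood condition \eqref{eq:hypothesis for concentration in infinite time} to rule out finite-time arrival at $r=0$, check monotonicity/non-crossing of the flow, and read off $M(t,0)=0$. Your explicit handling of the region $r\ge\beta(t)$ (characteristics arriving from infinity in finite time) is a small point of extra care that the paper glosses over with "the characteristics fill the space", but it does not change the argument.
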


\begin{proof}[Proof of \Cref{prop:solutions of pure aggregation}]
	\Cref{eq:aggregation mass}  is a first order linear PDE that we can solve by characteristics.
We can look at the characteristic curves of constant mass
$
	\overline M(t, r_c(t, r_0)) = \overline M(0,r_0).
$
Taking a derivative we recover
$
	\frac{\diff r_c}{\diff t} (t) = - \frac{\partial \widetilde V}{\partial r} (r_c(t)).
$
These are the same characteristics obtained when applying the method directly to \eqref{eq:aggregation}.
Clearly $ r_c (t, r_0) \le r_0.$
Since $V \in C^2 (\Rd)$, these characteristics exists for some time $t(r_0) > 0$, and are unique up to that time.
Hence, let
\begin{equation}
	\label{eq:pure aggregation condition characteristics}
	t = \int_{r_c(t, r_0)}^{r_0} \left(  \frac{\partial \widetilde V}{\partial r} (s) \right)^{-1} \diff s.
\end{equation}
Concentration will occur if $r_c(t, r_0) = 0$ for some $r_0 > 0$ and $t < \infty$, which is incompatible with \eqref{eq:hypothesis for concentration in infinite time}.
Notice that since $0 < r_c(t,r_0) \le r_0$, these functions are defined for all $t > 0$.
Let us check that $r_c(t,r_0)$ do not cross, and hence can be used as characteristics. If two of them cross at time $t$, we have that
	\begin{equation*}
			 \int_{r_0 }^{r_c(t,r_0)} \left(  \frac{\partial \widetilde V}{\partial r} (s) \right)^{-1} \diff s =- t = \int_{r_1}^{r_c(t,r_1)} \left(  \frac{\partial \widetilde V}{\partial r} (s) \right)^{-1} \diff s .
	\end{equation*}
	Since $r_c(t,r_0) = r_c(t,r_1)$ then we get
	\begin{equation*}
		\int_{r_0}^{r_1} \left(  \frac{\partial \widetilde V}{\partial r} (s) \right)^{-1} \diff s = 0.
	\end{equation*}
	As $ \frac{\partial \widetilde V}{\partial r}  > 0$ outside $0$, then $r_0 = r_1$ and the characteristics are the same.
	Due to the regularity of $\widetilde V$, there is continuous dependence and, since the characteristics point inwards and do not cross, they fill the space $[0,+\infty) \times [ 0, +\infty)$.

Finally, notice also that  $\frac{\partial \widetilde V}{\partial r} (0) = 0 $ and positive otherwise, then for any $r_0 > 0$ we have that
$
	\lim_{t \to +\infty} r_c (t, r_0) = 0 .
$
	Since $\widetilde V$ is $C^2$, then we have $\partial \widetilde V / \partial r (0) = 0$ so $r_c(t,0) = 0$, i.e. $M(t,0) = 0$.
\end{proof}

\begin{proposition}
	\label{prop:monotone solutions of aggregation}
	Let $\rho$ be a solution by characteristics of the aggregation equation \eqref{eq:aggregation}, and let $r_0(t,r)$ the foot of the characteristic through $(t,r)$. Then
	\begin{equation}
		\label{eq:aggregation derivative of solution by characteristics}
		\frac{\partial \rho  }{\partial r} (t,r)  =  \left( \frac { r_0} r \right)^{n-1} \rho_0 (r_0)  \frac{\frac{\partial \widetilde V }{\partial r} (r_0) } {  \left(  \frac{\partial \widetilde V }{\partial r} (r) \right)^2  }    \Bigg ( - \Delta \widetilde V (r) +  \Delta \widetilde V (r_0) + \rho_0(r_0) ^{-1} \frac {\diff \rho_0 }{\diff r} (r_0)  {  \frac{\partial \widetilde V}{\partial r} (r_0) }    \Bigg ).
	\end{equation}
	In particular, if $\rho$ is a decreasing solution and $ \widetilde V \in C^2 (\mathbb R^n)$ with $\Delta \widetilde V (0) = 0$, then
	\begin{equation}
		\label{eq:aggregation derivative of solution by characteristics decreasing solution C2 potential}
		\Delta \widetilde V + \rho_0^{-1} \frac{\diff \rho_0}{\diff r} \frac{\partial \widetilde V}{\partial r}   \le 0 , \qquad \text{ in } \supp \rho_0.
	\end{equation}
\end{proposition}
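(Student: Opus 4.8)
The plan is to express $\rho(t,r)$ in closed form along the characteristics and then differentiate it once more in $r$. From \Cref{prop:solutions of pure aggregation}, the characteristics of constant mass solve $\frac{\diff r_c}{\diff t}=-\frac{\partial \widetilde V}{\partial r}(r_c)$, do not cross, and fill $[0,\infty)\times[0,\infty)$; hence the foot $r_0=r_0(t,r)$ of the characteristic through $(t,r)$ is well defined, satisfies $r_0\ge r$, and obeys $t=\int_r^{r_0}\big(\tfrac{\partial \widetilde V}{\partial r}(s)\big)^{-1}\diff s$. Differentiating this identity in $r$ at fixed $t$ gives
\[
	\frac{\partial r_0}{\partial r}=\frac{\frac{\partial \widetilde V}{\partial r}(r_0)}{\frac{\partial \widetilde V}{\partial r}(r)}.
\]
Since the characteristics carry mass inward without crossing, $M(t,r)=M_0(r_0(t,r))$, where $M(t,\cdot)$ and $M_0$ denote the mass functions of $\rho(t,\cdot)$ and $\rho_0$. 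Differentiating in $r$ and using $\frac{\partial M}{\partial r}(t,r)=n\omega_n r^{n-1}\rho(t,r)$ together with the analogous identity for $M_0$, one obtains the representation
\[
	\rho(t,r)=\Big(\tfrac{r_0}{r}\Big)^{n-1}\rho_0(r_0)\,\frac{\frac{\partial \widetilde V}{\partial r}(r_0)}{\frac{\partial \widetilde V}{\partial r}(r)}.
\]

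Next I would take the logarithmic derivative of this formula in $r$. Using $\Delta \widetilde V(r)=\frac{\partial^2 \widetilde V}{\partial r^2}(r)+\frac{n-1}{r}\frac{\partial \widetilde V}{\partial r}(r)$ for radial functions and substituting the identity for $\partial_r r_0$ above, all terms combine into
\[
	\frac{\partial_r\rho(t,r)}{\rho(t,r)}\,\frac{\partial \widetilde V}{\partial r}(r)=\Delta\widetilde V(r_0)-\Delta\widetilde V(r)+\rho_0(r_0)^{-1}\frac{\diff\rho_0}{\diff r}(r_0)\,\frac{\partial \widetilde V}{\partial r}(r_0),
\]
and reinserting the formula for $\rho(t,r)$ yields exactly \eqref{eq:aggregation derivative of solution by characteristics}. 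This is the only computational step, and it becomes routine once organised through the logarithmic derivative; the key input is the expression for $\partial_r r_0$.

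For the second assertion, fix $r_0>0$ with $\rho_0(r_0)>0$ and follow the characteristic $r=r_c(t,r_0)$, so that $r_0(t,r)=r_0$. In \eqref{eq:aggregation derivative of solution by characteristics} the prefactor $\big(\tfrac{r_0}{r}\big)^{n-1}\rho_0(r_0)\,\frac{\partial \widetilde V}{\partial r}(r_0)\big(\frac{\partial \widetilde V}{\partial r}(r)\big)^{-2}$ is strictly positive (recall $\frac{\partial \widetilde V}{\partial r}>0$ off the origin), so the assumption $\partial_r\rho\le 0$ forces
\[
	-\Delta\widetilde V\big(r_c(t,r_0)\big)+\Delta\widetilde V(r_0)+\rho_0(r_0)^{-1}\frac{\diff\rho_0}{\diff r}(r_0)\,\frac{\partial \widetilde V}{\partial r}(r_0)\le 0,\qquad \forall t>0.
\]
Because $\widetilde V\in C^2(\mathbb R^n)$ is radial, $\frac{\partial \widetilde V}{\partial r}(0)=0$, so by \Cref{prop:solutions of pure aggregation} $r_c(t,r_0)\to 0$ as $t\to\infty$; letting $t\to\infty$ and using that $\Delta\widetilde V$ is continuous with $\Delta\widetilde V(0)=0$ removes the first term and leaves $\Delta\widetilde V(r_0)+\rho_0(r_0)^{-1}\frac{\diff\rho_0}{\diff r}(r_0)\,\frac{\partial \widetilde V}{\partial r}(r_0)\le 0$, which is \eqref{eq:aggregation derivative of solution by characteristics decreasing solution C2 potential} at interior points of $\supp\rho_0$.

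The main obstacle is really only bookkeeping: keeping the roles of $r$ and $r_0=r_0(t,r)$ straight through the second differentiation, and ensuring the prefactor is nonzero precisely where it is used, i.e. at interior points of $\supp\rho_0$ where $\rho_0$ is differentiable — degenerate boundary points of the support must be excluded, since $\rho_0^{-1}\,\diff\rho_0/\diff r$ is undefined there.
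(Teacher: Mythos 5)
Your proof is correct and follows essentially the same route as the paper: both rest on the representation $M(t,r)=M_0(r_0(t,r))$, the identity $\partial_r r_0=\partial_r\widetilde V(r_0)/\partial_r\widetilde V(r)$ obtained from the characteristic relation, and the limit $t\to\infty$ along a characteristic with $\Delta\widetilde V(r_c(t,r_0))\to\Delta\widetilde V(0)=0$ for the second assertion. The only (cosmetic) difference is that you organise the second differentiation as a logarithmic derivative of the explicit formula for $\rho(t,r)$, whereas the paper differentiates $r^{1-n}\partial_r\bigl(M_0(r_0(t,r))\bigr)$ directly using an explicit expression for $\partial_r^2 r_0$; the computations are identical in substance.
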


\begin{remark}
	For $\widetilde V(r)= r^2$ then $\Delta \widetilde V$ is constant, and we only have the last term, so all solutions with decreasing initial datum are decreasing.
	If $\Delta \widetilde V$ is non-increasing, then in \eqref{eq:aggregation derivative of solution by characteristics} we have $-\Delta \widetilde V(r) + \Delta \widetilde V(r_0) \le 0$ and all solutions are decreasing. This is the case for $\widetilde V(r) = \gamma r^\gamma$ with $\lambda \in (0,2]$.
	When $\widetilde V (r) = \gamma r^\lambda$ with $\lambda > 2$, let us show that decreasing solutions of \eqref{eq:aggregation} are not $L^1 (\mathbb R^n)$. Hence, any decreasing integrable initial data produces a solution that losses monotonicity. Indeed, if $\widetilde V(r) = r^\lambda$ then $\Delta \widetilde V= (n + \lambda - 2) r^{\lambda - 2}$ and integrating in \eqref{eq:aggregation derivative of solution by characteristics decreasing solution C2 potential} we recover $\rho_0 \ge Cr ^{-(n + \lambda - 2)}$ which is not integrable for $\lambda > 2$.
\end{remark}

\begin{proof}[Proof of \Cref{prop:monotone solutions of aggregation}]
	Taking the derivative directly on $M(t,r) = n \omega_n r^{n-1} \frac{\partial \rho}{\partial r}$, we recover that
	\begin{align*}
		\allowdisplaybreaks
		\frac{\partial \rho  }{\partial r} (t,r)   &= (n \omega_n)^{-1} \frac{\partial}{\partial r} \left( r^{1-n} \frac{\partial }{\partial r}  (M (t,r))\right)
		= (n \omega_n)^{-1} \frac{\partial}{\partial r} \left( r^{1-n} \frac{\partial }{\partial r}  (M_0 (r_0(t,r) ))\right)   \\
		&= r^{1-n} r_0^{n-1} \frac{\partial r_0}{\partial r} (t,r) \rho_0 (r_0) \Bigg ( - (n-1) r^{-1}  + (n-1) r_0^{-1}  \frac{\partial r_0}{\partial r} + \rho_0(r_0) ^{-1} \frac {\diff \rho_0 }{\diff r} (r_0) \frac{\partial r_0}{\partial r} +  \frac{ \frac{\partial^2 r_0}{\partial r^2} }{ \frac {\partial r_0}{\partial r}} \Bigg ).
	\end{align*}
	Going back to \eqref{eq:pure aggregation condition characteristics} and taking a derivative in $r$, we deduce
	\begin{equation*}
		\frac{\partial r_0}{\partial r} (t,r) = \frac{  \frac{\partial \widetilde V}{\partial r} (r_0 (t,r) ) }{  \frac{\partial \widetilde V}{\partial r} (r) } \ge 0.
	\end{equation*}
	Taking another derivative we have that
		\begin{equation*}
		\frac{\partial^2 r_0}{\partial r^2} (t,r) = \frac{  \frac{\partial \widetilde V}{\partial r} (r_0 (t,r) ) }{  \frac{\partial \widetilde V}{\partial r} (r)^2  } \left(   \frac{\partial^2 \widetilde V}{\partial r^2} (r_0) - \frac{\partial^2 \widetilde V}{\partial r^2} (r) \right) .
	\end{equation*}
	Joining this information and collecting terms we recover \eqref{eq:aggregation derivative of solution by characteristics}.
	Clearly, \eqref{eq:aggregation derivative of solution by characteristics decreasing solution C2 potential} and the convexity of $\rho_0$ guarantee that $\rho (t, \cdot)$ is decreasing.
	Let us show that the condition holds in general. If $\rho$ is decreasing, then this value is not positive. For $r_0 \in \supp \rho_0$ we therefore have
	\begin{equation*}
		 - \Delta \widetilde V (r) +  \Delta \widetilde V (r_0) + \rho_0(r_0) ^{-1} \frac {\diff \rho_0 }{\diff r} (r_0)  {  \frac{\partial \widetilde V}{\partial r} (r_0 ) } \le 0.
	\end{equation*}
	The support of $\rho_0$ is a ball. Fixing a value a value of $r \in \supp \rho_0$ we have that
	\begin{equation*}
		 - \Delta \widetilde V (r_c(t,r)) +  \Delta \widetilde V (r) + \rho_0(r) ^{-1} \frac {\diff \rho_0 }{\diff r} (r)  {  \frac{\partial \widetilde V}{\partial r} (r) } \le 0.
	\end{equation*}
	Letting $t \to +\infty$, since $r_c (t,r) \to 0$, $\Delta \widetilde V$ is continuous and $\Delta \widetilde V (0) = 0$, we recover \eqref{eq:aggregation derivative of solution by characteristics decreasing solution C2 potential}.
	This completes the proof.
\end{proof}

Now we have the tools to show that concentration does not happen in finite time if $\frac{\partial V}{\partial r} \le C_v r$ close to $0$. We construct a super-solution using the pure-aggregation equation.

\begin{proof}[Proof of \Cref{prop:Rd no concentration in finite time}]
Take
\begin{equation*}
	\overline \rho_0 (x) =  { \rho_0 (x) }  \frac{\int_{\mathbb R^n} \rho_0 (x) \diff x} {\int_{B_{R_V}} \rho_0 (x) \diff x} \chi_{B_{R_V}}
\end{equation*}
and
\begin{equation}
	\label{eq:pure aggregation replacement V}
	\widetilde V (r) =
		\frac{ C_V } 2 r^2.
\end{equation}
Obtain $\overline M$ as the solution by characteristics of \eqref{eq:aggregation mass} constructed in \Cref{prop:solutions of pure aggregation}.
Due the definition of $\widetilde V $,  we know that it satisfies the hypothesis of \Cref{prop:monotone solutions of aggregation} and we have $\Delta \widetilde V  = n C_V \ge 0$. Thus,
	\eqref{eq:aggregation derivative of solution by characteristics} shows that $\overline \rho (t, \cdot)$ is decreasing, and non-negative. Therefore, it holds that, in the viscosity sense
	$
		\frac{\partial \overline M}{\partial v} \ge 0$ and  $\frac{\partial^2 \overline M}{\partial v^2} \ge 0.
	$
	Hence, still in the viscosity sense
	\begin{align*}
		\frac{\partial \overline M}{\partial t} -
		(n \omega_n^{\frac 1 n} v^{\frac{n-1} n })^2 \left\{ m \left( \frac{\partial \overline M}{\partial v}\right)^{m-1}   \frac{\partial^2 \overline M}{\partial v^2} +  \frac{\partial \overline M}{\partial v} \frac{\partial V}{\partial v} \right\}
		&\ge \frac{\partial \overline M}{\partial t} -
		(n \omega_n^{\frac 1 n} v^{\frac{n-1} n })^2 \left\{   \frac{\partial \overline M}{\partial v} \frac{\partial V}{\partial v} \right\} \\
		&=  \frac{\partial \overline M}{\partial t} - \frac{\partial \overline M}{\partial r} \frac{\partial V}{\partial r}
		= \frac{\partial \overline M}{\partial r} \left(C_V r  - \frac{\partial V}{\partial r} \right).
	\end{align*}
	Since characteristics retract, $\supp \frac{\partial \overline M}{\partial v} \subset B_{R_V}$ so the last term is non-negative by the assumption, because either $\frac{\partial \overline M}{\partial v} = 0$ or $C_V r  - \frac{\partial V}{\partial r} \le 0$. 	
	Thus, using the comparison principle in $\Br$ for $R \ge R_V$ given in \Cref{thm:comparison principle m} we have that
	$
		M_R \le \overline M $ for all $t \ge 0 , v \in [0,R_v]
	$
	Since $M$ is constructed by letting $R \to \infty$, we conclude $M \le \overline M$ for $t,v \ge 0$. 	
\end{proof}

\section{Final comments}	
\label{sec:final comments}
\begin{enumerate}
	\item Blow-up is usually associated in the literature to superlinear nonlinearities, both in reaction diffusion or in Hamilton-Jacobi equations, cf.  instance \cite{QuittSoupBook, GalVaz99} and its many references. Here  it is associated to sublinear diffusion,
	notice that \eqref{Vgrowth} implies, at least, $0<m<1$.
	This might seem surprising but it is not,
	due to two facts.
	First, recall that $0 < m <1$ means that the diffusion coefficient $m u^{m-1}$ is large when $u$ is small, and small when $u$ is large.
	This translates into fast diffusion of the support but slow diffusion of level sets with high values (see e.g. \cite{ChassVaz2002} for a thorough discussion%
	).
	This explains why $\delta_0$ may not be diffused for $m$ small (see \cite{Brezis1983}).
	Secondly,
	the confinement potential $V$ needs to be strong enough at the origin to compensate the diffusion and produce a concentration. In $\Br$, this is translated in the assumption $\int_{\Br} \rho_V < 1$ (recall that, for $V(x) = |x|^{\lambda_0}$, this implies $0 <m < \frac{n-\lambda_0}n < 1$). In $\Rd$ we need to deal with the behaviour at infinity, as mentioned in the introduction.

\item Formation of a concentrated singularity in finite time is a clear possibility in this kind of problem.
In this paper,
we do not consider the case $V \notin W^{2,\infty}_{loc} (\Rd)$ (e.g. $V(x) = |x|^\lambda$ with $\lambda < 2$).
	So long as $\frac{\partial V}{\partial v}$ is continuous (e.g. $\lambda \ge 1$), it makes sense to use the theory of viscosity solutions of the mass equation \eqref{eq:mass}.
	In principle, there could be concentration in finite time, even in \eqref{eq:main bounded domain}.
	Notice that, in our results, the estimate for $\rho(t) \in L^q(\Br)$ depends on $\| \Delta V \|_{L^\infty (\Br)}$. For more general $V$, better estimates for $\rho$ are needed in order to pass the limits $\Phi_k(s) \to s^m$ and $R \to \infty$.
	Some of these issues will be studied elsewhere.

	\item For $\rho_0 \in L^{1}_+(\Br)$, $S_R(t) \rho_0$ is constructed extending the semigroup through a density argument. We do not know whether it is the limit of the solutions $u_k$ of \eqref{eq:main regularised bounded} with \eqref{eq:Phik}. Furthermore, this question can be extended to initial data so that $\mathcal F_R[\rho_0] < \infty$.

	\item Non-radial data.
		We provide a well-posedness theory in $\Br$ when $\rho_0 > 0$, but not in $\Rd$.
		In $\Br$, as mentioned in \Cref{rem:bounded concentration non-radial}, we can show concentration in some non-radial cases,
		but the exact splitting of mass in the asymptotic distribution is still unknown.
		The asymptotic behaviour in the non-radial case is completely open.
\end{enumerate}

\appendix

\section{Recalling some classical regularity results}
\label{sec:classical regularity}
The equation for the mass of the solution of $u_t = \diver ( \nabla \Phi (u) + u \nabla V)$ is given by
\begin{align}
\tag{M$_\Phi$}
\label{eq:MPhi}
	\frac{\partial M}{\partial t} &= (n \omega^{\frac 1 n } v ^{\frac{n-1} n})^2   \left[ \frac{\partial }{\partial v} \Phi \left(  \frac{\partial M }{\partial v}  \right) + \frac{\partial V}{\partial v} \frac{\partial M }{\partial v} \right].
\end{align}
Let us prove local regularity of bounded solutions by applying the results in \cite{DiBenedetto1993}. To match the notation of \cite{DiBenedetto1993}, in this appendix we choose the notation $
x = v$,  $u = M$, and $a_0(x) = (n \omega^{\frac 1 n } v ^{\frac{n-1} n})^2
$.
We write the problem \eqref{eq:MPhi} as
\begin{equation}
\label{eq:DiBenedetto}
	u_t = \diver a (x,t,u,Du) + b(x,t,u,Du)
\end{equation}
where
\begin{equation*}
	a(x,Du) = a_0(x) \Phi(Du) , \qquad b(x,Du) = - D a_0(x) \Phi(Du) + a_0(x) DV \cdot Du.
\end{equation*}
The standard hypothesis set in \cite{DiBenedetto1993} are that for some $p > 1$ we have
\begin{align*}
\tag{A$_1$}
\label{eq:A1}
a(x,t,u,Du) \cdot Du &\ge C_0 |Du|^p - \varphi_0 (t,x), \\
\tag{A$_2$}
\label{eq:A2}
|a(x,t,u,Du) | &\le C_1 |Du|^{p-1} +\varphi_1 (t,x), \\
\tag{A$_3$}
\label{eq:A3}
	|b(x,Du)| &\le C_2 |Du|^p + \varphi_2 (t,x).
\end{align*}
We set $\Phi(s) = |s|^{m-1}s$ so $p = m+1 \in (1,2)$.
We aim to recover local estimates on a set $\Omega \Subset \Rd$.
We will be able to get local estimates outside $0$. Hypothesis \eqref{eq:A1} and \eqref{eq:A2} are easy to check with
$
	C_0 = \inf_{\Omega } a_0$, $
	C_1 = \sup_{\Omega } a_0 $ , and  $  \varphi_0 = \varphi_1 = 0.$
However, \eqref{eq:A3} is initially not trivial. Since $p \in (1,2)$, $|Du|^{p-1}$ is not controlled by $|Du|^p$ but
we have
\begin{equation*}
	|b(x, Du) | = ( \max_\Omega |Da_0| + \max_\Omega a_0 |DV|) ( 1 + |Du|^p ),
\end{equation*}
so we choose
$	C_2 = \max_\Omega |Da_0|  + \max_\Omega a_0 |DV|$
and
$ \varphi_2 =  \max_\Omega |Da_0|  + \max_\Omega a_0 |DV|.
$
Our functions $\varphi_i$ are bounded, so we also have the hypothesis
\begin{equation}
\tag{A$_4$}
\label{eq:A4}
	\varphi_0, \quad \varphi_1^{ \frac p {p-1} } , \quad \varphi_2 \in L^{\widehat q, \widehat r} ((0,T) \times \Omega)
\end{equation}
where
\begin{equation}
\tag{A$_5$}
\label{eq:A5}
\frac 1 {\widehat r} + \frac{N}{p \widehat q } = 1 - \kappa_1
\end{equation}
are trivially satisfied. A weak solution in DiBenedetto's notation requires the regularity
$	
u \in C_{loc} (0,T; L^2_{loc} (\Omega)).
$
The notion of sub-solution (resp. super-) of \eqref{eq:DiBenedetto} is, for every $K \Subset \Omega$ and $0 < t_1 < t_2 \le T$, we have
\begin{equation*}
	\int_K u \varphi \diff x \Bigg| _{t_1}^{t_2} + \int_{t_1}^{t_2} \int_{K} ( - u \varphi_t + a (x,Du) \cdot D \varphi) \diff x \diff \tau \le (\ge) \int_{t_1}^{t_2} \int_K b(x,Du) \varphi \diff x \diff \tau,
\end{equation*}
for test functions
$
	0 \le \varphi \in W_{loc}^{1,2} (0,T; L^2 (K)) \cap L^p_{loc} (0,T; W_0^{1,p} (K))$.
Let us denote
$	
	\Omega_T = (0,T) \times \Omega.
$
We have the following result
\begin{theorem}[\cite{DiBenedetto1993} Chapter III, Theorem 1.1]
\label{thm:regularity dibenedetto}
Let $p > 1$, assume \eqref{eq:A1}, \eqref{eq:A2}, \eqref{eq:A3}, \eqref{eq:A4} and \eqref{eq:A5} and let $u$ be a local weak solution of \eqref{eq:DiBenedetto}. Then, there exists constants $\gamma > 1$ and $\alpha \in (0,1)$ depending only on the constant of \eqref{eq:A1}-\eqref{eq:A5},
	$
		\| u \|_{L^\infty ( \Omega_T ) },$ and  $\| \varphi_0, \varphi_1^{\frac{p-1} p} , \varphi_2 \|_{L^{\widehat q, \widehat r} (\Omega_T)}
	$
	such that for all $K \Subset (0,T) \times \Omega$
	\begin{equation*}
			|u(t_1,x_1) - u(t_2,x_2) | \le \gamma \| u \|_{L^\infty(\Omega_T)} \left( \frac{|x_1-x_2| + \| u \|_{L^\infty(\Omega_T)} ^{\frac {p-2} p } |t_1 - t_2|^{\frac 1 p} }{\mathrm{dist} (K, \Gamma, p)} \right)^\alpha
	\end{equation*}
where, for $\Gamma = \{ (y,s) : s = 0 \text{ or } y \in \partial \Omega \}$ we have
\begin{equation*}
	\mathrm{dist} (K, \Gamma; p) = \inf_{ \substack { (x,t) \in K \\ (y,s) \in \Gamma} }\left ( |x-y| + \| u \|_{L^\infty (\Omega_T)} ^{\frac {p-2} p } |t - s|^{\frac 1 p} \right ).
\end{equation*}
\end{theorem}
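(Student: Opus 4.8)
The statement is DiBenedetto's interior H\"older estimate for quasilinear parabolic equations with $p$-growth structure, so the plan is to reproduce the \emph{intrinsic scaling} method of \cite[Ch.~III--IV]{DiBenedetto1993} rather than to invent anything new. The first step is to derive a pair of energy (Caccioppoli) inequalities for the truncations $(u-k)_\pm$: since $u$ is simultaneously a sub- and a supersolution, one tests the weak formulation with $\varphi=\pm(u-k)_\pm\zeta^p$ for a space-time cutoff $\zeta$ and uses \eqref{eq:A1}--\eqref{eq:A3} to obtain, on a cylinder $Q=B_\rho(x_0)\times(t_0-\tau,t_0)$,
\begin{multline*}
	\esssup_{t}\int_{B_\rho}(u-k)_\pm^2\zeta^p\,\diff x + \iint_{Q}|D(u-k)_\pm|^p\zeta^p \\
	\le C\iint_{Q}(u-k)_\pm^2|D\zeta|^p + C\iint_{Q}(u-k)_\pm\zeta^p\big(\chi_{\{(u-k)_\pm>0\}}+\text{lower-order}\big),
\end{multline*}
together with a companion logarithmic estimate for $\log^2\!\big(H/(H-(u-k)_\pm)\big)$ that propagates smallness of level sets forward in time.

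Because $p\neq 2$ the equation is not invariant under the parabolic rescaling $x\mapsto\rho x$, $t\mapsto\rho^p t$, so these estimates only become effective on \emph{intrinsic cylinders}. Fixing a point and setting $\mu^+=\esssup_{Q_0}u$, $\mu^-=\essinf_{Q_0}u$, $\omega=\mu^+-\mu^-$, one works with cylinders $Q_\rho(\theta)=B_\rho\times(t_0-\theta\rho^p,t_0)$ where $\theta\sim\omega^{2-p}$ is chosen precisely so that $u\mapsto u/\omega$ turns the equation into one uniformly comparable to the heat equation on a standard cylinder. On such a cylinder one runs the classical \emph{alternative}: either $u<\mu^+-\tfrac14\omega$ on a fixed fraction of $Q_\rho(\theta)$, or $u>\mu^-+\tfrac14\omega$ on a fixed fraction. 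In each branch a De~Giorgi iteration fed by the energy and logarithmic inequalities above yields an oscillation reduction $\essosc_{Q_{\rho/2}(\theta')}u\le\eta\,\omega$ for a universal $\eta\in(0,1)$; the lower-order terms $\varphi_i$ contribute only an additive error of size $C\rho^{\kappa}$, which is exactly the role of the scaling-subcriticality condition \eqref{eq:A5}.

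Iterating the oscillation reduction along a geometrically shrinking chain of intrinsic cylinders $Q_n=Q_{\rho_n}(\theta_n)$ with $\rho_{n+1}=\sigma\rho_n$, $\theta_n\sim\omega_n^{2-p}$, and $\omega_{n+1}\le\max\{\eta\omega_n,\,C\rho_n^\kappa\}$, and keeping careful track of how the intrinsic time-intervals shrink, produces $\essosc_{Q_n}u\le C\lambda^n$ for some $\lambda\in(0,1)$; unwinding the intrinsic time scaling turns $\lambda^n$ into a power of the parabolic distance $|x_1-x_2|+\|u\|_{L^\infty}^{(p-2)/p}|t_1-t_2|^{1/p}$ normalised by the distance from $K$ to the parabolic boundary, which is precisely the estimate claimed. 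The hard part is the singular range $1<p<2$, which is exactly the one relevant here since $p=m+1\in(1,2)$: there the branch where $u$ is close to $\mu^+$ must be handled by an expansion-of-positivity / measure-to-pointwise argument that is delicate in its dependence on the choice of $\theta$, and ensuring that every constant stays independent of the a priori unknown oscillation $\omega$ through the whole iteration is where the technical bulk lies. Since this is carried out in full detail in \cite{DiBenedetto1993}, in the paper we simply invoke it; note that in our application the hypotheses simplify substantially ($\varphi_0=\varphi_1=0$, $\varphi_2$ bounded, $a(x,Du)=a_0(x)\Phi(Du)$ with $a_0$ bounded and, away from $v=0$, bounded below), so only the interior estimate on sets $\Omega\Subset(0,\infty)$ is required.
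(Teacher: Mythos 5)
This statement is quoted verbatim from the literature (DiBenedetto, Ch.~III, Thm.~1.1) and the paper offers no proof beyond the citation; your proposal likewise ends by invoking the reference, so the approaches coincide. Your sketch of the underlying intrinsic-scaling argument (truncated energy and logarithmic estimates, intrinsic cylinders with $\theta\sim\omega^{2-p}$, the alternative plus De Giorgi iteration, and the extra care needed in the singular range $1<p<2$) is an accurate summary of how the cited result is actually established, and your closing remarks about the simplified hypotheses in this application ($\varphi_0=\varphi_1=0$, $a_0$ bounded above and, away from $v=0$, below) match the paper's use of the theorem.
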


\paragraph{Regularity at $t = 0$.} For the regularity at $t = 0$, if $\rho_0$ is only integrable, then $M_{\rho_0}$ is continuous. In order to construct a modulus of continuity, we introduce the essential oscillation on a set $K$ defined as
$
	\essosc_K u = \esssup_K u - \essinf_K u.
$
Notice that the previous result in the whole space stated that, for any $K \Subset (0,\infty) \times \Omega $ we have
\begin{equation*}
	\omega_{i,u} (K,h) = \essosc_{ \substack{ |t-s| \le h^p \\ |x-y| \le h \\ (t,x), (s,y) \in K} } u \le \gamma \| u \|_{L^\infty(\omega_T)} \left( \frac{1 + \| u \|_{L^\infty(\Omega_T)} ^{\frac {p-2} p }   }{\mathrm{dist} (K, \Gamma, p)} \right)^\alpha h^\alpha \to 0,
\end{equation*}
as $h \to 0$.
This in an interior modulus of continuity (with scaling). A similar estimate on the essential oscillations holds near $t = 0$, but the modulus of continuity now depends on the one from $u_0$.

\begin{theorem}[\cite{DiBenedetto1993} Chapter III, Proposition 11.1]
Fix $x_0 \in \Omega$ and $T_0 > 0$ and $R_0 > 0$ so that $B_{2R_0} (x_0) \subset \Omega$.
Then, for $k \ge 1$, there exist sequences $R_k, T_k \searrow 0$ and $\delta_k \to 0$ depending only on the constant of \eqref{eq:A1}-\eqref{eq:A5}, $R_0$ and $\|u\|_{L^\infty([0,T] \times B_{2R_0}(x_0))}$  such that
\begin{equation}
	\essosc_{ [0,T_k] \times B_{R_k / 2} (x_0) } u \le \max \left\{ \delta_k; C  \essosc_{B_{R_k} (x_0) } u_0 \right\}.
\end{equation}
\end{theorem}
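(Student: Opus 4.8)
This is \cite[Chapter~III, Proposition~11.1]{DiBenedetto1993}; I only indicate the structure of the argument, which is the intrinsic-scaling/De~Giorgi scheme of Chapter~III of \cite{DiBenedetto1993} carried out up to the initial time.

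Set $M = \|u\|_{L^\infty([0,T_0]\times B_{2R_0}(x_0))}$ and, for $\rho\le R_0$, write $Q_\rho = (0,\rho^p)\times B_\rho(x_0)$, $\omega(\rho) = \essosc_{Q_\rho} u$ and $\omega_0(\rho) = \essosc_{B_\rho(x_0)} u_0$. The whole statement follows from a single \emph{one-step oscillation reduction}: there are $\eta\in(0,1)$ and $C_\ast\ge 1$, depending only on $n$, $p$, the structural constants in \eqref{eq:A1}--\eqref{eq:A5} and $M$, such that
\[
	\omega(\rho/2)\ \le\ \max\bigl\{\,\eta\,\omega(\rho),\ C_\ast\,\omega_0(\rho)\,\bigr\}\qquad\text{for every }\rho\le R_0 .
\]
Iterating this along $\rho_j = 2^{-j}R_0$, using $\omega(R_0)\le 2M$ and that $\rho\mapsto\omega_0(\rho)$ is non-decreasing, and splitting the resulting maximum at the index $\lfloor j/2\rfloor$, gives $\omega(\rho_j)\le\max\{C_1\eta^{j/2},\,C_\ast\,\omega_0(\rho_{\lfloor j/2\rfloor})\}$; relabelling the scales and discarding the intermediate ones produces sequences $R_k,T_k\searrow 0$ and a null sequence $\delta_k\to 0$ for which $\essosc_{[0,T_k]\times B_{R_k/2}(x_0)} u\le\max\{\delta_k,\,C_\ast\,\essosc_{B_{R_k}(x_0)} u_0\}$. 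Thus everything reduces to the one-step estimate.

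For the one-step estimate I would argue as follows. If $\omega(\rho)\le C_\ast\,\omega_0(\rho)$ there is nothing to prove, so assume $\omega_0(\rho)$ much smaller than $\omega(\rho)$; this is the only place the initial data is used, and it is precisely what makes the slice $\{t=0\}$ usable as a ``good'' pivot time: writing $\mu^+$ and $\mu^-$ for the essential supremum and infimum of $u$ on $Q_\rho$, the super-level sets $\{u_0>\mu^+-\tfrac14\omega(\rho)\}\cap B_\rho$ and sub-level sets $\{u_0<\mu^-+\tfrac14\omega(\rho)\}\cap B_\rho$ are empty. Rescaling $Q_\rho$ intrinsically so that its time-length is comparable to $\omega(\rho)^{2-p}\rho^p$, one writes the Caccioppoli (energy) inequalities for the truncations $(u-k)_\pm$ with cut-off functions allowed to reach $t=0$; the extra boundary term $\int_{B_\rho}(u_0-k)_\pm^2\zeta^2$ that then appears is either zero or bounded by $\omega_0(\rho)^2\rho^n$, hence negligible against the leading terms. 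With these energy estimates one runs DiBenedetto's two-alternative De~Giorgi argument: in the first alternative a fast-geometric-convergence lemma on super-level sets yields $u\le\mu^+-\tfrac14\omega(\rho)$ on $Q_{\rho/2}$; in the second a logarithmic (expansion-of-positivity) estimate propagates the smallness of the bad set and gives the matching bound $u\ge\mu^-+\tfrac14\omega(\rho)$ on $Q_{\rho/2}$. In either case $\omega(\rho/2)\le\tfrac34\,\omega(\rho)$, which is the reduction with $\eta=\tfrac34$.

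The only genuine difficulty is this one-step reduction \emph{near $t=0$}: the full intrinsic-scaling machinery (energy estimates, the two alternatives, the measure-shrinking lemmas of Chapter~III) has to be carried through while keeping track of the contribution of the initial slice, and one must justify the ``forgetting the initial data'' step, which is automatic in the interior — it is legitimate here because, in the only non-trivial case, $u_0$ is essentially constant on $B_\rho(x_0)$ at the relevant truncation levels, so that $\{t=0\}$ acts as an admissible starting time level. The singular range $p=m+1\in(1,2)$ merely requires the singular (rather than degenerate) form of DiBenedetto's estimates and introduces no new phenomenon here; verifying that all the constants depend only on the data listed in the statement is bookkeeping through the above steps.
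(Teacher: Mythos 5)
The paper does not prove this statement; it is quoted verbatim from DiBenedetto's book (Chapter III, Proposition 11.1), and your sketch faithfully reproduces the structure of that proof: the one-step oscillation reduction $\essosc_{Q_{\rho/2}}u \le \max\{\eta\,\essosc_{Q_\rho}u,\ C_\ast\,\essosc_{B_\rho}u_0\}$ obtained by using $\{t=0\}$ as an admissible pivot level when the initial oscillation is small, followed by the dyadic iteration and the split of indices that yields the sequences $R_k$, $T_k$, $\delta_k$. Your iteration bookkeeping and the identification of where the initial slice enters the Caccioppoli estimates are correct, so this is consistent with the paper's (cited) argument.
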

As a consequence of the previous theorem we conclude that
\begin{equation*}
	\omega_{b,u} (x_0,h) = \esssup_{0 \le t \le h} |u(t,x_0)-u(0,x_0)| \le \essosc_{[0,h] \times B_{h}(x_0)} u ,
\end{equation*}
tends to $0$ as $h \to 0$.
This modulus of continuity depends only on the constants of \eqref{eq:A1}-\eqref{eq:A5} and
$	
	\omega_{u_0} (x_0,h) = \essosc_{|x-x_0| \le h} u_0.
$
For any $K$ compact, there exists
$\omega_{u_0} (K,h)$ such that
$	
	\omega_{u_0} (x_0,h) \le \omega_{u_0} (K,h),$ for all $ x_0 \in K,
$
also going to $0$ as $h \to 0$.

\begin{corollary}
	\label{cor:uniform regularity interior in x up to 0 in t}
	Let $K \Subset \Omega$, and $u_0 \in C(K)$. Then, for any $T > 0$ and $\ee >0$, there exists $\delta > 0$, depending only on $T, \ee$, the constants of \eqref{eq:A1}-\eqref{eq:A5} and $\omega_{u_0} (K, \cdot)$, such that if $(t,x), (s,y) \in [0,T] \times K  $, $|t-s| \le \delta^p$ and $|x-y| \le \delta$ then
	\begin{equation*}
		|u(t,x)- u(s,y)| \le \ee.
	\end{equation*}
\end{corollary}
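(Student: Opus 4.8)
The plan is to split the cylinder $[0,T]\times K$ into a thin slab $\{0\le t\le\tau\}$ near the initial time and its complement $\{t\ge\tau\}$, for a threshold $\tau>0$ to be fixed first. On the slab I will control the oscillation of $u$ by the oscillation estimate near $t=0$ recalled just above (together with the modulus of continuity of $u_0$), and on the complement by the interior Hölder estimate of \Cref{thm:regularity dibenedetto}; the two pieces are then glued by the triangle inequality.

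First I would fix $\tau>0$ and take $(t,x),(s,y)\in[0,T]\times K$ with $|x-y|\le\delta$ and $|t-s|\le\delta^p$, assuming from the outset $\delta\le\tau$ and $\delta^p\le\tau/2$. If $\max\{t,s\}\le\tau$, then inserting $u_0(x)$ and $u_0(y)$ (with $u_0=u(0,\cdot)$) gives
\[
	|u(t,x)-u(s,y)|\le|u(t,x)-u_0(x)|+|u_0(x)-u_0(y)|+|u_0(y)-u(s,y)|,
\]
and the outer two terms are each at most $\omega_{b,u}(K,\tau)$ by the near-$t=0$ oscillation bound from \cite{DiBenedetto1993} stated above (in its uniform-over-$K$ form, valid for $\tau$ small since $K\Subset\Omega$), while the middle term is at most $\omega_{u_0}(K,\delta)\le\omega_{u_0}(K,\tau)$. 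Since both moduli tend to $0$ as $\tau\to0$, and $\omega_{b,u}(K,\cdot)$ depends only on the constants of \eqref{eq:A1}--\eqref{eq:A5}, the (universal) $L^\infty$ bound of $u$, and $\omega_{u_0}(K,\cdot)$, I would choose $\tau=\tau(\ee)$ so that $2\omega_{b,u}(K,\tau)+\omega_{u_0}(K,\tau)\le\ee$.

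Next, with this $\tau$ fixed, I would treat the case $\max\{t,s\}>\tau$; say $t>\tau$. Since $|t-s|\le\delta^p\le\tau/2$, also $s\ge\tau/2$, so $(t,x),(s,y)$ both lie in $K':=[\tau/2,T]\times K$, which is compactly contained in $(0,T)\times\Omega$ (note that $\Gamma$ in \Cref{thm:regularity dibenedetto} does not include the line $\{t=T\}$). Hence $\operatorname{dist}(K',\Gamma;p)\ge d_\tau:=\min\{\operatorname{dist}(K,\partial\Omega),\,\|u\|_{L^\infty}^{(p-2)/p}(\tau/2)^{1/p}\}>0$, and \Cref{thm:regularity dibenedetto} gives
\[
	|u(t,x)-u(s,y)|\le\gamma\|u\|_{L^\infty}\left(\frac{|x-y|+\|u\|_{L^\infty}^{(p-2)/p}|t-s|^{1/p}}{d_\tau}\right)^{\!\alpha}\le\gamma\|u\|_{L^\infty}\left(\frac{(1+\|u\|_{L^\infty}^{(p-2)/p})\,\delta}{d_\tau}\right)^{\!\alpha}.
\]
With $\tau$ fixed this tends to $0$ as $\delta\to0$, so I would finally pick $\delta\le\tau$ with $\delta^p\le\tau/2$ small enough that the right-hand side is $\le\ee$; combining the two cases proves the corollary, and by construction $\delta$ depends only on $T$, $\ee$, the constants of \eqref{eq:A1}--\eqref{eq:A5} (through which $\|u\|_{L^\infty}$ and $\gamma,\alpha$ enter) and $\omega_{u_0}(K,\cdot)$.

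The argument is mostly bookkeeping; the main thing to get right is the quantitative lower bound $d_\tau>0$ on $\operatorname{dist}(K',\Gamma;p)$, which is what dictates the order of the two choices (first $\tau$ from the slab estimate, then $\delta$ from the interior estimate), together with verifying that the uniform near-$t=0$ modulus $\omega_{b,u}(K,\cdot)$ genuinely depends only on the structural data and on $\omega_{u_0}(K,\cdot)$ --- precisely the content of the oscillation bound recalled immediately before the statement.
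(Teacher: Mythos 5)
Your proof is correct and follows essentially the same strategy as the paper's: a near-$t=0$ slab handled by the initial-time oscillation bound together with $\omega_{u_0}(K,\cdot)$, glued to the interior Hölder estimate of \Cref{thm:regularity dibenedetto} away from $t=0$, with $\delta$ chosen after the slab threshold. The only (cosmetic) difference is that by imposing $\delta^p\le\tau/2$ you collapse the paper's mixed case $t<\delta_t\le s$ into the interior case, whereas the paper handles it with an extra intermediate point $u(\delta_t,x)$; both bookkeepings are valid.
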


\begin{proof}
	First, we point out that there exists $\omega_{b,u} (K,h)$ depending only on \eqref{eq:A1}-\eqref{eq:A5} and $\omega_{u_0} (K,\cdot)$ such that
$	\omega_{b,u} (x_0,h) \le \omega_{b,u} (K,h), $ for all $ x_0 \in K$.
Fix $T> 0$ and $\ee > 0$. Since we want to use the interior and boundary regularity, we first fix $\delta_t > 0$ such that
\begin{equation*}
	0 \le t \le \delta_t^p \implies \esssup_{x \in K} |u(t,x) - u(0,x)| \le \frac \ee 3.
\end{equation*}
Due to the uniform continuity of $u_0$, there exists $\delta_x > 0$ such that
$	
	\omega_{u_0} (K,\delta_x) \le \frac \ee 3.
$
Lastly, we take $h_i > 0$ such that
$	
	\omega_{i,u} ([\delta_t, T] \times K, h ) \le \frac \ee 3.
$
We then take $\delta = \min \{ \delta_t, \delta_x, h_i \}$. Let us now check the condition. We distinguish cases:
If $t, s > \delta_t$ then $|u(t,x) - u(s,y)| \le \frac \ee 3 < \ee$.
If $t < \delta_t  \le s $ (or viceversa), then we write
		\begin{equation*}
			|u(t,x) - u(s,y)| \le |u(t, x) - u(0, x) | + |u(0,x) - u(\delta_t,x)| + |u(\delta_t, x) - u(s, y)| \le \ee.
		\end{equation*}
Finally, if $t,s < \delta_t $, then we write
	\begin{equation*}
		|u(t,x) - u(s,y)| \le |u(t,x) - u(0,x)| + |u(0,x) - u(0,y)| + |u(0,y) - u(s,y)| \le \ee.
	\end{equation*}
This completes the proof.
\end{proof}

\section{Relating space and time regularities}
\label{sec:appendix b}
\begin{theorem}
	\label{thm:relation between space and time regularities}
	Let $I \subset \mathbb R$ and $u \in L^\infty (0,T; C^\alpha (\overline I)) \cap C^\beta (0,T; L^1 (\overline I))$. Then
	\begin{equation*}
		|u(t,x) - u(s,y)| \le C ( |x-y|^\alpha + |t-s|^{\frac{\alpha\beta}{\alpha + 1}})
	\end{equation*}
	where $C$ depends only on the norms of $u$ in the spaces above.
\end{theorem}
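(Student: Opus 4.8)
The plan is to interpolate the two given moduli of continuity by splitting $|u(t,x)-u(s,y)|$ through an intermediate point and introducing a spatial averaging scale $\ell$ to be optimized. First I would reduce to controlling $|u(t,x)-u(s,x)|$ for fixed $x$, since the spatial part $|u(s,x)-u(s,y)| \le [u(s,\cdot)]_{C^\alpha}|x-y|^\alpha$ is immediate from $u \in L^\infty(0,T;C^\alpha(\overline I))$. For the time increment at fixed $x$, the difficulty is that we only control the time-regularity of $u$ in $L^1_x$, not pointwise; so I would pass through a spatial average. Concretely, for a small scale $\ell>0$ (with $[x,x+\ell]\subset \overline I$, or $[x-\ell,x]$ near the right endpoint) set $\bar u_\ell(t,x) = \frac{1}{\ell}\int_x^{x+\ell} u(t,y)\,\diff y$ and write
\begin{equation*}
	u(t,x)-u(s,x) = \big(u(t,x)-\bar u_\ell(t,x)\big) + \big(\bar u_\ell(t,x)-\bar u_\ell(s,x)\big) + \big(\bar u_\ell(s,x)-u(s,x)\big).
\end{equation*}

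The two outer terms are bounded using the $C^\alpha$ bound in space: $|u(\tau,x)-\bar u_\ell(\tau,x)| \le \frac1\ell\int_x^{x+\ell}|u(\tau,x)-u(\tau,y)|\,\diff y \le [u(\tau,\cdot)]_{C^\alpha}\,\ell^\alpha \le C\ell^\alpha$, uniformly in $\tau$. The middle term is controlled by the $L^1$-in-space time regularity: $|\bar u_\ell(t,x)-\bar u_\ell(s,x)| \le \frac1\ell\int_x^{x+\ell}|u(t,y)-u(s,y)|\,\diff y \le \frac1\ell\,\|u(t,\cdot)-u(s,\cdot)\|_{L^1(\overline I)} \le \frac{C}{\ell}\,|t-s|^\beta$. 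Combining, $|u(t,x)-u(s,x)| \le C\big(\ell^\alpha + \ell^{-1}|t-s|^\beta\big)$. Now optimize in $\ell$: the two terms balance when $\ell^{\alpha+1} = |t-s|^\beta$, i.e. $\ell = |t-s|^{\beta/(\alpha+1)}$, which gives $|u(t,x)-u(s,x)| \le C\,|t-s|^{\alpha\beta/(\alpha+1)}$, exactly the claimed exponent. One should just note that if this optimal $\ell$ exceeds $|I|$ (i.e. $|t-s|$ is not small), the estimate is trivial from the uniform $L^\infty$ bound after adjusting the constant, so there is no loss.

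Putting the pieces together via the triangle inequality yields $|u(t,x)-u(s,y)| \le C(|x-y|^\alpha + |t-s|^{\alpha\beta/(\alpha+1)})$ with $C$ depending only on $\|u\|_{L^\infty(0,T;C^\alpha)}$ and $\|u\|_{C^\beta(0,T;L^1)}$. The only mild technical point — and the one I expect to require the most care — is the boundary behavior: near an endpoint of $I$ one must average on an interval lying inside $\overline I$ (choose $[x-\ell,x]$ instead of $[x,x+\ell]$ when $x$ is within $\ell$ of the right endpoint), which is harmless but needs a sentence. Everything else is a routine interpolation argument.
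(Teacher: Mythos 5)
Your argument is correct and is essentially the paper's own proof: both split off the spatial increment via the $C^\alpha$ bound, compare the time increment at a fixed point with a spatial average over a window of size $\ell$ (resp. $h$), bound the averaged time difference by $\ell^{-1}\|u(t)-u(s)\|_{L^1}\le C\ell^{-1}|t-s|^\beta$ and the averaging error by $C\ell^\alpha$, and optimize $\ell=|t-s|^{\beta/(\alpha+1)}$. Your extra remarks on one-sided averaging near the endpoints and on large $|t-s|$ are harmless refinements of the same computation.
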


\begin{proof}
	We the following splitting
	$
		|u(t,x) - u(s,y)| \le |u(t,x) - u(s,y)| + |u(t,y) - u(s,y)|.
	$
	The bound for the first term is evident and yields $C|x-y|^\alpha$. For the second term we write, for some $h > 0$
	\begin{align*}
		|u( t,y) - u(s,y)| &\le \left| \frac{1}{2h} \int_{y-h}^{y+h} (u(t,z) - u(s,z) ) \diff s \right| + \left|\frac{1}{2h} \int_{y-h}^{y+h} (u(t,z) - u(t,y) ) \diff s \right|
 			\\
 			&\qquad + \left|\frac{1}{2h} \int_{y-h}^{y+h} (u(s,z) - u(s,y) ) \diff s \right|  \\
 			&\le \frac{1}{2h} \| u(t) - u(s)\| _{L^1} + C \int_{y-h}^{y+h} |z-y|^\alpha \diff s
 			\le  C \left (\frac{|t-s|^\beta}h +  h^{\alpha} \right).
	\end{align*}
	By choosing $h = |t-s|^\gamma$, the optimal rate is achieved when $\beta - \gamma = \alpha$, i.e. $\gamma = \frac{\beta}{\alpha + 1}$. This choice yields
	\begin{equation*}
		|u( t,y) - u(s,y)| \le C |t-s|^{\frac{\alpha \beta}{\alpha + 1}}. \qedhere
	\end{equation*}
\end{proof}

\section* {Acknowledgments}
The authors 
are thankful 
to the anonymous referee for the detailed reading of the manuscript, and their insightful suggestions.
The research of JAC and DGC was supported by the Advanced Grant Nonlocal-CPD (Nonlocal PDEs for Complex Particle Dynamics:
Phase Transitions, Patterns and Synchronization) of the European Research Council Executive Agency (ERC) under the European Union’s Horizon 2020 research and innovation programme (grant agreement No. 883363).
JAC was partially supported by EPSRC grant number EP/T022132/1.
The research of JLV was partially supported by grant PGC2018-098440-B-I00 from the Ministerio de Ciencia, Innovación y Universidades of the Spanish Government. JLV was an Honorary Professor at Univ.\ Complutense.

\printbibliography

\end{document}